\documentclass[11pt,a4paper,reqno]{amsart} 

\usepackage [english]{babel}  

\DeclareFontFamily{U}{mathb}{\hyphenchar\font45}
\DeclareFontShape{U}{mathb}{m}{n}{
      <5> <6> <7> <8> <9> <10>
      <10.95> <12> <14.4> <17.28> <20.74> <24.88>
      mathb10
      }{}
\DeclareSymbolFont{mathb}{U}{mathb}{m}{n}

\DeclareMathSymbol{\sqbullet}{1}{mathb}{"0D}

\allowdisplaybreaks 

\usepackage{amsmath,mathtools}
\usepackage{amssymb}
\usepackage{euscript}
\usepackage{bm}
\usepackage{graphicx}
\usepackage[all]{xy}
\usepackage[dvipsnames]{xcolor}
\usepackage[colorlinks=true,linktocpage=true,pagebackref=false, citecolor=black,linkcolor=black]{hyperref}
\usepackage{pifont}
\usepackage{tikz,pgfplots}
\pgfplotsset{compat=1.10}
\usepgfplotslibrary{fillbetween}
\usetikzlibrary{cd,arrows,decorations.pathmorphing,backgrounds,automata,positioning,fit,matrix}
\usepackage{caption,subcaption}
\captionsetup{belowskip=-7.5pt}
\usepackage{comment}
\usepackage{xcolor}

\usetikzlibrary{matrix}
\usetikzlibrary{decorations.pathreplacing, calc, fit, backgrounds}

\pgfplotsset{soldot/.style={color=black,only marks,mark=*}} \pgfplotsset{holdot/.style={color=black,fill=white,only marks,mark=*}}

\hoffset=-15mm
\voffset=-10mm
\setlength{\textwidth}{16cm}
\setlength{\textheight}{24.25cm}
\raggedbottom

\parskip=0.8ex


\newtheorem{thm}{Theorem}[section]

\newtheorem{lem}[thm]{Lemma}
\newtheorem{prop}[thm]{Proposition}

\newtheorem{cor}[thm]{Corollary}

\newtheorem{ass}[thm]{Assumption}
\newtheorem{quest2}[thm]{Question}

\theoremstyle{definition}
\newtheorem{defn}[thm]{Definition}

\newtheorem{remark}[thm]{Remark}

\newtheorem{example}[thm]{Example}

\theoremstyle{remark}

\numberwithin{equation}{section}
\numberwithin{figure}{section}

\setcounter{secnumdepth}{4}


 \newcommand{\R}{{\mathbb R}}
\newcommand{\Q}{{\mathbb Q}} \newcommand{\C}{{\mathbb C}}
 
\newcommand{\HH}{{\mathbb H}} 
\newcommand{\sph}{{\mathbb S}} 
 
 \renewcommand{\O}{{\mathbb O}}


 \newcommand{\Cont}{{\mathcal C}}



\newcommand{\Ff}{{\EuScript F}}

\newcommand{\Bb}{{\EuScript B}}

\newcommand{\Qq}{{\EuScript Q}}


\newcommand{\ord}{\operatorname{ord}}

\renewcommand{\Re}{\operatorname{Re}}
\renewcommand{\Im}{\operatorname{Im}}


\newcommand{\x}{{\tt x}}  
\newcommand{\z}{{\tt z}} \renewcommand{\t}{{\tt t}}
 
 \newcommand{\e}{{\tt e}}
  \newcommand{\q}{{\tt q}}


\usepackage{scalerel}

\setcounter{tocdepth}{1}

\begin{document}

\title[Division algebras of slice-Nash functions]{Division algebras of slice-Nash functions}
\begin{abstract}
The purpose of this paper is to introduce the notion of Nash functions in the context of slice regular functions of one quaternionic or octonionic variable. We begin with a detailed analysis of the possible definitions of Nash slice regular functions which leads us to the definition of \textit{slice-Nash} function proposed in this paper (and which we strongly believe to be the natural generalisation of the classical real and complex Nash functions to this context). Once the `correct' definition of slice-Nash functions has been established, we study their properties with particular focus on their finiteness properties. These finiteness properties position this new class of slice-Nash functions as an intermediate class between the class of slice regular functions and the class of slice polynomials function, in analogy with the classical real and complex case. We also introduce semiregular slice-Nash functions, in analogy with meromorphic Nash functions, and study their finiteness properties. 
\end{abstract}
\subjclass[2020]{Primary: 30G35, 14P20. Secondary: 32C07, 30D30, 17A35, 30C15.}
\keywords{Nash functions, Slice-Nash functions, Semiregular slice-Nash functions, Slice regular functions, Semiregular slice functions, Division algebras, Meromorphic functions.}

\date{17/10/2025}
\author{Cinzia Bisi}
\address{Cinzia Bisi, Dipartimento di matematica e informatica,
Via Machiavelli 30, Università di Ferrara, 44121 Ferrara (ITALY). {\tt ORCID: 0000-0002-4973-1053}}
\email{cinzia.bisi@unife.it}
\thanks{The first author was partially supported by GNSAGA of INdAM and by PRIN \textit{Variet\'a reali e complesse: geometria, topologia e analisi armonica.}}

\author{Antonio Carbone}
\address{Antonio Carbone, Dipartimento di Scienze dell'Ambiente e della Prevenzione, Palazzo Turchi di Bagno, C.so Ercole I D'Este, 32, Università di Ferrara, 44121 Ferrara (ITALY)}
\email{antonio.carbone@unife.it}

\maketitle

\tableofcontents

\section{Introduction}

The purpose of this paper is to introduce the notion of Nash functions in the context of slice regular functions of one quaternionic or octonionic variable.

\subsection{Hypercomplex analysis and geometry}

Since the beginning of the last century, there have been several attempts to determine a suitable class of functions of one quaternionic variable that would had played the same role as the holomorphic functions of one complex variable. One of the first successful notions of `regular' quaternionic functions is due to Fueter \cite{fu}, who defined regular quaternionic functions as those functions $f$ that satisfy the so-called Cauchy-Riemann-Fueter equation:
$$
\frac{\partial f}{\partial \x_0}+i\frac{\partial f}{\partial \x_1}+j\frac{\partial f}{\partial \x_2}+k\frac{\partial f}{\partial \x_3}=0.
$$
The theory of Fueter regular functions has been widely developed, and we refer the reader to \cite{su} for further information and details. Unfortunately, the class of Fueter regular functions does not contain the identity map $q\mapsto q$, and therefore does not include polynomials and power series. 

Following some ideas of Cullen \cite{cu}, Gentili and Struppa \cite{gs, gs2} introduced a notion of regularity for quaternionic functions, called \textit{slice regularity} (see \cite[Def.1.2]{gs}). The class of slice regular functions includes (suitable) polynomials and convergent power series. After the work of Gentili and Struppa, the theory of slice regular functions has been widely developed and studied in the last 20 years. This theory has been extended to octonions \cite{gs3} and to Clifford algebras \cite{csast}. Using a different approach to slice regularity, based on the concept of stem functions, Ghiloni and Perotti \cite{gp0,gp} extended the theory to any real alternative $*$-algebra of finite dimension. This approach also allows one to characterise slice regularity using a system of global differential equations \cite{gp5} and to extend the theory of slice regular functions in one variable to functions of several variables \cite{gp6}.

Slice regular functions share many properties with holomorphic functions - they satisfy the identity principle, Cauchy formulas, and admit (locally) suitable expansions as convergent power series. Several classical results of complex analysis have their quaternionic counterpart. For instance, the open mapping theorem \cite{gst3,gst2}, the Weierstrass factorisation theorem \cite{gv}, the Schwarz-Pick lemma \cite{bs}, the Landau theorem \cite{bs2}, the Picard theorem \cite{bw}, the Runge approximation theorem \cite{bw2}, the Brolin theorem \cite{bd}, the Jensen formula \cite{alb,bw3}, the classification of the automorphisms of the algebra of functions \cite{bw6}, and many others. We refer the reader to the monograph \cite{gsst} and the references therein for further information and details. 

While the `analytic properties' of slice regular functions are nowadays quite well understood (at least for functions in one variable), the situation changes radically when we talk about their `geometrical properties'. Except for some positive results (for instance the reader may check the articles \cite{ap, ap2, ab, bg, gss, gss2,  gsv} and the references contained therein), very little is known on how to define and study fundamental geometrical objects - such as manifolds, algebraic sets or varieties, analytic sets or spaces, etc. - in this context. There are two main reasons, in our opinion, that explain these difficulties:
\begin{itemize}
\item Besides the foundational work of Ghiloni and Perotti \cite{gp6}, the theory of slice functions in several variables (unlike the theory of one variable) is still not well developed nor understood.
\item The non-commutative (and often non-associative) nature of the involved algebra makes it intrinsically `difficult' to `do geometry' (whatever that means) in this context.
\end{itemize}

\subsection{Nash geometry}\label{nashgeometry} Nash functions were introduced by John Nash in his groundbreaking paper \cite{nas} published in 1952, where he showed that a compact $d$-dimensional smooth manifold $M$ is diffeomorphic to a union of nonsingular connected components of a real algebraic subset of $\R^{2d+1}$. A (real) Nash function is an analytic function that is algebraic over the ring of polynomials (see Definition \ref{defNash} for the precise definition). In 1973, Tognoli \cite{to} improved Nash's result showing that $M$ is actually diffeomorphic to a whole nonsingular algebraic subset of $\R^{2d+1}$. We refer the reader to \cite[\S 14.1]{bcr} for further information and details. 

Nash functions are the smallest subring of the ring of analytic functions that contains the polynomials and satisfy the implicit function theorem. In particular, there is a well established theory of (affine) Nash manifolds. We refer the reader to \cite[\S8]{bcr} and \cite{sh} for a detailed introduction to Nash functions and Nash manifolds. Nash manifolds and maps form a category. The Nash category enjoys many advantages over other categories: the analytic category of real analytic manifolds and analytic maps, and the algebraic category of affine nonsingular real algebraic sets and regular maps. \textit{The Nash category lies in between these two categories.} The analytic category has too many objects because any $\Cont^{\infty}$ manifold admits a unique analytic manifold structure, and the algebraic category has too few morphisms to construct a desirable theory. On the other hand, the Nash category has suitably many objects because any affine Nash manifold is Nash diffeomorphic to an affine nonsingular real algebraic set. Moreover, it has enough but not too many morphisms to construct a good theory. It is also worthwhile to notice that the Nash category shares many of the \textit{`finiteness properties'} of the algebraic category. 

As an example of result that enlighten this kind of `intermediate' behaviour of the Nash category, we cite the following theorem of Coste, Ruiz and Shiota. In \cite{crs1}, the authors show a global version of Artin’s approximation theorem \cite{ar}, which implies the following approximation result.

\begin{thm}\label{alg}
Let $X\subset \R^n$ be a compact algebraic set (resp. a compact Nash manifold) and $Y\subset \R^n$ be any algebraic set (resp. Nash set). Then every real analytic map $f:X\to Y$ can be uniformly approximated by real Nash maps $g:X\to Y$.
\end{thm}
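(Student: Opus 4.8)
The plan is to reduce the assertion to a global version of Artin's approximation theorem. I treat the algebraic case; the Nash case is identical after replacing polynomials by Nash functions throughout. Write $Y=\{y\in\R^n: p_1(y)=\dots=p_k(y)=0\}$ with $p_1,\dots,p_k\in\R[y_1,\dots,y_n]$, and identify a map $g=(g_1,\dots,g_n):X\to\R^n$ with values in $Y$ with a solution $(g_1,\dots,g_n)$ of the system
\[
p_i(g_1,\dots,g_n)=0,\qquad i=1,\dots,k,
\]
to be solved in the ring $\Nn(X)$ of (global) Nash functions on $X$. The components of the given analytic map $f$ form a solution of this system in the ring $\mathcal{O}(X)$ of real analytic functions on $X$. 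If one drops the constraint (i.e. takes $Y=\R^n$), approximating $f$ by a Nash map is elementary: for $X$ a compact algebraic set one $\Cont^0$-approximates $f$ by polynomial maps via Stone--Weierstrass, and for $X$ a compact Nash manifold one invokes the classical $\Cont^0$-approximation of analytic functions by Nash functions on compact Nash manifolds. The whole difficulty is to keep the approximant inside $Y$.

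This is exactly what a global Artin approximation theorem supplies, and it is the input I would take from \cite{crs1}: over the ring $\Nn(X)$ of global Nash functions on a compact Nash manifold $X$, every system of polynomial (or Nash) equations that has an analytic solution has a Nash solution arbitrarily $\Cont^0$-close to it. Feeding in the system above together with the analytic solution $f$ produces Nash maps $g:X\to Y$ with $\norm{g-f}$ as small as desired, which is the statement; the two parenthetical variants correspond to whether the equations $p_i$ and the Nash structure on $X$ are taken algebraic or Nash.

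It remains to indicate how this global Artin property is itself proved, which is the real content. The argument is ring-theoretic. One first checks that $\Nn(X)$ is a Noetherian excellent ring --- this is where compactness of $X$ is essential --- and that the inclusion $\Nn(X)\hookrightarrow\mathcal{O}(X)$ is a \emph{regular} homomorphism; the latter is where the \emph{local} Artin approximation theorem \cite{ar} enters, since regularity is tested on the completed local rings of $X$, where Nash and analytic germs are governed by the classical theory. Granting regularity, the General Néron Desingularization theorem of Popescu exhibits $\mathcal{O}(X)$ as a filtered colimit of smooth $\Nn(X)$-algebras, and Artin approximation for the pair $(\Nn(X),\mathcal{O}(X))$ follows by the standard deduction: factor the analytic solution through a smooth $\Nn(X)$-algebra and lift it by the implicit function theorem, which is available in the Nash category. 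Passing from ``solvable to order $r$ at every point'' to ``$\Cont^0$-close'', and allowing the target $Y$ to be merely a Nash set, are then routine, using compactness of $X$ once more.

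The step I expect to be the main obstacle is precisely this globalisation. Locally the approximation is classical Artin approximation, but Nash functions admit no partitions of unity, so local Nash corrections cannot be glued into a global map; the entire point of \cite{crs1} is to circumvent partitions of unity by working with the global ring $\Nn(X)$ and exploiting its Noetherianity and excellence alongside the affineness and compactness of $X$. Establishing that $\Nn(X)\to\mathcal{O}(X)$ is regular --- i.e. flat with geometrically regular fibres --- is the technical crux, and the rest of the proof is assembled around it.
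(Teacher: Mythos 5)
The paper does not supply a proof of this statement: Theorem~\ref{alg} is stated as a known result and attributed to Coste, Ruiz and Shiota \cite{crs1}, with the single remark that it is a consequence of the global version of Artin's approximation theorem established there. Since the paper gives no argument, there is nothing in it to compare your proposal against line by line; what can be said is whether your sketch is consistent with the route the authors point to, and it is.

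Your first step --- encoding maps $X\to Y$ as solutions in $\Nn(X)$ of the polynomial (or Nash) system cutting out $Y$, observing that the components of $f$ solve it in the ring $\mathcal O(X)$ of analytic functions, and then invoking a global $\Cont^0$-Artin approximation property for the pair $(\Nn(X),\mathcal O(X))$ --- is exactly the reduction the paper's remark intends, and it is correct as stated. Your subsequent account of how the global Artin property is itself obtained (Noetherianity and excellence of $\Nn(X)$ for compact $X$, regularity of $\Nn(X)\hookrightarrow\mathcal O(X)$, and Popescu's General N\'eron Desingularization to pass from the local Artin theorem of \cite{ar} to the global one) is the now-standard ring-theoretic framework for such results. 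It is plausible and internally coherent, but it is also content that the present paper does not engage with at all; whether it reproduces the actual argument of \cite{crs1} or a later repackaging of it would have to be checked against that reference directly, not against this one. Two small points worth flagging: the claim that $\Nn(X)\to\mathcal O(X)$ is regular is indeed the technical crux and is not a formal consequence of local Artin approximation --- it requires genuine work --- and in the case where $X$ is a compact algebraic \emph{set} (possibly singular) rather than a Nash manifold, the definition and Noetherianity of $\Nn(X)$ need additional care that your sketch elides. Neither issue affects the overall soundness of the reduction, which matches the paper's attribution.
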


The previous theorem does not hold true if one tries to approximate analytic maps by regular maps, because, if the target space $Y$ is `generic' in a suitable sense, then regular maps from a compact nonsingular real algebraic set $X$ to $Y$ are `very few', see \cite{gh1,gh2}. It is worth noting that compact real algebraic sets with at least two points do not have tubular neighbourhoods with regular retractions \cite[Thm.2]{gh}.

Approximation of analytic objects by algebraic objects has been extensively studied also in the complex case by many mathematicians. For instance, the argument of Nash was adapted to the complex case (with more restricting hypotheses) by Stout \cite{st}, while Lempert showed in \cite{le} the counterpart of Theorem \ref{alg} for the complex setting. In the complex case too, the Nash category (see \cite[\S1]{t} for an introduction to complex Nash functions) lies in between the analytic category and the algebraic category and it is the `suitable' category to study algebraic approximation. 

\subsection{Slice-Nash functions}

As already mentioned, the purpose of this work is to introduce the class of Nash functions in the context of slice regular functions of one quaternionic or octonionic variable and to study their properties, with particular focus on their `finiteness properties'. In a forthcoming paper, we will introduce Nash functions in several quaternionic or octonionic variables. The very first question we asked ourselves before starting this work was the following:

\begin{quest2}\label{q1}
Are Nash functions `interesting' (whatever that means) in the context of hypercomplex analysis and geometry?
\end{quest2}

As explained in \S\ref{nashgeometry}, the Nash category lies in between the analytic category and the algebraic category and helps to study (geometrical) properties of both these categories, as well as their relationships. In fact, as the classical real and complex case show, having a subclass of the class of analytic functions that shares finiteness and tame properties with algebraic functions often helps to enlighten properties of both the algebraic and analytic objects. Thus, our hope is that defining `the correct' class of Nash functions in this context will help to better understand the geometrical properties of slice regular functions and slice polynomials (especially given that the theory will also be developed in several variables in the near future). From another perspective, Savi, in the recent work \cite{sa}, studied quaternionic and octonionic algebras from a model theoretic point of view. Slice regular functions which are definable in the theories he introduced are precisely those slice regular functions whose real components are real Nash functions. 

In light of these considerations, we strongly believe that Question \ref{q1} has an affirmative answer. Once established that, a natural question arises.

\begin{quest2}\label{q2}
What is the `correct' definition of Nash functions in the quaternionic (or octonionic) setting?
\end{quest2}

A real (resp. complex) Nash function is a real analytic (resp. holomorphic) function which is algebraic over the ring of polynomials with real (resp. complex) coefficients (see Definition \ref{defNash} below). In analogy with the real and complex cases, one would like to define quaternionic (or octonionic) Nash functions as those slice regular functions which are, in addition, algebraic over the ring of polynomials with quaternionic (or octonionic) coefficients. Being $\HH$ and $\O$ not commutative and $\O$ not even associative, it is not clear what should be the meaning of: \textit{`being algebraic over the polynomials'}. Moreover, there are several kinds of polynomials one may consider - slice polynomials, slice polynomials endowed with the multiplicative structure induced by the slice product, and mixed polynomials. In \S\ref{buonadef}, we analyse all these possibilities and explain why quaternionic (or octonionic) Nash functions cannot be defined as those regular functions which are `algebraic over polynomials' if we want them to have all the good properties discussed above. That is why, in Definition \ref{defslicenash}, we define \textit{slice-Nash functions} as those slice regular functions induced by stem functions whose complex components (with respect to any base, Proposition \ref{indipendente}) are complex Nash functions. In order to study the properties of this new class of functions, in Theorems \ref{char} and \ref{char2}, we characterise slice-Nash functions in terms of their components given by the splitting lemma and in terms of their real components. Putting these results together, we obtain the following characterisation for slice-Nash functions (we refer the reader to \S\ref{sectionchar} for the precise definitions and notation). Let $A$ be either the algebra of quaternions $\HH$ or the algebra of octonions $\O$ and $\Omega\subset A$ a (non-empty) open circular set.

\begin{thm}
Let $f:\Omega\to A$ be a slice function. The following statements are equivalent:
\begin{itemize}
\item[{\rm(i)}] $f$ is a slice-Nash function on $\Omega$.
\item[{\rm(ii})] There exist $I\in\sph_A$ and a splitting base $\Bb_I:=\{I_0:=1,I, I_1,II_1,\ldots,I_{u_A},II_{u_A}\}$ of $A$ associated to $I$ such that the functions  $f_1^{I,I_k},f^{I,I_k}_2:\Omega_I\to \C_I$ are $\C_I$-Nash functions on $\Omega_I$ for each $k=0,\ldots,u_A$. 
\item[{\rm(iii)}] For each $I\in\sph_A$ and each splitting base $\Bb_I:=\{I_0:=1,I, I_1,II_1,\ldots,I_{u_A},II_{u_A}\}$ of $A$ associated to $I$ the functions $f_1^{I,I_k},f^{I,I_k}_2:\Omega_I\to \C_I$ are $\C_I$-Nash functions on $\Omega_I$ for each $k=0,\ldots,u_A$. 
\item[{\rm(iv})] $f$ is slice regular and there exists a base $\Bb$ of $A$ as a $\R$-vector space such that the real components $f_\ell^{\Bb}:\Omega_{\Bb}\to \R$ of $f$ with respect to the base $\Bb$ are real Nash functions for each $\ell=0,\ldots,d_A$.
\item[{\rm(v)}] $f$ is slice regular and for each base $\Bb$ of $A$ as a $\R$-vector space the real components $f_\ell^{\Bb}:\Omega_{\Bb}\to \R$ of $f$ with respect to the base $\Bb$ are real Nash functions for each $\ell=0,\ldots,d_A$.
\end{itemize}
\end{thm}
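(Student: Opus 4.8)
The plan is to obtain the five-fold equivalence by chaining together results already in hand: statement (i) is by construction Definition~\ref{defslicenash}, while (ii)--(v) are the reformulations supplied by Theorems~\ref{char} and~\ref{char2} (through the splitting lemma and through the real components, respectively). So the proof reduces to organising these implications and checking the two trivial quantifier passages, together with the observation that both ``splitting'' conditions already force $f$ to be slice regular, since $\C_I$-Nash functions are in particular holomorphic from $\Omega_I\subset\C_I$ to $\C_I$, so the splitting lemma applies.

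First I would note the trivial implications (iii) $\Rightarrow$ (ii) and (v) $\Rightarrow$ (iv): each specialises a ``for every choice'' statement to a ``for some choice'' one, and neither specialisation is vacuous, since $\sph_A \neq \varnothing$, every $I \in \sph_A$ admits a splitting base associated to it, and $A$ certainly possesses $\R$-bases. Next, for the first block: by Definition~\ref{defslicenash}, (i) says that $f$ is slice regular and that the complex components of its inducing stem function are complex Nash functions, a condition that by Proposition~\ref{indipendente} is independent of the chosen base. Theorem~\ref{char} turns this into the statement about the splitting-lemma components $f_1^{I,I_k},f_2^{I,I_k}$, supplying both (i) $\Rightarrow$ (iii) (in the ``for every $I$ and every splitting base'' direction) and (ii) $\Rightarrow$ (i) (reconstructing the stem function's components from a single splitting base); with (iii) $\Rightarrow$ (ii) this closes the cycle (i) $\Rightarrow$ (iii) $\Rightarrow$ (ii) $\Rightarrow$ (i). In the same way, Theorem~\ref{char2} provides (i) $\Rightarrow$ (v) and (iv) $\Rightarrow$ (i) for the real components $f_\ell^{\Bb}$, which together with (v) $\Rightarrow$ (iv) closes the cycle (i) $\Rightarrow$ (v) $\Rightarrow$ (iv) $\Rightarrow$ (i). Since both cycles run through (i), all five statements are equivalent.

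The one genuinely delicate point --- the passage from ``one choice'' to ``all choices'' of imaginary unit or base --- is not reproved here: it is already absorbed into Proposition~\ref{indipendente} and the two characterisation theorems. Its mechanism is that the splitting-lemma components (resp. the real components) with respect to two splitting bases (resp. $\R$-bases) of $A$ differ by an invertible $\C_I$-linear (resp. $\R$-linear) map, possibly composed with a standard $\R$-algebra isomorphism $\C_J \cong \C_I \cong \C$, and the Nash property over $\R$ (equivalently over $\C$, via any fixed $\R$-algebra isomorphism) is stable under invertible linear changes of the source variables and under post-composition with polynomial maps --- a classical fact about real and complex Nash functions recorded in \cite[\S8]{bcr}. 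Hence no further obstacle arises at this stage, and the theorem follows by assembling Definition~\ref{defslicenash}, Proposition~\ref{indipendente}, and Theorems~\ref{char} and~\ref{char2}.
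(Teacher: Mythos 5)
Your proof is correct and matches the paper's intended derivation: the introductory theorem is obtained by concatenating Theorems~\ref{char} and~\ref{char2}, exactly as you organise the two cycles through (i), with the trivial quantifier passages (iii)$\Rightarrow$(ii) and (v)$\Rightarrow$(iv) closing them. Your side remark that (ii) and (iii) already force slice regularity via the splitting lemma (since $\C_I$-Nash components are in particular holomorphic) is a helpful detail worth making explicit, since Theorem~\ref{char} is stated under a prior slice-regularity hypothesis while the statement at hand begins only with a slice function.
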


Slice-Nash functions share many properties with classical real and complex Nash functions. For instance, the slice derivatives of a slice-Nash function are themselves slice-Nash (Proposition \ref{slicederProp}) and slice-Nash functions are closed under taking finite sums and slice products (Proposition \ref{ringNash}). In Theorem \ref{*Nash}, we obtain the following result about the algebraic structure for slice-Nash functions (compare this result with Theorem \ref{sliceregularstructure}). We refer the reader to \S\ref{preliminaries} and Definition \ref{defNash} for the precise definitions and notations. 

\begin{thm}\label{introduDivAlg}
The set $\mathcal{SN}_A(\Omega)$ of all slice-Nash functions on $\Omega$ is an alternative $*$-subalgebra of the alternative $*$-algebra $\mathcal{SR}_A(\Omega)$ of slice regular functions on $\Omega$ endowed with $+$, $\cdot$, $\cdot^c$. If $A=\HH$, then $\mathcal{SN}_\HH(\Omega)$ is associative. Moreover,
\begin{itemize}
\item[\rm{(i)}] if $\Omega$ is a symmetric slice domain, then $\mathcal{SN}_A(\Omega)$ is a division algebra,
\item[\rm{(ii)}] if $\Omega$ is a product domain, then $\mathcal{SN}_A(\Omega)$ includes some element $f\not\equiv 0$ with $N(f)\equiv 0$. However, every element $f$ with $N(f)\not\equiv 0$ admits a multiplicative inverse in the algebra $\mathcal{SN}_A(\Omega\setminus V(N(f)))$.
\end{itemize}
\end{thm}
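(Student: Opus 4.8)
\emph{Approach and the $*$-subalgebra structure.} The plan is to reduce everything to the characterisations of slice-Nash functions via components (Theorems \ref{char} and \ref{char2}, summarised in the statement above), to the closure under finite sums and slice products already available (Proposition \ref{ringNash}), and to the elementary fact that the reciprocal of a nowhere vanishing real (resp. complex) Nash function is again Nash. Since $\mathcal{SR}_A(\Omega)$ is an alternative $*$-algebra, for the first assertion it is enough to show that $\mathcal{SN}_A(\Omega)$ is an $\R$-subspace containing the constants, stable under the slice product and under the conjugation $f\mapsto f^c$. Stability under $\cdot$ is Proposition \ref{ringNash}. For $f^c$ I would use the characterisation by real components (Theorem \ref{char2}): fixing a real base $\Bb$ of $A$, the conjugation of $A$ is $\R$-linear, so the real components of $f^c$ are fixed constant-coefficient $\R$-linear combinations of the real components $f_\ell^{\Bb}$ of $f$; a constant-coefficient $\R$-linear combination of real Nash functions is real Nash, so $f^c\in\mathcal{SN}_A(\Omega)$. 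Hence $\mathcal{SN}_A(\Omega)$ is an alternative $*$-subalgebra of $\mathcal{SR}_A(\Omega)$ (a subalgebra of an alternative, resp. associative, algebra that is stable under an involution is again such), and for $A=\HH$ it inherits associativity from $\mathcal{SR}_\HH(\Omega)$ (Theorem \ref{sliceregularstructure}).

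\emph{Inverses.} The one genuinely new ingredient, underlying both (i) and the second half of (ii), is the following: if $g$ is a slice-preserving slice-Nash function with $g\not\equiv0$, then $1/g$ is a slice-Nash function on $\Omega\setminus V(g)$. Indeed $1/g$ is slice-preserving and slice regular there, and by the characterisation via the splitting-lemma components (Theorem \ref{char}) one only needs that its $\C_I$-components are, up to the zero function, the reciprocal of the corresponding nowhere vanishing $\C_I$-Nash component of $g$, hence $\C_I$-Nash. Applying this: for $f\in\mathcal{SN}_A(\Omega)$ the normal function $N(f)=f\cdot f^c$ is a slice-preserving slice-Nash function, so whenever $N(f)\not\equiv0$ the function $N(f)^{-1}\cdot f^c$ belongs to $\mathcal{SN}_A(\Omega\setminus V(N(f)))$; since $N(f)$ is slice-preserving it pulls out of the slice product, giving $f\cdot\bigl(N(f)^{-1}\cdot f^c\bigr)=N(f)^{-1}\cdot N(f)=1$ there (and similarly on the other side), so $N(f)^{-1}\cdot f^c$ is a two-sided inverse of $f$; this is exactly the second assertion of (ii). For (i), on a symmetric slice domain every slice regular $f\not\equiv0$ has $N(f)\not\equiv0$ — the standard fact on normal functions over slice domains, and the only point where the hypothesis is used — so $\mathcal{SN}_A(\Omega)$ has no zero divisors and every non-zero element is invertible, i.e. it is a division algebra, exactly as for $\mathcal{SR}_A(\Omega)$ in Theorem \ref{sliceregularstructure}.

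\emph{The zero divisor on a product domain, and the main obstacle.} For the first assertion of (ii), on a product domain the slices $\Omega_I$ (and $\Omega_{\Bb}$) are disconnected, and this permits the standard example of a non-zero slice regular $f$ with $N(f)\equiv0$: one whose stem function is locally constant with values in the zero divisors of $A\otimes_\R\C$. Its real components are then locally constant, hence polynomial, hence real Nash, so $f\in\mathcal{SN}_A(\Omega)$ by Theorem \ref{char2}. The only steps that are not pure bookkeeping are the reciprocal lemma — which comes down to clearing denominators in the algebraic relation satisfied by a nowhere vanishing $\C_I$-Nash function — and writing down this explicit zero divisor with locally constant stem function; I expect the reciprocal lemma to be the technical heart of the argument, while everything else follows from the characterisation theorems and from the already-established structure of $\mathcal{SR}_A(\Omega)$ (Theorem \ref{sliceregularstructure}).
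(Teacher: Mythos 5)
Your overall plan follows the paper's proof almost exactly: the $*$-subalgebra structure comes from Proposition \ref{ringNash} together with closure under $\cdot^c$, the inverse in (i) and the second half of (ii) come from $f^{-\bullet}=(N(f))^{-1}\cdot f^c$ once one knows that the reciprocal of a nowhere-vanishing slice-preserving slice-Nash function is slice-Nash (this is the paper's Remark \ref{siminv}(iii), phrased there at the stem-function level rather than via $\C_I$-components, but morally identical), and the zero divisor in (ii) is the locally-constant-stem example \ref{fettazero}. The reciprocal lemma is, as you predict, the only non-bookkeeping step, and you handle it correctly.

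There is, however, a genuine gap in your argument that $f^c\in\mathcal{SN}_A(\Omega)$. You claim that ``the real components of $f^c$ are fixed constant-coefficient $\R$-linear combinations of the real components $f_\ell^{\Bb}$ of $f$''. This implicitly uses $f^c(x)=(f(x))^c$, which is false: the slice conjugate is $f^c:=\mathcal{I}(F^c)$, so $f^c(\alpha+\beta I)=F_1(z)^c+I\,F_2(z)^c$, whereas $(f(\alpha+\beta I))^c=F_1(z)^c-F_2(z)^cI$; these differ because $I$ and $F_2(z)^c$ need not commute. Concretely, for $g(q)=q^2+q\cdot i$ in $\HH$ with the Hamilton base, one computes $g_0^{\Bb}(q)=q_0^2-q_1^2-q_2^2-q_3^2-q_1$ and $(g^c)_0^{\Bb}(q)=q_0^2-q_1^2-q_2^2-q_3^2+q_1$, and the difference $2q_1$ cannot be a constant-coefficient affine combination of $g_0^{\Bb},\dots,g_3^{\Bb}$ (inspect the $q_1$ and $q_1^2$ coefficients). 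The claim \emph{is} true one level up, for the stem function: $F^c_s=(F_s)^c$ does have real components that are fixed $\R$-linear combinations of those of $F_s$, so $F$ stem-Nash $\Rightarrow F^c$ stem-Nash $\Rightarrow f^c=\mathcal{I}(F^c)$ slice-Nash. This is exactly the paper's Remark \ref{siminv}(i), via Proposition \ref{Calgebra}. Since the rest of your argument (closure of $N(f)$ under Nash-ness, the reciprocal) uses $f^c\in\mathcal{SN}_A(\Omega)$, the fix propagates but is immediate once you work at the stem level. A smaller slip of the same flavour appears in the zero-divisor step: ``Its real components are then locally constant'' is true for the stem function $F$ of Example \ref{fettazero}, and that is enough to conclude $F$ is stem-Nash and hence $f$ slice-Nash by definition; but the real components $f_\ell^{\Bb}$ of $f$ itself on $\Omega_{\Bb}$ are \emph{not} locally constant (they depend on $\Im(x)/\|\Im(x)\|$), so invoking Theorem \ref{char2} with ``locally constant'' for $f$ as stated does not work; using Theorem \ref{char} on the slice $\Omega_I$, where $f_I$ \emph{is} locally constant, would.
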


In \S\ref{SliceNashProp}, we study finiteness properties of slice-Nash functions enlightening their `algebraic nature'. These finiteness properties place the class of slice-Nash functions as an intermediate class that lies in between the class of slice-regular functions and the class of slice polynomial functions, in analogy with the classical real and complex case. In order to show the results of \S\ref{SliceNashProp}, we make use of some (finiteness) properties of complex Nash functions of one complex variable that we include in \S\ref{compNashSec}. Most of the results of \S\ref{compNashSec} are probably well known; we included them there with full detailed proofs as we could not find precise references in the literature. We summarise here the main results of \S\ref{SliceNashProp}:
\begin{itemize}
\item Global slice-Nash functions are slice polynomial functions (Theorem \ref{global}).
\item Slice-Nash functions defined on symmetric slice domains have only finitely many isolated zeros and isolated spherical zeros (Corollary \ref{slicedomainzero}).
\item Slice-Nash functions are polynomially bounded at infinity (Theorem \ref{slicepolbound}).
\end{itemize} 
In \S\ref{semiregularSec}, we also introduce \textit{semiregular slice-Nash functions}, as the quaternionic (or octonionic) counterpart of the complex meromorphic Nash functions. In analogy with Theorem \ref{introduDivAlg}, in Theorem \ref{semiregularAlgThm}, we obtain the following result for semiregular slice-Nash functions (compare this result with Theorem \ref{*semiregular}). 

\begin{thm}
The set $\mathcal{SEN}_A(\Omega)$ of all semiregular slice-Nash functions on $\Omega$ is an alternative $*$-subalgebra of the alternative $*$-algebra $\mathcal{SEM}_A(\Omega)$ of semiregular slice functions on $\Omega$ endowed with $+$, $\cdot$, $\cdot^c$. If $A=\HH$, then $\mathcal{SEN}_\HH(\Omega)$ is associative. Moreover,
\begin{itemize}
\item if $\Omega$ is a symmetric slice domain, then $\mathcal{SEN}_A(\Omega)$ is a division algebra,
\item if $\Omega$ is a product domain, then $\mathcal{SEN}_A(\Omega)$ includes some element $f\not\equiv 0$ with $N(f)\equiv 0$. However, every element $f$ with $N(f)\not\equiv 0$ admits a multiplicative inverse within the algebra.
\end{itemize}
\end{thm}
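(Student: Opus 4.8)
The plan is to deduce everything from three inputs: the structure of the ambient algebra $\mathcal{SEM}_A(\Omega)$ (Theorem~\ref{*semiregular}), the corresponding statement for regular slice-Nash functions (Theorem~\ref{introduDivAlg}), and the fact---recorded in \S\ref{compNashSec}---that the complex meromorphic Nash functions on a connected open subset of $\C_I$ form a field, namely the fraction field of the integral domain of $\C_I$-Nash functions. Recall from \S\ref{semiregularSec} that a semiregular slice-Nash function on $\Omega$ is, by definition, a semiregular slice function whose components (equivalently, by the meromorphic counterpart of Proposition~\ref{indipendente}, its complex components with respect to any base, or its real components in the sense of Theorems~\ref{char} and~\ref{char2}) are complex meromorphic Nash; on a symmetric slice domain such a function is a slice quotient $g^{-1}\cdot h$ with $g,h$ slice-Nash and $g\not\equiv0$. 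Since $\mathcal{SEN}_A(\Omega)\subset\mathcal{SEM}_A(\Omega)$ and the latter is an alternative $*$-algebra which is associative when $A=\HH$, as soon as we know that $\mathcal{SEN}_A(\Omega)$ is closed under $+$, $\cdot$ and $\cdot^c$, both alternativity and (in the quaternionic case) associativity are inherited, and it only remains to treat the invertibility statements.

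First I would establish closure. The operations $+$, $\cdot$ and $\cdot^c$ on semiregular slice functions are computed, via stem functions and the (meromorphic) splitting and real-component descriptions, coordinatewise by the corresponding operations on the complex components followed by multiplication and conjugation of elements of $A$. Since the complex meromorphic Nash functions on $\Omega_I$ form a field, every finite sum, slice product, and conjugate of semiregular slice-Nash functions again has complex meromorphic Nash components, hence is semiregular slice-Nash. The only point needing care is the bookkeeping of singular sets: the sum or product of two semiregular slice functions is semiregular with singular set contained in the union of the two singular sets together with the zero set of the relevant normal function, and passing to the fraction class from the outset makes this harmless for the Nash property.

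Next, the division algebra statement for symmetric slice domains. Let $f\in\mathcal{SEN}_A(\Omega)$, $f\not\equiv0$. Since $\Omega$ is a (symmetric) slice domain, its base is connected and $\mathcal{SEM}_A(\Omega)$ has no zero divisors (Theorem~\ref{*semiregular}), so $N(f)=f\cdot f^c\equiv0$ would force $f\equiv0$; hence $N(f)\not\equiv0$. The slice-preserving function $N(f)$ then has only admissible poles and zeros, so $f^c/N(f)$ is a genuine semiregular slice function on all of $\Omega$; it is semiregular slice-Nash because $f^c\in\mathcal{SEN}_A(\Omega)$ and $N(f)\in\mathcal{SEN}_A(\Omega)$ by the closure already proved, and dividing by the slice-preserving semiregular slice-Nash function $N(f)$ keeps all complex components in the field of complex meromorphic Nash functions. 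Using that $N(f)$ behaves centrally for the slice product, together with the alternative identities valid in $\mathcal{SEM}_A(\Omega)$, one gets $f\cdot(f^c/N(f))=(f^c/N(f))\cdot f=1$, so $\mathcal{SEN}_A(\Omega)$ is a division algebra. For a product domain, the element $f\not\equiv0$ with $N(f)\equiv0$ supplied by Theorem~\ref{introduDivAlg}(ii) already lies in $\mathcal{SN}_A(\Omega)\subset\mathcal{SEN}_A(\Omega)$, which proves the first bullet; and when $N(f)\not\equiv0$ the same formula $f^c/N(f)$ produces a two-sided multiplicative inverse which---crucially, since poles are now permitted---lies in $\mathcal{SEN}_A(\Omega)$ itself, with no need to delete $V(N(f))$, in contrast with the regular case.

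The main obstacle I expect is organisational rather than conceptual: making precise the coordinatewise description of $+$, $\cdot$ and $\cdot^c$ for semiregular slice functions in terms of complex meromorphic Nash components---this is where one needs the meromorphic analogue of Proposition~\ref{indipendente} and of Theorems~\ref{char}, \ref{char2}, so that the Nash property of the components is base-independent and survives the algebra operations---and, in the octonionic case, carefully invoking the correct alternative identities for the slice product (in particular $N(f\cdot g)=N(f)N(g)$, $f\cdot f^c=f^c\cdot f=N(f)$, and the centrality of slice-preserving functions) to confirm that $f^c/N(f)$ is a genuine two-sided inverse in $\mathcal{SEM}_A(\Omega)$, hence in $\mathcal{SEN}_A(\Omega)$.
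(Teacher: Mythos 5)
Your proposal is correct, and it arrives at the same conclusions via the same basic strategy (inherit alternativity and associativity from $\mathcal{SEM}_A(\Omega)$, verify closure under $+$, $\cdot$, $\cdot^c$, and produce the two-sided inverse $N(f)^{-\bullet}\cdot f^c$), but the technical route differs from the paper's. The paper argues at the level of the slice-Nash algebra directly: given $f,g\in\mathcal{SEN}_A(\Omega)$ with discrete singular sets $\Omega_0,\Omega_1$, it restricts to $\Omega\setminus(\Omega_0\cup\Omega_1)$, notes (Remark~\ref{siminv}) that $\cdot^c$ preserves $\mathcal{SN}_A$ there, and then simply invokes Theorem~\ref{*Nash} on that complement together with Theorem~\ref{*semiregular} on the ambient algebra $\mathcal{SEM}_A(\Omega)$; this sidesteps any need to develop a meromorphic analogue of Proposition~\ref{indipendente} or Theorems~\ref{char}, \ref{char2}. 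You instead work componentwise, treating the meromorphic complex Nash functions on $\Omega_I$ as the fraction field of $\mathcal{N}_{\C_I}(\Omega_I)$, which is a legitimate and arguably more transparent picture but, as you yourself flag, requires establishing those meromorphic characterisation analogues first — precisely the bookkeeping the paper avoids. One small imprecision: the union of singular sets suffices for $+$, $\cdot$, $\cdot^c$; the zero set $V(N(f))$ only enters when forming the inverse, and there you correctly note that since $N(f)\not\equiv 0$ the set $V(N(f))$ is a discrete set of singularities (a finite union of isolated real points and isolated spheres, by Proposition~\ref{finiti}), so $f^{-\bullet}$ is slice-Nash on the complement of $\Omega_0\cup V(N(f))$ and hence genuinely belongs to $\mathcal{SEN}_A(\Omega)$ without deleting anything. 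Your reference to Theorem~\ref{introduDivAlg}(ii) for the product-domain non-example, and to Example~\ref{fettazero}, matches the paper exactly.
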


Also semiregular slice-Nash functions enjoy finiteness properties, for instance global semiregular slice-Nash functions are `slice rational' (Theorem \ref{globalsemiregular}).

\begin{remark}
Our definition of slice-Nash functions extends verbatim to slice regular functions defined on open circular subsets of the quadratic cone $\Qq_A$ of a finite dimensional real alternative $*$-algebra $A$. Also most of our results (with possibly some modifications to the statements) extend to this general context. A case of particular interest is the Clifford algebra $\R_3$, because, in this case explicit calculation is possible \cite{bd2}. We have decided to focus on $\HH$ and $\O$ in order to offer a clearer theory and exposition. The interested reader may try to find themselves the adequate hypothesis and statements to generalise our results to general alternative $*$-algebras. 
\end{remark}

\subsection*{Structure of the article}
We summarise here the organisation of the article. In order to keep this work as self-contained as possible, in \S\ref{preliminaries}, we collect the notions and results that we need in the remaining sections. In \S\ref{compNashSec}, we study finiteness properties for complex Nash functions (of one complex variable) and meromorphic Nash functions that we will use in \S\ref{SliceNashProp} to derive their slice counterparts. In \S\ref{buonadef}, we answer the important Question \ref{q2}. In particular, we discuss why defining slice-Nash functions as the class of slice regular functions that are ‘algebraic over the polynomials’ is not the ‘correct’ approach. In \S\ref{Maindef}, we finally introduce slice-Nash functions and characterise them in terms of their components given by the splitting lemma and in terms of their real components. Moreover, still in \S\ref{Maindef}, we show that slice-Nash functions are closed under taking slice derivatives and we establish their algebraic structure. In \S\ref{SliceNashProp}, we study finiteness properties of slice-Nash functions, we introduce semiregular slice Nash functions (in analogy with meromorphic Nash functions), and we establish finiteness properties also for this new class of functions.  

\section{Preliminaries}\label{preliminaries}

In this section, we collect some preliminary concepts and results that will be used freely along this article. We include them for the sake of completeness and to ease the reading of the article. 

\subsection{Real quaternions and octonions}

Consider the field $\R$ of real numbers and the field $\C:=\{\alpha+\beta i: \alpha,\beta\in\R\}$ of complex numbers endowed with the \textit{complex conjugation} defined as $\overline{\alpha+\beta i}:=\alpha-\beta i$ for each $\alpha,\beta\in \R$. The field $\C$ can be obtained by $\R$ by means of the so-called Cayley-Dickson construction.

$\bullet$ $\C:=\R\oplus i \R$, with product defined as $(\alpha+i\beta)(\gamma+i\delta)=(\alpha\gamma-\beta\delta)+i(\alpha\delta+\beta\gamma)$ for each $\alpha,\beta,\gamma,\delta\in \R$ and the complex conjugation $\overline{\alpha+i\beta}:=\alpha-i\beta$ for each $\alpha,\beta\in \R$ as $*$-involution.

Let $\HH$ and $\O$ denote the $*$-algebra of quaternions and octonions respectively. They can be built by iterating the Cayley-Dickson construction (see for instance \cite{ba,w,cosm}):

$\bullet$ $\HH:=\C\oplus j\C $, with product defined as  $(\alpha+j\beta)(\gamma+j\delta):=(\alpha\gamma-\delta\overline{\beta})+j(\overline{\alpha}\delta+\gamma\beta)$ for each $\alpha,\beta,\gamma,\delta\in \C$ and the $*$-involution defined as $(\alpha+j\beta )^c:=\overline{\alpha}-j\beta $ for each $\alpha,\beta\in \C$.

$\bullet$ $\O:=\HH\oplus\ell \HH$, with product defined as  $(\alpha+\ell\beta)(\gamma+\ell\delta):=(\alpha\gamma-\delta\beta^c)+\ell(\alpha^c\delta+\gamma\beta)$ for each $\alpha,\beta,\gamma,\delta\in \HH$ and the $*$-involution defined as $(\alpha+\ell \beta)^c:=\alpha^c-\ell\beta$ for each $\alpha,\beta\in \HH$.

We briefly recall some of the properties of $\HH$ and $\O$, referring the reader to \cite{ba,w,cosm} for further information and details.

The Cayley-Dickson construction endows $\HH$ and $\O$ with a structure of (real) $*$-algebra of dimension 4 and 8 respectively. By construction, each of them has a multiplicative neutral element $1$ and we identify $\R$ with the subalgebra generated by $1$. It is well known that $\HH$ is not commutative but associative. While $\O$ is not commutative nor associative but it is \textit{alternative}, i.e. the associator of three elements $(x,y,z):=(xy)z-x(yz)$ vanishes whenever two of them coincide. In particular, Artin's theorem holds for $\O$.

\begin{thm}[Artin's theorem {\cite[\S37]{ku}}]
For each $x,y\in\O$ the subalgebra generated by $x$ and $y$ is associative.
\end{thm}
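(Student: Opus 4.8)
The plan is to deduce the statement from the internal geometry of $\O$: any two octonions lie in a subalgebra isomorphic to a subalgebra of $\HH$, which is associative because $\HH$ is. The crux is to reduce to two \emph{imaginary} octonions and then to the classical fact that two orthonormal imaginary octonions span a copy of $\HH$.

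First I would dispose of the case in which $x$ and $y$ are both real, where the generated subalgebra $B$ is just $\R$. Otherwise, using the quadratic relation $q^2=2\Re(q)\,q-N(q)$ satisfied by every $q\in\O$ (with $N(q)=qq^c$), one checks that $\R\cdot 1\subseteq B$, hence $\Im(x)=x-\Re(x)\cdot 1$ and $\Im(y)=y-\Re(y)\cdot 1$ lie in $B$; since adjoining $\R\cdot 1$ does not affect associativity, we may assume $x$ and $y$ are imaginary. If $x,y$ are $\R$-linearly dependent, then $B$ is generated by a single imaginary element and hence is commutative; so assume $x,y$ are $\R$-linearly independent. Since $uv+vu=-2\langle u,v\rangle\cdot 1$ for imaginary $u,v$ (where $\langle\cdot,\cdot\rangle$ is the positive definite bilinear form underlying $N$), Gram--Schmidt lets us replace $x,y$ by orthonormal imaginary units $e,f$ with $B=\langle e,f\rangle$. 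It then remains to check that $\{1,e,f,ef\}$ spans a subalgebra isomorphic to $\HH$ --- i.e.\ that these four elements are $\R$-linearly independent, that their span is closed under multiplication, and that $e\mapsto i$, $f\mapsto j$, $ef\mapsto k$ extends to an algebra isomorphism onto $\HH$; all of this reduces to verifying the $\HH$-multiplication table on the orthonormal frame $\{1,e,f,ef\}$.

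That verification is the main obstacle. Most of the required identities are formal: $e^2=f^2=-1$, as well as $e(ef)=-f$ and $(ef)f=-e$ from the left and right alternative laws, $fe=-ef$ from $e\perp f$, and $(ef)e=f$, $f(ef)=e$ from the flexible law together with anticommutation. The genuinely non-formal identity is $(ef)^2=-1$, equivalently $(ef)(fe)=1$: this requires the middle Moufang identity $(ab)(ca)=a\big((bc)a\big)$, valid in the alternative algebra $\O$, applied with $a=e$ and $b=c=f$. So the one substantial step is establishing the Moufang identities from the alternative and flexible laws --- classical, but not formal (see e.g.\ Schafer's book on non-associative algebras). An alternative route avoids the octonion-specific reduction and proves Artin's theorem in any alternative algebra: linearizing the alternative laws shows that the associator $(u,v,w):=(uv)w-u(vw)$ is an alternating trilinear form, and then an induction on the total degree of monomials in $x,y$ --- peeling off one factor at a time via the (linearized) Moufang identities --- reduces every associator of monomials to associators of strictly smaller degree, hence to $0$; by trilinearity the associator vanishes on all of $B$. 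The combinatorial bookkeeping in that induction is the corresponding obstacle in the general approach.
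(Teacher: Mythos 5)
The paper does not prove this statement: it is quoted as a classical result, with Kurosh's \textit{Lectures on general algebra} cited as the reference. The reference (like the other standard sources) proves the more general statement --- in any alternative algebra, the subalgebra generated by two elements is associative --- by essentially the argument you sketch as your ``alternative route'': linearize $x^2y=x(xy)$ and $yx^2=(yx)x$ to see that the associator is an alternating trilinear map, then induct on the total degree of monomials in $x,y$ using Moufang-type consequences of alternativity. Your primary, octonion-specific route is correct and is a genuinely different (and, for $\O$, shorter) path: the reductions via the quadratic relation $q^2-t(q)q+n(q)=0$ and Gram--Schmidt to orthonormal imaginary units $e,f$ are sound, as is the verification of the quaternion relations for $\{1,e,f,ef\}$ from left/right alternativity, flexibility, and anticommutation. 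The one step you flag, $(ef)^2=-1$, you derive from the middle Moufang identity; but establishing Moufang from alternativity carries much of the weight of the general inductive proof, which undercuts the economy of the specialization. In $\O$ you can avoid Moufang entirely: multiplicativity of the norm (which comes for free from the Cayley--Dickson construction recalled in the paper) gives $n(ef)=n(e)n(f)=1$, and $(ef)^c=f^ce^c=(-f)(-e)=fe=-ef$ shows $ef$ is purely imaginary, so the quadratic relation applied to $ef$ yields $(ef)^2=-n(ef)=-1$ directly. With that substitution your $\O$-specific argument is fully elementary, at the mild cost of using the composition-algebra structure of $\O$ rather than alternativity alone.
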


Let $A$ denote either the algebra $\HH$ of quaternions or the algebra $\O$ of octonions. The $*$-involution is a real linear map $\cdot^c:A\to A,\, x\mapsto x^c$ that satisfies the following properties:
\begin{itemize}
\item[{\rm(i)}] $(x^c)^c=x$ for each $x\in A$,
\item[{\rm(ii)}] $(xy)^c=y^cx^c$ for each $x,y\in A$,
\item[{\rm(iii)}] $x^c=x$ for each $x\in \R$.
\end{itemize}
The \textit{trace} and the \textit{(squared) norm} functions, defined as
$$
t(x):=x+x^c \quad \text{and} \quad n(x):=xx^c
$$
are real-valued. In $\HH\simeq \R^4$ and $\O\simeq \R^8$, the expression $\tfrac{1}{2}t(xy^c)$ coincides with the standard scalar product $\langle x,y\rangle$ and $n(x)$ with the squared Euclidean norm $\|x\|^2$. In particular, $n(x)=0$ implies $x=0$. Moreover, $(x+y)^c=x-y$ for each $x\in\R$ and each $y$ in the Euclidean orthogonal complement of $\R$. In particular, $xx^c=x^cx$ for each $x\in A$.

Every non-zero element $x$ of $\HH$ or $\O$ has a multiplicative inverse, namely 
$$
x^{-1}=n(x)^{-1}x^c=x^cn(x)^{-1}.
$$
Moreover, for each $x,y\neq 0$ it holds $(xy)^{-1}=y^{-1}x^{-1}$. As a consequence, the algebras $\HH$ and $\O$ are \textit{division algebras with no zero divisors}. A famous result of Frobenius states that $\R$, $\C$, $\HH$ and $\O$ are the only finite dimensional real division algebras.

For $A=\HH$ or $\O$ we consider the  \textit{sphere of imaginary units} 
$$
\sph_A:=\{x\in A : t(x)=0, n(x)=1\}=\{I\in A : I^2=-1\}. 
$$ 
Observe that $\sph_A$ lies in the orthogonal complement of $\R$. As in what follows we will consider also the complexified algebra $A\otimes_\R\C$, along this paper we make the following assumption:

\begin{ass}\label{assum}
The complex field $\C$ is defined as $\C:=\{\alpha+\iota \beta : \alpha,\beta\in\R\}$ endowed with the standard multiplication and complex conjugation, where $\iota$ satisfies $\iota^2=-1$ and $\iota\not\in \sph_A$. In particular, $\C$ is not a subalgebra of $A$.
\end{ass}

The $*$-subalgebra generated by any imaginary unit $I\in\sph_A$, i.e. the complex plane $\C_I:=\langle 1,I\rangle$, is $*$-isomorphic to $\C$ through the $*$-isomorphism 
\begin{equation}\label{*iso}
\phi_I: \C\to \C_I, \quad \alpha+\iota \beta\mapsto \alpha+\beta I.
\end{equation} 
The complex plane $\C_I=\phi_I(\C)$ is called \textit{(complex) slice}. This name is justified by the following equality:
\begin{equation}\label{slicedec}
A=\bigcup_{I\in\sph_A}\C_I.
\end{equation}
Moreover, it holds $\C_I\cap \C_J=\R$ for each $I,J\in\sph_A$ such that $J\neq \pm I$. In particular, \eqref{slicedec} gives a \textit{book decomposition of the algebra $A$ into complex slices}. As a consequence, every element $x\in A\setminus \R$ can be written uniquely as $x=\alpha+\beta I$, where $\alpha,\beta\in \R$, $\beta>0$ and $I\in\sph_A$. If $x\in \R$, then $x=\alpha$, $\beta=0$ and $I$ can be chosen arbitrarily in $\sph_A$. In particular, we may define the \textit{real part} $\Re(x)$ and the \textit{imaginary part} $\Im(x)$ of $x:=\alpha+\beta I\in A$ by setting
$$
\Re(x):=\frac{x+x^c}{2} =\alpha \quad \text{and} \quad \Im(x):=x-\Re(x)=\frac{x-x^c}{2}=\beta I.
$$

On the $\R$-vector space $A$, we consider the standard Euclidean topology and analytic structure. The relative topology on each slice $\C_I$, where $I\in\sph_A$, agrees with the topology induced by the identification of $\C_I$ with $\C$. In particular, the $*$-isomorphism $\phi_I$ is a homeomorphism. 

\subsection{Slice functions}

Let $A$ be either the algebra $\HH$ of quaternions or the algebra $\O$ of octonions. We endow the complexified algebra $A\otimes_{\R}\C$ with the product defined as
$$
(x+\iota y)\cdot(x'+\iota y'):=xx'-yy'+\iota(xy'+yx')
$$
for each $x,x',y,y'\in A$. The algebra $A\otimes_{\R}\C$ has two commutating $*$-involution - the \textit{complex conjugation} $w:=x+\iota y\mapsto \overline{w}:=x-\iota y$ and the \textit{complex $*$-involution} $w:=x+\iota y\mapsto w^c:=x^c+\iota y^c$. It follows, by Assumption \ref{assum}, that the imaginary unit $\iota$ commutes with every element of $A\otimes_\R \C$. In particular, the centre of the algebra $A\otimes_{\R}\C$ is the $*$-subalgebra $\R\otimes_\R \C=\C$. The $*$-algebra $\HH\otimes_{\R}\C$ is associative, while the $*$-algebra $\O\otimes_{\R}\C$ is not associative, but alternative. For each $I\in\sph_A$ the $*$-isomorphism $\phi_I:\C\to \C_I$ introduced in \eqref{*iso} extends to 
$$
\widetilde{\phi}_I:A\otimes_\R\C\to A,\quad x+\iota y\mapsto x+yI,
$$ 
which is still a surjective $*$-algebra morphism, but no longer injective.

For a (non-empty) subset $D\subset \C$ we define the \textit{circularised} $\Omega_D$ \textit{of} $D$ as:
$$
\Omega_D:=\{x=\alpha+\beta I\in A : \alpha,\beta\in\R, \alpha+\iota \beta \in D, I\in\sph_A\}=\bigcup_{I\in \sph_A}\phi_I(D).
$$
A subset $\Omega\subset A$ is called a \textit{circular subset} if there exists $D\subset \C$ such that $\Omega=\Omega_D$. Observe that a circular subset $\Omega\subset A$ is always axially symmetric with respect to the real axes. A circular subset $\Omega\subset A$ is called a \textit{circular domain} if it is open and connected.

For each subset $D\subset \C$ we denote by $\overline{D}$ the \textit{conjugate of} $D$, that is the subset of $\C$ defined as $\overline{D}:=\{z\in \C : \overline{z}\in D\}$. We have $D=\overline{D}$ if and only if $D$ is symmetric with respect to the real axis. Observe that  $D$ is symmetric with respect to real axes if and only if $\phi_I(D)=\Omega_D\cap \C_I$ for each $I\in\sph_A$. In particular, $D\cup \overline{D}$ is symmetric with respect to the real axes and $D\cup \overline{D}=\overline{\overline{D}\cup D}$. It is clear that $\Omega_D=\Omega_{D\cup \overline{D}}$. Thus, given a circular subset $\Omega=\Omega_D\subset A$, up to substitute $D$ with $D\cup \overline{D}$, we may always assume that $D$ is symmetric with respect to the real axes. 

Let $D\subset \C$ be an open subset symmetric with respect to the real axes and $\Omega:=\Omega_D$. For each $I\in\sph_A$ denote $\Omega_I:=\Omega\cap \C_I$. If $\Omega=\Omega_D$ is a circular domain, then we only have the following two possibilities:
\begin{itemize}
\item[{\rm(i)}] For each $I\in\sph_A$ the set $\Omega_I$ is connected. In particular, $\Omega\cap \R\neq \varnothing$. In this case $\Omega$ is called a \textit{symmetric slice domain}.
\item[{\rm(ii)}] For each $I\in\sph_A$ the set $\Omega_I$ has two connected components obtained one by the other by the complex conjugation of $\C_I$. In particular, $\Omega\cap \R=\varnothing$. In this case $\Omega$ is called a \textit{product domain}. 
\end{itemize}
Let $I\in\sph_A$ be an imaginary unit and $\phi_I:\C\to \C_I$ the $*$-isomorphism introduced in \eqref{*iso}. As $\Omega_I=\phi_I(D)$, then in case (i), $D$ is connected and $D\cap \R\neq \varnothing$. While in case (ii), $D$ has two connected components obtained one by the other by the complex conjugation of $\C$, so in particular $D\cap \R=\varnothing$. 

We start by recalling the definition of stem functions.

\begin{defn}[Stem function]
Let $D\subset \C$ be a (non-empty) open subset. A function $F:D\to A\otimes_\R\C$ is called a \textit{stem function on} $D$ if it is \textit{complex intrinsic}, i.e. it satisfies 
\begin{equation}\label{compint}
\overline{F(z)}=F(\overline{z})
\end{equation}
for each $z\in D$ such that $\overline{z}\in D$. \hfill$\sqbullet$
\end{defn}

If $D$ is symmetric with respect to the real axes, then \eqref{compint} holds for each $z\in D$. In what follows, even if not explicitly mentioned, we make the following assumption:

\begin{ass}\label{domainD}
$D$ is non-empty and symmetric with respect to the real axes, that is $D=\overline{D}$.
\end{ass}

Observe that $F=F_1+\iota F_2:D\to A\otimes_\R\C$ is a stem function if and only if the $A$-valued components $F_1$ and $F_2$ form an \textit{odd-even pair} with respect to the imaginary part of $z$, that is, if and only if $F_1(\overline{z})=F_1(z)$ and $F_2(\overline{z})=-F_2(z)$ for each $z\in D$.

We define the \textit{conjugated of} $F$, that we denote by $F^c$, as the function defined as $F^c(z):=(F(z))^c$ for each $z\in D$. It is worthwhile to remark that if $F=F_1+\iota F_2$, then $F^c=F_1^c+\iota F_2^c$. Thus, $F$ is a stem function if and only if $F^c$ is a stem function. 

The $*$-algebra structure on $A\otimes_\R\C$ induces a $*$-algebra structure on the set of all stem functions on $D$. Namely, under Assumption \ref{domainD}, we have the following:

\begin{prop}\label{stemalg*}
The set of all stem functions $D\to A\otimes_{\R}\C$ form an alternative $*$-algebra over $\R$ with pointwise addition $(F+G)(z):=F(z)+G(z)$, multiplication $(FG)(z):=F(z)G(z)$ and conjugation $F^c(z):=(F(z))^c$. Moreover, if $A=\HH$, then the product is associative. 
\end{prop}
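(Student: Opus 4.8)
\emph{Proof sketch.}
The plan is to obtain the $*$-algebra of stem functions as a $*$-subalgebra of the large algebra $\mathrm{Fun}(D,A\otimes_\R\C)$ of \emph{all} $A\otimes_\R\C$-valued functions on $D$, equipped with the operations induced pointwise, and then to note that every structural property we want is transferred ``for free'' — first from $A\otimes_\R\C$ to $\mathrm{Fun}(D,A\otimes_\R\C)$, then from $\mathrm{Fun}(D,A\otimes_\R\C)$ to the subalgebra of stem functions.

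First I would record that $\mathrm{Fun}(D,A\otimes_\R\C)$, with $(F+G)(z):=F(z)+G(z)$, $(\lambda F)(z):=\lambda F(z)$ for $\lambda\in\R$, $(FG)(z):=F(z)G(z)$ and $F^c(z):=(F(z))^c$, is an alternative $*$-algebra over $\R$, which is moreover associative when $A=\HH$. This is immediate: the pointwise $+$ and $\R$-action make it an $\R$-vector space, the constant function $z\mapsto 1$ is a two-sided unit, and each remaining axiom — bilinearity of the product, the two alternative laws $(F,F,G)=0=(G,F,F)$ (and associativity if $A=\HH$), together with the $*$-involution axioms $(F^c)^c=F$, $(FG)^c=G^cF^c$ and $\R$-linearity of $\cdot^c$ — is a universally quantified identity in finitely many variables which holds in $A\otimes_\R\C$ (by the properties of the complexified algebra recalled above), hence holds when evaluated at each $z\in D$, hence holds in $\mathrm{Fun}(D,A\otimes_\R\C)$.

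Next I would check that the set of stem functions is closed under the four operations, so that it is an alternative $*$-subalgebra and therefore inherits all of the above (including associativity for $A=\HH$). Under Assumption \ref{domainD} the stem condition reads $\overline{F(z)}=F(\overline z)$ for every $z\in D$. Closure under $+$ and real scalars uses only that complex conjugation $w\mapsto\overline w$ on $A\otimes_\R\C$ is additive and fixes $\R$: for stem functions $F,G$ one gets $\overline{(F+G)(z)}=\overline{F(z)}+\overline{G(z)}=F(\overline z)+G(\overline z)=(F+G)(\overline z)$, and likewise for $\lambda F$; the unit $z\mapsto 1$ is a stem function since $\overline 1=1$. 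The decisive point — the only step that uses something beyond bookkeeping — is closure under multiplication, and it hinges on the fact that complex conjugation on $A\otimes_\R\C$ is not merely additive but \emph{multiplicative}, i.e.\ an algebra automorphism: $\overline{(x+\iota y)(x'+\iota y')}=(xx'-yy')-\iota(xy'+yx')=(x-\iota y)(x'-\iota y')$, a computation involving only pairwise products in $A$ and hence valid for $A=\HH$ and $A=\O$ alike. Granting this, $\overline{(FG)(z)}=\overline{F(z)\,G(z)}=\overline{F(z)}\,\overline{G(z)}=F(\overline z)\,G(\overline z)=(FG)(\overline z)$, so $FG$ is again a stem function; because $\overline{\,\cdot\,}$ is an automorphism and not an anti-automorphism, the order of the factors is preserved, so no commutativity of $A$ is needed here. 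Finally, closure under $\cdot^c$ follows from the commutation of the two involutions on $A\otimes_\R\C$ (as already remarked in the text before Proposition \ref{stemalg*}): $\overline{F^c(z)}=\overline{(F(z))^c}=(\overline{F(z)})^c=(F(\overline z))^c=F^c(\overline z)$.

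I do not anticipate a genuine obstacle: the whole proof is a transfer of identities from $A\otimes_\R\C$ to a function algebra and then to a subalgebra. The single point demanding care is to invoke, in the product-closure step, the automorphism property of complex conjugation on $A\otimes_\R\C$ — as opposed to the anti-automorphism property of the complex $*$-involution $w\mapsto w^c$; conflating the two would falsely suggest one needs $A$ to be commutative. Everything else (alternativity descending to $\mathrm{Fun}(D,A\otimes_\R\C)$ and then to subalgebras, existence of the unit, the $*$-axioms) is automatic, and the associativity assertion for $A=\HH$ is exactly the special case $A\otimes_\R\C=\HH\otimes_\R\C$, which is associative.
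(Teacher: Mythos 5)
The paper states this proposition without proof in the preliminaries section, treating it as a standard recalled fact, so there is no paper argument to compare against. Your proof is correct and is the natural one: transfer all identities from $A\otimes_\R\C$ to the pointwise function algebra, then check closure of the stem condition under the four operations, correctly isolating the one nontrivial point — that closure under multiplication rests on complex conjugation $w\mapsto\overline w$ being an algebra \emph{automorphism} of $A\otimes_\R\C$ (order-preserving), not to be confused with the anti-automorphism $w\mapsto w^c$, so no commutativity of $A$ is needed.
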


Next, we recall the definition of slice functions.

\begin{defn}[Slice function]
Let $D\subset \C$ be a (non-empty) open subset and $\Omega_D$ the circularised of $D$. A function $f:\Omega_D\to A$ is a (\textit{left}) \textit{slice function} if there exists a stem function $F:D\to A\otimes_\R\C$ such that the diagram
$$
\xymatrix{
D\ar[d]_{\phi_I} \ar[r]^{F}&A\otimes_\R\C \ar[d]^{\widetilde{\phi}_I}\\
\Omega_D\ar[r]^{f}& A
}
$$
commutes for each $I\in\sph_A$. In this case, we say that the slice function $f$ is \textit{induced by the stem function} $F$ and we write $f=\mathcal{I}(F)$. We denote by $\mathcal{S}_A(\Omega_D)$ the set of all slice functions on $\Omega_D$. \hfill$\sqbullet$
\end{defn}

If $F=F_1+\iota F_2:D\to A\otimes_\R\C$ is a stem function such that $f=\mathcal{I}(F)$, then 
$$
f(x)=F_1(z)+IF_2(z)
$$
for each $I\in\sph_A$ and each $x:=\alpha+\beta I\in \Omega_I=\Omega_D\cap \C_I$, where $z:=\phi_I^{-1}(x)=\alpha+\iota\beta\in D$. Observe that as $F_1$ and $F_2$ form an odd-even pair, then the slice function $f$ is well defined. In fact,
$$
f(\alpha+(-\beta)(-I))=F_1(\overline{z})+(-I)F_2(\overline{z})=F_1(z)+IF_2(z)=f(\alpha+\beta I).
$$

The following \textit{representation formula} shows that slice functions are completely determined by their values on a single (complex) slice.

\begin{prop}[Representation formula {\cite[Prop.6]{gp}}]
Let $\Omega\subset A$ be an open circular set and $f:\Omega\to A$ a slice function. Then for each $I\in \sph_A$ we have
\begin{equation}\label{repform}
f(x)=\frac{1}{2}(f(\alpha+\beta I)+f(\alpha-\beta I))-\frac{J}{2}(I(f(\alpha+\beta I)-f(\alpha-\beta I)))
\end{equation}
for each $J\in \sph_A$ and each $x:=\alpha+\beta J\in\Omega_J:=\Omega\cap \C_J$.
\end{prop}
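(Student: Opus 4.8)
The plan is to reduce everything to the inducing stem function. Write $f=\mathcal{I}(F)$ with $F=F_1+\iota F_2\colon D\to A\otimes_\R\C$; by definition $(F_1,F_2)$ is an odd-even pair with respect to the imaginary part of the variable, i.e.\ $F_1(\overline z)=F_1(z)$ and $F_2(\overline z)=-F_2(z)$. Fix $I,J\in\sph_A$ and $x=\alpha+\beta J\in\Omega_J$, and put $z:=\alpha+\iota\beta\in D$; since $\Omega=\Omega_D$ is circular and $D=\overline D$, the points $\alpha\pm\beta I$ again lie in $\Omega$ and $\overline z\in D$. The commuting diagram defining $\mathcal{I}(F)$ gives $f(\alpha+\beta J)=F_1(z)+JF_2(z)$ and $f(\alpha+\beta I)=F_1(z)+IF_2(z)$, while $\alpha-\beta I=\phi_I(\overline z)$ together with the odd-even property yields $f(\alpha-\beta I)=F_1(\overline z)+IF_2(\overline z)=F_1(z)-IF_2(z)$.

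With these three identities in hand, I would substitute into the right-hand side of \eqref{repform}. Forming the half-sum and half-difference of $f(\alpha+\beta I)$ and $f(\alpha-\beta I)$ collapses the $F_1$ and $F_2$ contributions, giving respectively $F_1(z)$ and $IF_2(z)$, so that the right-hand side of \eqref{repform} becomes $F_1(z)-J\bigl(I(IF_2(z))\bigr)$. It then remains to simplify $I(IF_2(z))$; here one invokes Artin's theorem (the subalgebra of $A$ generated by $I$ and $F_2(z)$ is associative), so that $I(IF_2(z))=(I^2)F_2(z)=-F_2(z)$, and hence the right-hand side equals $F_1(z)+JF_2(z)=f(\alpha+\beta J)=f(x)$. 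The degenerate case $\beta=0$ (that is $x\in\R$) needs no separate argument: then $F_2(z)=0$, both brackets reduce trivially, and the right-hand side is $F_1(z)=f(x)$.

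The one place where care is required is the non-associativity of $\O$: every triple product entering the computation must be confined to an associative subalgebra. This is exactly what Artin's theorem provides for $I(IF_2(z))$, and the remaining operations — multiplication by the real scalar $\tfrac12$ and the final left multiplication by $J$ on a single element of $A$ — involve no triple products, so no further associativity is needed; for $A=\HH$ this subtlety disappears altogether. I do not expect any genuine obstacle: once the stem-function description of $f$ is used, the statement is essentially bookkeeping, its whole content being the (harmless) passage between the evaluations of $f$ on the slices $\C_I$ and $\C_J$ through the single pair $(F_1(z),F_2(z))$.
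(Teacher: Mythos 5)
Your proof is correct. The paper itself does not prove this proposition --- it simply cites \cite[Prop.6]{gp} as a preliminary fact --- and your argument (unwind $f=\mathcal{I}(F)$, use the odd-even parity of $(F_1,F_2)$ to extract $F_1(z)$ and $IF_2(z)$ from the half-sum and half-difference, then apply Artin's theorem to simplify $I(IF_2(z))=-F_2(z)$, with the $\beta=0$ degenerate case handled by $F_2(z)=0$) is the standard one and fills that gap cleanly, with the non-associativity of $\O$ handled exactly where it needs to be.
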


Let $\Omega\subset A$ be an open circular set and $D\subset \C$ an open subset such that $\Omega=\Omega_D$. Under Assumption \ref{domainD}, the open set $D$ is the unique maximal open subset of $\C$ (with respect to the inclusion) such that $\Omega=\Omega_D$. In particular, as 
$$
F_1(z)=\frac{1}{2}(f(x)+f(x^c))\quad \text{and} \quad F_2(z)=-\frac{I}{2}(f(x)-f(x^c))
$$
for each $x\in \Omega_I:=\Omega\cap \C_I$ and $z=\phi^{-1}_I(x)\in D$, then under Assumption \ref{domainD}, there exists a unique stem function $F:D\to A\otimes_\R\C$ such that $f=\mathcal{I}(F)$.

\subsection{Slice regular functions} Let $A$ be either the algebra $\HH$ of quaternions or the algebra $\O$ of octonions. Let $D\subset \C$ be an open subset and $F=F_1+\iota F_2:D\to A\otimes_\R\C$  a stem function. We denote by $\Cont^1(D)$ the set of all stem functions of class $\Cont^1$ on $D$ (with respect to the differentiable structure induced by the Euclidean one). Assume that $F\in\Cont^1(D)$. Then, the partial derivatives defined as 
$$
\frac{\partial F}{\partial\z}(z):=\frac{1}{2}\Big(\frac{\partial F}{\partial\bm{\alpha}}(z)-\iota \frac{\partial F}{\partial\bm{\beta}}(z)\Big) \quad \text{and} \quad \frac{\partial F}{\partial\overline{\z}}(z):=\frac{1}{2}\Big(\frac{\partial F}{\partial\bm{\alpha}}(z)+\iota \frac{\partial F}{\partial\bm{\beta}}(z)\Big)
$$
for each $z=\alpha+\iota \beta\in D$ are continuous stem functions on $D$. Let $\Omega:=\Omega_D$ and let $f:=\mathcal{I}(F)$	. The \textit{slice derivative of $f$} is defined as
$$
\frac{\partial f}{\partial\x}:=\mathcal{I}\Big(\frac{\partial F}{\partial\z}\Big) 
$$
In particular, as $F$ is $\Cont^1$, then the slice derivative of $f$ is a continuous slice function on $\Omega$ \cite[Prop.7(i)]{gp}. 

The left multiplication by $\iota$ defines a complex structure on $A\otimes_\R\C$. With respect to this structure, a stem function $F=F_1+\iota F_2:D\to A\otimes_\R\C$ is a \textit{holomorphic stem function on $D$} if it is of class $\Cont^1(D)$ and satisfies the Cauchy-Riemann equation:
$$
\frac{\partial F}{\partial\overline{\z}}=\frac{1}{2}\Big(\frac{\partial F}{\partial\bm{\alpha}}+\iota \frac{\partial F}{\partial\bm{\beta}}\Big)=0.
$$
The Cauchy-Riemann equation is equivalent to the equations
$$
\frac{\partial F_1}{\partial\bm{\alpha}}= \frac{\partial F_2}{\partial\bm {\beta}} \quad \text{and} \quad \frac{\partial F_1}{\partial\bm{\beta}}=- \frac{\partial F_2}{\partial\bm{\alpha}}.
$$
Slice regular functions are defined as those slice functions induced by holomorphic stem functions.

\begin{defn}[Slice regular function]
Let $D\subset \C$ be a (non-empty) open subset and $\Omega:=\Omega_D$. Let $F:D\to A\otimes_\R\C$ be a stem function. We say that the slice function $f=\mathcal{I}(F):\Omega\to A$ is \textit{slice regular on} $\Omega$ if the stem function $F$ is holomorphic on $D$. We denote by $\mathcal{SR}_A(\Omega)$ the set of all slice regular functions on $\Omega$. \hfill$\sqbullet$
\end{defn}

Polynomials and power series are examples of slice regular functions. In fact, it holds:

\begin{prop}[{\cite[Thm.2.1]{gs2}\cite[Thm.2.1]{gs3}}]Let $A$ be either the algebra of quaternions $\HH$ or the algebra of octonions $\O$. 
\begin{itemize}
\item Every polynomial of the form $\sum_{s=0}^nx^s\alpha_s$ with coefficients $\alpha_0,\ldots,\alpha_n\in A$ defines a slice regular function on $A$. 
\item Every power series $\sum_{n\geq 0} x^n\alpha_n$ with coefficients $\{\alpha_n\}_{n\geq 0}\subset A$ converges in an open ball $B(0,R):=\{x\in A : \|x\|<R\}$. If $R>0$, then the sum of the series defines a slice regular function on $B(0,R)$.
\end{itemize}
\end{prop}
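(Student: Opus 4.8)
The plan is to handle the two bullets separately, in each case exhibiting an explicit holomorphic stem function that induces the function in question; slice regularity then follows immediately from the definition. For the polynomial assertion, the key step is a monomial lemma: for a fixed exponent $n\ge 0$ and a fixed coefficient $c\in A$, the map $x\mapsto x^nc$ is slice regular on all of $A$. To prove it, write the entire holomorphic function $z\mapsto z^n$ as $z^n=a_n(\alpha,\beta)+\iota\,b_n(\alpha,\beta)$, where $a_n,b_n\in\R[\alpha,\beta]$ are its real and imaginary parts; from $\overline{z^n}=\overline z^{\,n}$ one reads off that $a_n$ is even and $b_n$ is odd in $\beta$. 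Set
$$
G_n\colon\C\to A\otimes_\R\C,\qquad G_n(z):=a_n(\alpha,\beta)\,c+\iota\,b_n(\alpha,\beta)\,c .
$$
Since $a_nc$ and $b_nc$ are mere real scalar multiples of $c$, no (non-)associativity issue arises. The parity of $a_n,b_n$ gives $\overline{G_n(z)}=G_n(\overline z)$, so $G_n$ is a stem function; its Cauchy–Riemann equation reduces, after cancelling the common factor $c$, to the Cauchy–Riemann equation of $z\mapsto z^n$, so $G_n$ is holomorphic (and $\Cont^1$, its components being polynomials); and for $x=\alpha+\beta I\in\C_I$ one computes $\mathcal{I}(G_n)(x)=a_n(\alpha,\beta)c+I\bigl(b_n(\alpha,\beta)c\bigr)=\bigl(a_n(\alpha,\beta)+b_n(\alpha,\beta)I\bigr)c=\phi_I(z^n)\,c=x^nc$, using that $\phi_I$ is a ring homomorphism. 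Since $\mathcal{I}$ is additive and a finite sum of holomorphic stem functions is again one (Proposition~\ref{stemalg*}, the Cauchy–Riemann equations being linear), the polynomial $\sum_{s=0}^nx^s\alpha_s=\mathcal{I}\bigl(\sum_{s=0}^nG_s\bigr)$, with $G_s$ the stem function associated to the monomial $x^s\alpha_s$, is slice regular on $A$.

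For the power series I would first record convergence: by multiplicativity of the norm on $A$ (a consequence of Artin's theorem), $\|x^n\alpha_n\|=\|x\|^n\|\alpha_n\|$, so $\sum_{n\ge 0}x^n\alpha_n$ converges absolutely and locally uniformly on the open ball $B(0,R)$, where $R\in[0,+\infty]$ is the radius $\bigl(\limsup_n\|\alpha_n\|^{1/n}\bigr)^{-1}$, by comparison with the real series $\sum_n\|x\|^n\|\alpha_n\|$; call the sum $f$, and assume $R>0$. Now $B(0,R)=\Omega_D$ with $D$ the open disc of radius $R$ in $\C$, and with $G_n$ as above (for the coefficient $\alpha_n$) the elementary bounds $|a_n(\alpha,\beta)|,|b_n(\alpha,\beta)|\le|z|^n$, together with the analogous bounds $\le n|z|^{n-1}$ for their first-order partials, show that $F:=\sum_{n\ge 0}G_n$ and its first-order partial derivatives $\partial F/\partial\bm{\alpha}$, $\partial F/\partial\bm{\beta}$ converge locally uniformly on $D$ (each component series is an honest power series in the single variable $z$, of radius $R$). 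Hence $F$ is a $\Cont^1$ stem function, and, being a locally uniform limit of the holomorphic stem functions $\sum_{s=0}^NG_s$ whose derivatives also converge, it satisfies the Cauchy–Riemann equation, i.e. it is holomorphic. Finally $\mathcal{I}(F)=\lim_N\mathcal{I}\bigl(\sum_{s=0}^NG_s\bigr)=\lim_N\sum_{s=0}^Nx^s\alpha_s=f$ pointwise on $B(0,R)$, so $f=\mathcal{I}(F)$ is slice regular there.

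The parity statements, the cancellation of $c$ in the Cauchy–Riemann check, and the estimates $|a_n|,|b_n|\le|z|^n$ are routine. The one point that deserves care is the limiting argument for the power series: one needs not merely uniform convergence of $F$ but also locally uniform convergence of $\partial F/\partial\bm{\alpha}$ and $\partial F/\partial\bm{\beta}$, which is precisely what licenses term-by-term differentiation and guarantees that the limit stem function is of class $\Cont^1$ and still solves the Cauchy–Riemann equation — exactly the two requirements in the definition of holomorphic stem function. This is harmless because the matter decouples into finitely many scalar power series in $z$, all of radius $R$, so the classical Weierstrass and Cauchy estimates apply componentwise; and the non-associativity of $A\otimes_\R\C$ when $A=\O$ never intervenes, since every product occurring in $G_n$ is the product of the fixed element $c$ (resp. $\alpha_n$) of $A$ with a real scalar.
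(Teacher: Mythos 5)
The paper itself gives no proof of this Proposition: it is quoted from \cite[Thm.\,2.1]{gs2} and \cite[Thm.\,2.1]{gs3}, where the statement is established directly in the Cullen/Gentili--Struppa framework by restricting to each slice $\C_I$ and differentiating. Your argument instead works entirely inside the Ghiloni--Perotti stem-function framework that the present paper adopts as its foundational language: you write down the explicit stem function $G_n(z)=a_n(\alpha,\beta)c+\iota\, b_n(\alpha,\beta)c$ for the monomial $x^n c$ (which is precisely $c\otimes z^n$ in the notation the paper uses for the polynomial stem function in Example~\ref{esempislice}(i)), verify complex-intrinsicity from the parity of $a_n,b_n$ and holomorphy from the Cauchy--Riemann equations for $z\mapsto z^n$, compute $\mathcal{I}(G_n)(x)=x^n c$ via the ring isomorphism $\phi_I$, and then either take a finite sum (polynomials) or pass to a locally uniform limit (power series). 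The reasoning is sound; in particular you are right to insist on locally uniform convergence of the first-order partials, since that is what ensures the limit stem function is $\Cont^1$ and still satisfies the Cauchy--Riemann equation, and you correctly note that no (non-)associativity issue arises in $\O$ because every product in sight carries a real scalar factor. This is a perfectly legitimate alternative route; it is arguably the more natural one given this paper's conventions, since it reproves the result in the exact formalism the subsequent sections rely on.

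One small inaccuracy worth flagging: the multiplicativity $\|xy\|=\|x\|\,\|y\|$ on $\HH$ and $\O$ is not a consequence of Artin's theorem. It is the statement that these are composition algebras, a separate (well-known and elementary) property; Artin's theorem is what makes $x^n$ unambiguously defined without parentheses in $\O$. Both facts are standard and your convergence estimate is correct, but the attribution should be fixed.
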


Let $F$ be a stem function of class $\Cont^1$ on $D$ and $\Omega:=\Omega_D$. Then, the slice function $f:=\mathcal{I}(F)$ is slice regular on $\Omega$ if and only if
$$
\frac{\partial f}{\partial\x^c}:=\mathcal{I}\Big(\frac{\partial F}{\partial\overline{\z}}\Big)
$$
vanishes identically on $\Omega$. Moreover, if $f$ is slice regular on $\Omega$, then also its slice derivative $\tfrac{\partial f}{\partial\x}$ is slice regular on $\Omega$. 

It is worthwhile to remark that if $\Omega\subset A$ is a symmetric slice domain, then a $f$ is a slice regular function if and only if it is \textit{Cullen regular} in the sense introduced by Gentili and Struppa in \cite{gs,gs2} for the quaternionic case and in \cite{gssv,gs3} for the octonionic case.

\subsection{Splitting lemma} Let $A$ be either the algebra of quaternions $\HH$ or the algebra of octonions $\O$. Define the integers  
\begin{equation}\label{dim}
d_A:=\dim_\R(A)-1=
\begin{cases}
3, \text{ if } A=\HH,\\
7, \text{ if } A=\O,
\end{cases}
\quad
u_A:=\dim_\C(A)-1=
\begin{cases}
1, \text{ if } A=\HH,\\
3, \text{ if } A=\O.
\end{cases}
\end{equation}
In particular, $d_A=2u_A+1$. The left multiplication by an element of $\sph_A$ induces a complex structure on $A$. More precisely, for a fixed imaginary unit $I\in\sph_A$, the sum of $A$ together with the complex scalar multiplication $\C_I\times A\to A$ sending the pair $(\gamma,a)\in \C_I\times A$ into the product $\gamma a$ in $A$ defines a structure of $\C_I$-vector space on $A$ (in the case $A=\O$, it follows by Artin's theorem). If $\{1,I_1,\ldots,I_{u_A}\}$ is a base of $A$ as a $\C_I$-vector space, then $\Bb_I:=\{1,I, I_1,II_1,\ldots,I_{u_A},II_{u_A}\}$ is a base of $A$ as a $\R$-vector space \cite[Lem.2.3]{gp2}. A real vector base of $A$ of this form is called a \textit{splitting base of $A$ associated to $I$}. 

\begin{example}\label{splittingex}
(i) A splitting base of $\HH$ associated to $I\in\sph_\HH$ is any real basis of the form $\{1,I,J,IJ\}$ where $J\in\sph_{\HH}$ is an imaginary unit such that $J\neq \pm I$.

(ii) There exist imaginary units $I,J,L\in\sph_\O$ such that $\{1,I,J,IJ,L,IL,JL,I(JL)\}$ is a splitting base of $\O$ associated to $I$. For instance, one can take $I=i, J=j,$ and $L=\ell$, where $i,j,\ell$ are the imaginary unit obtained by iterating the Cayley-Dickson construction. \hfill$\sqbullet$
\end{example}

Let $\Omega\subset A$ be an open circular set and $f:\Omega\to A$ a slice function. Let $I\in\sph_A$ be an imaginary unit and $\{I_0:=1,I, I_1,II_1,\ldots,I_{u_A},II_{u_A}\}$ a splitting base of $A$ associated to $I$. Let $\Omega_I:=\Omega\cap \C_I$ and $f_I:=f|_{\Omega_I}:\Omega_I\to A$. Then, there exist unique real valued functions $f_{s,k}^{I,I_k}:\Omega_I\to \R$ for $s=1,2$ and $k=0,\ldots,u_A$ such that
$$
f_I(z_I)=\sum_{k=0}^{u_A}(f_{1,k}^{I,I_k}(z_I)+f_{2,k}^{I,I_k}(z_I)I)I_k
$$
for each $z_I\in \Omega_I$. For each $k=0,\ldots,u_A$ define the (complex) function 
$$
f_k^{I,I_k}:=f_{1,k}^{I,I_k}+f_{2,k}^{I,I_k}I:\Omega_I\to \C_I.
$$

The relation between slice regularity and complex holomorphy is made explicit in
the following result, called the \textit{splitting lemma}. 

\begin{lem}[Splitting lemma {\cite[Lem.2.4]{gp2}}]
The following are equivalent:
\begin{itemize}
\item[{\rm (i)}] $f$ is slice regular.
\item[{\rm (ii)}] There exist $I\in \sph_A$ and a splitting base $\{I_0:=1,I, I_1,II_1,\ldots,I_{u_A},II_{u_A}\}$ of $A$ associated to $I$ such that the functions $f_k^{I,I_k}:\Omega_I\to \C_I$ are holomorphic for each $k=0,\ldots,u_A$.
\item[{\rm (iii)}] For each imaginary unit $I\in \sph_A$ and each splitting base $\{I_0:=1,I, I_1,II_1,\ldots,I_{u_A},II_{u_A}\}$ of $A$ associated to $I$, the functions $f_k^{I,I_k}:\Omega_I\to \C_I$ are holomorphic functions for each $k=0,\ldots,u_A$.
\end{itemize}
\end{lem}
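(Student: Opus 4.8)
The plan is to deduce all three equivalences from a single ``one-slice'' criterion: for each fixed imaginary unit $I\in\sph_A$, slice regularity of $f=\mathcal{I}(F)$ is equivalent to holomorphy of the restriction $f_I=f|_{\Omega_I}$ when $A$ is regarded as a $\C_I$-vector space under left multiplication; and, once $I$ and a splitting base $\Bb_I$ are fixed, this holomorphy is in turn equivalent to the holomorphy of all the scalar components $f_k^{I,I_k}$. Granting these two steps for \emph{every} $I$ and \emph{every} splitting base, the equivalence (i)$\Leftrightarrow$(iii) and the implication (iii)$\Rightarrow$(ii) are immediate, and one only has to close the loop with (ii)$\Rightarrow$(i).

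For the one-slice criterion, write $F=F_1+\iota F_2$ for the unique stem function inducing $f$, so that $f_I(\alpha+\beta I)=F_1(z)+IF_2(z)$ with $z=\alpha+\iota\beta$, and recall that $F_1$ is even and $F_2$ is odd in $\beta$. Under the identification $\phi_I\colon\C\to\C_I$, a $\Cont^1$ map $g$ on $\Omega_I$ valued in a $\C_I$-module is holomorphic precisely when $\partial g/\partial\beta=I\,\partial g/\partial\alpha$. Substituting $g=f_I=F_1+IF_2$ and using $I(IX)=-X$ for all $X\in A$ (valid by Artin's theorem), a short computation rewrites this condition as
\[
\Big(\tfrac{\partial F_1}{\partial\beta}+\tfrac{\partial F_2}{\partial\alpha}\Big)=I\Big(\tfrac{\partial F_1}{\partial\alpha}-\tfrac{\partial F_2}{\partial\beta}\Big).
\]
The left-hand side is odd in $\beta$ and the right-hand side is even in $\beta$; since $D$ is symmetric about $\R$, both sides vanish identically, and as $I$ is invertible this is exactly the pair of Cauchy--Riemann equations $\partial F_1/\partial\alpha=\partial F_2/\partial\beta$, $\partial F_1/\partial\beta=-\partial F_2/\partial\alpha$ for $F$. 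Conversely, those equations trivially give back the displayed identity. Hence, as soon as $F\in\Cont^1(D)$, $f$ is slice regular iff $f_I$ is holomorphic. This parity observation --- which lets one split the single $A$-valued equation into the two scalar Cauchy--Riemann equations even though $\{1,I\}$ does not span $A$ --- is the main obstacle; the non-associativity of $\O$ enters only through the repeated, harmless use of Artin's theorem.

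For the coordinate step, let $\Bb_I=\{1,I,I_1,II_1,\dots,I_{u_A},II_{u_A}\}$ be a splitting base, so that $\{I_0:=1,I_1,\dots,I_{u_A}\}$ is a $\C_I$-basis of $A$ and $f_I=\sum_{k=0}^{u_A}f_k^{I,I_k}I_k$ is the associated coordinate expansion, with each $f_k^{I,I_k}=\pi_k\circ f_I$ for the $\R$-linear projection $\pi_k\colon A\to\C_I$ (so the $\Cont^1$ hypotheses on either side match automatically). Since the operator $\tfrac12(\partial_\alpha+I\,\partial_\beta)$ satisfies $\tfrac12(\partial_\alpha+I\,\partial_\beta)(\gamma I_k)=\big(\tfrac12(\partial_\alpha+I\,\partial_\beta)\gamma\big)I_k$ for $\gamma\colon\Omega_I\to\C_I$ (again by Artin) and the $I_k$ are $\C_I$-linearly independent, $f_I$ is holomorphic iff each $f_k^{I,I_k}\colon\Omega_I\to\C_I$ is holomorphic. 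Finally I assemble the pieces. If $f$ is slice regular then $F$ is a holomorphic stem function, in particular $F\in\Cont^1(D)$, so by the two steps all $f_k^{I,I_k}$ are holomorphic for every $I$ and every splitting base; this gives (i)$\Rightarrow$(iii), and (iii)$\Rightarrow$(ii) is trivial. For (ii)$\Rightarrow$(i), suppose the $f_k^{I,I_k}$ are holomorphic for some $I$ and splitting base; then they are real-analytic, hence so is $f_I=\sum_k f_k^{I,I_k}I_k$, and since $F_1(z)=\tfrac12\big(f(x)+f(x^c)\big)$ and $F_2(z)=-\tfrac{I}{2}\big(f(x)-f(x^c)\big)$ for $x\in\C_I$ --- where $x,x^c\in\C_I$, so the right-hand sides only involve $f_I$ composed with the $\R$-linear map $\phi_I$ --- we get $F\in\Cont^1(D)$. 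The coordinate step then yields holomorphy of $f_I$, the one-slice criterion yields the Cauchy--Riemann equations for $F$, and therefore $f=\mathcal{I}(F)$ is slice regular.
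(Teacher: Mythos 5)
The Splitting Lemma is quoted in the paper as a citation to \cite[Lem.~2.4]{gp2} with no proof supplied, so there is no in-paper argument to compare against; I can only assess the proposal on its own terms, and it is correct. Your strategy --- reduce to a ``one-slice'' criterion (slice regularity of $f=\mathcal I(F)$ is equivalent, once $F\in\Cont^1(D)$, to holomorphy of $f_I$ with respect to left multiplication by $I$), then split $f_I$ into its $\C_I$-scalar components along the $\C_I$-basis $\{I_0,\dots,I_{u_A}\}$ --- is the natural one, and the two nontrivial points are both handled cleanly. The parity observation is the real content: the single $A$-valued identity $\partial_\beta F_1+\partial_\alpha F_2 = I(\partial_\alpha F_1-\partial_\beta F_2)$ pits an odd function of $\beta$ against an even one on the symmetric set $D$, so both sides must vanish, which is exactly the pair of Cauchy--Riemann equations for the stem function; without this step one could not extract two scalar equations from one equation on a single slice. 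Your invocations of Artin's theorem are all legitimate because in each instance the three factors lie in the subalgebra generated by two elements (e.g.\ $I$, $\gamma(z)\in\C_I$ and $I_k$ all lie in $\langle I,I_k\rangle$), so the associator vanishes even in $\O$.

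The one place where a careless argument could go wrong is the regularity bookkeeping in (ii)$\Rightarrow$(i): the one-slice criterion presupposes $F\in\Cont^1(D)$, which is not part of the hypothesis. You correctly defuse this by noting that holomorphy of the scalar components $f_k^{I,I_k}$ forces real-analyticity of $f_I=\sum_k f_k^{I,I_k}I_k$, and then that $F_1(z)=\tfrac12(f(x)+f(x^c))$ and $F_2(z)=-\tfrac{I}{2}(f(x)-f(x^c))$ with $x=\phi_I(z)$ recover $F$ from $f_I$ by $\R$-linear operations, giving $F\in\Cont^1(D)$ before the criterion is applied. I see no gaps.
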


In order to lighten the exposition, we will often use the notation from the following example:

\begin{example}
(i) Let $\Omega\subset \HH$ be an open circular set and $f:\Omega\to \HH$ a slice regular function. Let $I,J\in\sph_{\HH}$ be imaginary units such that $J\neq \pm I$. Then $\Bb_I:=\{1,I,J,IJ\}$ is a splitting base of $\HH$ associated to $I$. By the splitting lemma there exist unique holomorphic functions $f_1^{I,J},f_2^{I,J}:\Omega_I\to \C_I$ such that
$$
f_I(z_I)=f_1^{I,J}(z_I)+f^{I,J}_2(z_I)J
$$
for each $z_I\in\Omega_I$. When the situation is clear by the context, we will often simply write $f_1,f_2$ instead of $f_1^{I,J},f_2^{I,J}$.

(ii) Let $\Omega\subset \O$ be an open circular set and $f:\Omega\to \O$ a slice regular function. Let $I,J,L\in\sph_{\O}$ be imaginary units such that $\{1,I,J,IJ,L,IL,JL,I(JL)\}$ is a splitting base of $\O$ associated to $I$. By the splitting lemma, there exist unique holomorphic functions $f_s^{I,J,L}:\Omega_I\to \C_I$ for $s=0,1,2,3$ such that
$$
f_I(z_I)=f_0^{I,J,L}(z_I)+f^{I,J,L}_1(z_I)J+f_2^{I,J,L}(z_I)L+f_3^{I,J,L}(z_I)(JL)
$$
for each $z_I\in\Omega_I$. When the situation is clear by the context, we will often simply write $f_0,f_1,f_2,f_3$ instead of $f_0^{I,J,L},f_1^{I,J,L},f_2^{I,J,L},f_3^{I,J,L}$ \hfill$\sqbullet$
\end{example}

\subsection{Slice product} Let $A$ be either the algebra of quaternions $\HH$ or the algebra of octonions $\O$. In general, the pointwise product $x\mapsto f(x)g(x)$ of two slice functions $f,g:\Omega\to A$ is not a slice function. For instance, if $f(x)=xI$, then the function $f(x)f(x)=(xI)(xI)$ is not a slice function. However, the product of stem functions induces a natural product on slice functions. Let $D\subset \C$ be an open subset and $\Omega:=\Omega_D$.

\begin{defn}[Slice product]
Let $F,G:D\to A\otimes_\R\C$ be stem functions and $f=\mathcal{I}(F)$ and $g=\mathcal{I}(G)$ the slice functions induced by $F$ and $G$ respectively. The \textit{slice product of $f$ and $g$} is the slice function defined as $f\cdot g:=\mathcal{I}(F\cdot G)\in\mathcal{S}_A(\Omega).$ \hfill$\sqbullet$
\end{defn}

By \cite[Prop.11]{gp}, we have that, if $f$ and $g$ are slice regular on $\Omega$, then $f\cdot g$ is slice regular on $\Omega$. Observe that in general $(f\cdot g)(x)\neq f(x)g(x)$. Let $F=F_1+\iota F_2$ and $G=G_1+\iota G_2$ be stem functions such that $f=\mathcal{I}(F)$ and $g=\mathcal{I}(G)$. Let $I\in\sph_A$, $z_I=\alpha+\beta I\in \Omega_I:=\Omega\cap \C_I$ and $z=\phi_I^{-1}(z_I)=\alpha+\iota\beta\in D$. Then,
\begin{equation}\label{intrsliceprod}
(f\cdot g)(z_I)=F_1(z)G_1(z)-F_2(z)G_2(z)+I(F_1(z)G_2(z)+F_2(z)G_1(z)),
\end{equation}
while
\begin{equation}\label{intrprod}
f(z_I)g(z_I)=F_1(z)G_1(z)+(IF_2(z))(IG_2(z))+F_1(z)(IG_2(z))+(IF_2(z))G_1(z).
\end{equation}

It is worthwhile to remark that if $\Omega\subset \HH$ is a symmetric slice domain, then the slice product on $\mathcal{SR}_{\HH}(\Omega)$ coincides with the $*$-product on $\mathcal{SR}_{\HH}(\Omega)$ as defined in \cite[Def.2.2]{gst2}.

Let $F:D\to A\otimes_\R\C$ be a stem function. We define the \textit{conjugate of} $f:=\mathcal{I}(F)$ as $f^c:=\mathcal{I}(F^c)$. It is clear that $f$ is slice regular if and only if $f^c$ is slice regular. Let $D\subset \C$ be an open subset (that satisfies Assumption \ref{domainD}) and $\Omega:=\Omega_D$. By Proposition \ref{stemalg*}, we deduce the following:

\begin{prop}\label{algslice1}
The set $\mathcal{S}_A(\Omega)$ of all slice functions on $\Omega$ form an alternative $*$-algebra over $\R$ (which is associative if $A=\HH$) with the pointwise addition $(f+g)(x):=f(x)+g(x)$, the slice product $x\mapsto (f\cdot g)(x)$ and the conjugation $f\mapsto f^c$. The mapping $\mathcal{I}$ is a $*$-algebra isomorphism from the $*$-algebra of stem functions on $D$ onto $\mathcal{S}_A(\Omega)$.
\end{prop}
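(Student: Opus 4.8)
The plan is to exhibit $\mathcal{I}$ as a bijection intertwining the three operations and then transport the alternative $*$-algebra structure of Proposition \ref{stemalg*} along it. Under Assumption \ref{domainD}, it has already been recorded that every slice function $f$ on $\Omega=\Omega_D$ is induced by a \emph{unique} stem function $F:D\to A\otimes_\R\C$, with components recovered by $F_1(z)=\tfrac12(f(x)+f(x^c))$ and $F_2(z)=-\tfrac{I}{2}(f(x)-f(x^c))$ for $x\in\Omega_I$, $z=\phi_I^{-1}(x)$. This says precisely that $\mathcal{I}$ is a well-defined bijection from the set of stem functions on $D$ onto $\mathcal{S}_A(\Omega)$; in particular, as a set, $\mathcal{S}_A(\Omega)$ is the image of the $\R$-algebra of Proposition \ref{stemalg*} under $\mathcal{I}$.

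Next I would check that $\mathcal{I}$ respects each operation. For addition, since each $\widetilde{\phi}_I:A\otimes_\R\C\to A$ is additive (indeed $\R$-linear), the commuting square defining $\mathcal{I}(F+G)$ factors through those defining $\mathcal{I}(F)$ and $\mathcal{I}(G)$, whence $\mathcal{I}(F+G)(x)=\mathcal{I}(F)(x)+\mathcal{I}(G)(x)$ for all $x\in\Omega$; in particular the pointwise sum of two slice functions is again a slice function and the pointwise addition on $\mathcal{S}_A(\Omega)$ is the transported one. For the slice product and the conjugation there is essentially nothing to prove: by the very definitions $f\cdot g:=\mathcal{I}(F\cdot G)$ and $f^c:=\mathcal{I}(F^c)$, where $F\cdot G$ and $F^c$ are again stem functions by Proposition \ref{stemalg*}, so $\mathcal{I}(F\cdot G)=\mathcal{I}(F)\cdot\mathcal{I}(G)$ and $\mathcal{I}(F^c)=(\mathcal{I}(F))^c$ hold by fiat (independence of these definitions from the chosen representing stem function being immediate from the uniqueness above). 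Finally, $\mathcal{I}$ sends the unit stem function $1$ to the constant slice function $1$.

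Combining these observations, $\mathcal{I}$ is a unital bijection carrying $+$, $\cdot$, $\cdot^c$ on stem functions to $+$, $\cdot$, $\cdot^c$ on slice functions; hence it is a $*$-algebra isomorphism, and $\mathcal{S}_A(\Omega)$ inherits from the alternative $*$-algebra of Proposition \ref{stemalg*} the structure of an alternative $*$-algebra over $\R$, associative precisely when the stem-function algebra is, i.e. when $A=\HH$. I do not expect any genuine obstacle here: the only points deserving attention are the bijectivity of $\mathcal{I}$ (already available under Assumption \ref{domainD}) and the additivity of $\widetilde{\phi}_I$ needed for the addition clause; everything concerning $\cdot$ and $\cdot^c$ is true by construction of the operations on $\mathcal{S}_A(\Omega)$.
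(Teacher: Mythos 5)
Your proof is correct and is essentially the same argument the paper intends (the paper states the result as a direct consequence of Proposition~\ref{stemalg*} without spelling it out): under Assumption~\ref{domainD} the map $\mathcal{I}$ is a bijection from stem functions to slice functions, it is additive because $\widetilde{\phi}_I$ is $\R$-linear, and it intertwines $\cdot$ and $\cdot^c$ by the very definitions of the slice product and conjugate, so the alternative $*$-algebra structure of Proposition~\ref{stemalg*} transports to $\mathcal{S}_A(\Omega)$.
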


Observe that the restriction of $\mathcal{I}$ to the $*$-algebra of holomorphic stem functions on $D$ is a $*$-algebra isomorphism onto $\mathcal{SR}_A(\Omega)$. We end this section with the following remark:

\begin{remark}\label{sliceprodoct}
As constant functions are a particular kind of slice functions, we may ask whether the product and the slice product of a slice function $f:\Omega\to A$ and a constant function coincide or not. If $A=\HH$, then for any $q_0\in \HH$ we have
$
f(q)q_0=(f\cdot q_0)(q)
$
for each $q\in \HH$. 

While if $A=\O$, then the product and the slice product with a constant function may be different. Let $I,L\in \sph_{\O}$ be imaginary units such that $IL=-LI$. Consider the slice polynomial function $f(x)=xI$. The slice function
$
(f\cdot L)(x)=(xI)\cdot L=x(IL)
$
is different from the function $f(x)L=(xI)L$. Observe that as $(f\cdot L)(x)=f(x)L$ on the slice $\C_I$ (by Artin's theorem), we deduce, by the representation formula, that the function $f(x)L$ is not a slice function. \hfill $\sqbullet$
\end{remark}

\subsection{Slice polynomials} \label{prelipol}

Let $A$ be either the algebra of quaternions $\HH$ or the algebra of octonions $\O$. We denote by $A[\x]$ the set of \textit{slice polynomials}, that is, the set of all polynomials of the form $P(\x)=\x^n\alpha_n+\ldots+\x\alpha_1+\alpha_0$, where $\alpha_0,\ldots,\alpha_n\in A$. To each slice polynomial $P\in A[\x]$ we can associate a polynomial function $A\to A, x\mapsto P(x)$, which is slice regular, that we will still denote by $P$ (if $A=\O$, it follows by Artin's theorem). With the same operations, the slice polynomial functions form a $*$-subalgebra of $\mathcal{SR}_A(A)$. Observe that the slice product of two ordered polynomials coincide with the usual product of polynomials, where $\x$ is considered as a commutating variable (as defined, for instance, in \cite{lam}). 

Analogously, we define the set of \textit{slice polynomials} $A[\x_1,\x_2]$ in two variables as the set of formal finite sums of ordered monomials of the form $\x_1^{s}(\x_2^{r}\alpha)$, where $\alpha\in A$. To each ordered polynomial $P\in A[\x_1,\x_2]$ we can associate a polynomial function $A^2\to A, (x_1,x_2)\mapsto P(x_1,x_2)$, which is slice regular in the sense introduced in \cite{gp6} for functions of several variables (see \cite[Prop.3.14]{gp6}). We endow $A[\x_1,\x_2]$ with the usual sum of polynomial and the product of polynomials introduced in  \cite{gp6}, which is defined on the monomials as $(\x_1^{s_1}(\x_2^{r_1}\alpha_1))\cdot(\x_1^{s_2}(\x_2^{r_2}\alpha_2)):=\x_1^{s_1+s_2}(\x_2^{r_1+r_2}(\alpha_1\alpha_2))$.

\subsection{Normal function} 

Let $A$ be either the algebra of quaternions $\HH$ or the algebra of octonions $\O$. Led $D\subset \C$ be an open subset and $\Omega:=\Omega_D$. Let $F=F_1+\iota F_2:D\to A\otimes_\R\C$ be a stem function and $f=\mathcal{I}(F)$. The slice function $f$ is called \textit{slice preserving} if $f(\Omega\cap \C_I)\subset \C_I$ for each $I\in\sph_A$. Equivalently by \cite[Prop.10]{gp}, $f$ is slice preserving if and only if the $A$-components $F_1$ and $F_2$ of the stem function $F$ are real valued functions. Observe that if $f$ is slice preserving, by \eqref{intrsliceprod} and \eqref{intrprod}, we deduce 
$
(f\cdot g)(x)=f(x)g(x)
$
for each $x\in \Omega$. 

Recall that $F^c:=F_1^c+\iota F_2^c$. The function
$
F\cdot F^c=n(F_1)-n(F_2)+\iota t(F_1F_2^c)
$
is a stem function on $D$. Hence we may introduce the following definition:

\begin{defn}[Normal function]
Let $D\subset \C$ be an open subset and $\Omega:=\Omega_D$ the circularised of $D$. Let $F:D\to A\otimes_\R\C$ be a stem function and $f=\mathcal{I}(F):\Omega\to A$. The \textit{normal function of $f$} is the slice function $
N(f):=f\cdot f^c=\mathcal{I}(F\cdot F^c)\in\mathcal{S}_A(\Omega)$. \hfill$\sqbullet$
\end{defn}

Normal functions satisfy the following properties:
\begin{enumerate}
\item $N(f)=N(f)^c$,
\item $N(f)=f\cdot f^c=f^c\cdot f=N(f^c)$,
\item $N(f)$ is slice preserving,
\item If $f\in\mathcal{SR}_A(\Omega)$, then $N(f)\in\mathcal{SR}_A(\Omega)$.
\item If $f,g\in\mathcal{S}_A(\Omega)$, then $N(f\cdot g)=N(f)N(g)=N(g)N(f)=N(g\cdot f)$.
\end{enumerate}

In general, it is not guaranteed that if $f\in\mathcal{SR}_A(\Omega)$ is not identically zero, then $N(f)$ is not identically zero. We point out this phenomenon for $\O$ in the following example, but it can be easily adapted to $\HH$.

\begin{example}[{\cite[Rmk.12]{gp}, \cite[Ex.2.10]{gps2}}]\label{fettazero}
Let $J\in\sph_\O$. Consider the function $f:\O\setminus \R\to \O$ defined as
$$
f(x):=1+\frac{\Im(x)}{\|\Im(x)\|}J
$$
for each $x\in\O\setminus\R$. The zero set of $f$ is $\C_J^+:=\{\alpha+\beta J : \alpha,\beta\in \R, \beta>0\}$. The function $f$ is slice regular, as it is induced by the holomorphic stem function
$$
F:\C\setminus\R\to \O\otimes_\R\C, \quad z\mapsto \begin{cases}1+\iota J:=1\otimes 1+J\otimes \iota, \, \text{ if } \Im(z)>0,\\
1-\iota J:=1\otimes 1-J\otimes \iota, \, \text{ if } \Im(z)<0.\end{cases}
$$
As 
$$
F^c:\C\setminus\R\to \O\otimes_\R\C, \quad z\mapsto \begin{cases}1-\iota J, \, \text{ if } \Im(z)>0,\\
1+\iota J, \, \text{ if } \Im(z)<0.\end{cases}
$$
and
$$
(1+\iota J)(1-\iota J)=(1-\iota J)(1+\iota J)=1+J^2+\iota(-J+J)=0,
$$
we have $N(f)=\mathcal{I}(F\cdot F^c)\equiv 0$. \hfill$\sqbullet$
\end{example}

Nevertheless, it holds the following characterisation:

\begin{prop}[{\cite[Prop.3.7]{gps}}]\label{nonzeroint}
Let $\Omega\subset A$ be an open circular set and $f\in\mathcal{SR}_A(\Omega)$. The equality $N(f)\equiv 0$ implies $f\equiv 0$ if and only if $\Omega$ is a union of symmetric slice domains.
\end{prop}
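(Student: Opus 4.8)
The statement is an ``if and only if,'' and since both $N(\cdot)\equiv 0$ and $(\cdot)\equiv 0$ are local conditions while a slice regular function on $\Omega$ may be prescribed independently on each connected component, the plan is to reduce at once to a single circular domain $\Omega'=\Omega_{D'}$ (for the construction of counterexamples one simply extends by $0$ to the other components). By the dichotomy recalled in the preliminaries, such an $\Omega'$ is either a symmetric slice domain or a product domain, and ``$\Omega$ is a union of symmetric slice domains'' is equivalent to ``every connected component of $\Omega$ is a symmetric slice domain'' (a connected component of a circular set is a circular domain, and it is a symmetric slice domain precisely when it meets $\R$). Thus the proposition follows from two claims: \emph{(a)} on a symmetric slice domain, $N(f)\equiv 0$ forces $f\equiv 0$; \emph{(b)} on a product domain there is a slice regular $f\not\equiv 0$ with $N(f)\equiv 0$.

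For \emph{(a)}, I would write $f=\mathcal{I}(F)$ with $F=F_1+\iota F_2$ a holomorphic stem function on $D'$ and use the identity $F\cdot F^c=n(F_1)-n(F_2)+\iota\,t(F_1F_2^c)$ recalled just before the definition of the normal function, so that $N(f)\equiv 0$ is equivalent to $n(F_1)\equiv n(F_2)$ and $t(F_1F_2^c)\equiv 0$ on $D'$. Since $\Omega'$ is a symmetric slice domain, $D'$ is connected and $D'\cap\R\neq\varnothing$; on $D'\cap\R$ the odd component $F_2$ vanishes, hence $n(F_1)=n(F_2)=0$ there, and as $n(\cdot)=\|\cdot\|^2$ is positive definite on $A$ this gives $F_1\equiv 0$ on $D'\cap\R$ as well. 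Therefore the holomorphic map $F\colon D'\to A\otimes_\R\C$ vanishes on the nonempty open subset $D'\cap\R$ of $\R$, which has accumulation points in the connected set $D'$; by the identity principle for holomorphic maps into the finite-dimensional complex vector space $A\otimes_\R\C$ (a genuine complex vector space for the left-$\iota$-structure by Assumption~\ref{assum}), we get $F\equiv 0$, hence $f\equiv 0$.

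For \emph{(b)}, I would adapt Example~\ref{fettazero}. If $\Omega'=\Omega_{D'}$ is a product domain, then $D'$ has two connected components, $D'^+\subset\{\Im z>0\}$ and $D'^-=\overline{D'^+}$. Fixing any $J\in\sph_A$, I set $F$ equal to the constant $1+\iota J$ on $D'^+$ and to $1-\iota J$ on $D'^-$; since $\overline{1+\iota J}=1-\iota J$, the map $F$ is complex intrinsic, and being locally constant it is a holomorphic stem function, so $f:=\mathcal{I}(F)$ is slice regular and $f\not\equiv 0$. Computing in $A\otimes_\R\C$ with $J^c=-J$ and $J^2=-1$, one finds $F\cdot F^c=(1+\iota J)(1-\iota J)=(1+J^2)+\iota(-J+J)=0$ on $D'^+$, and symmetrically on $D'^-$, so $N(f)=\mathcal{I}(F\cdot F^c)\equiv 0$. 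Extending $F$ by the (holomorphic, complex intrinsic) zero stem function on the remaining components of $D$ yields the required counterexample on $\Omega$, which is the contrapositive of the forward implication.

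I do not expect a genuine obstacle here: the argument is essentially bookkeeping with stem functions. The only step needing a little care is the appeal to the identity principle in \emph{(a)} — one must know that $A\otimes_\R\C$ is a complex vector space for the $\iota$-structure and that the base domain $D'$ of a symmetric slice domain is connected, both of which are already recorded in the preliminaries — after which positive-definiteness of the norm on $A$ does the rest.
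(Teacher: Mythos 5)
The paper does not prove this proposition — it is quoted verbatim from \cite[Prop.~3.7]{gps} with no in-text argument — so there is no internal proof to compare against. Your argument is nonetheless correct and self-contained. The reduction to a single component $\Omega'=\Omega_{D'}$ is legitimate (vanishing and $N(\cdot)\equiv 0$ are both checked componentwise, and you correctly note that a circular domain is a symmetric slice domain exactly when it meets $\R$, so ``$\Omega$ is a union of symmetric slice domains'' is the same as ``every component is one''). In (a), the chain $N(f)\equiv 0\Leftrightarrow F\cdot F^c\equiv 0$ (injectivity of $\mathcal I$ under Assumption~\ref{domainD}) $\Rightarrow n(F_1)\equiv n(F_2)$, then $F_2\equiv 0$ on $D'\cap\R$ by oddness, then $n(F_1)\equiv 0$ hence $F_1\equiv 0$ there by $n(x)=0\Rightarrow x=0$, then $F\equiv 0$ by the identity principle for the holomorphic stem function on the connected $D'$, is sound. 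In (b), your construction is exactly Example~\ref{fettazero}, which the paper itself attributes to \cite[Ex.~2.10]{gps}; extending by the zero stem function on the other components is fine. No gap.
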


\subsection{Zeros of slice functions}\label{sectionzero} Zero sets of slice functions have been extensively studied in several contexts - for quaternionic slice regular functions \cite{gst, gs2, gst2}, for octonionic power series \cite{gs3,gp3} and more generally for slice functions defined on finite dimensional real alternative $*$-algebras \cite{gps3,gps}.  

Let $A$ be either the algebra of quaternions $\HH$ or the algebra of octonions $\O$. Let $\Omega\subset A$ be a circular domain and $f\in\mathcal{S}_A(\Omega)$. The \textit{zero set of} $f$ is the set 
$
V(f):=\{x\in \Omega : f(x)=0\}.
$
Let $I\in\sph_A$ be an imaginary unit and $x=\alpha+\beta I\in A$. Define
$
\sph_x:=\alpha+\beta\sph_A=\{\alpha+\beta J : J\in \sph_A\}.
$
The set $\sph_A$ is the circularised of the singleton $\{\alpha+\iota\beta\}\subset \C$. Moreover, 
\begin{itemize}
\item if $x\in \R$, then $\sph_x=\{x\}$,
\item if $x\in A\setminus \R$, then the set $\sph_x$ is a sphere of dimension $d_A-1$ obtained by the sphere $\sph_A$ by a translation along the real axes and a dilatation.
\end{itemize}

We recall the following theorems on the zero sets of slice functions. For each $I\in\sph_A$ denote $\C_I^+:=\{\alpha+\beta I\in\C_I : \beta >0\}$ and $\Omega_I^+:=\Omega\cap \C_I^+$.

\begin{thm}[{\cite[Thm.3.2]{gps}}]\label{zeri1}
If $f\in\mathcal{S}_A(\Omega)$, then for each $x\in \Omega$ the sets $V(f)\cap \sph_x$ and $V(f^c)\cap \sph_x$ are both empty, both singletons or both equal to $\sph_x$. Moreover, 
$$
V(N(f))=\bigcup_{x\in V(f)}\sph_x=\bigcup_{x\in V(f^c)}\sph_x.
$$
\end{thm}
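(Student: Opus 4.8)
The plan is to fix a point $x=\alpha+\beta I\in\Omega$ with $\beta\ge 0$, put $z:=\phi_I^{-1}(x)=\alpha+\iota\beta\in D$, and write $F=F_1+\iota F_2$ for the stem function with $f=\mathcal{I}(F)$, together with $a:=F_1(z)$, $b:=F_2(z)\in A$. Since $\Omega$ is circular, $\sph_x\subset\Omega$, and from $f(\alpha+\beta J)=F_1(z)+JF_2(z)$ one gets, for every $J\in\sph_A$,
$$
f(\alpha+\beta J)=a+Jb,\qquad f^c(\alpha+\beta J)=a^c+Jb^c ,
$$
so $V(f)\cap\sph_x$ is controlled by the equation $a+Jb=0$ with unknown $J\in\sph_A$. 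If $x\in\R$ then $b=0$ (the odd component vanishes on the real axis) and the statement is immediate, so I concentrate on $\beta>0$. If $b=0$ then $f\equiv a$ on $\sph_x$, hence $V(f)\cap\sph_x$ is all of $\sph_x$ when $a=0$ and is empty otherwise; if $b\ne 0$ then, as $A$ has no zero divisors, $a+Jb=0$ has at most one solution, so $V(f)\cap\sph_x$ is empty or a singleton. The same trichotomy holds for $f^c$, with $(a,b)$ replaced by $(a^c,b^c)$.

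Next I relate this to the normal function. Using the identity $F\cdot F^c=n(F_1)-n(F_2)+\iota\,t(F_1F_2^c)$ one finds $N(f)(\alpha+\beta J)=\bigl(n(a)-n(b)\bigr)+J\,t(ab^c)$ for all $J\in\sph_A$; since the two coefficients are real, $N(f)$ vanishes somewhere on $\sph_x$ iff it vanishes on all of $\sph_x$ iff $n(a)=n(b)$ and $t(ab^c)=0$. I then claim
$$
V(f)\cap\sph_x\ne\varnothing\ \Longleftrightarrow\ N(f)\big|_{\sph_x}\equiv 0 .
$$
For ``$\Rightarrow$'': if $a=-Jb$ with $J\in\sph_A$, multiplicativity of $n$ gives $n(a)=n(b)$, while, working inside the associative subalgebra generated by $J$ and $b$ (Artin's theorem), $ab^c=-(Jb)b^c=-n(b)J$, so $t(ab^c)=-n(b)\,t(J)=0$. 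For ``$\Leftarrow$'': if $b=0$ then $n(a)=0$ forces $a=0$; if $b\ne 0$, set $J_0:=-ab^{-1}$, so that $n(J_0)=n(a)n(b)^{-1}=1$ and $t(J_0)=-n(b)^{-1}t(ab^c)=0$, hence $J_0\in\sph_A$, and $a+J_0b=a-(ab^{-1})b=0$ (again by Artin). Since $N(f^c)=N(f)$, the same equivalence holds for $f^c$; combining this with the first paragraph and using $b=0\Leftrightarrow b^c=0$ and $a=0\Leftrightarrow a^c=0$, the sets $V(f)\cap\sph_x$ and $V(f^c)\cap\sph_x$ are simultaneously empty, simultaneously singletons, or simultaneously equal to $\sph_x$.

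For the displayed equality: $N(f)$ is slice preserving, so writing $N(f)=\mathcal{I}(G_1+\iota G_2)$ with $G_1,G_2$ real valued, $N(f)(\alpha+\beta J)=G_1(z)+JG_2(z)$ can vanish for one $J$ only if $G_1(z)=G_2(z)=0$, i.e. for all $J$; hence $V(N(f))\cap\sph_x$ is, for every $x$, either empty or all of $\sph_x$. Thus $V(N(f))=\bigcup\{\sph_x:N(f)|_{\sph_x}\equiv 0\}$, which by the second paragraph equals $\bigcup_{x\in V(f)}\sph_x$; replacing $f$ by $f^c$ and using $N(f^c)=N(f)$ yields $\bigcup_{x\in V(f^c)}\sph_x$ as well. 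The only genuinely delicate point is the ``$\Leftarrow$'' implication above -- reconstructing an imaginary unit $J_0=-ab^{-1}$ out of the vanishing of the two real scalars $n(a)-n(b)$ and $t(ab^c)$ and checking $J_0^2=-1$ -- together with the need, in the octonionic case, to keep every product that is manipulated supported on at most two generators so that Artin's theorem applies.
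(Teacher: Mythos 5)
The paper does not contain a proof of this statement: it is recalled verbatim from Ghiloni--Perotti--Stoppato, cited as {\cite[Thm.3.2]{gps}}, so there is no in-paper argument to compare against. Your proof is correct and is essentially the natural stem-function reduction: you fix $a=F_1(z)$, $b=F_2(z)$, deduce the trichotomy from the shape of $a+Jb$, and synchronize $f$ and $f^c$ via the equivalence $V(f)\cap\sph_x\neq\varnothing\iff N(f)|_{\sph_x}\equiv 0$, which you establish in both directions. The two genuinely delicate points are handled properly: the reconstruction $J_0=-ab^{-1}$ and the verification $n(J_0)=1$, $t(J_0)=-n(b)^{-1}t(ab^c)=0$ from the two real conditions, and the octonionic cancellations $(ab^{-1})b=a$ and $(Jb)b^c=n(b)J$, which are licit because $b^c=t(b)-b$ lies in the subalgebra generated by $b$, so the relevant products stay inside a two-generator (hence, by Artin, associative) subalgebra.
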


\begin{thm}[{\cite[Thm.3.5]{gps}}]\label{zeri2}
Let $\Omega\subset A$ be a circular domain and $f:\Omega \to A$ a non-zero slice regular function. Then
\begin{itemize}
\item[{\rm(i)}] The intersection $V(f)\cap \C_I^+$ is closed and discrete in $\Omega_I$ for each $I\in\sph_A$ with at most one exception $I_0\in\sph_A$, for which it holds $f|_{\Omega_{I_0}^+}\equiv 0$.
\item[{\rm(ii)}]  If, moreover, $N(f)\not\equiv 0$, then $V(f)$ is a union of isolated points and isolated spheres of the form $\sph_x$ for $x\in A\setminus \R$.
\end{itemize}
\end{thm}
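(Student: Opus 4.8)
The plan is to push everything down to one‑variable complex analysis via the splitting lemma and the stem‑function description, and then to exploit that $A$ is a division algebra to control the single ``degenerate'' slice. Write $f=\mathcal I(F)$ with $F=F_1+\iota F_2:D\to A\otimes_\R\C$ holomorphic, put $D^{\pm}:=\{z\in D:\pm\Im z>0\}$ (so $\phi_I(D)=\Omega_I$ and $\phi_I(D^+)=\Omega_I^+$), and recall $f(\alpha+\beta I)=F_1(z)+IF_2(z)$ for $z=\alpha+\iota\beta$. For part~(i), fix $I\in\sph_A$ and a splitting base associated to $I$ and write $f|_{\Omega_I}=\sum_{k=0}^{u_A}f_k^{I,I_k}I_k$ with each $f_k^{I,I_k}:\Omega_I\to\C_I$ holomorphic (splitting lemma); since $\{1,I_1,\dots,I_{u_A}\}$ is a $\C_I$‑basis, $V(f)\cap\C_I^+\subset V(f_k^{I,I_k})$ for every $k$. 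Let $\Omega_I'$ be the connected component of $\Omega_I$ containing $\Omega_I^+$: it is $\Omega_I$ itself in the symmetric slice case and it is $\Omega_I^+$ in the product case. Either $f|_{\Omega_I^+}\equiv0$, or $f|_{\Omega_I'}\not\equiv0$ --- indeed the latter is automatic in the product case, while in the symmetric case $f|_{\Omega_I}\equiv0$ would force $f\equiv0$ by the representation formula. In that second alternative some $f_k^{I,I_k}$ is not identically zero on the connected open set $\Omega_I'$, so by the identity principle for one complex variable $V(f_k^{I,I_k})\cap\Omega_I'$ is discrete and closed in $\Omega_I'$; hence $V(f)\cap\C_I^+$, being contained in it, is discrete and closed in $\Omega_I'$ and therefore in $\Omega_I$.

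It remains to show that $E:=\{J\in\sph_A:f|_{\Omega_J^+}\equiv0\}$ has at most one element; its unique element, if any, is then the exceptional $I_0$ of the statement. If $I,J\in E$ with $J\ne\pm I$, then $F_1+IF_2\equiv0$ and $F_1+JF_2\equiv0$ on $D^+$, so $(I-J)F_2\equiv0$ on $D^+$; since $A$ has no zero divisors and $I-J\ne0$, this gives $F_2\equiv0$ and hence $F_1\equiv0$ on $D^+$, and then, since $F_1(\overline z)=F_1(z)$, $F_2(\overline z)=-F_2(z)$ and $F$ is continuous, $F\equiv0$ on all of $D$, i.e. $f\equiv0$, a contradiction. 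If instead $J=-I$, then $f$ vanishes on $\phi_I(D^+)\cup\phi_I(D^-)=\Omega_I\setminus\R$, hence on $\Omega_I$ by continuity, and again $f\equiv0$ by the representation formula. So $|E|\le1$, which proves~(i).

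For part~(ii), recall that $N(f)=f\cdot f^c$ is slice regular and slice preserving, with stem function $F\cdot F^c=(n(F_1)-n(F_2))+\iota\,t(F_1F_2^c)$ whose two $A$‑components are real valued; thus $N(f)$ is induced on every slice by one and the same holomorphic $G:D\to\C$, with $N(f)(\alpha+\beta I)=\phi_I(G(\alpha+\iota\beta))$, so that $V(N(f))=\Omega_{V(G)}$. Since $N(f)\not\equiv0$ and $G$ is complex intrinsic (and, in the product case, $D$ has two conjugate components and no real points), $G$ is not identically zero on any component of $D$, so $V(G)$ is discrete and closed in $D$. A direct computation gives $\|\phi_I(z_0)-\phi_J(w)\|\ge|z_0-w|$ whenever $\Im z_0,\Im w\ge0$; applying it with $w$ ranging over $V(G)$ shows that each of the spheres $\sph_x$ making up $\Omega_{V(G)}$ lies at positive distance from the rest of $V(N(f))$, so $V(N(f))$ is a locally finite union of isolated spheres $\sph_x$ (those with $x\in\R$ reducing to single points). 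Finally, by Theorem~\ref{zeri1} we have $V(f)\subset V(N(f))$ and, for each such sphere, $V(f)\cap\sph_x$ is empty, a single point, or all of $\sph_x$; collecting the spheres entirely contained in $V(f)$ (which are isolated, and necessarily of the form $\sph_x$ with $x\in A\setminus\R$) together with the one‑point intersections and the real zeros of $f$ (which are isolated points) gives the description asserted in~(ii).

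I expect the most delicate point to be the bookkeeping in~(i): keeping $V(f)\cap\C_I^+$ distinct from $V(f)\cap\Omega_I$ so that the single degenerate slice $I_0$ does not invalidate the statement, placing the potential accumulation point of $V(f)\cap\C_I^+$ in the correct connected component of $\Omega_I$ in the symmetric versus product cases, and disposing of the borderline case $J=-I$. In~(ii) the only genuinely quantitative ingredient is upgrading the discreteness of $V(G)$ in $D$ to the isolatedness of the spheres $\sph_x$ in $\Omega$, which the elementary distance estimate above handles.
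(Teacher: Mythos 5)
The theorem you were asked to prove is cited in the paper as \cite[Thm.3.5]{gps} and is stated without proof in the preliminaries; the paper therefore contains no proof to compare against. With that caveat, your argument is correct and self-contained, and the route you take (splitting lemma to reduce (i) to zeros of one-variable holomorphic functions; reduction of (ii) to the zero set of a single complex-intrinsic holomorphic stem function $G$ for the slice-preserving $N(f)$, followed by an explicit distance estimate $\|\phi_I(z_0)-\phi_J(w)\|\ge|z_0-w|$ to upgrade discreteness of $V(G)$ in $D$ to isolatedness of the spheres $\sph_x$ in $\Omega$) is the natural one. A few small points worth flagging, none of which is a gap: your dichotomy ``either $f|_{\Omega_I^+}\equiv 0$ or $f|_{\Omega_I'}\not\equiv 0$'' is really just the observation that in the symmetric slice case the exceptional alternative cannot occur at all (by the identity principle plus the representation formula), while in the product case it is possible and the two are literally complementary since $\Omega_I'=\Omega_I^+$; your passage from ``closed and discrete in $\Omega_I'$'' to ``closed and discrete in $\Omega_I$'' uses implicitly that $\Omega_I'$ is a connected component and hence clopen in $\Omega_I$, and that a subset of a closed discrete set is closed and discrete — both correct but worth saying; and in the last sentence of (ii), the conclusion that the one-point intersections $V(f)\cap\sph_x$ are isolated points of $V(f)$ rests on the previously established isolatedness of $\sph_x$ in $V(N(f))$ together with $V(f)\subset V(N(f))$, which you did establish but only invoke implicitly.
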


Case (i) happens for instance for the function $f$ of Example \ref{fettazero}. By Proposition \ref{nonzeroint}, we deduce the following:

\begin{cor}
If $\Omega\subset A$ is a symmetric slice domain and $f:\Omega \to A$ a non-zero slice regular function, then $V(f)$ is a union of isolated points and isolated spheres of the form $\sph_x$ for $x\in A\setminus \R$.
\end{cor}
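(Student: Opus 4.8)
The plan is to derive the statement as an immediate consequence of Theorem \ref{zeri2}(ii) together with Proposition \ref{nonzeroint}. The only thing that needs checking is that the hypothesis $N(f)\not\equiv 0$ of Theorem \ref{zeri2}(ii) is automatically satisfied in the present situation, and this is exactly where Proposition \ref{nonzeroint} enters.

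First I would observe that a symmetric slice domain $\Omega$ is trivially a union of symmetric slice domains (namely $\Omega$ itself), so that the equivalent condition appearing in Proposition \ref{nonzeroint} holds for $\Omega$. Consequently, for every $f\in\mathcal{SR}_A(\Omega)$ the implication ``$N(f)\equiv 0 \Rightarrow f\equiv 0$'' is valid on $\Omega$. Taking the contrapositive, since $f$ is assumed to be non-zero we conclude that $N(f)\not\equiv 0$ on $\Omega$.

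Next I would simply invoke Theorem \ref{zeri2}: since $\Omega$ is in particular a circular domain and $f$ is a non-zero slice regular function on $\Omega$ with $N(f)\not\equiv 0$, part (ii) of that theorem applies verbatim and yields that $V(f)$ is a union of isolated points and isolated spheres of the form $\sph_x$ with $x\in A\setminus\R$, which is precisely the assertion.

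There is essentially no obstacle here: the corollary is a direct specialisation of the two cited results, and the only (trivial) point to spell out is the observation that a symmetric slice domain falls under the hypothesis of Proposition \ref{nonzeroint}, so that case (i) of Theorem \ref{zeri2} — the exceptional slice $I_0$ on which $f$ vanishes identically, as in Example \ref{fettazero} — cannot occur. One could optionally remark that this is why the phenomenon of Example \ref{fettazero}, which takes place on the product domain $\O\setminus\R$, is excluded on symmetric slice domains.
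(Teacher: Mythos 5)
Your argument is correct and matches the paper's: the corollary is stated immediately after the remark that Proposition \ref{nonzeroint} applies, with exactly the deduction you spell out (symmetric slice domain $\Rightarrow$ $N(f)\not\equiv 0$ for non-zero $f$ $\Rightarrow$ Theorem \ref{zeri2}(ii) applies). Nothing to add.
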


An isolated point of $V(f)$ is called an \textit{isolated zero of} $f$, while an isolated sphere $\sph_x\subset V(f)$, where $x\in A\setminus \R$, is called an \textit{isolated spherical zero of} $f$.

\subsection{Reciprocals of slice functions} Let $A$ be either the algebra of quaternions $\HH$ or the algebra of octonions $\O$. Let $\Omega\subset A$ be an open circular set. In particular, $\Omega$ is a disjoint union of symmetric slice domains and product domains. Let $f:\Omega\to A$ be a non-zero slice regular function and $\Omega'\subset \Omega$ a connected component of $\Omega$. By Proposition \ref{nonzeroint}, if $\Omega'$ is a symmetric slice domain, then $N(f)\not\equiv 0$, while it might happen that $N(f)\equiv 0$ when $\Omega'$ is a product domain. In particular, $\Omega\setminus V(N(f))$ may be contained only in some of the connected components of $\Omega$ or even be empty (see Example \ref{fettazero}). When $\Omega\setminus V(N(f))$ is non-empty, slice functions have a multiplicative inverse on $\Omega\setminus V(N(f))$, which is denoted by $f^{-\bullet}$.

\begin{prop}[{\cite[Prop.4.1]{gps}}]
If $\Omega\setminus V(N(f))$ is non-empty, then $f$ admits a multiplicative inverse $f^{-\bullet}$ in the algebra $\mathcal{S}_A(\Omega\setminus V(N(f)))$. Namely, the slice function defined as
$$
f^{-\bullet}(x):=(N(f)(x))^{-1}f^c(x)
$$
for each $x\in\Omega\setminus V(N(f))$ satisfies
$
f\cdot f^{-\bullet}=f^{-\bullet}\cdot f=1
$
on $\Omega\setminus V(N(f))$. Moreover, $f^{-\bullet}$ is slice regular on $\Omega\setminus V(N(f))$ if and only if $f$ is slice regular on $\Omega\setminus V(N(f))$.
\end{prop}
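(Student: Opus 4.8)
The plan is to carry out the whole argument on the level of stem functions, where the slice product becomes the pointwise product in $A\otimes_\R\C$, and to isolate the ``scalar part'' of the normal function, which lives in the centre $\C$ of $A\otimes_\R\C$. Write $f=\mathcal{I}(F)$ with $F=F_1+\iota F_2:D\to A\otimes_\R\C$. Then $N(f)=\mathcal{I}(\mathcal{G})$ where $\mathcal{G}:=F\cdot F^c=n(F_1)-n(F_2)+\iota\,t(F_1F_2^c)$; since $n$ and $t$ are real-valued, $\mathcal{G}$ takes values in the centre $\R\otimes_\R\C=\C$ of $A\otimes_\R\C$, and it is complex intrinsic, i.e. $\overline{\mathcal{G}(z)}=\mathcal{G}(\overline{z})$. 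By Theorem~\ref{zeri1}, $V(N(f))$ is circular and closed in $\Omega$, so $\Omega':=\Omega\setminus V(N(f))=\Omega_{D'}$ for some open, real-symmetric $D'\subset D$; and since $N(f)(x)=\widetilde{\phi}_I(\mathcal{G}(z))$ for $x=\phi_I(z)\in\Omega'_I$ and $\widetilde{\phi}_I|_\C=\phi_I$ is injective, $\mathcal{G}$ is nowhere vanishing on $D'$.

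Next I would construct the inverse and verify the identities. As $\C$ is a field, $\mathcal{G}^{-1}:D'\to\C$ is a well-defined complex-intrinsic function, and $H:=\mathcal{G}^{-1}F^c:D'\to A\otimes_\R\C$ is again a stem function (complex intrinsicity of $H$ follows at once from that of $\mathcal{G}^{-1}$ and $F^c$). Since $\mathcal{G}^{-1}(z)$ lies in the centre of $A\otimes_\R\C$ — hence, $A\otimes_\R\C$ being alternative, in its nucleus — it can be commuted and re-associated freely inside products; applying the $*$-algebra morphism $\widetilde{\phi}_I$, which carries $\mathcal{G}(z)=F(z)F^c(z)$ to $N(f)(x)$ and $F^c(z)$ to $f^c(x)$, gives
$$\mathcal{I}(H)(x)=\widetilde{\phi}_I(\mathcal{G}(z)^{-1})\,\widetilde{\phi}_I(F^c(z))=N(f)(x)^{-1}f^c(x),$$
so $f^{-\bullet}:=\mathcal{I}(H)$ is precisely the slice function in the statement. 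Pulling $\mathcal{G}^{-1}(z)$ out of the pointwise products and using $F\cdot F^c=F^c\cdot F=\mathcal{G}$ (this is property~(2) of the normal function together with injectivity of $\mathcal{I}$), one gets $(F\cdot H)(z)=(H\cdot F)(z)=\mathcal{G}(z)^{-1}\mathcal{G}(z)=1$; that is, $F\cdot H=H\cdot F$ is the constant stem function $1$ on $D'$, and applying the $*$-algebra isomorphism $\mathcal{I}$ (Proposition~\ref{algslice1}) yields $f\cdot f^{-\bullet}=f^{-\bullet}\cdot f=1$ on $\Omega'$.

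For the regularity statement I would first note that the equivalence is symmetric in $f$ and $f^{-\bullet}$: from property~(5) of the normal function applied to $f\cdot f^{-\bullet}=1$ we get $N(f)\,N(f^{-\bullet})=1$ on $\Omega'$, so $N(f^{-\bullet})=N(f)^{-1}$ is nowhere vanishing on $\Omega'$, whence $\Omega'\setminus V(N(f^{-\bullet}))=\Omega'$ and $(f^{-\bullet})^{-\bullet}=f$. It therefore suffices to show that if $f$ is slice regular on $\Omega'$ then so is $f^{-\bullet}$. Assuming $f$ slice regular on $\Omega'$, the stem function $F^c$ is holomorphic on $D'$ (conjugation preserves holomorphy), and $N(f)=\mathcal{I}(\mathcal{G})$ is slice regular (property~(4) of the normal function), so $\mathcal{G}$ is a holomorphic stem function; being $\C$-valued, holomorphy of $\mathcal{G}$ reduces to the classical Cauchy–Riemann equations, so $\mathcal{G}$ is an ordinary nowhere-zero holomorphic function $D'\to\C$ and hence $\mathcal{G}^{-1}$ is holomorphic on $D'$. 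Since a product of holomorphic stem functions is holomorphic (\cite[Prop.~11]{gp}), $H=\mathcal{G}^{-1}F^c$ is holomorphic on $D'$, and therefore $f^{-\bullet}=\mathcal{I}(H)$ is slice regular on $\Omega'$.

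The main obstacle is the non-associativity bookkeeping in the case $A=\O$: every time $\mathcal{G}^{-1}(z)$ is pulled through or out of a pointwise product one is using that the centre of the alternative algebra $A\otimes_\R\C$ is contained in its nucleus, and this must be invoked carefully in each of the identities $F(z)\big(\mathcal{G}^{-1}(z)F^c(z)\big)=\mathcal{G}^{-1}(z)\big(F(z)F^c(z)\big)$ and $\big(\mathcal{G}^{-1}(z)F^c(z)\big)F(z)=\mathcal{G}^{-1}(z)\big(F^c(z)F(z)\big)$. The other delicate point is purely one of bookkeeping: identifying $\mathcal{G}$ as a genuine element of the centre $\C$ of $A\otimes_\R\C$, so that $\widetilde{\phi}_I(\mathcal{G})=N(f)$ and ``$\mathcal{G}$ holomorphic as a stem function'' literally means ordinary holomorphy of a $\C$-valued function. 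Everything else is a routine computation once one has passed to the stem-function side.
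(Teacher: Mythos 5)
The paper does not prove this statement itself: it is recalled verbatim from \cite[Prop.~4.1]{gps}, so there is no internal proof to compare yours against. That said, your argument is correct and is the natural one in the stem-function framework of Ghiloni--Perotti: pass to $F$, observe that $\mathcal{G}:=F\cdot F^c$ is $\C$-valued (hence central and, $A\otimes_\R\C$ being alternative, nuclear), set $H:=\mathcal{G}^{-1}F^c$, and verify $F\cdot H=H\cdot F=1$ by pulling $\mathcal{G}^{-1}$ through. Two small loose ends are worth flagging. First, the closedness of $V(N(f))$ (needed so that $D'$ is open, as $\mathcal{S}_A(\Omega_{D'})$ requires) is not a consequence of Theorem~\ref{zeri1}; it uses continuity of $N(f)$, which is fine under the standing assumptions in the surrounding text but should not be attributed to that theorem. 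Second, you assert $(f^{-\bullet})^{-\bullet}=f$ without proof; since your whole ``only if'' direction rests on this symmetry, it deserves a line. It follows either from uniqueness of two-sided inverses in an alternative ring, or directly on the stem level by the same kind of computation you used elsewhere: $(\mathcal{G}^{-1})^c=\mathcal{G}^{-1}$ gives $H^c=\mathcal{G}^{-1}F$, hence $H\cdot H^c=\mathcal{G}^{-2}(F^c\cdot F)=\mathcal{G}^{-1}$, and the stem function inducing $(f^{-\bullet})^{-\bullet}$ is $(H\cdot H^c)^{-1}H^c=\mathcal{G}\cdot(\mathcal{G}^{-1}F)=F$. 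With those two points supplied, the proof is complete; in particular your handling of the non-associative bookkeeping for $A=\O$, via centrality of $\C$ in $A\otimes_\R\C$, is exactly right.
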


By Propositions \ref{algslice1}, \ref{nonzeroint} and the previous result, we deduce straightforwardly the following:

\begin{thm}\label{sliceregularstructure}
Let $A$ be either the algebra of quaternions $\HH$ or the algebra of octonions $\O$. Let $\Omega\subset A$ be an open circular set. The set $\mathcal{SR}_A(\Omega)$ of all slice regular functions on $\Omega$ is an alternative $*$-algebra if endowed with $+$, $\cdot$, $\cdot^c$. If $A=\HH$, then $\mathcal{SR}_\HH(\Omega)$ is associative. Moreover,
\begin{itemize}
\item if $\Omega$ is a symmetric slice domain, then $\mathcal{SR}_A(\Omega)$ is a division algebra,
\item if $\Omega$ is a product domain, then $\mathcal{SR}_A(\Omega)$ includes some element $f\not\equiv 0$ with $N(f)\equiv 0$. However, every element $f$ with $N(f)\not\equiv 0$ admits a multiplicative inverse in the algebra $\mathcal{SR}_A(\Omega\setminus V(N(f)))$.
\end{itemize}
\end{thm}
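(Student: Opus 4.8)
The plan is to obtain the algebra structure by restricting the $*$-algebra isomorphism $\mathcal{I}$ of Proposition \ref{algslice1} to holomorphic stem functions, to settle the symmetric slice domain case with Proposition \ref{nonzeroint}, and to settle the product domain case with Example \ref{fettazero} together with the reciprocal result \cite[Prop.4.1]{gps}. Since the statement is advertised as a straightforward deduction, I do not expect a serious obstacle; the only step that is not pure bookkeeping is the symmetric slice domain case.

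First I would establish the algebraic structure. By definition $\mathcal{SR}_A(\Omega)$ is the $\mathcal{I}$-image of the set of holomorphic stem functions on the (unique maximal) $D$ with $\Omega=\Omega_D$, so it suffices to check that the holomorphic stem functions form a $*$-subalgebra of the alternative $*$-algebra of all stem functions on $D$ (Proposition \ref{stemalg*}): it contains the constant stem function $1$, and it is closed under $+$ (the Cauchy--Riemann equation $\partial F/\partial\overline{\z}=0$ is $\R$-linear), under $\cdot$ (if $F,G$ are holomorphic then so is $FG$, which is the stem-function form of \cite[Prop.11]{gp}), and under $\cdot^c$ (as $\cdot^c\colon A\to A$ is $\R$-linear it commutes with $\partial/\partial\bm{\alpha}$ and $\partial/\partial\bm{\beta}$, so $\partial F^c/\partial\overline{\z}=(\partial F/\partial\overline{\z})^c=0$). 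Because $\mathcal{I}$ is a $*$-algebra isomorphism, $\mathcal{SR}_A(\Omega)$ is then an alternative $*$-subalgebra of $\mathcal{S}_A(\Omega)$, associative for $A=\HH$ since $\HH\otimes_\R\C$ is; equivalently, one may simply quote the remark following Proposition \ref{algslice1}.

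Next I would treat the case of a symmetric slice domain $\Omega$. Fix $f\in\mathcal{SR}_A(\Omega)$ with $f\not\equiv 0$. As $\Omega$, being a symmetric slice domain, is a union of symmetric slice domains, Proposition \ref{nonzeroint} gives $N(f)\not\equiv 0$; hence $\Omega\setminus V(N(f))\neq\varnothing$, and \cite[Prop.4.1]{gps} then yields a two-sided slice inverse $f^{-\bullet}\in\mathcal{S}_A(\Omega\setminus V(N(f)))$, which is slice regular there because $f$ is. To see that $\mathcal{SR}_A(\Omega)$ has no zero divisors --- the division algebra assertion --- suppose $f\cdot g=0$, so that $N(f)\cdot N(g)=N(f\cdot g)=0$ (recall $N(f\cdot g)=N(f)N(g)$). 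Fix $I\in\sph_A$: the slice $\Omega_I$ is connected (this is exactly where the hypothesis enters), the restrictions $N(f)|_{\Omega_I}$ and $N(g)|_{\Omega_I}$ are $\C_I$-valued holomorphic functions on it (both normal functions being slice preserving and slice regular), and their pointwise product --- equal, by slice-preservingness, to the restriction of $N(f)\cdot N(g)=0$ --- vanishes identically on $\Omega_I$. By the identity principle on the connected $\Omega_I$ (zeros of a nonzero holomorphic function being isolated), one of them, say $N(f)|_{\Omega_I}$, vanishes identically, so $N(f)\equiv 0$ on $\Omega$ by the representation formula \eqref{repform}, and then $f\equiv 0$ by Proposition \ref{nonzeroint}.

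Finally I would dispatch the product domain case. The function of Example \ref{fettazero} (or its evident analogue over $\HH$) is slice regular on $A\setminus\R$, has zero set $\C_J^+$, has identically vanishing normal function, and has non-vanishing conjugate. A product domain $\Omega$ satisfies $\Omega\cap\R=\varnothing$, hence $\Omega\subset A\setminus\R$, and, being open in $A$, it cannot be contained in the lower-dimensional set $\C_J^+$; restricting that function to $\Omega$ therefore gives $f\in\mathcal{SR}_A(\Omega)$ with $f\not\equiv 0$, $f^c\not\equiv 0$, and $N(f)=f\cdot f^c\equiv 0$ --- a genuine zero divisor. The remaining assertion of item (ii), that any $f$ with $N(f)\not\equiv 0$ admits a multiplicative inverse in $\mathcal{SR}_A(\Omega\setminus V(N(f)))$, is precisely the argument from \cite[Prop.4.1]{gps} already used, valid for arbitrary open circular $\Omega$. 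As anticipated, the symmetric slice domain step is the only one requiring genuine thought: the connectedness of the slices $\Omega_I$, together with Proposition \ref{nonzeroint}, is what rules out the pathology of Example \ref{fettazero}; everything else is bookkeeping with $\mathcal{I}$ and the cited results.
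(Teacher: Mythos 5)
Your proof is correct and follows exactly the route the paper gestures at: the paper itself gives no details, stating only that the theorem is ``deduced straightforwardly'' from Propositions \ref{algslice1} and \ref{nonzeroint} and \cite[Prop.4.1]{gps}, and you fill in that deduction faithfully, supplementing it only with Example \ref{fettazero} (which the paper also relies on implicitly for the product-domain zero divisor) and with a clean identity-principle argument on a connected slice $\Omega_I$ for the absence of zero divisors. Everything you write is consistent with the paper's intent and with the reading of ``division algebra'' as an algebra without zero divisors, which is the only reading compatible with item (ii) placing inverses in $\mathcal{SR}_A(\Omega\setminus V(N(f)))$ rather than in $\mathcal{SR}_A(\Omega)$.
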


\subsection{Singularities of slice regular functions} Singularities of slice regular functions over the quaternions have been studied in \cite{gst2,s}. In \cite{gps1}, the authors study singularities of slice regular functions over general alternative $*$-algebras finding Laurent-type expansions and the related classification of singularities as removable, poles or essential. In \cite{gps}, the authors specialise the results of \cite{gps1} to the case of quaternions and octonions. In this section, we recall the results that we need in this article. 

We start by recalling the classification of the \textit{isolated singularities}. Let $A$ be the algebra of quaternions $\HH$ or the algebra of octonions $\O$. Let $\Omega\subset A$ be an open circular set.

\begin{thm}[Characterisation of isolated singularities {\cite[Thm.9.5]{gps1}}]\label{sing1}
Let $y\in \Omega\cap \R$ and let $f:\Omega\setminus\{y\}\to A$ be a slice regular function. Then one (and only one) of the following properties holds:
\begin{itemize}
\item[{\rm(i)}] {\rm(Removable singularity)} There exists a slice regular function $\Phi:\Omega\to A$ such that $f=\Phi|_{\Omega\setminus\{y\}}$.
\item[{\rm(ii)}] {\rm(Pole)} There exists an integer $k\geq 1$ and a slice regular function $\Phi:\Omega\to A$ such that 
$
\Phi(x)=(x-y)^kf(x)
$
for each $x\in \Omega\setminus\{y\}$.
\item[{\rm(iii)}] {\rm(Essential singularity)} For all open neighbourhoods $U$ of $y$ in $\Omega$ and all integers $k\geq 0$ it holds
$
\sup_{x\in U\setminus\{y\}}\|(x-y)^kf(x)\|=+\infty.
$
\end{itemize}
\end{thm}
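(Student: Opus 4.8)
The plan is to reduce the trichotomy to the classical classification of isolated singularities of holomorphic functions of one complex variable by means of the splitting lemma, exploiting crucially that $y$ is a \emph{real} point of $\Omega$. Fix an open $D\subset\C$, symmetric with respect to the real axis (Assumption \ref{domainD}), with $\Omega=\Omega_D$; then $y\in D$ corresponds to a real point, $\phi_I(y)=y$ for every $I$, and $\Omega\setminus\{y\}=\Omega_{D\setminus\{y\}}$ is again an open circular set, so that $f$ is slice regular on it in the usual sense. Choose an imaginary unit $I\in\sph_A$ and a splitting base $\Bb_I=\{1,I,I_1,II_1,\dots,I_{u_A},II_{u_A}\}$ of $A$ associated to $I$. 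By the splitting lemma applied to $f$ on $\Omega\setminus\{y\}$ there are holomorphic functions $f_k^{I,I_k}\colon\Omega_I\setminus\{y\}\to\C_I$ ($k=0,\dots,u_A$) with $f_I=\sum_{k}f_k^{I,I_k}I_k$; since $y\in\Omega_I$ and $\Omega_I$ is open in $\C_I\cong\C$, each $f_k^{I,I_k}$ has an isolated singularity at $y$, hence (by classical complex analysis) is either removable, a pole of some order, or essential there.

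I would then split into the three resulting cases. \emph{(a) All $f_k^{I,I_k}$ removable.} Each extends holomorphically across $y$, so $f_I$ extends to a holomorphic $A$-valued map $\widetilde f_I$ on $\Omega_I$; feeding $\widetilde f_I$ into the representation formula \eqref{repform} yields a slice function $\Phi$ on $\Omega$, which is slice regular by the splitting lemma and which restricts to $f$ on $\Omega\setminus\{y\}$ (both are slice regular there and agree on $\Omega_I\setminus\{y\}$, so one invokes the identity principle). This is case (i). \emph{(b) No $f_k^{I,I_k}$ essential, at least one a pole.} Let $m\ge 1$ be the largest among the pole orders. Then $(z-y)^m f_k^{I,I_k}(z)$ is removable at $y$ for every $k$; since the slice polynomial $(\x-y)^m$ has real coefficients it is slice preserving, so $(\x-y)^m\cdot f$ agrees pointwise with $x\mapsto(x-y)^m f(x)$ and its splitting components are precisely the functions $(z-y)^m f_k^{I,I_k}(z)$, all removable at $y$. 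By case (a) it extends to a slice regular $\Phi$ on $\Omega$ with $\Phi(x)=(x-y)^m f(x)$ on $\Omega\setminus\{y\}$, while $f$ itself cannot extend since $f_I$ is unbounded near $y$. This is case (ii) with $k=m$. \emph{(c) Some $f_{k_0}^{I,I_{k_0}}$ essential.} For every integer $k\ge 0$ and every neighbourhood $V$ of $y$ in $\Omega_I$ one has $\sup_{z\in V\setminus\{y\}}|(z-y)^k f_{k_0}^{I,I_{k_0}}(z)|=+\infty$, for otherwise $f_{k_0}^{I,I_{k_0}}$ would be removable at $y$ or have a pole of order at most $k$. Since $\|f(x)\|$ is bounded below by a positive constant times $\max_k|f_k^{I,I_k}(z)|$ for $x=z\in\C_I$, and $\|(x-y)^k f(x)\|=\|x-y\|^k\|f(x)\|=|z-y|^k\|f(x)\|$ by multiplicativity of the norm on the composition algebra $A$, it follows that $\sup_{x\in U\setminus\{y\}}\|(x-y)^k f(x)\|=+\infty$ for every neighbourhood $U$ of $y$ in $\Omega$ (apply the slice estimate on $V:=U\cap\C_I$, a neighbourhood of $y$ in $\Omega_I$). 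This is case (iii).

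Finally I would check that $f$ satisfies exactly one of these: in cases (i) and (ii) the map $(x-y)^k f(x)$ (with $k=0$, resp.\ $k=m$) extends continuously to $y$ and is hence locally bounded, which contradicts (iii); and in case (ii) the function $f$ is genuinely unbounded near $y$, so it is not of type (i). The step I expect to require the most care is the construction of the slice regular extensions in (a) and (b): one must verify that the function produced from the extended holomorphic components through the representation formula is itself slice regular and defined on \emph{all} of $\Omega$ (not merely on $\Omega\setminus\{y\}$), and that the slice product with the real slice polynomial $(\x-y)^m$ coincides with the pointwise product — both of which follow from the splitting lemma, the representation formula \eqref{repform} and the properties of slice-preserving functions recalled in \S\ref{preliminaries}, so no essentially new idea beyond careful bookkeeping is needed.
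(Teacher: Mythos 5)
The paper does not prove this theorem; it is a preliminary result quoted from Ghiloni, Perotti and Stoppato \cite[Thm.~9.5]{gps1}, whose original derivation (as the discussion preceding Theorem~\ref{sing1} in \S\ref{preliminaries} indicates) goes through Laurent-type expansions of slice regular functions on circular domains. Your argument takes the alternative, equally natural route of reducing to the classical trichotomy for isolated singularities of holomorphic functions of one complex variable via the splitting lemma; this reduction is available precisely because $y$ is real, so it lies on every slice $\C_I$ and the singularity of each component $f_k^{I,I_k}$ at $y$ is genuinely isolated in $\Omega_I$. As far as I can tell the proof is correct, and the technical facts you invoke all hold: $(\x-y)^m$ has real coefficients and is therefore slice preserving, so its slice product with $f$ equals the pointwise product $(x-y)^m f(x)$ and the splitting components of $(\x-y)^m\cdot f$ are $(z-y)^m f_k^{I,I_k}(z)$; extending the holomorphic components across $y$ and applying the representation formula \eqref{repform} yields a slice regular extension on all of $\Omega$; the norm on $A$ is multiplicative (quaternions and octonions are composition algebras) so $\|(x-y)^k f(x)\| = |z-y|^k\,\|f(x)\|$ on $\C_I$; and the direct-sum decomposition $A=\bigoplus_{k=0}^{u_A} \C_I I_k$ gives the norm equivalence you need in case (iii). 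Compared to the splitting-lemma reduction, the Laurent/spherical-expansion approach of \cite{gps1} extends more uniformly to the spherical singularities of Theorem \ref{sing2}, where the singular set meets each slice in a pair of conjugate points; your slice-wise argument still works there (cf.\ Remark \ref{singchar}) but has to track both points simultaneously.
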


Let $y\in A$. The function $f(x):=x-y$ is slice regular on $A$. The normal function 
$$
N(f)(x)=(x-y)\cdot(x-y)^c=(x-y)\cdot(x-y^c)=x^2-x(y+y^c)+yy^c
$$
coincides with the slice preserving quadratic polynomial function $\Delta_y(x):=x^2-xt(y)+n(y)$, whose zero set is $\sph_y$. In particular, if $y'\in A$, then $\Delta_y=\Delta_{y'}$ if and only if $\sph_y=\sph_{y'}$. 

Next, we recall the classification of \textit{spherical singularities}. Given that in this article we are not interested in the order of the singularity, we ignore questions related to spherical singularities with points with different multiplicities, and we state the result in the following form:

\begin{thm}[Characterisation of spherical singularities {\cite[Thm.9.4]{gps1}}]\label{sing2}
Let $y\in \Omega\setminus \R$ and let $f:\Omega\setminus\sph_y\to A$ be a slice regular function. Then, one (and only one) of the following properties holds:
\begin{itemize}
\item[{\rm(i)}] {\rm(Spherical removable singularity)} There exists a slice regular function $\Phi:\Omega\to A$ such that $f=\Phi|_{\Omega\setminus\sph_y}$. 
\item[{\rm(ii)}] {\rm(Spherical pole)} There exists an integer $k\geq 1$ and a slice regular function $\Phi:\Omega\to A$ such that $\Phi(x)=\Delta_y(x)^kf(x)$ for each $x\in \Omega\setminus\sph_y$.
\item[{\rm(iii)}] {\rm(Spherical essential singularity)} For all open neighbourhoods $U$ of $\sph_y$ in $\Omega$ and all integers $k\geq 0$ it holds $\sup_{x\in U\setminus\sph_y}\|\Delta_y(x)^kf(x)\|=+\infty. 
$
\end{itemize}
\end{thm}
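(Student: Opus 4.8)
The plan is to reduce everything to the classical trichotomy for an isolated singularity of a holomorphic function of one complex variable, by passing to the stem function of $f$. Write $y=\alpha+\beta I_0$ with $\alpha,\beta\in\R$, $\beta>0$, put $w:=\alpha+\iota\beta\in\C$ and $\delta_y(z):=z^2-t(y)z+n(y)$; then $\delta_y$ is a real-coefficient polynomial with $t(y)=w+\overline w$, $n(y)=w\overline w$, so $\delta_y(z)=(z-w)(z-\overline w)$, the sphere $\sph_y$ is the circularisation of $\{w,\overline w\}$, and $\Omega\setminus\sph_y=\Omega_{D\setminus\{w,\overline w\}}$. By uniqueness of the inducing stem function, $f=\mathcal{I}(F)$ for a unique holomorphic stem function $F\colon D\setminus\{w,\overline w\}\to A\otimes_\R\C$. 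Fix a real basis $v_1,\dots,v_{d_A+1}$ of $A$; it is also a $\C$-basis of $A\otimes_\R\C$ for the complex structure of left multiplication by $\iota$, and writing $F=\sum_j G_j\,v_j$ the coefficients $G_j\colon D\setminus\{w,\overline w\}\to\C$ are holomorphic, with at worst isolated singularities at $w$ and at $\overline w$; the complex-intrinsic identity $\overline{F(z)}=F(\overline z)$ turns into $\overline{G_j(z)}=G_j(\overline z)$, so each $G_j$ has singularities of the same classical type at $w$ and at $\overline w$. Finally, recall that $\Delta_y=N(x-y)$ is slice preserving, with stem function $\delta_y$; hence so are its slice powers, and $(\Delta_y^{\,\cdot k}\cdot f)(x)=\Delta_y(x)^k f(x)$.

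The dichotomy is then governed by the single number $k_0\in\{0,1,2,\dots\}\cup\{\infty\}$, the least $k\ge0$ for which $\delta_y^{\,k}F$ extends holomorphically across $w$ (equivalently, by the symmetry above, across both $w$ and $\overline w$), with $k_0:=\infty$ if no such $k$ exists; this number depends only on $f$ and $\sph_y$, since $F$ does, and visibly not on the chosen real basis. The three alternatives of the statement correspond exactly to $k_0=0$, $1\le k_0<\infty$, and $k_0=\infty$, a genuine partition. If $k_0=0$, then $F$ extends to a holomorphic $\widetilde F\colon D\to A\otimes_\R\C$; the identity $\overline{\widetilde F(z)}=\widetilde F(\overline z)$ persists on $D$ by continuity (the punctured set is dense), so $\widetilde F$ is a holomorphic stem function and $\Phi:=\mathcal{I}(\widetilde F)\in\mathcal{SR}_A(\Omega)$ satisfies $f=\Phi|_{\Omega\setminus\sph_y}$, which is (i). If $1\le k_0<\infty$, then $\delta_y(z)$ lies in the centre $\C$ of $A\otimes_\R\C$, so the stem function of $\Delta_y^{\,\cdot k_0}\cdot f$ is $\delta_y^{\,k_0}F$, which by definition of $k_0$ extends holomorphically to $D$; hence $\Delta_y^{\,\cdot k_0}\cdot f$ has a removable spherical singularity, and applying the case just treated produces $\Phi\in\mathcal{SR}_A(\Omega)$ with $\Phi(x)=\Delta_y(x)^{k_0}f(x)$ on $\Omega\setminus\sph_y$ — and one checks, using minimality of $k_0$ and that $A$ has no zero divisors, that $\Phi$ is not identically zero on $\sph_y$, so this is genuinely (ii) with $k=k_0$.

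For $k_0=\infty$, and for mutual exclusivity, the key observation is: if $g:=\Delta_y^{\,\cdot k}\cdot f$ is bounded by $M$ on $U\setminus\sph_y$ for some neighbourhood $U$ of $\sph_y$ and some $k\ge0$, then $\delta_y^{\,k}F$ is bounded near $w$ and $\overline w$. Indeed, fix $I\in\sph_A$; for $z$ sufficiently close to $w$ both $\phi_I(z)$ and $(\phi_I(z))^c=\phi_I(\overline z)$ lie in $U\setminus\sph_y$, and the recovery formulas $H_1(z)=\tfrac12\bigl(g(x)+g(x^c)\bigr)$, $H_2(z)=-\tfrac{I}{2}\bigl(g(x)-g(x^c)\bigr)$ for the components of the stem function $\delta_y^{\,k}F=H_1+\iota H_2$ of $g$ give $\|(\delta_y^{\,k}F)(z)\|\le\sqrt2\,M$ (using $\|I\xi\|=\|\xi\|$, i.e.\ multiplicativity of $n$); the bound near $\overline w$ follows likewise. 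By Riemann's removable singularity theorem (applied to the coordinates $\delta_y^{\,k}G_j$), $\delta_y^{\,k}F$ then extends holomorphically across $w$ and $\overline w$, i.e.\ $k_0\le k$. Consequently: if $k_0=\infty$ no such bound exists, so $\sup_{x\in U\setminus\sph_y}\|\Delta_y(x)^k f(x)\|=+\infty$ for every $U$ and every $k\ge0$, which is (iii); in case (i), $g=\Phi$ is continuous on $\Omega$, hence bounded near $\sph_y$, forcing $k_0=0$; and in case (ii), $\Delta_y^{\,\cdot k_0}\cdot f$ is bounded near $\sph_y$ while $f$ is not (else $k_0=0$), forcing $0<k_0<\infty$. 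Thus exactly one of (i), (ii), (iii) holds.

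The argument is entirely parallel to (and slightly simpler than) the isolated-singularity case of Theorem \ref{sing1}, and the only real obstacle is conceptual: one must be sure that ``removable / pole / essential'' is an intrinsic attribute of $f$ at $\sph_y$, not an artefact of a chosen slice $\C_I$ or splitting base. I would circumvent this by never privileging a slice — working with the unique stem function $F$ on $D$ — and by encoding the singularity type in the single number $k_0=\min\{k:\delta_y^{\,k}F\text{ extends holomorphically across }w\}$, which manifestly does not depend on the real basis of $A$. The remaining ingredients (the recovery formulas, Riemann's theorem, multiplicativity of $n$, slice-preservation of $\Delta_y$, and the factorisation $\delta_y(z)=(z-w)(z-\overline w)$) are routine.
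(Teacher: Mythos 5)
The paper does not prove this statement: it quotes it in the Preliminaries from \cite[Thm.9.4]{gps1}, adding explicitly that questions of multiplicity are being suppressed in the restatement. So there is no in-paper proof to compare with, and I assess your argument on its own merits.

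Your reduction is sound and is the natural one in the $\HH$/$\O$ setting: pass to the unique stem function $F$ on the symmetric open set $D\setminus\{w,\overline w\}$; expand $F=\sum_j G_j v_j$ in a real basis of $A$, which is a $\C$-basis of $A\otimes_\R\C$, so each $G_j$ is scalar holomorphic and the intrinsic identity $\overline{G_j(z)}=G_j(\overline z)$ forces the same classical singularity type at $w$ and at $\overline w$; observe that $\Delta_y$ is slice preserving with central stem $\delta_y(z)=(z-w)(z-\overline w)$, so slice-multiplying $f$ by $\Delta_y^{\cdot k}$ corresponds to multiplying $F$ by the scalar $\delta_y^k$; and then run Riemann removability on the coordinates $\delta_y^kG_j$. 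The transfer of boundedness from $g=\Delta_y^{\cdot k}\cdot f$ to its stem function through the recovery formulas, using $\|I\xi\|=\|\xi\|$ from multiplicativity of $n$ on $\HH$ and $\O$, is correct. The quoted source \cite{gps1} instead derives this from Laurent-type expansions in general alternative $*$-algebras; your Riemann-removability route is a bit more elementary in the concrete quaternionic/octonionic case, and the invariant $k_0$ cleanly encodes the trichotomy.

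One point to flag concerning exclusivity: as literally stated, alternative (ii) is implied by (i), because if $f=\Phi_0|_{\Omega\setminus\sph_y}$ with $\Phi_0\in\mathcal{SR}_A(\Omega)$ then $\Phi:=\Delta_y^{\cdot k}\cdot\Phi_0$ is slice regular on $\Omega$ and satisfies $\Phi(x)=\Delta_y(x)^kf(x)$ on $\Omega\setminus\sph_y$ for every $k\geq1$. Hence the claim that exactly one of (i), (ii), (iii) holds requires reading (ii) with an implicit minimality or nonvanishing-on-$\sph_y$ condition; you tacitly do this by choosing $k=k_0$ and remarking that the resulting $\Phi$ does not vanish identically on $\sph_y$, and the corresponding step in your exclusivity paragraph (the assertion that (ii) forces $k_0>0$) silently assumes that reading too. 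This is a defect of the paper's simplified restatement rather than of your argument; with the minimality convention made explicit, the proof is complete.
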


Let $y\in\Omega$. Clearly, $\sph_y=\{y\}$ if $y\in \R$. For each $I\in\sph_A$ let $\{y_I,y_I^c\}:=\sph_y\cap \C_I$ (if $y\in \R$ it holds $y_I=y_I^c=y$ for each $I\in\sph_A$). Let $u_A$ be the integer defined in \eqref{dim}. Given an imaginary unit $I\in \sph_A$ let $\Bb_I:=\{1,I,I_1,II_1,\ldots,I_{u_A}\}$ be a splitting base of $A$ associated to $I$. Let $f:\Omega\setminus\sph_y\to A$ be a slice regular function. By the splitting lemma, there exist unique holomorphic functions $f^{\Bb_I}_0,\ldots,f^{\Bb_I}_{u_A}:\Omega_I\setminus\{y_I,y_I^c\}\to \C_I$ such that
$$
f_I=f^{\Bb_I}_0+f^{\Bb_I}_1I_1+\ldots f^{\Bb_I}_{u_A}I_{u_A}.
$$
By the results of \cite[\S9]{gps1}, we derive the following remark that characterise the singularities of slice regular functions in terms of the components given by the splitting lemma.

\begin{remark}\label{singchar}
The isolated or spherical singularity $\sph_y$ is:

(i) A removable singularity if and only if, for each imaginary unit $I\in\sph_A$ and each splitting base $\Bb_I$ of $A$ associated to $I$, both $y_I$ and $y_I^c$ are removable singularities for the functions $f^{\Bb_I}_0,\ldots,f^{\Bb_I}_{u_A}$.

(ii) A pole if and only if, for each imaginary unit $I\in\sph_A$ and each splitting base $\Bb_I$ of $A$ associated to $I$, the points $y_I$ and $y_I^c$ are poles (possibly of different order) or removable singularities for $f^{\Bb_I}_0,\ldots,f^{\Bb_I}_{u_A}$ and at least one between $y_I$ and $y_I^c$ is a pole for at least one of the functions $f^{\Bb_I}_0,\ldots,f^{\Bb_I}_{u_A}$.

(iii) An essential singularity if and only if, for each imaginary unit $I\in\sph_A$ and each splitting base $\Bb_I$ of $A$ associated to $I$, at least one among $y_I$ and $y_I^c$ is an essential singularity for at least one of the functions $f^{\Bb_I}_0,\ldots,f^{\Bb_I}_{u_A}$. \hfill$\sqbullet$
\end{remark}

\subsection{Semiregular slice functions}

Semiregular slice functions are the analogous of meromorphic functions in the hypercomplex setting. Let $A$ be either the algebra of quaternions $\HH$ or the algebra of octonions $\O$ and $\Omega\subset A$ a (non-empty) open circular set.

\begin{defn}[Semiregular slice function]\label{semiregularint}
 A function $f$ is called a \textit{semiregular slice function} on $\Omega$ if there exists a closed circular subset $\Omega_0\subset \Omega$, which is a union of isolated points on the real axes and isolated spheres of the form $\sph_y$ for $y\in\Omega\setminus \R$, such that 
\begin{itemize}
\item $f\in \mathcal{SR}_A(\Omega\setminus \Omega_0)$,
\item each isolated point $y\in\Omega_0\cap \R$ is an isolated pole or an isolated removable singularity for $f$,
\item each isolated sphere $\sph_y\subset \Omega_0$ for $y\in \Omega\setminus \R$ is an isolated spherical pole or an isolated spherical removable singularity for $f$. 
\end{itemize}
We denote by $\mathcal{SEM}_A(\Omega)$ the set of all semiregular slice functions on $\Omega$. \hfill$\sqbullet$
\end{defn}

The addition, slice multiplication and conjugation are well defined operations on $\mathcal{SEM}_A(\Omega)$. This follows by the fact that if two closed subsets $\Omega_0,\Omega_1\subset \Omega$ are unions of isolated points on the real axes and isolated spheres of the form $\sph_y$ for $y\in\Omega\setminus \R$, then $\Omega_0\cup \Omega_1$ is still closed set which is a union of isolated points on the real axes and isolated spheres of the form $\sph_y$ for $y\in\Omega\setminus \R$. In particular, the set $\mathcal{SEM}_A(\Omega)$ can be endowed (naturally) with a structure of $*$-algebra.

\begin{thm}[{\cite[Thm.6.6]{gps}}]\label{*semiregular}
The set $\mathcal{SEM}_A(\Omega)$ of all semiregular slice functions on $\Omega$ is an alternative $*$-algebra if endowed with $+$, $\cdot$, $\cdot^c$. If $A=\HH$, then $\mathcal{SEM}_\HH(\Omega)$ is associative. Moreover,
\begin{itemize}
\item if $\Omega$ is a symmetric slice domain, then $\mathcal{SEM}_A(\Omega)$ is a division algebra,
\item if $\Omega$ is a product domain, then $\mathcal{SEM}_A(\Omega)$ includes some element $f\not\equiv 0$ with $N(f)\equiv 0$. However, every element $f$ with $N(f)\not\equiv 0$ admits a multiplicative inverse within the algebra.
\end{itemize}
\end{thm}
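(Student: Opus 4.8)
The plan is to reduce the statement to the structure theorem for slice regular functions (Theorem~\ref{sliceregularstructure}) by working on the open dense locus where every function in play is slice regular, and to transport the conclusions across the singular sets by means of the classification of isolated and spherical singularities (Theorems~\ref{sing1}--\ref{sing2} and Remark~\ref{singchar}).

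First I would verify that $\mathcal{SEM}_A(\Omega)$ is closed under $+$, $\cdot$ and $\cdot^c$ and carries an alternative $*$-algebra structure. Given $f,g\in\mathcal{SEM}_A(\Omega)$ with singular sets $\Omega_0,\Omega_1$, Theorem~\ref{sliceregularstructure} gives that $f+g$, $f\cdot g$, $f^c$ are slice regular on $\Omega':=\Omega\setminus(\Omega_0\cup\Omega_1)$. To see they extend to semiregular functions on $\Omega$, fix $y$ in $\Omega_0\cup\Omega_1$: by Theorems~\ref{sing1}--\ref{sing2} there are slice regular $\Phi_f,\Phi_g$ near $y$ and integers $k,m\geq0$ with $(x-y)^kf=\Phi_f$, $(x-y)^mg=\Phi_g$ when $y\in\R$, and $\Delta_y^kf=\Phi_f$, $\Delta_y^mg=\Phi_g$ when $y\notin\R$; since $x-y$ (for $y\in\R$) and $\Delta_y$ are slice preserving and hence central for the slice product, one obtains that $(x-y)^{\max(k,m)}(f+g)$ and $(x-y)^{k+m}(f\cdot g)$ (resp.\ their spherical analogues) are slice regular near $y$, so $y$ is a pole or a removable singularity for $f+g$ and for $f\cdot g$; for $f^c$ the singularity type is unchanged. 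Thus these operations keep us inside $\mathcal{SEM}_A(\Omega)$. The $*$-algebra identities — alternativity, and associativity when $A=\HH$ — hold on the dense open set $\Omega'$ by Theorem~\ref{sliceregularstructure}, hence on all of $\Omega$ by the identity principle applied slice by slice (on each $\C_I$ the splitting-lemma components are meromorphic and the exceptional set is discrete).

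Next I would split into the two cases for $\Omega$. If $\Omega$ is a symmetric slice domain and $f\not\equiv0$, then $N(f)\not\equiv0$ by Proposition~\ref{nonzeroint}. The normal function $N(f)=f\cdot f^c$ is slice preserving and semiregular, so on each slice it is a non-zero meromorphic function; hence $V(N(f))$ is a union of isolated real points and isolated spheres at which $N(f)$ vanishes to finite order, so the pointwise reciprocal $N(f)^{-1}$ has only poles of finite order there, is therefore semiregular on $\Omega$, and so is $f^{-\bullet}:=N(f)^{-1}\,f^c$. On the regular locus of $f$ minus $V(N(f))$ one has $f\cdot f^{-\bullet}=f^{-\bullet}\cdot f=1$ by the reciprocal formula \cite[Prop.4.1]{gps} (in the octonionic case Artin's theorem applies, $f$ and $f^c$ generating an associative subalgebra while $N(f)$ is slice preserving), and the identity principle extends this to all of $\Omega$. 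Hence every non-zero element of $\mathcal{SEM}_A(\Omega)$ is invertible and $\mathcal{SEM}_A(\Omega)$ is a division algebra. If instead $\Omega$ is a product domain, then $\Omega\cap\R=\varnothing$ and each $\Omega_I$ has two conjugate components; carrying out the construction of Example~\ref{fettazero} on the two components of $D$ yields a slice regular (hence semiregular) $f\not\equiv0$ on $\Omega$ with $N(f)\equiv0$. For the last claim, if $N(f)\not\equiv0$ then, $N(f)$ being slice preserving, $N(f)|_{\Omega_I}$ is a non-zero meromorphic function for each $I$, so exactly as before $N(f)^{-1}$ is semiregular on $\Omega$, $f^{-\bullet}=N(f)^{-1}f^c\in\mathcal{SEM}_A(\Omega)$, and $f\cdot f^{-\bullet}=f^{-\bullet}\cdot f=1$ on all of $\Omega$.

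The main obstacle is the bookkeeping that keeps every construction inside $\mathcal{SEM}_A(\Omega)$: one must ensure that sums, slice products, normal functions and reciprocals never create an \emph{essential} singularity but only poles or removable ones (this is exactly where the local Laurent expansions behind Theorems~\ref{sing1}--\ref{sing2} and Remark~\ref{singchar} are used), and one must set up carefully enough an identity principle for semiregular slice functions so that the algebraic identities valid on the dense open regular locus propagate across the singular sets.
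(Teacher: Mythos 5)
Your proof is essentially correct. Restricting to the common regular locus, invoking Theorem~\ref{sliceregularstructure} (or directly Proposition~\ref{stemalg*} at the level of stem functions) for the algebraic identities, and then using Theorems~\ref{sing1}--\ref{sing2} together with the centrality of $(x-y)$ and $\Delta_y$ to rule out essential singularities of $f+g$ and $f\cdot g$, is a sound way to establish closure. The inverse construction $f^{-\bullet}=N(f)^{-1}f^c$, with $N(f)^{-1}$ built as the pointwise reciprocal of the slice-preserving, nonzero, sliceable-meromorphic $N(f)$, is also standard. One step you use tacitly and should spell out: when $\Omega$ is a symmetric slice domain and $\Omega_0\subset\Omega$ is a discrete set of real points and $(d_A-1)$-spheres, $\Omega\setminus\Omega_0$ is again a symmetric slice domain (these singular sets have real codimension $\geq 2$ in $A$, so connectedness is preserved, and $\Omega_0\cap\R$ is discrete so $(\Omega\setminus\Omega_0)\cap\R\neq\varnothing$); only then does Proposition~\ref{nonzeroint} apply to the slice regular restriction $f|_{\Omega\setminus\Omega_0}$ as you claim. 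For the record, the paper itself does not prove this statement: it is recorded as \cite[Thm.6.6]{gps} in the preliminaries, with only the one-sentence remark preceding the theorem that the union of two discrete sets of singularities is again such a set, the full argument being deferred to \cite{gps}. Your reconstruction is therefore considerably more detailed than what appears in the paper, but it is consistent with the sketch given there and with the cited source.
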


\section{Properties of complex Nash functions in one complex variable}\label{compNashSec}

In this section, we study several (finiteness) properties for complex Nash functions defined on open subsets $D\subset \C$. We will use these results in \S\ref{SliceNashProp} to show their counterpart in the quaternionic and octonionic settings. Most of these results are probably well known by the experts. We include them here with full detailed proofs as we could not find precise references for them.

A subset $D\subset \C$ is called a \textit{domain} if it is open and connected. Given any open subset $D\subset \C$ and any function $f:D\to \C$, we denote by $V(f)$ the \textit{zero set of $f$} (\textit{in $D$}), that is the set $V(f):=\{z\in D : f(z)=0\}$. For each $z_0\in \C$ and each $R>0$ we denote by 
$$
B(z_0,R):=\{z\in\C : |z-z_0|<R\}
$$ 
the \textit{open ball with centre $z_0$ and radius $R$} and by $\overline{B}(z_0,R):=\{z\in\C : |z-z_0|\leq R\}$ the \textit{closed ball of centre $z_0$ and radius $R$}, which is the closure of $B(z_0,R)$ with respect to the Euclidean topology. 

\subsection{Real and complex Nash functions} We start by recalling the definition of real and complex Nash functions. Let $k$ be either the field of real numbers $\R$ or the field of complex numbers $\C$. Let $D\subset k^n$ be a (non-empty) open subset and $f:D\to k$ a $k$-valued function. 

\begin{defn}\label{defNash}
We say that $f$ is a \textit{$k$-Nash function at $x_0\in D$} if there exist an open neighbourhood $U$ of $x_0$ in $D$ and a non-zero polynomial $P\in k[\x,\t]:=k[\x_1,\ldots,\x_n,\t]$ such that 
\begin{itemize}
\item $f$ is $k$-analytic on $U$,
\item $P(x,f(x))=0$ for each $x\in U$. 
\end{itemize}
The function $f$ is said to be a \textit{$k$-Nash function on $D$} if it is a Nash function at every point of $D$. We denote by $\mathcal{N}_k(D)$ the set of all $k$-Nash functions on $D$ . \hfill$\sqbullet$
\end{defn}

We will also call $\R$-Nash functions \textit{real Nash functions} and $\C$-Nash functions \textit{complex Nash functions}. As being $k$-analytic is a local property, we have that, if a function $f$ is $k$-Nash on $D$, then $f$ is also $k$-analytic on $D$. By \cite[Cor.8.1.6]{bcr}, it follows that the set $\mathcal{N}_{\R}(D)$ endowed with the pointwise addition and multiplication of functions is a subring of the ring of real analytic functions on $D$. While, by \cite[Cor.1.11]{t}, the set $\mathcal{N}_{\C}(D)$ endowed with the pointwise addition and multiplication of functions is a subring of the ring of holomorphic functions on $D$. 

Let $f:D\to k$ be a $k$-analytic function and assume that $D$ is an open and connected subset of $k^n$. As for each polynomial $P\in k[\x,\t]$ the function $D\to k, \, x\mapsto P(x,f(x))$ is $k$-analytic, we deduce (straightforwardly) that the following are equivalent:
\begin{itemize}
\item[{\rm (i)}] $f\in\mathcal{N}_k(D)$,
\item[{\rm (ii)}] there exists $x_0\in D$ such that $f$ is $k$-Nash at $x_0$,
\item[{\rm (iii)}] there exists an irreducible polynomial $P\in k[\x,\t]$ such that $P(x,f(x))=0$ on $D$. 
\end{itemize}
In particular, if $D$ has finitely many connected components, then $f\in\mathcal{N}_k(D)$ if and only if there exists a non-zero polynomial $P\in k[\x,\t]$ such that $P(x,f(x))=0$ for each $x\in D$. If $D$ has infinitely many connected components, $f$ might not be algebraic over the ring $k[\x,\t]$. That is, in general, the existence of a non-zero polynomial $P\in k[\x,\t]$ such that $P(x,f(x))=0$ for each $x\in D$ is not guaranteed, see \cite[Ex.2.2]{ca}.

\subsection{Zero sets of complex Nash functions}

In this section, we show that non-zero complex Nash functions defined on domains of $\C$ have finitely many zeros. Namely, we show the following:

\begin{lem}[Finiteness of zeros of complex Nash functions]\label{lemmafiniti}
Let $D\subset \C$ be a domain and $f:D\to \C$ a non-zero complex Nash function. Then, $V(f)$ is a finite set.
\end{lem}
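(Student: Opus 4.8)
The plan is to exploit the algebraicity of $f$ over $\C[\z,\t]$ together with the fact that $D$ is connected. Since $D$ is connected, by the equivalence recalled just before the statement (clause (iii)), there exists an irreducible polynomial $P\in\C[\z,\t]$ with $P(z,f(z))=0$ for each $z\in D$. Write $P(\z,\t)=\sum_{j=0}^{d}a_j(\z)\,\t^j$ with $a_d\not\equiv 0$ and $d=\deg_\t P\geq 1$ (if $d=0$ then $P\in\C[\z]$ would vanish on the open set $D$, forcing $P\equiv 0$, a contradiction). The key point is that for $z\in V(f)$ we have $0=P(z,0)=a_0(z)$, so $V(f)\subset V(a_0)$, where $a_0\in\C[\z]$.

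First I would handle the case $a_0\not\equiv 0$: then $V(a_0)$ is the zero set of a non-zero one-variable polynomial, hence finite, and therefore $V(f)\subset V(a_0)$ is finite as well, and we are done. So the remaining case is $a_0\equiv 0$. The hard part is to rule this out using irreducibility of $P$ and the hypothesis $f\not\equiv 0$. If $a_0\equiv 0$, then $\t\mid P(\z,\t)$ in $\C[\z,\t]$, so $P(\z,\t)=\t\cdot Q(\z,\t)$ for some $Q\in\C[\z,\t]$; since $P$ is irreducible and $\t$ is not a unit, we must have $Q$ a non-zero constant, i.e.\ $P(\z,\t)=c\,\t$ for some $c\in\C\setminus\{0\}$. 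But then $0=P(z,f(z))=c\,f(z)$ for all $z\in D$ forces $f\equiv 0$, contradicting the hypothesis. Hence $a_0\not\equiv 0$, and the previous paragraph finishes the argument.

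The only genuine subtlety is the use of the equivalence that on a \emph{connected} $D$ the (local) Nash condition yields a single \emph{irreducible} global polynomial annihilating $f$; this is precisely what makes the divisibility argument in the case $a_0\equiv 0$ work. An alternative, if one prefers to avoid irreducibility, is to take $P$ of minimal $\t$-degree among non-zero annihilators of $f$: then minimality of $d$ together with $f\not\equiv0$ on the connected set $D$ (so $f$ does not vanish on any nonempty open subset, by the identity principle for holomorphic functions) shows that $a_0\not\equiv 0$ directly, since $a_0\equiv 0$ would let us factor out $\t$ and, on the (open, dense) set where $f\neq 0$, obtain a lower-degree annihilator, which then extends to all of $D$ by analytic continuation. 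Either route reduces $V(f)$ to a subset of the finite zero set of a non-zero univariate polynomial.
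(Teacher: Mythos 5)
Your main argument is correct and reaches the same conclusion by a slightly different route. Both your proof and the paper's rest on the same key observation: for $z\in V(f)$ one has $0=P(z,f(z))=P(z,0)$, so $V(f)$ is contained in the zero set of the univariate polynomial $a_0(\z)=P(\z,0)$, and the whole work is to rule out $a_0\equiv 0$. You rule it out by choosing $P$ \emph{irreducible} (available because $D$ is connected, by clause (iii) of the equivalence): if $\t\mid P$, irreducibility forces $P=c\t$, hence $f\equiv 0$, a contradiction. The paper instead takes an arbitrary nonzero annihilator $P$, factors out the maximal power $\z_2^k$ dividing $P$ to write $P=\z_2^kQ$ with $Q(\z_1,0)\not\equiv 0$, and then observes that the holomorphic function $z\mapsto Q(z,f(z))$ vanishes on the nonempty open set $D\setminus V(f)$, hence on all of $D$ by the identity theorem, which gives $V(f)\subset V(Q(\z_1,0))$. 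Your alternative route (minimal $\t$-degree plus analytic continuation) is essentially the paper's argument recast in terms of minimality rather than maximal $\z_2$-divisibility. The trade-off: your irreducibility argument is a bit shorter and avoids the identity-theorem step, while the paper's argument avoids invoking irreducibility and unique factorization in $\C[\z_1,\z_2]$, using only elementary factoring and holomorphy. Both are fine; minor nitpick: the condition $a_d\not\equiv 0$ that you state is never used.
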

\begin{proof}
As $f$ is a complex Nash function on $D$ and $D$ is connected, then there exists a non-zero polynomial $P\in\C[\z_1,\z_2]$ such that $P(z,f(z))=0$ for each $z\in D$. If $f$ has infinite many zeros on $D$, then the polynomial $P(\z_1,0)$ has infinite many roots. This means that the polynomial $P(\z_1,0)$ is the zero polynomial. Thus, there exists an integer $k\geq 1$ such that $P(\z_1,\z_2)=\z_2^k Q(\z_1,\z_2)$ for some non-zero polynomial $Q\in\C[\z_1,\z_2]$ such that $Q(\z_1,0)$ is not the zero polynomial. In particular,
$
P(z,f(z))=f(z)^kQ(z,f(z))=0
$
for each $z\in D$. The holomorphic function $z\mapsto Q(z,f(z))$ vanishes on the non-empty open set $D\setminus V(f)$. Thus, $Q(z,f(z))=0$ for each $z\in D$. We conclude that $V(f)$ is finite, because otherwise the polynomial $Q(\z_1,0)$ would have infinitely many zeros, so $Q(\z_1,0)$ would be the zero polynomial.
\end{proof}

\subsection{Meromorphic complex Nash functions}\label{MeroCNash}

In this section, we study properties of meromorphic complex Nash functions. Let $D\subset \C$ be an open subset. Recall that a function $f$ is a \textit{meromorphic function on} $D$ if there exists a closed and discrete subset $D_0$ of $D$ such that $f:D\setminus D_0\to \C$ is a holomorphic function and each point $z\in D\setminus D_0$ is a removable singularity or a pole of $f$. We say that $f$ is a \textit{meromorphic complex Nash function on} $D$ if $f$ is a meromorphic function on $D$ and there exists a closed and discrete subset $D_0$ of $D$ such that $f\in\mathcal{N}_{\C}(D\setminus D_0)$.

We start by recalling the following well known bound for the modules of roots of a complex polynomial. We include its proof here for the sake of completeness. 

\begin{lem}\label{boundzero}
Let $P(\z):=\alpha_n\z^n+\ldots+\alpha_1\z+\alpha_0\in \C[\z]$ be a complex polynomial such that $\alpha_n\neq 0$ and define
$$
R:=1+\left|\frac{\alpha_{n-1}}{\alpha_n}\right|+\ldots+\left|\frac{\alpha_{0}}{\alpha_n}\right|.
$$
If $z\in \C$ is such that $P(z)=0$, then $|z|<R$.
\end{lem}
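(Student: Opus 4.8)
The plan is to prove this classical Cauchy-type bound by separating the two cases $|z| \le 1$ and $|z| > 1$, though actually one can handle it uniformly: since $R \ge 1$, if $|z| \le 1$ there is nothing to prove, so the substantive case is $|z| > 1$.

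First I would start from the hypothesis $P(z) = 0$, which gives $\alpha_n z^n = -(\alpha_{n-1}z^{n-1} + \ldots + \alpha_1 z + \alpha_0)$. Dividing by $\alpha_n$ (legitimate since $\alpha_n \neq 0$) and taking absolute values, I would apply the triangle inequality to obtain
$$
|z|^n \le \left|\frac{\alpha_{n-1}}{\alpha_n}\right| |z|^{n-1} + \ldots + \left|\frac{\alpha_1}{\alpha_n}\right| |z| + \left|\frac{\alpha_0}{\alpha_n}\right|.
$$
Then, assuming $|z| > 1$ (so $|z|^k \le |z|^{n-1}$ for all $k \le n-1$), I would bound each term on the right by the corresponding coefficient times $|z|^{n-1}$, yielding $|z|^n \le \left(\sum_{k=0}^{n-1}\left|\frac{\alpha_k}{\alpha_n}\right|\right)|z|^{n-1} = (R-1)|z|^{n-1}$. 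Dividing through by $|z|^{n-1} > 0$ gives $|z| \le R - 1 < R$.

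To finish cleanly I would combine the cases: if $|z| \le 1$ then $|z| \le 1 \le R - (R-1)$; more to the point $R > 1$ since it is $1$ plus nonnegative terms, so $|z| \le 1 < R$ directly. Hence in all cases $|z| < R$.

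I do not expect any real obstacle here — this is a standard estimate. The only point requiring a word of care is making the case split explicit so that the step "$|z|^k \le |z|^{n-1}$" is justified (it needs $|z| \ge 1$), and noting that the strict inequality in the conclusion comes for free because $R$ has the summand $1$ that is never "used up" in the bound $|z| \le R-1$.
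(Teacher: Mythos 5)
Your core estimate is correct and is essentially the paper's argument in a direct rather than contrapositive form (the paper assumes $|z|\geq R$, factors $P(z)=\alpha_n z^n\bigl(1+\tfrac{\alpha_{n-1}}{\alpha_n}z^{-1}+\cdots+\tfrac{\alpha_0}{\alpha_n}z^{-n}\bigr)$, and shows the parenthesized factor has modulus $>0$). However, your wrap-up of the case $|z|\leq 1$ has a flaw: you assert that ``$R>1$ since it is $1$ plus nonnegative terms,'' but nonnegative terms only give $R\geq 1$, with equality precisely when $\alpha_0=\cdots=\alpha_{n-1}=0$. In that situation your inequality $|z|\leq 1<R$ fails when $|z|=1$, so the proof as written does not close.

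The repair is minor and you already have the ingredients for it. Put the boundary of the case split at $|z|\geq 1$ rather than $|z|>1$: you yourself observe that the step $|z|^k\leq|z|^{n-1}$ for $k\leq n-1$ only needs $|z|\geq 1$, and dividing by $|z|^{n-1}$ is still legitimate there. Then the substantive case yields $|z|\leq R-1<R$ for every root with $|z|\geq 1$, and the leftover case is $|z|<1\leq R$, which gives strict inequality with no appeal to $R>1$. (The paper avoids the issue altogether by splitting at $|z|\geq R$; since $R\geq 1$, that automatically puts $|z|\geq 1$ in the range where the same power bounds apply.)
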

\begin{proof}
In order to conclude, it is enough to show that if $z\in \C$ is such that $|z|>R$, then $P(z)\neq 0$. Let $z\in\C$ be such that $|z|\geq R$. We have
\begin{equation}\label{diversozeroSec}
P(z)=\alpha_n z^n\Big(1+\frac{\alpha_{n-1}}{\alpha_n}z^{-1}+\ldots+\frac{\alpha_{0}}{\alpha_n}z^{-n}\Big).
\end{equation}
As $|z|\geq R\geq 1$, it holds
$$
\left|\frac{\alpha_{n-1}}{\alpha_n}z^{-1}+\ldots+\frac{\alpha_{0}}{\alpha_n}z^{-n}\right|\leq \frac{1}{R}\Big(\left|\frac{\alpha_{n-1}}{\alpha_n}\right|+\ldots+\left|\frac{\alpha_{0}}{\alpha_n}\right|\Big)< 1,
$$
so, in particular,
$$
1+\frac{\alpha_{n-1}}{\alpha_n}z^{-1}+\ldots+\frac{\alpha_{0}}{\alpha_n}z^{-n}\neq 0
$$
By \eqref{diversozeroSec}, we conclude that $P(z)\neq 0$, as required.
\end{proof}

Next, we study the singularities of (germs of) complex Nash functions.

\begin{lem}[Singularities of complex Nash functions]\label{onlypoles}
Let $D\subset \C$ be an open subset and $z_0\in D$. If $f:D\setminus\{z_0\}\to \C$ is a complex Nash function, then $f$ is a meromorphic complex Nash function on $D$.
\end{lem}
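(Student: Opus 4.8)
\emph{Plan of proof.} The point is to show that $z_{0}$ is not an essential singularity of $f$; since by hypothesis $f$ is holomorphic on $D\setminus\{z_{0}\}$ and lies in $\mathcal{N}_{\C}(D\setminus\{z_{0}\})$, this immediately yields that $f$ is a meromorphic complex Nash function on $D$, with the closed and discrete set $D_{0}:=\{z_{0}\}$.

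First I would pass to a local picture. Choose $r>0$ with $B(z_{0},r)\subset D$. The punctured disc $B(z_{0},r)\setminus\{z_{0}\}$ is a connected open subset of $\C$ on which $f$ is a complex Nash function (being Nash is a local, restriction-stable property). By the equivalence recalled after Definition \ref{defNash}, there is a non-zero polynomial $P\in\C[\z_{1},\z_{2}]$ with $P(z,f(z))=0$ for every $z\in B(z_{0},r)\setminus\{z_{0}\}$. Writing $P=\sum_{k=0}^{n}a_{k}(\z_{1})\z_{2}^{k}$ with $a_{n}\not\equiv 0$, one has $n=\deg_{\z_{2}}P\geq 1$: otherwise $P$ would depend only on $\z_{1}$ and $P(z)=0$ on the punctured disc would force $P\equiv 0$. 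Since $a_{n}$ has only finitely many zeros, after shrinking $r$ I may assume $a_{n}(z)\neq 0$ for all $z$ with $0<|z-z_{0}|<r$ (the value $a_{n}(z_{0})$ is allowed to vanish).

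Next, for each such $z$ the number $f(z)$ is a root of the one-variable polynomial $P(z,\cdot)$ whose leading coefficient $a_{n}(z)$ is non-zero, so Lemma \ref{boundzero} gives
\[
|f(z)|<1+\sum_{k=0}^{n-1}\left|\frac{a_{k}(z)}{a_{n}(z)}\right|.
\]
Each $a_{k}/a_{n}$ is a rational function, holomorphic on $B(z_{0},r)\setminus\{z_{0}\}$ and with at worst a pole at $z_{0}$; letting $m\geq 0$ be an integer exceeding all these pole orders, the product $(z-z_{0})^{m}f(z)$ is bounded near $z_{0}$. Hence $h(z):=(z-z_{0})^{m+1}f(z)$ is holomorphic on $B(z_{0},r')\setminus\{z_{0}\}$ for some $0<r'\leq r$ and tends to $0$ as $z\to z_{0}$, so by Riemann's removable singularity theorem $h$ extends holomorphically across $z_{0}$. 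Therefore $f=h/(z-z_{0})^{m+1}$ has at most a pole at $z_{0}$, i.e.\ $f$ is meromorphic on $D$. Together with $f\in\mathcal{N}_{\C}(D\setminus\{z_{0}\})$ this shows $f$ is a meromorphic complex Nash function on $D$.

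The argument is essentially routine; the only points needing (minor) care are ensuring that the leading coefficient $a_{n}$ does not vanish on a punctured neighbourhood of $z_{0}$ — handled by shrinking the disc — and observing that the bound furnished by Lemma \ref{boundzero} is a finite sum of moduli of rational functions, hence dominated by $C|z-z_{0}|^{-m}$, which is precisely what rules out an essential singularity.
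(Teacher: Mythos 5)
Your proposal is correct and takes essentially the same approach as the paper: both reduce to a punctured neighbourhood of $z_0$, invoke the polynomial relation $P(z,f(z))=0$, apply Lemma \ref{boundzero} to bound $|f|$ by a sum of moduli of rational functions $\alpha_k/\alpha_n$, deduce that $(z-z_0)^m f(z)$ is bounded near $z_0$ for a suitable $m$, and conclude via the Riemann removable-singularity theorem. The only cosmetic difference is that you phrase the final estimate in terms of the pole orders of the rational functions $a_k/a_n$, whereas the paper explicitly factors $\alpha_n(z)=z^kQ(z)$ with $Q(0)\neq 0$; these are the same computation.
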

\begin{proof}
Up to an affine change of coordinates, we may assume that $z_0=0$. Moreover, as we are interested in the behaviour of $f$ locally around 0, we may also assume that $D$ is connected. By the Riemann extension theorem \cite[Thm.1.5.2]{Na}, it is enough to show: \textit{There exists an integer $k\geq 0$ such that the function $z^kf(z)$ is bounded locally around 0.} 

As $f$ is a Nash function on $D\setminus\{0\}$ and $D\setminus\{0\}$ is connected, then there exists a non-zero polynomial 
$$
P(\z_1,\z_2):=\alpha_n(\z_1)\z_2^n+\ldots+\alpha_1(\z_1)\z_2+\alpha_0(\z_1)\in \C[\z_1,\z_2]=\C[\z_1][\z_2]
$$ 
such that $P(z,f(z))=0$ for each $z\in D\setminus\{0\}$. We may assume that $\alpha_n$ is not the zero polynomial. For each $z\in D\setminus\{0\}$, we have that $f(z)$ is a zero for the polynomial $P(z,\z_2)\in\C[\z_2]$. Thus, by Lemma \ref{boundzero}, we have
\begin{equation}\label{stimapoli}
|f(z)|\leq 1+\left|\frac{\alpha_{n-1}(z)}{\alpha_n(z)}\right|+\ldots+\left|\frac{\alpha_{0}(z)}{\alpha_n(z)}\right| 
\end{equation}
for each $z\in D\setminus\{0\}$ such that $\alpha_n(z)\neq 0$. As $D$ is open and $\alpha_n$ is not the zero polynomial, then there exists $R>0$ such that the closed ball $\overline{B}(0,R)$ is contained in $D$ and $\alpha_n(z)\neq 0$ for each $z\in \overline{B}(0,R)\setminus\{0\}$. Let 
$$
M:=\max_{\ell=0,\ldots,n-1}\max_{z\in \overline{B}(0,R)} |\alpha_{\ell}(z)|.
$$
As $\alpha_n$ is not the zero polynomial, then there exist an integer $k\geq 0$ and a polynomial $Q\in \C[\z]$, with $Q(0)\neq 0$, such that $\alpha_n(\z)=\z^k Q(\z)$. Observe that $Q(z)\neq 0$ for each $z\in \overline{B}(0,R)$. Let $L:=\min_{z\in \overline{B}(0,R)} |Q(z)|>0$. It holds
$
|\alpha_n(z)|=|z^kQ(z)|\geq L|z^k|
$
for each $z\in \overline{B}(0,R)$. By \eqref{stimapoli}, we have
$$
|f(z)|\leq 1+\frac{(n-1) M}{|\alpha_n(z)|}\leq 1+\frac{(n-1) M}{L|z^k|}
$$
for each $z\in \overline{B}(0,R)\setminus\{0\}$. We deduce
$$
|z^kf(z)|\leq |z^k|+\frac{(n-1) M}{L}\leq \max_{z\in \overline{B}(0,R)}|z^k|+\frac{(n-1) M}{L},
$$
for each $z\in \overline{B}(0,R)\setminus\{0\}$, so the function $z^kf(z)$ is bounded locally around zero, as desired.
\end{proof}

By Lemma \ref{onlypoles}, we deduce that meromorphic complex Nash functions have finitely many poles on domains of $\C$.

\begin{lem}[Finiteness of poles of meromorphic complex Nash functions]\label{finitepoles}
Let $D\subset \C$ be a domain and $f$ a meromorphic complex Nash function on $D$. Let $D_0\subset D$ be a closed and discrete subset such that $f\in\mathcal{N}_{\C}(D\setminus D_0)$. Then there exist 
\begin{itemize}
\item finitely many (possibly none) points $z_1,\ldots,z_k \in D_0$,
\item a complex Nash function $\Phi: D\setminus \{z_1,\ldots,z_k\}\to \C$,
\end{itemize}
such that $\Phi$ is meromorphic on $D$ and $f=\Phi|_{D\setminus D_0}$.
\end{lem}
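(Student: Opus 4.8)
The plan is to produce a single polynomial relation satisfied by $f$ on all of $D\setminus D_0$ and to locate the poles inside the (finite) zero set of its leading coefficient. First I would observe that $D\setminus D_0$ is again a domain: $D_0$ is discrete and closed in the second-countable space $D$, hence at most countable, and removing a countable subset from an open connected subset of $\C$ leaves a path-connected set. Since $f\in\mathcal{N}_{\C}(D\setminus D_0)$ and $D\setminus D_0$ is connected, the characterisation of Nash functions on domains recorded after Definition \ref{defNash} provides a non-zero polynomial
\[
P(\z_1,\z_2)=\alpha_n(\z_1)\z_2^n+\cdots+\alpha_1(\z_1)\z_2+\alpha_0(\z_1)\in\C[\z_1][\z_2],\qquad \alpha_n\not\equiv 0,
\]
with $P(z,f(z))=0$ for every $z\in D\setminus D_0$. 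I then set $Z:=\{z\in D:\alpha_n(z)=0\}$, which is a finite set since $\alpha_n$ is a non-zero one-variable polynomial.

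Next I would show that every point of $D_0\setminus Z$ is a removable singularity of $f$. Fix $z_0\in D_0\setminus Z$ and choose $r>0$ with $\overline{B}(z_0,r)\subset D$, $\overline{B}(z_0,r)\cap D_0=\{z_0\}$ and $\alpha_n$ nowhere vanishing on $\overline{B}(z_0,r)$. For $z\in\overline{B}(z_0,r)\setminus\{z_0\}\subset D\setminus D_0$ the value $f(z)$ is a root of $P(z,\cdot)\in\C[\z_2]$, so Lemma \ref{boundzero} gives
\[
|f(z)|\le 1+\Big|\tfrac{\alpha_{n-1}(z)}{\alpha_n(z)}\Big|+\cdots+\Big|\tfrac{\alpha_0(z)}{\alpha_n(z)}\Big|,
\]
and the right-hand side is bounded on the compact set $\overline{B}(z_0,r)$ because there $\alpha_n$ is continuous and non-zero. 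Hence $f$ is bounded near $z_0$; by the Riemann extension theorem it extends holomorphically across $z_0$, and the identity principle shows this extension still satisfies $P(z,\cdot)=0$ near $z_0$, so it is Nash there — exactly as in the proof of Lemma \ref{onlypoles}. Consequently every non-removable singularity of $f$ lies in the finite set $D_0\cap Z$; since $f$ is holomorphic off $D_0$ and meromorphic on $D$ (or by Lemma \ref{onlypoles}), each such point is a genuine pole, and I let $z_1,\dots,z_k\in D_0\cap Z\subseteq D_0$ denote these finitely many (possibly none) poles.

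Finally I would glue the pieces together. Define $\Phi$ on $D\setminus\{z_1,\dots,z_k\}$ to equal $f$ on $D\setminus D_0$ and to equal the holomorphic extension obtained above at each point of $D_0\setminus\{z_1,\dots,z_k\}$; discreteness of $D_0$ makes these prescriptions compatible, so $\Phi$ is holomorphic on the domain $D\setminus\{z_1,\dots,z_k\}$. As $\Phi$ is holomorphic on a connected open set and $P(z,\Phi(z))=0$ on the non-empty open subset $D\setminus D_0$, the identity principle forces $P(z,\Phi(z))=0$ throughout $D\setminus\{z_1,\dots,z_k\}$, so $\Phi\in\mathcal{N}_{\C}(D\setminus\{z_1,\dots,z_k\})$. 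Near each $z_j$ we have $\Phi=f$ on a punctured disc, so $z_j$ is a pole of $\Phi$; since $\{z_1,\dots,z_k\}$ is finite, hence closed and discrete in $D$, $\Phi$ is meromorphic on $D$, and $f=\Phi|_{D\setminus D_0}$ by construction, as required.

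The main obstacle is the passage from local to global: one needs $D\setminus D_0$ connected so that a single polynomial $P$ governs $f$ everywhere — otherwise the finiteness of $Z$ would only bound the poles on one connected component at a time — and one needs the removable extension across a point of $D_0\setminus Z$ to be Nash rather than merely holomorphic, since it is the survival of the relation $P$ that ultimately confines the poles to the finite set $D_0\cap Z$. The remaining verifications (connectedness of the complement of a discrete set, the compactness bound, and the identity-principle bookkeeping) are routine.
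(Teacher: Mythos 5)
Your proof is correct and follows essentially the same route as the paper's: obtain a single polynomial relation $P(z,f(z))=0$ on the connected set $D\setminus D_0$, use the finite zero set of the leading coefficient $\alpha_n$ together with the root bound of Lemma~\ref{boundzero} to show that every $z_0\in D_0\setminus V(\alpha_n)$ is a removable singularity, and invoke Lemma~\ref{onlypoles} to handle the remaining finitely many points. Where you differ is only in explicitness: you verify that $D\setminus D_0$ is connected, spell out the gluing of the local holomorphic extensions into $\Phi$, and check via the identity principle that $P(z,\Phi(z))\equiv 0$ so $\Phi$ is again Nash — steps the paper leaves implicit but which are indeed needed for the statement as written.
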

\begin{proof}
As $f$ is a complex Nash function on $D\setminus D_0$ and $D\setminus D_0$ is connected, then there exists a non-zero polynomial 
$$
P(\z_1,\z_2):=\alpha_n(\z_1)\z_2^n+\ldots+\alpha_1(\z_1)\z_2+\alpha_0(\z_1)\in \C[\z_1,\z_2]=\C[\z_1][\z_2]
$$ 
such that $P(z,f(z))=0$ for each $z\in D\setminus D_0$. We may assume that $\alpha_n$ is not the zero polynomial. In particular, $V(\alpha_n)$ is a finite subset of $\C$. By Lemma \ref{onlypoles}, we only need to show that if $z_0\in D_0\setminus V(\alpha_n)$, then $z_0$ is a removable singularity for $f$. Let $z_0\in D_0\setminus V(\alpha_n)$. As $\alpha_n(z_0)\neq 0$, by \eqref{stimapoli}, we have that $f$ is locally bounded around $z_0$. By the Riemann extension theorem \cite[Thm.1.5.2]{Na}, we conclude that $z_0$ is a removable singularity for $f$, as required.
\end{proof}

By the previous results, we deduce straightforwardly that complex Nash functions on $D\setminus D_0$ are meromorphic on $D$ and have finitely many poles on each connected component of $D$. 

\begin{cor}\label{mero}
Let $D\subset \C$ be an open subset and $D_0\subset D$ a closed and discrete subset. If $f$ is a complex Nash function on $D\setminus D_0$, then $f$ is a meromorphic function on $D$ with finitely many poles on each connected component of $D$.
\end{cor}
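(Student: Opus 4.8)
The plan is to deduce the statement by localising at the points of $D_0$ and then invoking Lemma~\ref{onlypoles} and Lemma~\ref{finitepoles}. Both conclusions — being meromorphic on $D$, and having finitely many poles on each connected component — can be checked separately on each connected component of $D$; moreover, since being a complex Nash function is a local property, the restriction of $f$ to any open subset of $D\setminus D_0$ is again a complex Nash function, and if $C$ is a connected component of $D$ then $C\cap D_0$ is closed and discrete in the domain $C$ with $f\in\mathcal{N}_{\C}(C\setminus D_0)$. Hence there is no loss in assuming from the start that $D$ is a domain.

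First I would show that $f$ is meromorphic on $D$. Because $D_0$ is closed and discrete in $D$, every point $z_0\in D_0$ is isolated in $D_0$, so there is an open set $U$ with $z_0\in U\subset D$ and $U\cap D_0=\{z_0\}$. Then $f|_{U\setminus\{z_0\}}$ is a complex Nash function on $U\setminus\{z_0\}$, so Lemma~\ref{onlypoles} guarantees that $z_0$ is either a removable singularity or a pole of $f$. Since $f$ is holomorphic on $D\setminus D_0$ (a complex Nash function being in particular holomorphic) and every point of the closed discrete set $D_0$ is a removable singularity or a pole, $f$ is by definition a meromorphic function on $D$; together with the hypothesis $f\in\mathcal{N}_{\C}(D\setminus D_0)$, this means $f$ is a meromorphic complex Nash function on $D$.

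Now I would apply Lemma~\ref{finitepoles} with this $D_0$: it produces finitely many points $z_1,\ldots,z_k\in D_0$ and a complex Nash function $\Phi:D\setminus\{z_1,\ldots,z_k\}\to\C$, meromorphic on $D$, with $f=\Phi|_{D\setminus D_0}$. At every point of $D\setminus\{z_1,\ldots,z_k\}$ the meromorphic extension agrees with the complex Nash — hence holomorphic — function $\Phi$, so such a point cannot be a pole; therefore all poles of $f$ on $D$ lie in the finite set $\{z_1,\ldots,z_k\}$. Running this argument over the (possibly infinitely many) connected components of $D$ yields the corollary.

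I do not expect any genuine obstacle: the only point requiring a little care is the order of operations, namely that one must first establish that $f$ is meromorphic on all of $D$ — so that the word ``pole'' is meaningful at points of $D_0$ — before counting the poles, and this is precisely the content of Lemma~\ref{onlypoles}; the finiteness is then immediate from Lemma~\ref{finitepoles}.
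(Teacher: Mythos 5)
Your argument is correct and is precisely the route the paper intends: the paper states Corollary~\ref{mero} as an immediate consequence of Lemma~\ref{onlypoles} and Lemma~\ref{finitepoles} with no further proof, and your write-up just fills in the obvious details (localise to a connected component, use Lemma~\ref{onlypoles} near each point of $D_0$ to establish meromorphy, then invoke Lemma~\ref{finitepoles} for finiteness of poles).
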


We end this section by showing that global meromorphic complex Nash functions are actually rational functions. Compare this result with \cite[Thm.1.3]{t}.

\begin{thm}[Global meromorphic complex Nash functions]\label{globalmer}
Let $D_0\subset \C$ be a closed and discrete subset and $f:\C\setminus D_0\to \C$ a complex Nash function. Then there exist two polynomials $P,Q\in\C[\z]$ such that $f(z)=\tfrac{P(z)}{Q(z)}$ for each $z\in \C\setminus D_0$.
\end{thm}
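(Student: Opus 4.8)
The strategy is first to use the earlier results to replace $f$ by a function with only finitely many poles, then to clear those poles by multiplying by a polynomial, and finally to show that the resulting entire function must be a polynomial; dividing back gives the desired rational expression.

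First I would reduce to finitely many poles. By Corollary~\ref{mero} applied with $D=\C$, the function $f$ is meromorphic on $\C$ with only finitely many poles, and by Lemma~\ref{finitepoles} there exist finitely many points $z_1,\dots,z_k\in\C$ and a complex Nash function $\Phi:\C\setminus\{z_1,\dots,z_k\}\to\C$, meromorphic on $\C$, with $f=\Phi$ on $\C\setminus D_0$. Letting $m_i\ge 0$ be the order of the pole of $\Phi$ at $z_i$ and setting $Q(\z):=\prod_{i=1}^k(\z-z_i)^{m_i}\in\C[\z]$, the function $g:=Q\cdot\Phi$ has a removable singularity at each $z_i$ and hence extends to an entire function $g:\C\to\C$. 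Since $f=\Phi=g/Q$ on $\C\setminus D_0$, it then suffices to prove that $g$ is a polynomial, and the theorem follows with $P:=g$.

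Next I would verify that $g$ is a complex Nash function on all of $\C$. The set $\C\setminus\{z_1,\dots,z_k\}$ is connected and $\Phi$ is Nash there, so there is a non-zero polynomial $P_0(\z_1,\z_2)=\alpha_n(\z_1)\z_2^n+\dots+\alpha_0(\z_1)$ with $\alpha_n\not\equiv 0$ and $P_0(z,\Phi(z))=0$ for every $z\in\C\setminus\{z_1,\dots,z_k\}$. Substituting $\Phi=g/Q$ (valid since $Q$ vanishes only at the $z_i$) and multiplying through by $Q(\z_1)^n$ produces the polynomial $\widetilde P(\z_1,\z_2):=\sum_{j=0}^n\alpha_j(\z_1)Q(\z_1)^{n-j}\z_2^j$, whose $\z_2^n$-coefficient is $\alpha_n$, hence $\widetilde P\not\equiv 0$, and which satisfies $\widetilde P(z,g(z))=0$ on $\C\setminus\{z_1,\dots,z_k\}$, thus on all of $\C$ by continuity. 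As $g$ is entire, this gives $g\in\mathcal N_{\C}(\C)$.

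Finally I would show that an entire complex Nash function is a polynomial. Writing $\widetilde P(\z_1,\z_2)=\sum_{j=0}^n\beta_j(\z_1)\z_2^j$ with $\beta_n\not\equiv 0$, for each $z$ with $\beta_n(z)\ne 0$ the value $g(z)$ is a root of $\widetilde P(z,\cdot)\in\C[\z_2]$, so Lemma~\ref{boundzero} yields $|g(z)|\le 1+\sum_{j=0}^{n-1}\bigl|\beta_j(z)/\beta_n(z)\bigr|$. Since $\beta_n$ has only finitely many zeros, for $|z|$ large the right-hand side is bounded by $C(1+|z|)^N$ for suitable $C,N>0$, and together with the boundedness of $g$ on compact sets this shows $g$ has polynomial growth on $\C$, hence is a polynomial by the generalised Liouville theorem (alternatively, composing with $w\mapsto 1/w$ and invoking Lemma~\ref{onlypoles} shows that $g$ has a pole or removable singularity at infinity, which forces $g$ to be a polynomial). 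This completes the argument. The only delicate point is really the bookkeeping in the first reduction, but that is supplied verbatim by Corollary~\ref{mero} and Lemma~\ref{finitepoles}; the remaining content consists of the two standard facts that clearing poles preserves algebraicity over $\C[\z]$ and that an entire algebraic function is polynomial.
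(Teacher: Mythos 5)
Your proof is correct and follows essentially the same route as the paper's: reduce to finitely many poles via Lemma \ref{finitepoles}, multiply by $Q(\z)=\prod_j(\z-z_j)^{\ord_{z_j}\Phi}$ to get an entire Nash function, and conclude it is a polynomial. The only cosmetic differences are that you obtain the algebraic equation for $g=Q\Phi$ by clearing denominators from the one for $\Phi$ rather than invoking the ring property of Nash functions as the paper does, and you re-derive the fact that entire Nash functions are polynomials (via Lemma \ref{boundzero} and a Liouville-type growth estimate) rather than simply citing \cite[Thm.1.3]{t} as the paper does; both variants are sound.
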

\begin{proof}
By Lemma \ref{finitepoles}, there exist finitely many (possibly none) points $z_1,\ldots,z_k\in D_0$ and a complex Nash function $\Phi:\C\setminus\{z_1,\ldots,z_k\}\to \C$ which is meromorphic on $\C$ and such that $f=\Phi|_{\C\setminus D_0}$. For each $j=1,\ldots,k$ denote by $\ord_{z_j}(\Phi)\geq 0$ the order of the meromorphic function $\Phi$ at $z_j$. Consider the polynomial
$$
Q(\z):=\prod_{j=1}^k(\z-z_j)^{\ord_{z_j}(\Phi)}\in\C[\z]
$$
As both $\Phi$ and $Q$ are complex Nash functions on $\C\setminus\{z_1,\ldots,z_k\}$, by \cite[Cor.1.11]{t}, we deduce that also the function $G:=Q\cdot \Phi$ is a complex Nash function on $\C\setminus\{z_1,\ldots,z_k\}$. As $\C\setminus \{z_1,\ldots,z_k\}$ is connected and $G$ is a complex Nash function on $\C\setminus\{z_1,\ldots,z_k\}$, then there exists a non-zero polynomial $R\in \C[\z,\t]$ such that
\begin{equation}\label{nasheqzero}
R(z,G(z))=0
\end{equation}
for each $z\in\C\setminus\{z_1,\ldots,z_k\}$. 

Let $j\in\{1,\ldots,k\}$. Using the Laurent expansion of $\Phi$ around $z_j$, we deduce that the function $z\mapsto (z-z_j)^{\ord_{z_j}(\Phi)}\Phi(z)$ is holomorphic in $z_j$. In particular, the function $G=Q\cdot \Phi$ is holomorphic on $\C$. Thus, the function $z\mapsto R(z,G(z))$ is also holomorphic on $\C$. By \eqref{nasheqzero}, we deduce that 
$
R(z,G(z))=0
$
for each $z\in \C$. In particular, $G=Q\Phi$ is a complex Nash function on $\C$. By \cite[Thm.1.3]{t}, there exists a polynomial $P\in \C[\z]$ such that $Q(z)\Phi(z)=P(z)$ for each $z\in \C$, so
$$
\Phi(z)=\frac{P(z)}{Q(z)}
$$
for each $z\in\C\setminus\{z_1,\ldots,z_k\}$. As $f=\Phi|_{\C\setminus D_0}$ and $\{z_1,\ldots,z_k\}\subset D_0$, we conclude that 
$$
f(z)=\frac{P(z)}{Q(z)}
$$
for each $z\in \C\setminus D_0$, as required.
\end{proof}

\subsection{Polynomial bounds at infinity for complex Nash functions}\label{BoundsSec}

In this section we show that the module of a complex Nash function is polynomially bounded at infinity. We start with the following elementary lemma that we include here for the sake of completeness.

\begin{lem}\label{polineq}
Let $P\in\C[\z]$ be a polynomial of degree $n\geq 0$. Then there exists a constant $C\geq 0$ such that 
$
|P(z)|\leq C(1+|z|^n)
$
for each $z\in \C$.
\end{lem}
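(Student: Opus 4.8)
The plan is to write $P(\z)=a_n\z^n+a_{n-1}\z^{n-1}+\cdots+a_1\z+a_0$ with $a_0,\ldots,a_n\in\C$, and to set $C:=|a_0|+|a_1|+\cdots+|a_n|\geq 0$. This is the natural candidate, being just the sum of the moduli of all coefficients, and it obviously does not depend on $z$.

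First I would bound $|P(z)|$ by the triangle inequality: $|P(z)|\leq \sum_{j=0}^n |a_j|\,|z|^j$. Then I would split into the two easy regimes. If $|z|\leq 1$, then $|z|^j\leq 1$ for every $j=0,\ldots,n$, so $|P(z)|\leq \sum_{j=0}^n|a_j|=C\leq C(1+|z|^n)$. If $|z|\geq 1$, then $|z|^j\leq |z|^n$ for every $j=0,\ldots,n$, so $|P(z)|\leq \Big(\sum_{j=0}^n|a_j|\Big)|z|^n=C|z|^n\leq C(1+|z|^n)$. In either case the desired inequality holds for all $z\in\C$, which finishes the proof.

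There is no real obstacle here; the only thing to be slightly careful about is the degenerate case $n=0$ (so $P$ is a constant $a_0$), where the statement reads $|a_0|\leq C(1+1)$ with $C=|a_0|$, which is trivially true, and also the case where $P$ is the zero polynomial if one allows $\deg=0$ loosely — then $C=0$ works. I would simply remark that the two-case argument above covers these uniformly, so no separate treatment is needed.
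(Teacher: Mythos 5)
Your proof is correct and uses exactly the same constant $C=\sum_j|a_j|$ and the same two-case split ($|z|\leq 1$ versus $|z|\geq 1$) as the paper's proof. The closing remark about degenerate cases is fine but unnecessary, since the two-case argument already covers them.
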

\begin{proof}
Assume that $P(\z):=\alpha_n\z^n+\ldots+\alpha_1\z+\alpha_0$ and define
$
C:=|\alpha_0|+\ldots+|\alpha_n|.
$
It holds $|P(z)|\leq C$ if $|z|\leq 1$ and $|P(z)|\leq C|z|^n$ if $|z|>1$. Thus, $|P(z)|\leq C(1+|z|^n)$ for each $z\in \C$, as required.
\end{proof}

Next, we show the following:

\begin{prop}[Polynomial bounds for complex Nash functions]\label{polboundC}
Let $D\subset \C$ be an unbounded domain and $f\in \mathcal{N}_\C(D)$. Then there exist an integer $m\geq 0$ and constants $C,R\geq1$ such that 
$$
|f(z)|\leq C(1+|z|^m)
$$
for each $z\in D$ such that $|z|\geq R$.
\end{prop}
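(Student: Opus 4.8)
The plan is to use the algebraic relation satisfied by $f$ together with the root bound of Lemma \ref{boundzero}, in the spirit of the proof of Lemma \ref{onlypoles}. Since $f\in\mathcal{N}_\C(D)$ and $D$ is connected, there is a non-zero polynomial
\[
P(\z_1,\z_2)=\alpha_n(\z_1)\z_2^n+\ldots+\alpha_1(\z_1)\z_2+\alpha_0(\z_1)\in\C[\z_1][\z_2]
\]
with $P(z,f(z))=0$ for each $z\in D$; we may assume $\alpha_n$ is not the zero polynomial, and moreover $n\geq 1$, since if $n=0$ the polynomial $\alpha_0$ would vanish on the infinite set $D$, contradicting $P\neq 0$. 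For every $z\in D$ with $\alpha_n(z)\neq 0$ the value $f(z)$ is a root of the degree-$n$ polynomial $P(z,\z_2)\in\C[\z_2]$, so Lemma \ref{boundzero} gives, just as in \eqref{stimapoli},
\[
|f(z)|\leq 1+\left|\tfrac{\alpha_{n-1}(z)}{\alpha_n(z)}\right|+\ldots+\left|\tfrac{\alpha_{0}(z)}{\alpha_n(z)}\right|.
\]

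First I would bound the denominator from below near infinity: writing $d:=\deg\alpha_n$ and $c\neq 0$ for its leading coefficient, the standard lower bound for a polynomial near infinity produces an $R\geq 1$ with $|\alpha_n(z)|\geq \tfrac{|c|}{2}|z|^d\geq \tfrac{|c|}{2}$ for all $z$ with $|z|\geq R$ (in particular $\alpha_n(z)\neq 0$ there, so the displayed estimate is valid on $\{|z|\geq R\}\cap D$, and the finitely many points of $V(\alpha_n)$ cause no trouble). Next I would bound each numerator from above via Lemma \ref{polineq}: setting $m:=\max_{0\leq\ell\leq n-1}\deg\alpha_\ell$, there are constants $C_\ell\geq 0$ with $|\alpha_\ell(z)|\leq C_\ell(1+|z|^m)\leq 2C_\ell|z|^m$ whenever $|z|\geq 1$. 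Combining the two bounds, $\bigl|\tfrac{\alpha_\ell(z)}{\alpha_n(z)}\bigr|\leq \tfrac{4C_\ell}{|c|}|z|^m$ for $|z|\geq R$, whence
\[
|f(z)|\leq 1+\Bigl(\tfrac{4}{|c|}\textstyle\sum_{\ell=0}^{n-1}C_\ell\Bigr)|z|^m\leq C(1+|z|^m)
\]
for each $z\in D$ with $|z|\geq R$, where $C:=\max\bigl\{1,\tfrac{4}{|c|}\sum_{\ell=0}^{n-1}C_\ell\bigr\}\geq 1$. This is the asserted bound.

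There is no real obstacle here; the only points needing a little care are (a) avoiding the finite exceptional set $V(\alpha_n)$, where Lemma \ref{boundzero} does not directly apply — this is automatic once $|z|$ is large — and (b) checking $n\geq 1$, so that the relation genuinely expresses $f$ as an algebraic function of $z$. The hypothesis that $D$ is unbounded is used only to guarantee that $\{z\in D:|z|\geq R\}$ is non-empty, so that the statement is not vacuous.
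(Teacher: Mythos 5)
Your proof is correct and follows essentially the same route as the paper: both extract the algebraic relation $P(z,f(z))=0$, apply Lemma \ref{boundzero} to get the estimate \eqref{stimapoli2}, bound the numerators $|\alpha_\ell(z)|$ polynomially via Lemma \ref{polineq}, and bound $|\alpha_n(z)|$ away from zero for large $|z|$. The only cosmetic difference is in that last step: the paper observes that $|\alpha_n|$ diverges at infinity and takes the minimum $C_1:=\min_{z\in\C\setminus B(0,R)}|\alpha_n(z)|>0$, whereas you use the leading-term lower bound $|\alpha_n(z)|\geq\tfrac{|c|}{2}|z|^d$ for $|z|\geq R$. Both are standard and give the same conclusion; your version also checks explicitly that $n\geq 1$, which the paper leaves implicit.
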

\begin{proof}
As $f$ is a complex Nash function on $D$ and $D$ is connected, then there exists a non-zero polynomial 
$$
P(\z_1,\z_2):=\alpha_n(\z_1)\z_2^n+\ldots+\alpha_1(\z_1)\z_2+\alpha_0(\z_1)\in \C[\z_1,\z_2]=\C[\z_1][\z_2]
$$ 
such that $P(z,f(z))=0$ for each $z\in D$. We may assume that $\alpha_n$ is not the zero polynomial. For each $z\in D$, we have that $f(z)$ is a zero for the polynomial $P(z,\z_2)\in\C[\z_2]$. Thus, by Lemma \ref{boundzero}, we have
\begin{equation}\label{stimapoli2}
|f(z)|\leq 1+\left|\frac{\alpha_{n-1}(z)}{\alpha_n(z)}\right|+\ldots+\left|\frac{\alpha_{0}(z)}{\alpha_n(z)}\right| 
\end{equation}
for each $z\in D$ such that $\alpha_n(z)\neq 0$. As $\alpha_n$ is not the zero polynomial, then there exists $R\geq1$ such that $V(\alpha_n)\subset  B(0,R)$. As 
\begin{itemize}
\item $\C\setminus B(0,R)$ is a closed subset of $\C$, 
\item $|\alpha_n(z)|$ diverges on complements of compact sets 
\item $\alpha_n(z)\neq 0$ for each $z\in  \C\setminus B(0,R)$,
\end{itemize} 
then there exists
$$
C_1:=\min_{z\in  \C\setminus B(0,R)}|\alpha_n(z)|>0.
$$
By Lemma \ref{polineq}, for each $k=0,\ldots,n-1$ there 
exist an integer $m_k\geq 0$ and a constant $C_{2,k}\geq 0$ such that
$
|\alpha_k(z)|\leq C_{2,k}(1+|z|^{m_k})
$
for each $z\in \C\setminus B(0,R)$. Let
$$
C_2:=\max_{k=0,\ldots,n-1}\{ C_{2,k}\} \quad \text{and}\quad m:=\max_{k=0,\ldots,n-1} \{m_k\}
$$
Observe that $|z|^{m_k}\leq |z|^m$ for each $z\in \C\setminus B(0,R)$ and each $k=0,\ldots,n-1$, because $R\geq 1$. By \eqref{stimapoli2}, we deduce
\begin{align*}
|f(z)|&\leq 1+\left|\frac{C_{2,n-1}(1+|z|^{m_{n-1}})}{C_1}\right|+\ldots+\left|\frac{C_{2,0}(1+|z|^{m_{0}})}{C_1}\right| \\
&\leq 1+(n-1)\frac{C_2}{C_1}(1+|z|^m)\leq  \Big(1+(n-1)\frac{C_2}{C_1}\Big)(1+|z|^m)
\end{align*}
for each $z\in D\setminus B(0,R)$. Define $C:=1+(n-1)\tfrac{C_2}{C_1}$ and observe that $C\geq 1$. We conclude that $|f(z)|\leq C(1+|z|^m)$ for each $z\in D$ such that $|z|\geq R$, as required.
\end{proof}

The following example shows that the converse of the previous result is false.

\begin{example}\label{irrationalpower}
Let $D:=\C\setminus\{z\in \R, \, z\leq 0\}$  and let $\gamma\in \R\setminus \Q$ be an irrational number such that $\gamma>0$ (here $\Q$ denotes, as usual, the field of rational numbers). Let $z^{\gamma}$ be the holomorphic branch of $\z^{\gamma}:=e^{\gamma\log\z}$ such that $1^{\gamma}=1$. Let $m\geq 1$ be an integer such that $\gamma<m$. We have 
$$
|z^{\gamma}|\leq 1+|z|^m,
$$
for each $z\in D$. Let
$
P(\z_1,\z_2):=\sum_{r,s=0}^n\alpha_{r,s}\z_1^{r}\z_2^s\in\C[\z_1,\z_2]
$
be a polynomial such that $P(z,z^{\gamma})=0$ for each $z\in D$. This means,
$$
P(z,z^{\gamma})=\sum_{r,s=0}^n\alpha_{r,s}z^{r+s\gamma}=0
$$
for each $z\in D$. As $\gamma$ is irrational, then $r_1+s_1\gamma\neq r_2+s_2\gamma$ for each $(r_1,s_1),(r_2,s_2)\in\mathbb{N}^2$ such that $(r_1,s_1)\neq (r_2,s_2)$ (here $\mathbb{N}$ denotes, as usual, the set of non-negative integers). We claim: \textit{$P$ is the zero polynomial.}

In order to show that $P$ is the zero polynomial, it is enough to show that the family of functions
$$
\Ff:=\{z^\delta:D\to \C :\, \delta\in \R \textit{ and } \delta>0\}
$$ 
is linearly independent over $\C$. Let $\delta_1,\ldots,\delta_k>0$ be such that $\delta_i\neq \delta_j$ if $i\neq j$. We may assume that $\delta_1<\delta_i$ for each $i\neq 1$. In particular, $z^{\delta_i-\delta_1}\in\Ff$ for each $i\neq 1$. Assume that there exist $\beta_1,\ldots,\beta_k\in \C$ such that
$
\beta_1z^{\delta_1}+\ldots+\beta_kz^{\delta_k}=z^{\delta_1}(\beta_1+\beta_2z^{\delta_2-\delta_1}+\ldots+\beta_kz^{\delta_k-\delta_1})=0
$
for each $z\in D$. In particular, 
$
\beta_1+\beta_2z^{\delta_2-\delta_1}+\ldots+\beta_kz^{\delta_k-\delta_1}=0
$
for each $z\in D$. Letting $z$ approach to zero, we derive that $\beta_1=0$, that is 
$
\beta_2z^{\delta_2-\delta_1}+\ldots+\beta_kz^{\delta_k-\delta_1}=0
$
for each $z\in D$. Using an inductive argument, we deduce that the family $\mathcal{F}$ is linearly independent over $\C$. Thus, $P$ is the zero polynomial, as required. \hfill$\sqbullet$
\end{example}

Let $D\subset \C$ be an unbounded domain and $f$ a meromorphic complex Nash function on $D$. Let $D_0\subset D$ be a closed and discrete subset such that $f\in\mathcal{N}_{\C}(D\setminus D_0)$. By Lemma \ref{finitepoles}, there exist  
\begin{itemize}
\item finitely many (possibly none) points $z_1,\ldots,z_k \in D_0$,
\item a complex Nash function $\Phi: D\setminus \{z_1,\ldots,z_k\}\to \C$,
\end{itemize}
such that $\Phi$ is meromorphic on $D$ and $f=\Phi|_{D\setminus D_0}$. In particular, There exists $R'>1$ such that $\Phi$ is a complex Nash function on $D\setminus B(0,R')$. Thus, by Proposition \ref{polboundC}, there exist an integer $m\geq 0$ and constants $C,R''\geq1$ such that 
$$
|\Phi(z)|\leq C(1+|z|^m)
$$
for each $z\in D$ such that $|z|\geq \max\{R',R''\}$. In particular, we deduce the following:

\begin{cor}
Let $D\subset \C$ be an unbounded domain and $f$ a meromorphic complex Nash function on $D$. Let $D_0\subset D$ be a closed and discrete subset such that $f\in\mathcal{N}_{\C}(D\setminus D_0)$. Then, there exist an integer $m\geq 0$ and constants $C,R\geq1$ such that 
$$
|f(z)|\leq C(1+|z|^m)
$$
for each $z\in D\setminus D_0$ such that $|z|\geq R$.
\end{cor}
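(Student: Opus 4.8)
The plan is to reduce the corollary to Proposition~\ref{polboundC} by first cleaning up the poles with Lemma~\ref{finitepoles}. Concretely, I would begin by applying Lemma~\ref{finitepoles} to $f$ and $D_0$: this produces finitely many (possibly zero) points $z_1,\ldots,z_k\in D_0$ together with a complex Nash function $\Phi\colon D\setminus\{z_1,\ldots,z_k\}\to\C$ that is meromorphic on $D$ and satisfies $f=\Phi|_{D\setminus D_0}$. The point is that $D\setminus\{z_1,\ldots,z_k\}$ is obtained from the domain $D$ by deleting finitely many points, hence it is again a domain, and it is still unbounded since $D$ is. Therefore Proposition~\ref{polboundC} applies verbatim to $\Phi$ on $D\setminus\{z_1,\ldots,z_k\}$ and yields an integer $m\geq 0$ and constants $C,R\geq 1$ with $|\Phi(z)|\leq C(1+|z|^m)$ for every $z\in D\setminus\{z_1,\ldots,z_k\}$ with $|z|\geq R$.

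Next I would enlarge $R$, if necessary, so that $R>|z_j|$ for every $j=1,\ldots,k$ (no enlargement being needed when $k=0$); this guarantees that the region $\{z\in\C:|z|\geq R\}$ contains none of the points $z_1,\ldots,z_k$. Then for any $z\in D\setminus D_0$ with $|z|\geq R$ we have $z\in D\setminus\{z_1,\ldots,z_k\}$ and $f(z)=\Phi(z)$, so the bound for $\Phi$ gives $|f(z)|=|\Phi(z)|\leq C(1+|z|^m)$, which is exactly the asserted estimate. This is the argument already sketched informally in the paragraph preceding the statement; I would simply write it out as a short formal proof.

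I do not anticipate a genuine obstacle here. The only steps requiring a moment of care are the two observations that (a) removing finitely many points from a planar domain leaves a domain, so that Proposition~\ref{polboundC} is legitimately applicable to $\Phi$, and (b) $D\setminus\{z_1,\ldots,z_k\}$ remains unbounded — both of which are immediate. No estimate beyond Lemma~\ref{finitepoles} and Proposition~\ref{polboundC} is needed, and the dependence of $m,C,R$ on $f$ is inherited directly from those results.
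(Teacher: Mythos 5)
Your proposal is correct and follows essentially the same route as the paper: apply Lemma~\ref{finitepoles} to pass to a complex Nash extension $\Phi$ off finitely many points of $D_0$, then invoke Proposition~\ref{polboundC} and restrict to $D\setminus D_0$. If anything, your version is slightly cleaner, since you apply Proposition~\ref{polboundC} on the punctured domain $D\setminus\{z_1,\ldots,z_k\}$, which is visibly a domain (the paper instead restricts to $D\setminus \overline{B}(0,R')$, which need not be connected and so does not literally satisfy the hypotheses of the proposition); also note that the final enlargement of $R$ is actually unnecessary, since $D\setminus D_0\subset D\setminus\{z_1,\ldots,z_k\}$ already.
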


\section{What is the `correct' definition of slice-Nash functions?}\label{buonadef}

Real and complex Nash functions are defined as those analytic functions that are algebraic over the ring of polynomials (see Definition \ref{defNash}). Thus, it seems natural to extend the definition of Nash functions to quaternions and octonions by considering those functions that are slice regular and algebraic over the slice polynomials. Let us focus on $\HH$ for the moment. As $\HH$ is not commutative, what does it mean for a slice function to be \textit{`algebraic over the slice polynomials'} in this context? There are several difficulties to be addressed in order to give the `correct' definition of Nash functions over $\HH$ that we point out in this section. We start by explaining why a straightforward generalisation of Definition \ref{defNash} is not the `correct' notion for Nash functions on $\HH$. 

Let $\Omega\subset \HH$ be an open and connected circular set and $f:\Omega\to \HH$ a slice function. We may try to introduce the following:

\begin{defn}\label{algH}
We say that the function $f$ is \textit{right algebraic (over $\HH[\q_1,\q_2]$)} if there exists a non-zero polynomial $P\in \HH[\q_1,\q_2]$ such that $P(q,f(q))=0$ for each $q\in \Omega$. \hfill$\sqbullet$
\end{defn}

The following example shows that there are `too few' right algebraic slice regular functions. Namely, some slice regular functions that should be considered Nash functions, according to the classical theory of real and complex Nash functions, are not right algebraic. It is worthwhile to notice that the function $f(z):=\sqrt{z}\iota$ (where $\sqrt{z}$ is the branch of $\sqrt{\z}$ such that $\sqrt{1}=1$) is a complex Nash function on $D:=\C\setminus \{z\in \R : z\leq 0\}$, because it is holomorphic on $D$ and satisfies $P(z,f(z))=0$ for each $z\in D$, where $P(\z_1,\z_2):=\z_1+\z_2^2$.

\begin{example}\label{poche}
\textit{Let $\Omega:=\HH\setminus \{q\in \R: q\leq0\}$ and let $J\in\sph_\HH$ be an imaginary unit. Let $f:\Omega\to \HH$ be the slice regular branch of $\sqrt{\q}J$ such that $f(1)=J$. If $P\in\HH[\q_1,\q_2]$ is a slice polynomial such that $P(q,f(q))=0$ for each $q\in\Omega$, then $P$ is the zero polynomial.}

Let $I\in\sph_\HH$ be an imaginary unit such that $IJ=-JI$ and $P\in\HH[\q_1,\q_2]$ a slice polynomial such that $P(q,f(q))=0$ for each $q\in\Omega$. As $\HH\equiv \C_I\oplus \C_IJ$ (as $\C_I$-vector spaces), we may write
$$
P(\q_1,\q_2)=\sum_{r,\ell=0}^n \q_1^{r}\q_2^{\ell}(\alpha_{r,\ell}+\beta_{r,\ell}J)
$$
for some integer $n\geq 1$ and $\alpha_{r,\ell},\beta_{r,\ell}\in \C_I$. Up to add some zero coefficients if necessary, we may assume that $n$ is odd, that is $n=2m+1$ for some integer $m\geq 0$. This will help to simplify the notation in what follows. 

Let $\Omega_I:=\Omega\cap \C_I$. Clearly, we have $P(z_I,f|_{\Omega_I}(z_I))=0$ for each $z_I\in \Omega_I$. In order to simplify the explicit calculation, define the function $g:=f|_{\Omega_I}J^{-1}:\Omega_I\to \HH$. For each $z_I\in\Omega$, we have $f|_{\Omega_I}(z_I)=g(z_I)J $ and $g(z_I)=\sqrt{z_I}\in \C_I$, where $\sqrt{z_I}$ is the holomorphic branch of $\sqrt{\z_I}$ such that $\sqrt{1}=1$. Observe that 
$$
g(z_I)J=J\overline{g(z_I)}
$$
for each $z_I\in \Omega_I$, because $IJ=-JI$. Thus, for each integer $\ell\geq 0$, we have
$$
(g(z_I)J)^{\ell}=
\begin{cases}
(-1)^{\frac{\ell}{2}}g(z_I)^{\frac{\ell}{2}}\overline{g(z_I)}^{\frac{\ell}{2}}, & \text{ if } \ell \text{ is even},\\
 (-1)^{\frac{\ell-1}{2}} g(z_I)^{\frac{\ell+1}{2}}\overline{g(z_I)}^{\frac{\ell-1}{2}}J, & \text{ if } \ell \text{ is odd},
\end{cases}
$$
for each $z_I\in \Omega_I$. As $P(z_I,f|_{\Omega_I}(z_I))=0$ for each $z_I\in \Omega_I$, we deduce
\begin{align*}
P(z_I,&f|_{\Omega_I}(z_I))=P(z_I,g(z_I)J)=\sum_{r,\ell=0}^n z_I^{r}(g(z_I)J)^{\ell}(\alpha_{r,\ell}+\beta_{r,\ell}J)\\
&=\!\begin{multlined}[t][10.5cm]\sum_{r=0}^n z_I^{r}\sum_{\ell \text{\, even}} (-1)^{\frac{\ell}{2}}g(z_I)^{\frac{\ell}{2}}\overline{g(z_I)}^{\frac{\ell}{2}}(\alpha_{r,\ell}+\beta_{r,\ell}J)\\
+\sum_{r=0}^n z_I^{r}\sum_{\ell \text{\, odd}} (-1)^{\frac{\ell-1}{2}} g(z_I)^{\frac{\ell+1}{2}}\overline{g(z_I)}^{\frac{\ell-1}{2}}J(\alpha_{r,\ell}+\beta_{r,\ell}J)\end{multlined}\\
&=\!\begin{multlined}[t][10.5cm]\sum_{r=0}^n z_I^{r}\Big(\sum_{\ell \text{\, even}} (-1)^{\frac{\ell}{2}}g(z_I)^{\frac{\ell}{2}}\overline{g(z_I)}^{\frac{\ell}{2}}\alpha_{r,\ell}-\sum_{\ell \text{\, odd}} (-1)^{\frac{\ell-1}{2}} g(z_I)^{\frac{\ell+1}{2}}(\overline{g(z_I)}^{\frac{\ell-1}{2}})\overline{\beta_{r,\ell}}\Big)\\
+\sum_{r=0}^n z_I^{r}\Big(\sum_{\ell \text{\, even}} (-1)^{\frac{\ell}{2}}g(z_I)^{\frac{\ell}{2}}\overline{g(z_I)}^{\frac{\ell}{2}}\beta_{r,\ell}+\sum_{\ell \text{\, odd}} (-1)^{\frac{\ell-1}{2}} g(z_I)^{\frac{\ell+1}{2}}(\overline{g(z_I)}^{\frac{\ell-1}{2}})\overline{\alpha_{r,\ell}}\Big)J\end{multlined}\\
&=0
\end{align*}
for each $z_I\in\Omega_I$. In particular, as $\{1,J\}$ is a base of $\HH$ as a $\C_I$-vector space, we have
\begin{align}
\begin{split}\label{eqesemfew}
&\sum_{r=0}^n z_I^{r}\Big(\sum_{\ell \text{\, even}} (-1)^{\frac{\ell}{2}}g(z_I)^{\frac{\ell}{2}}\overline{g(z_I)}^{\frac{\ell}{2}}\alpha_{r,\ell}-\sum_{\ell \text{\, odd}} (-1)^{\frac{\ell-1}{2}} g(z_I)^{\frac{\ell+1}{2}}(\overline{g(z_I)}^{\frac{\ell-1}{2}})\overline{\beta_{r,\ell}}\Big)=0\\
&\sum_{r=0}^n z_I^{r}\Big(\sum_{\ell \text{\, even}} (-1)^{\frac{\ell}{2}}g(z_I)^{\frac{\ell}{2}}\overline{g(z_I)}^{\frac{\ell}{2}}\beta_{r,\ell}+\sum_{\ell \text{\, odd}} (-1)^{\frac{\ell-1}{2}} g(z_I)^{\frac{\ell+1}{2}}(\overline{g(z_I)}^{\frac{\ell-1}{2}})\overline{\alpha_{r,\ell}}\Big)=0
\end{split}
\end{align}
for each $z_I\in\Omega_I$. 

We want to apply \cite[Lem.3.B.13]{gr} to \eqref{eqesemfew}. In order to do this, we reorder \eqref{eqesemfew} with respect to the powers of $\overline{g(z_I)}$. Recall that $n=2m+1$. For each $k=0,\ldots,m$ define the polynomials $Q^{(1)}_k,R^{(1)}_k\in\C_I[\z_1,\z_2]$ as 
$$
Q_k^{(1)}(\z_1,\z_2):=(-1)^{k}\z_2^{k}\Big(\sum_{r=0}^n \z_1^{r}\alpha_{r,2k}\Big) \quad\text{and}\quad R_k^{(1)}(\z_1,\z_2):=(-1)^{k} \z_2^{k+1}\Big(\sum_{r=0}^n \z_1^{r}\overline{\beta_{r,2k+1}}\Big).
$$
Moreover, for each $k=0,\ldots,m$, define the polynomials $Q^{(2)}_k,R^{(2)}_k\in\C_I[\z_1,\z_2]$ as 
$$
Q_k^{(2)}(\z_1,\z_2):=(-1)^k\z_2^{k}\Big(\sum_{r=0}^n \z_1^{r}\beta_{r,2k}\Big) \quad\text{and}\quad R_k^{(2)}(\z_1,\z_2):=(-1)^{k} \z_2^{k+1}\Big(\sum_{r=0}^n \z_1^{r}\overline{\alpha_{r,2k+1}}\Big).
$$
By \eqref{eqesemfew}, we deduce that the polynomials $P_1,P_2\in\C_I[\z_1,\z_2,\z_3]$ defined as
\begin{align*}
P_1(\z_1,\z_2,\z_3)&:=\sum_{k=0}^m(Q_k^{(1)}(\z_1,\z_2)-R^{(1)}_k(\z_1,\z_2))\z_3^k\\
P_2(\z_1,\z_2,\z_3)&:=\sum_{k=0}^m(Q_k^{(2)}(\z_1,\z_2)+R^{(2)}_k(\z_1,\z_2))\z_3^k
\end{align*}
satisfy
$
P_1(z_I,g(z_I),\overline{g(z_I)})=P_2(z_I,g(z_I),\overline{g(z_I)})=0
$
for each $z_I\in\Omega_I$. Observe that the polynomials $P_1(z_I,g(z_I),\z)$ and $P_2(z_I,g(z_I),\z)$ are polynomials whose coefficients are holomorphic functions on $\Omega_I$. As $\overline{g}:\Omega_I\to \C_I$ is an anti-holomorphic function and satisfies identically the polynomials $P_1(z_I,g(z_I),\z)$ and $P_2(z_I,g(z_I),\z)$ on the open set $\Omega_I$, then, by \cite[Lem.3.B.13]{gr}, we have that both $P_1(z_I,g(z_I),\z)$ and $P_2(z_I,g(z_I),\z)$ are zero as polynomials with holomorphic coefficients. In particular,
\begin{align*}
& Q_k^{(1)}(z_I,g(z_I))-R^{(1)}_k(z_I,g(z_I))=(-1)^kg(z_I)^k\Big(\sum_{r=0}^n z_I^{r}\alpha_{r,2k}-g(z_I)\sum_{r=0}^n z_I^{r}\overline{\beta_{r,2k+1}}\Big)=0  \\
& Q_k^{(2)}(z_I,g(z_I))+R^{(2)}_k(z_I,g(z_I))=(-1)^kg(z_I)^k\Big(\sum_{r=0}^n z_I^{r}\beta_{r,2k}+g(z_I)\sum_{r=0}^n z_I^{r}\overline{\alpha_{r,2k+1}}\Big)=0
\end{align*}
for each $z_I\in\Omega_I$ and $k=0,\ldots,m$. Recall that $g(z_I)=\sqrt{z_I}$, where $\sqrt{z_I}$ is the holomorphic branch of $\sqrt{\z_I}$ such that $\sqrt{1}=1$. We deduce that
\begin{align*}
&\sum_{r=0}^n z_I^{r}\alpha_{r,2k}-\sqrt{z_I}\sum_{r=0}^n z_I^{r}\overline{\beta_{r,2k+1}}=\sum_{r=0}^n \alpha_{r,2k
}z_I^r-\sum_{r=0}^n\overline{\beta_{r,2k+1}}z_I^{r+\frac{1}{2}}=0\\
&\sum_{r=0}^n z_I^{r}\beta_{r,2k}+\sqrt{z_I}\sum_{r=0}^n z_I^{r}\overline{\alpha_{r,2k+1}}=\sum_{r=0}^n \beta_{r,2k
}z_I^r+\sum_{r=0}^n\overline{\alpha_{r,2k+1}}z_I^{r+\frac{1}{2}}=0
\end{align*}
for each $z_I\in\Omega_I$ and $k=0,\ldots,m$. We conclude that all the coefficients $\alpha_{r,\ell}$ and $\beta_{r,\ell}$ are zero for each $r,\ell=0,\ldots,n$, so $P$ is the zero polynomial, as desired. \hfill$\sqbullet$
\end{example}

Observe that, as $\HH$ is non-commutative, we may also give the left counterpart of Definition \ref{algH}. Namely, we may define slice functions \textit{left algebraic} as those slice functions $f:\Omega\to \HH$ such that there exists a non-zero polynomial $P\in \HH[\q_1,\q_2]$ with $P(f(q),q)=0$ for each $q\in\Omega$. There is no evident reason why right algebraic slice functions should be related to left algebraic slice functions. Nevertheless, it can be shown that the function $\sqrt{q}J$ defined in Example \ref{poche} is not even left algebraic (the argument is different from the one of Example \ref{poche}, because, \cite[Lem.3.B.13]{gr} cannot be applied in this case and one has to check the order of certain functions around zero). In both cases, the class of right or left algebraic slice regular functions is not the `correct' class for generalising Nash functions to the quaternionic setting.

Unlike the real and complex cases, the set of slice quaternionic polynomials $\HH[\q_1,\q_2]$ is not a ring if endowed with the sum and the quaternionic product of polynomials. But $\HH[\q_1,\q_2]$ is a ring if we endow it with the sum and the slice product. This might suggests a need to replace Definition \ref{algH} with a definition that involves the slice product instead of the quaternionic product, so to take into account the ring structure of $\HH[\q_1,\q_2]$ in analogy with the real and complex cases. 

Let $f:\Omega\to A$ be a slice function. We define inductively $f^{\bullet n}:=f^{\bullet (n-1)}\cdot f$ for each integer $n\geq 1$, where $f^{\bullet 0}=1$. Let $P\in\HH[\q_1,\q_2]$ be a slice polynomial of the form
$
P(\q_1,\q_2):=\sum_{k_1,k_2=0}^n\q_1^{k_1}\q_2^{k_2}\alpha_{k_1,k_2}.
$
Given $q\in\HH$, with the notation $P(q,f(q))^\bullet$ we mean
$$
P(q,f(q))^{\bullet}:=\sum_{k_1,k_2=0}^nq^{k_1}f^{\bullet k_2}(q)\alpha_{k_1,k_2}\Big(=\sum_{k_1,k_2=0}^nq^{\bullet k_1}\cdot f^{\bullet k_2}(q)\cdot\alpha_{k_1,k_2}\Big)\in\HH
$$
Observe that, if $f\in \mathcal{SR}_{\HH}(\Omega)$, then by \cite[Prop.11]{gp}, the function $\Omega\to \HH,\, q\mapsto P(q,f(q))^{\bullet}$ is slice regular on $\Omega$. Taking into account the slice product instead of the quaternionic product, we may introduce the following definition which is the counterpart of Definition \ref{algH} in this setting.

\begin{defn}\label{staralgH}
We say that the function $f$ is \textit{right slice-algebraic} if there exists a non-zero polynomial $P\in \HH[\q_1,\q_2]$ such that $P(q,f(q))^{\bullet}=0$ for each $q\in \Omega$. \hfill$\sqbullet$
\end{defn}

Slice functions \textit{left slice-algebraic} may be defined in the same way. Unlike the class of (right or left) algebraic slice regular functions that are `too few', the class of (right or left) slice-algebraic slice regular functions are `too many' to be the `correct' class of quaternionic slice-Nash functions. We point this out in the following example:

\begin{example}\label{troppe}
\textit{Let $I,J\in\sph_\HH$ be imaginary units such that $IJ=-JI$. Let $f:\HH\to \HH$ be the slice regular function defined as $f(q):=(\cos q)I+(\sin q)J$ for each $q\in\HH$. Then $f$ is both right and left slice-algebraic.}

In fact, consider the slice polynomials $P_s(\q_1,\q_2):=\q_s^2+1\in \HH[\q_1,\q_2]$ for $s=1,2$. We have
\begin{align*}
P_1(f(q),q)^{\bullet}=P_2(q,f(q))^{\bullet}\stackrel{(*)}{=}-\cos^2 q-\sin^2 q+\cos q\sin q(IJ+JI)+1=0
\end{align*}
for each $q\in \HH$, where the equality $(*)$ is justified by the fact that both the slice functions $\cos q$ and $\sin q$ are slice preserving. \hfill$\sqbullet$
\end{example}

As already mentioned several times, real and complex Nash functions are intrinsically of `algebraic nature' (whatever that means). The ring of $k$-Nash functions, where $k$ is either $\R$ or $\C$, is an intermediate ring `of algebraic nature' between the ring of polynomial functions, which are `too few' and `too rigid', and the ring of $k$-analytic functions, which are `too many' (for instance, the ring of global $k$-analytic functions is not Noetherian). In our opinion, the `correct' notion of slice-Nash functions on $\HH$ should maintain this nature, namely, to be an intermediate class of `algebraic nature' (whatever that means) between slice polynomials and slice regular functions. In particular, slice regular functions like the one of Example \ref{troppe} should not be Nash. 

The last notion of `algebraicity' one may consider, in order to define quaternionic Nash functions, is the notion of algebraicity over the ring of mixed polynomial, namely, the polynomials in the variables $\q_1$ and $\q_2$ expressible in the form
$$
P(\q_1,\q_2)=\sum_{s=1}^n \alpha_{s,1}\q_{\mu_{s,1}}\alpha_{s,2}\q_{\mu_{s,2}}\ldots\alpha_{s,\ell}\q_{\mu_{s,\ell}}\alpha_{s,\ell+1},
$$
where $r\geq 0$, $s_\ell\geq 0$, $\mu_{s,p}\in \{1,2\}$ and $\alpha_{s,p}\in \HH$. The ring operations are simply the usual addition and multiplication of polynomials. The following example shows that any function is algebraic over the ring of mixed polynomials (because there exist non-zero mixed polynomials $P$ that vanish identically on $\HH^2$).

\begin{example}\label{mixed}
Let $\{1,i,j,ij\}$ be the standard Hamilton base of $\HH$. Let 
$$
Q(\q):=\frac{1}{4}(\q-i\q i-j\q j- ij\q ij),
$$
which is a non-zero mixed polynomial. A straightforward computation shows that $Q(q)=\Re(q)$ for each $q\in \HH$. In particular, the non-zero mixed polynomial $P(\q_1,\q_2):=\q_1Q(\q_1)-Q(\q_1)\q_1$ satisfies $P(q,f(q))\equiv0$ for any function $f:\Omega\to \HH$ defined on any subset $\Omega\subset \HH$. \hfill$\sqbullet$
\end{example}

Examples \ref{poche}, \ref{troppe} and \ref{mixed} suggest that the `correct' definition of Nash functions on $\HH$ cannot be of `global nature'. That is, the `algebraic nature' (whatever that means) of Nash functions on $\HH$ cannot be detected globally using polynomials. Even if it may sound disappointing (at least for classical real and complex geometers), this phenomenon is not surprising. The same difficulties appeared with attempts to generalise the notion of holomorphic functions to the quaternionic setting. Both the approach of Gentili and Struppa \cite{gs,gs2} and the one of Ghiloni and Perotti \cite{gp0,gp} for generalising the notion of holomorphic functions to quaternions are intrinsically related to the `book decomposition' of $\HH$ into complex slices.

For the octonions, the situation is even more involved. If $P$ is an `ordered polynomial' in the variables $\x_1,\x_2$, then in order to have a well defined polynomial function $x\mapsto P(x,f(x))$ we have to fix the order of the parentheses on the monomials of $P$ (for instance as introduced in \S\ref{prelipol}) or considering only polynomials with real coefficients. The situation is analogous if we consider the slice product instead of the octonionic product - the quantity $x^{ s}f^{\bullet r}(x)\cdot\alpha$ is not well defined in general. 

The above discussion in this section motivated us to give a definition for slice-Nash functions over $\HH$ and $\O$ that takes into account all these phenomena that appear in the non-commutative and non-associative setting (see Definition \ref{defslicenash}). In particular, the notion of Nash functions we propose, and that we strongly believe to be the natural generalisation of the classical theory of real and complex Nash functions to this context, is related to the `book decomposition' of $\HH$ and $\O$ into complex slices (see Theorems \ref{char} and \ref{char2}) and of `algebraic nature' (see \S\ref{SliceNashProp}).

\section{Quaternionic and octonionic slice-Nash functions}\label{Maindef}

In this section, we introduce the main definition of this article. Namely, we define slice-Nash functions on open circular subsets of $\HH$ and $\O$. First, we define stem-Nash functions, then define slice-Nash functions as those slice regular functions induced by a stem-Nash function. We also provide several equivalent characterisations for slice-Nash functions and we show that slice derivatives of slice-Nash functions are still slice-Nash functions. Moreover, we show that the set of all slice-Nash functions has a (natural) structure of alternative $*$-algebra.

\subsection{Stem-Nash functions}

Let $A$ be either the algebra of quaternions $\HH$ or the algebra of octonions $\O$. Let $D\subset \C$ be a (non-empty) open subset and $F:=F_1+\iota F_2:D\to A\otimes_{\R}\C$ a stem function. Let $d_A$ and $u_A$ be the integers introduced in \eqref{dim}. Let $I\in\sph_A$ be an imaginary unit and $\Bb_I:=\{I_0:=1,I, I_1,II_1,\ldots,I_{u_A},II_{u_A}\}$ a splitting base of $A$ associated to $I$. As $\Bb_I$ is a base of $A$ as a $\R$-vector space, then there exist unique real valued functions $F_{s,\ell}^{I,I_k}:D\to  \R$ for $s,\ell=1,2$ and $k=0,\ldots,u_A$ such that
\begin{equation}\label{components}
F_s=\sum_{k=0}^{u_A}(F_{s,1}^{I,I_k}+F_{s,2}^{I,I_k}I)I_k
\end{equation}
for $s=1,2$. Define
$$
F_{I,I_k}^{\ell}:=F_{1,\ell}^{I,I_k}+\iota F_{2,\ell}^{I,I_k}:D\to \C
$$
for each $\ell=1,2$ and $k=0,\ldots,u_A$. We have
$$
F=F_1+\iota F_2=\sum_{k=0}^{u_A}(F_{I,I_k}^{1}+F_{I,I_k}^{2}I)I_k
$$
Observe that $F$ is a holomorphic stem function on $D$ if and only if the functions $F_{I,I_k}^{\ell}:D\to \C$ are holomorphic functions on $D$ for each $\ell=1,2$ and each $k=0,\ldots,u_A$ \cite[Rmk.3(2)]{gp}. 

This notation will allow us to write the statements of our results uniformly both for $\HH$ and $\O$. In order to lighten the exposition, we will use also the notation of the following example:

\begin{example}\label{notation}
(i) Let $F:=F_1+\iota F_2:D\to \HH\otimes_\R\C$ be a stem function. Let $I,J\in\sph_{\HH}$ be imaginary units such that $J\neq\pm I$. Then $\Bb_I:=\{1,I,J,IJ\}$ is a splitting base of $\HH$ associated to $I$. In particular, there exist unique real valued functions $F_{s,\ell}^{I,J}:D\to \R$ such that 
$$
F_s=F_{s,0}^{I,J}+F_{s,1}^{I,J}I+F_{s,2}^{I,J}J+F_{s,3}^{I,J}IJ
$$
for $s=1,2$. Let
$
F_{I,J}^{\ell}:=F_{1,\ell}^{I,J}+\iota F_{2,\ell}^{I,J}
$
for each $\ell=0,1,2,3$. We have
$$
F=F_1+\iota F_2=F_{I,J}^{0}+F_{I,J}^{1}I+F_{I,J}^{2}J+F_{I,J}^{3}IJ.
$$
The functions $F_{s,\ell}^{I,J}$ for $\ell=0,1,2,3(=d_{\HH})$ correspond to the functions $F_{s,1}^{I,I_k}$ and $F_{s,2}^{I,I_k}$ for $k=0,1(=u_{\HH})$ of \eqref{components}.

(ii) Let $F:=F_1+\iota F_2:D\to \O\otimes_\R\C$ be a stem function. Let $I,J,L\in\sph_{\O}$ be imaginary units such that $\Bb_I:=\{1,I,J,IJ,L,IL,JL,I(JL)\}$ is a splitting base of $\HH$ associated to $I$. In particular, there exist unique real valued functions $F_{s,\ell}^{I,J,L}:D\to \R$ such that 
$$
F_s=F_{s,0}^{I,J,L}+F_{s,1}^{I,J,L}I+F_{s,2}^{I,J,L}J+F_{s,3}^{I,J,L}IJ+F_{s,4}^{I,J,L}L+F_{s,5}^{I,J,L}IL+F_{s,6}^{I,J,L}JL+F_{s,7}^{I,J,L}I(JL)
$$
for $s=1,2$. Let
$
F_{I,J,L}^{\ell}:=F_{1,\ell}^{I,J,L}+\iota F_{2,\ell}^{I,J,L}
$
for each $\ell=0,\ldots,7$. We have
\begin{multline*}
F=F_1+\iota F_2\\
=F_{I,J,L}^{0}+F_{I,J,L}^{1}I+F_{I,J,L}^{2}J+F_{I,J,L}^{3}IJ+F_{I,J,L}^{4}L+F_{I,J,L}^{5}IL+F_{I,J,L}^{6}JL+F_{I,J,L}^{7}I(JL).
\end{multline*}
The functions $F_{s,\ell}^{I,J,L}$ for $\ell=0,\ldots,7(=d_{\O})$ correspond to the functions $F_{s,1}^{I,I_k}$ and $F_{s,2}^{I,I_k}$ for $k=0,1,2,3(=u_{\HH})$ of \eqref{components}. \hfill$\sqbullet$
\end{example}

We introduce the following definition:

\begin{defn}[Stem-Nash function]\label{defstemnash}
We say that a stem function $F:D\to A\otimes_{\R}\C$ is a \textit{stem-Nash function on $D$} if there exist $I\in \sph_A$ and a splitting base $\{I_0:=1,I, I_1,II_1,\ldots,I_{u_A},II_{u_A}\}$ of $A$ associated to $I$ such that the functions $F_{I,I_k}^{\ell}:D\to \C$ are complex Nash functions on $D$ for each $\ell=1,2$ and each $k=0,\ldots,u_A$. \hfill$\sqbullet$
\end{defn}

As complex Nash functions are in particular holomorphic (see Definition \ref{defNash}), we have that, if $F$ is a stem-Nash function on $D$, then the functions $F_{I,I_k}^{\ell}:D\to \C$ are holomorphic functions on $D$ for each $\ell=1,2$ and $k=0,\ldots,u_A$. We deduce that (see also \cite[Rmk.3(2)]{gp}): \textit{If $F$ is a stem-Nash function on $D$, then $F$ is a holomorphic stem function on $D$.}

By \cite[Prop.1.6]{t}, we deduce straightforwardly the following important result, which makes our definition consistent.

\begin{prop}[Independence from base choice I]\label{indipendente}
A stem function $F:D\to A\otimes_{\R}\C$ is a stem-Nash function on $D$ if and only if for each imaginary unit $I\in\sph_A$ and each splitting base $\{I_0:=1,I, I_1,II_1,\ldots,I_{u_A},II_{u_A}  \}$ of $A$ associated to $I$ the functions $F_{I,I_k}^{\ell}:D\to \C$ are complex Nash functions on $D$ for each $\ell=1,2$ and each $k=0,\ldots,u_A$. 
\end{prop}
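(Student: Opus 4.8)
The plan is to recognise the functions $F_{I,I_k}^{\ell}$ as the $\C$-coordinate functions of the single stem function $F$ in a suitable $\C$-basis of $A\otimes_\R\C$, and then to observe that passing to another such basis only replaces each coordinate function by a fixed $\C$-linear combination, with constant coefficients, of the old ones — an operation under which complex Nash functions are stable.

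First I would set up the linear-algebra dictionary. Any splitting base $\Bb_I=\{I_0:=1,I,I_1,II_1,\ldots,I_{u_A},II_{u_A}\}$ of $A$ associated to $I\in\sph_A$ is an $\R$-basis of $A$, and therefore (tensoring with $1$) also a $\C$-basis of $A\otimes_\R\C$ regarded as a $\C$-vector space of dimension $d_A+1=\dim_\R A$ over its centre $\C=\R\otimes_\R\C$; this uses only that $\iota$ is central and is insensitive to the non-associativity when $A=\O$. Unwinding the definition \eqref{components}, one checks that the $\C$-coordinate functions of $F\colon D\to A\otimes_\R\C$ with respect to this $\C$-basis are exactly the functions $F_{I,I_k}^{\ell}\colon D\to\C$ for $\ell=1,2$, $k=0,\ldots,u_A$. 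Consequently, Definition \ref{defstemnash} says precisely: $F$ is stem-Nash on $D$ if and only if its $\C$-coordinate functions with respect to \emph{some} $\C$-basis of $A\otimes_\R\C$ of the form $\Bb_I$ are all complex Nash on $D$.

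Granting this dictionary, the ``if'' half of the Proposition is trivial, since splitting bases of $A$ exist (Example \ref{splittingex}). For the ``only if'' half I would argue as follows: if the $F_{I,I_k}^{\ell}$ are complex Nash, and $\Bb_{I'}=\{1,I',I_1',I'I_1',\ldots,I_{u_A}',I'I_{u_A}'\}$ is any splitting base associated to any $I'\in\sph_A$, then the change-of-basis matrix from $\Bb_I$ to $\Bb_{I'}$ has entries in $\R\subset\C$ and is simultaneously the matrix of the corresponding change of $\C$-basis of $A\otimes_\R\C$; hence each coordinate function $F_{I',I_k'}^{\ell}$ of $F$ is a $\C$-linear combination, with constant coefficients, of the $F_{I,I_k}^{\ell}$. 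Since constant functions are complex Nash (Definition \ref{defNash}) and complex Nash functions on $D$ are closed under finite sums and products \cite[Cor.1.11]{t}, every $F_{I',I_k'}^{\ell}$ is complex Nash on $D$, which is the claim.

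I expect the only delicate point to be the dictionary of the second paragraph — verifying carefully that a splitting base of $A$ is genuinely a $\C$-basis of $A\otimes_\R\C$ and that, under this identification, the functions introduced through \eqref{components} coincide with the coordinate functions of $F$. This is a direct computation once one writes $A\otimes_\R\C=A\otimes 1\oplus A\otimes\iota$ and keeps track of the indexing in \eqref{components}. As a sanity check, and as an alternative route matching the citation in the statement, the whole argument can equally be run in the real picture: one reduces, via \cite[Prop.1.6]{t}, the complex-Nash condition on the $F_{I,I_k}^{\ell}$ to the $\R$-Nash condition on the real components of the $A$-valued maps $F_1,F_2\colon D\to A$, notes that the latter condition is invariant under $\R$-linear changes of basis of $A$ because $\mathcal{N}_\R(D)$ is a ring containing the constants \cite[Cor.8.1.6]{bcr}, and then translates back to the target basis using that $F$, being stem-Nash, is a holomorphic stem function.
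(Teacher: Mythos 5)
Your argument is correct and spells out precisely what the paper compresses into the single citation ``By \cite[Prop.1.6]{t}'': a splitting base of $A$ is an $\R$-basis of $A$ and hence, after tensoring with $1$, a $\C$-basis of $A\otimes_{\R}\C$ over its centre; the $F^{\ell}_{I,I_k}$ are exactly the $\C$-coordinate functions of $F$ in that basis; and changing to any other splitting base replaces these coordinates by constant, real-coefficient linear combinations of one another, under which complex Nash functions are stable. The one small point worth flagging is that your concluding step invokes \cite[Cor.1.11]{t} (ring structure of $\mathcal{N}_{\C}(D)$) where the paper cites \cite[Prop.1.6]{t}; both furnish what is needed, but you should be aware the paper reaches the same change-of-basis stability through a slightly different quoted fact, and your ``alternative real-picture route'' would in fact presuppose Proposition~\ref{equiv}, which the paper establishes only \emph{after} this statement (using \cite[Prop.2.4]{ca}, not \cite[Prop.1.6]{t}), so it should not be read as an interpretation of the paper's citation.
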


Instead of introducing the notion of stem-Nash function as in Definition \ref{defstemnash}, another possibility is to focus on the structure of $A\otimes_\R \C$ as a $\R$-vector space and define stem-Nash functions as those stem functions $F$ whose real components with respect to a real base $\Bb$ of $A$ are real Nash functions. We will see that, under the hypothesis that the involved stem function $F$ is a holomorphic stem function, these two approaches are actually equivalent.

Let $F=F_1+\iota F_2:D\to A\otimes_\R\C$ be a stem function. We identify $\C$ with $\R^2$ in the standard way. Let $D_{\R}:=\{(\alpha,\beta)\in\R^2 : \alpha+\iota \beta\in D\}$. The stem function $F$ induces the (real) map
$$
F^{\R}=(F_1^{\R},F_2^{\R}):D_{\R}\to A\oplus A, \quad (\alpha,\beta)\mapsto (F_1(\alpha+\iota \beta), F_2(\alpha+\iota \beta)).
$$
In order to lighten the exposition, up to a light abuse of notation, we will write $F=(F_1,F_2)$ instead of $F^{\R}=(F_1^{\R},F_2^{\R})$. This will create no confusion, as the situation will be always clear by the context. Let $\Bb:=\{\e_0,\ldots,\e_{d_A}\}$ be any base of $A$ as a $\R$-vector space (here we are not requiring that $1\in \Bb$). Then, there exist unique real valued functions $F_{s,r}^{\Bb}:D_{\R}\to \R$ for $s=1,2$ and $r=0,\ldots,d_A$ such that 
$$
F_s=F_{s,0}^{\Bb}\e_0+\ldots+F_{s,d_A}^{\Bb}\e_{d_A}
$$
for $s=1,2$. 

We start by showing that the fact that the real components $F_{s,r}^{\Bb}:D_{\R}\to \R$ of $F=(F_1,F_2)$ are real Nash functions does not depend by the choice of the real base $\Bb$ of $A$. This is analogous to Proposition \ref{indipendente}.

\begin{prop}[Independence from base choice II]\label{indipendente2}
The following are equivalent: 
\begin{itemize}
\item[\rm{(i)}] There exists a base $\Bb$ of $A$ as a $\R$-vector space such that $F_{s,r}^{\Bb}:D_{\R}\to \R$ is a real Nash function on $D_{\R}$ for each $s=1,2$ and each $r=0,\ldots,d_A$.
\item[\rm{(ii)}] For each base $\Bb$ of $A$ as a $\R$-vector space $F_{s,r}^{\Bb}:D_{\R}\to \R$ is a real Nash function on $D_{\R}$ for each $s=1,2$ and each $r=0,\ldots,d_A$.
\end{itemize}
\end{prop}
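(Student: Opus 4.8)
The plan is to reduce the statement to the elementary fact, recalled in \S\ref{compNashSec}, that $\mathcal{N}_{\R}(D_{\R})$ is a ring containing the constant functions (by \cite[Cor.8.1.6]{bcr}), so that any fixed $\R$-linear combination, with constant coefficients, of real Nash functions on $D_{\R}$ is again a real Nash function on $D_{\R}$. Note first that the implication \rm{(ii)}$\Rightarrow$\rm{(i)} is immediate, since $A$ admits at least one base as a $\R$-vector space; all the content is in \rm{(i)}$\Rightarrow$\rm{(ii)}. Observe also that $D_{\R}$ is open in $\R^2$ because $D$ is open in $\C$, and that no connectedness assumption on $D_{\R}$ is needed here, as being a real Nash function is a pointwise notion; the hypothesis that $F$ is a stem function plays no role either.

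To prove \rm{(i)}$\Rightarrow$\rm{(ii)}, I would fix a base $\Bb:=\{\e_0,\ldots,\e_{d_A}\}$ of $A$ for which \rm{(i)} holds and an arbitrary second base $\Bb':=\{\e_0',\ldots,\e_{d_A}'\}$. Let $C=(c_{jr})$ be the invertible real $(d_A+1)\times(d_A+1)$ matrix determined by $\e_r'=\sum_{j=0}^{d_A}c_{jr}\e_j$. For $s=1,2$, substituting this into $F_s=\sum_{r=0}^{d_A}F_{s,r}^{\Bb'}\e_r'$, collecting the coefficients of each $\e_j$, and using uniqueness of coordinates with respect to $\Bb$ in $F_s=\sum_{j=0}^{d_A}F_{s,j}^{\Bb}\e_j$, one gets $F_{s,j}^{\Bb}=\sum_{r=0}^{d_A}c_{jr}F_{s,r}^{\Bb'}$, and hence, after inverting, $F_{s,r}^{\Bb'}=\sum_{j=0}^{d_A}(C^{-1})_{rj}\,F_{s,j}^{\Bb}$. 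Thus each real component of $F$ in the base $\Bb'$ is a finite $\R$-linear combination, with constant coefficients, of the real components of $F$ in the base $\Bb$.

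It then remains to invoke the ring structure: constant functions on $D_{\R}$ are polynomials, hence real Nash, and $\mathcal{N}_{\R}(D_{\R})$ is closed under addition and multiplication, so $\sum_{j}(C^{-1})_{rj}F_{s,j}^{\Bb}\in\mathcal{N}_{\R}(D_{\R})$ whenever each $F_{s,j}^{\Bb}\in\mathcal{N}_{\R}(D_{\R})$. This gives $F_{s,r}^{\Bb'}\in\mathcal{N}_{\R}(D_{\R})$ for all $s=1,2$ and all $r=0,\ldots,d_A$, which is \rm{(ii)}. There is essentially no obstacle in this argument; the only point deserving care is that the entries of $C$ and $C^{-1}$ are genuine real constants, independent of the point of $D_{\R}$, which is precisely why closure under $\R$-linear combinations — rather than the more delicate matter of algebraicity over a coefficient ring — is all that is used. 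This is, mutatis mutandis (replacing ``complex Nash'' by ``real Nash'' and ``splitting base'' by ``arbitrary $\R$-base''), the exact analogue of Proposition \ref{indipendente}, whose proof rested on \cite[Prop.1.6]{t}.
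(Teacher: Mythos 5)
Your proof is correct and is essentially the paper's proof: both reduce base‑change to a constant‑coefficient $\R$‑linear relation between the real components and then invoke the ring structure of $\mathcal{N}_{\R}(D_{\R})$ from \cite[Cor.8.1.6]{bcr}. The only cosmetic difference is that you write the change of basis explicitly via the matrix $C$ and its inverse, while the paper just asserts the existence of the constants $\alpha_{\ell,r}^{s}$; your side remarks (openness of $D_{\R}$, irrelevance of the stem condition and of connectedness) are accurate.
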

\begin{proof}
We may assume $A=\HH$, as the case $A=\O$ is completely analogous. As (ii) implies (i) trivially, we only have to show the other implication. Assume that there exists a base $\Bb'$ such that the real components $F_{s,r}^{\Bb'}:D_{\R}\to \R$ of the map 
$
F=(F_1,F_2):D_\R\to \HH\oplus\HH
$
with respect to the base $\Bb'$ are real Nash functions on $D_{\R}$ for each $s=1,2$ and each $r=0,1,2,3$. Let $\Bb$ any base of $\HH$ as a $\R$-vector space. Then there exist (unique) $\alpha_{\ell,r}^s\in \R$ for $s=1,2$ and $\ell,r=0,1,2,3$ such that
$$
F_{s,\ell}^{\Bb}=\alpha_{\ell,0}^sF_{s,0}^{\Bb'}+\alpha_{\ell,1}^sF_{s,1}^{\Bb'}+\alpha_{\ell,2}^sF_{s,2}^{\Bb'}+\alpha_{\ell,3}^sF_{s,3}^{\Bb'}
$$
for each $s=1,2$ and $\ell=0,1,2,3$. As real Nash functions form a ring \cite[Cor.8.1.6]{bcr}, we conclude that the functions $F_{s,\ell}^{\Bb}:D_{\R}\to \R$ are real Nash functions on $D_{\R}$ for each $s=1,2$ and each $\ell=0,1,2,3$, as required.
\end{proof}

Let $I\in\sph_A$ be any imaginary unit and $\Bb_I:=\{I_0:=1,I, I_1,II_1,\ldots,I_{u_A},II_{u_A}\}$ any splitting base of $A$ associated to $I$. Let $\Bb:=\{\e_0,\ldots,\e_{d_A}\}$ be any base of $A$ as a $\R$-vector space. By \cite[Prop.2.4]{ca}, we derive that the complex components $F_{I,I_k}^{\ell}$ of a holomorphic stem function $F$ with respect to the splitting base $\Bb_I$ are complex Nash functions if and only if the real components $(F_{1,r}^{\Bb},F_{2,r}^{\Bb})$ of $F=(F_1,F_2)$ with respect to the base $\Bb$ are real Nash functions.

\begin{prop}\label{equiv}
The following are equivalent: 
\begin{itemize}
\item[\rm{(i)}] The functions
$
F_{I,I_k}^{\ell}:=F_{1,\ell}^{I,I_k}+\iota F_{2,\ell}^{I,I_k}:D\to \C
$
are complex Nash functions on $D$ for each $\ell=1,2$ and each $k=0,\ldots,u_A$.
\item[\rm{(ii)}] The stem function $F$ is holomorphic and the functions $F_{s,r}^{\Bb}:D_{\R}\to \R$ are real Nash functions on $D_{\R}$ for each $s=1,2$ and each $r=0,\ldots,d_A$.
\end{itemize}
\end{prop}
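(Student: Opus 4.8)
The plan is to reduce the equivalence to the bridge result \cite[Prop.2.4]{ca}, which asserts, for a \emph{holomorphic} stem function, that its complex components with respect to a splitting base are complex Nash functions if and only if its real components with respect to a real base are real Nash functions. The only extra work is to produce the holomorphy hypothesis on $F$ in the direction where it is not assumed, and, to be careful about the choice of bases, to invoke the independence statement Proposition \ref{indipendente2}.

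First I would prove (i) $\Rightarrow$ (ii). If the functions $F_{I,I_k}^{\ell}\colon D\to\C$ are complex Nash for all $\ell=1,2$ and all $k=0,\dots,u_A$, then in particular they are holomorphic, since complex Nash functions are holomorphic by Definition \ref{defNash}; by the remark preceding Definition \ref{defstemnash} — that $F$ is a holomorphic stem function precisely when all the $F_{I,I_k}^{\ell}$ are holomorphic — the stem function $F$ is holomorphic, which is the first half of (ii). Regarding the splitting base $\Bb_I$ as a base of $A$ as an $\R$-vector space and applying \cite[Prop.2.4]{ca} to the holomorphic stem function $F$, the real components of $F=(F_1,F_2)$ with respect to $\Bb_I$ are real Nash functions on $D_\R$; by Proposition \ref{indipendente2} the real components with respect to the arbitrary base $\Bb$ are then real Nash functions as well, which gives (ii).

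Conversely, for (ii) $\Rightarrow$ (i) I would use that $F$ is holomorphic by hypothesis, so that \cite[Prop.2.4]{ca} applies directly: first transferring, via Proposition \ref{indipendente2}, the real Nash property of the components from the base $\Bb$ to $\Bb_I$ viewed as a real base, the cited result then yields that the complex components $F_{I,I_k}^{\ell}$ are complex Nash functions on $D$, which is (i).

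Once \cite[Prop.2.4]{ca} is available the argument is purely formal, so I do not expect a serious obstacle; the only point deserving attention is the dictionary underlying \cite[Prop.2.4]{ca} itself, namely that under the identification $D\leftrightarrow D_\R$, $z\mapsto(\Re z,\Im z)$, a complex Nash function of the single complex variable $z$ corresponds exactly to the pair of real Nash functions of two real variables obtained by separating real and imaginary parts, and that this correspondence is compatible with the linear change of base linking the complex splitting base $\Bb_I$ to an arbitrary real base $\Bb$ (here one uses that real Nash functions form a ring, \cite[Cor.8.1.6]{bcr}). It is also worth emphasising that the holomorphy of $F$ genuinely cannot be omitted from (ii): a stem function whose real components are real Nash need not be holomorphic, and then its complex components need not even be holomorphic, let alone complex Nash.
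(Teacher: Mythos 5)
Your argument is correct and follows essentially the same route as the paper's proof: reduce to \cite[Prop.2.4]{ca}, use Proposition \ref{indipendente2} to pass between the splitting base $\Bb_I$ and an arbitrary real base $\Bb$, and in the direction (i) $\Rightarrow$ (ii) first deduce holomorphy of $F$ from the complex Nash property of its components. The only cosmetic difference is that you derive holomorphy before applying \cite[Prop.2.4]{ca} whereas the paper records it at the end of the same implication; the substance is identical.
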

\begin{proof}
We may assume $A=\HH$, as the case $A=\O$ is completely analogous. Let $I,J\in\sph_\HH$ be imaginary units such that $J\neq \pm I$. In particular, $\Bb_I:=\{1,I,J,IJ\}$ is a splitting base of $\HH$ associated to $I$. Let $F^{I,J}_{s,\ell}:D\to \R$ for $s=1,2$ and $\ell=0,1,2,3$ be the real components of $F=F_1+\iota F_2$ with respect to the base $\Bb_I$. Define
$$
F^{\Bb_I}_{s,\ell}: D_\R\to \R, \quad (\alpha,\beta)\mapsto F^{I,J}_{s,\ell}(\alpha+\iota \beta)
$$
for $s=1,2$ and $\ell=0,1,2,3$. By \cite[Prop.2.4]{ca}, if the functions $F_{I,J}^{\ell}:=F^{I,J}_{1,\ell}+\iota F^{I,J}_{2,\ell}:D\to \C$ are complex Nash functions on $D$ for each $\ell=0,1,2,3$, then the functions $F_{s,\ell}^{I,J}:D_{\R}\to \R$ are real Nash functions on $D$ for each $s=1,2$ and $\ell=0,1,2,3$. Thus, by Proposition \ref{indipendente2}, $F_{s,r}^{\Bb}$ are real Nash functions on $D_{\R}$ for each $s=1,2$ and each $r=0,\ldots,d_A$. Moreover, as complex Nash functions are in particular holomorphic, we also have that $F$ is a holomorphic stem function on $D$, so $(i)$ implies $(ii)$.

Vice versa, if (ii) holds, then by Proposition \ref{indipendente2} we may assume that the base $\Bb$ is of the form $\{1,I,J,IJ\}$ for some $I,J\in\sph_\HH$ such that $J\neq \pm I$. As $F$ is a holomorphic stem function on $D$, then the functions $F_{I,J}^{\ell}:=F_{1,\ell}^{I,J}+\iota F_{2,\ell}^{I,J}:D\to \C$ are holomorphic functions on $D$ for each $\ell=0,1,2,3$. Thus, using again \cite[Prop.2.4]{ca}, we conclude that (ii) implies (i), as desired. 
\end{proof}

The following example shows that the hypothesis that $F$ is a holomorphic stem function in point (ii) cannot be dropped. 

\begin{example}\label{hol2}
Consider the stem function 
$
F:D\to \HH\otimes_{\R} \C, \ z\mapsto \overline{z}:=1\otimes \overline{z}.
$
Let $\Bb:=\{1,I,J,IJ\}$, where $I,J\in\sph_\HH$ are imaginary units such that $J\neq \pm I$. Consider the real components $F_{s,\ell}^{\Bb}:D_\R\to \R$ of $F=(F_1,F_2)$ with respect to the base $\Bb$. We have
$$
F_{1,0}^{\Bb}(\alpha,\beta)=\alpha, \quad F_{2,0}^{\Bb}(\alpha,\beta)=-\beta \quad \text{and} \quad F_{s,1}^{\Bb}(\alpha,\beta)=F_{s,2}^{\Bb}(\alpha,\beta)=F_{s,3}^{\Bb}(\alpha,\beta)=0
$$
for each $(\alpha,\beta)\in D_\R$ and each $s=1,2$. In particular, $F_{s,\ell}^{\Bb}$ are real Nash functions on $D_\R$ for each $s=1,2$ and $\ell=0,1,2,3$. The complex components $F^{\ell}_{I,J}:D\to \C$ of $F$ with respect to the base $\Bb$ are the functions defined as
$$
F^{0}_{I,J}(z)=\overline{z} \quad \text{and} \quad F^{1}_{I,J}(z)=F^{2}_{I,J}(z)=F^{3}_{I,J}(z)=0
$$
for each $z\in D$. Observe that the function $F^{0}_{I,J}:D\to \C$ is an anti-holomorphic function. As being holomorphic does not depend on the choice of complex coordinates on $\HH\otimes_{\R}\C$ \cite[Rmk.3(2)]{gp}, then $F$ is not a holomorphic stem function, so not a stem-Nash function. \hfill$\sqbullet$
\end{example}

\subsection{Slice-Nash functions}

Let $A$ be either the algebra of quaternions $\HH$ or the algebra of octonions $\O$. Let $D\subset \C$ be a (non-empty) open subset and $F:D\to A\otimes_{\R}\C$ a stem function. Let $\Omega:=\Omega_D$ the circularised of $D$. We are ready to introduce our main definition. 

\begin{defn}[Slice-Nash function]\label{defslicenash}
We say that the slice function $f:=\mathcal{I}(F):\Omega\to A$ is a \textit{slice-Nash function on} $\Omega$ if the following hold:
\begin{itemize}
\item[\rm{(i)}] $f\in\mathcal{SR}_A(\Omega)$,
\item[\rm{(ii)}] the stem function $F:D\to A\otimes_{\R}\C$ is a stem-Nash function on $D$.
\end{itemize}
We denote by $\mathcal{SN}_A(\Omega)$ the set of all slice-Nash functions on $\Omega$.\hfill$\sqbullet$
\end{defn}

Observe that in the previous definition point (ii) implies point (i). In fact, if the stem function $F:D\to A\otimes_{\R}\C$ is a stem-Nash function, then, in particular, $F$ is a holomorphic stem function on $D$. Thus, the slice function $f:=\mathcal{I}(F):\Omega_D\to A$ is slice regular. We explicitly included point (i) in the definition, though not necessary, in order to underline the fact that $\mathcal{SN}_A(\Omega_D)\subset \mathcal{SR}_A(\Omega_D)$.

\begin{example}\label{esempislice}
(i) \textit{Let $f\in\HH[\q]$ be a slice polynomial. Then for any open circular set $\Omega\subset \HH$ the slice polynomial function $f:\Omega\to\HH, \, q\mapsto f(q)$ is a slice-Nash function on $\Omega$.}

We may write 
$$
f(\q):=\q^n\alpha_n+\ldots+\q\alpha_1+\alpha_0
$$
for some integer $n\geq 0$ and $\alpha_0,\ldots,\alpha_n\in\HH$. Let $D\subset \C$ be an open subset such that $\Omega=\Omega_D$. Then the stem function 
$$
F:D\to \HH\otimes_{\R}\C, \quad z\mapsto z^n\alpha_n+\ldots+z\alpha_1+\alpha_0:=\alpha_n\otimes z^n+\ldots+\alpha_1\otimes z+\alpha_0\otimes 1.
$$
is the stem function that induces $f$. Let $I,J\in\sph_\HH$ be imaginary units such that $J\neq \pm I$. As $\{1,I,J,IJ\}$ is a base of $\HH$ as a $\R$-vector space, then there exist unique $\alpha_k^0,\alpha_k^1,\alpha_k^2,\alpha_k^3\in\R$ for $k=0,\ldots,n$ such that
$
\alpha_k=\alpha_k^0+\alpha_k^1 I+\alpha_k^2J+\alpha_k^3IJ
$
for each $k=0,\ldots,n$. The stem function $F$ may be written, with respect to the base $\{1,I,J,IJ\}$, as
$$
F(z)=(z^n\alpha^0_n+\ldots+\alpha^0_0)+(z^n\alpha^1_n+\ldots+\alpha^1_0)I+(z^n\alpha^2_n+\ldots+\alpha^2_0)J+(z^n\alpha^3_n+\ldots+\alpha^3_0)IJ
$$
for each $z\in D$. As $\{1,I,J,IJ\}$ is a splitting base of $\HH$ associated to $I$, we deduce that $F$ is a stem-Nash function on $D$, so $f$ is a slice-Nash function on $\Omega=\Omega_D$. 

(ii) \textit{Analogously, if $f\in\O[\x]$ is a slice polynomial, then for any open circular set $\Omega\subset \O$ the slice polynomial function $\Omega\to\O, \, x\mapsto f(x)$ is a slice-Nash function on $\Omega$.}

(iii) \textit{Let $\Omega:= \HH\setminus\{q\in \R: q\leq0\}$ and let $J\in\sph_\HH$ be an imaginary unit. Let $f:\Omega\to \HH$ be the regular branch of $\sqrt{\q}J$ such that $f(1)=J$. Then $f\in\mathcal{SN}_\HH(\Omega)$.} 

Let $D:=\C\setminus\{z\in\R : z\leq 0\}$. A straightforward computation shows that the stem function $F$ that induces $f$ is the stem function
$$
F:D\to \HH\otimes_{\R}\C, \quad z\mapsto \sqrt{z}J:=J\otimes \sqrt{z},
$$
where $\sqrt{z}$ is the holomorphic branch of $\sqrt{\z}$ such that $\sqrt{1}=1$. Let $I\in\sph_\HH$ be any imaginary unit such that $J\neq\pm I$. As $\{1,I,J,IJ\}$ is a base of $\HH$ as a $\R$-vector space, the stem function $F$ may be written, with respect to this base, as
$
F(z)=\sqrt{z}J
$
for each $z\in D$. As $\{1,I,J,IJ\}$ is a splitting base of $\HH$ associated to $I$, we deduce that $F$ is a stem-Nash function on $D$, so $f$ is a slice-Nash function on $\Omega=\Omega_D$. 

(iv) \textit{Analogously, if $J\in\sph_\HH$ is an imaginary unit and $f:\Omega\to \HH$ the regular branch of $\sqrt{\x}J$ such that $f(1)=J$, where $\Omega:= \O\setminus\{x\in \R: x\leq0\}$, then $f\in\mathcal{SN}_\O(\Omega)$.}

(v) \textit{Let $\Omega\subset \HH$ be any open circular set and $I,J\in\sph_\HH$ any imaginary units such that $J\neq \pm I$. The slice regular function $f:\Omega\to \HH$ defined as $f(q)=(\cos q)I+(\sin q) J$ is not a slice-Nash function on $\Omega$.}

Let $D\subset \C$ be an open subset such that $\Omega=\Omega_D$ and $F:D\to \HH\otimes_\R\C$ the stem function such that $f=\mathcal{I}(F)$. With respect to the base $\{1,I,J,IJ\}$ of $\HH$ as a $\R$-vector space, we may write
$$
F(z)=(\cos z)I+(\sin z)J
$$
for each $z\in D$. As both $\cos z$ and $\sin z$ have infinitely many zeros on $\C$, then by Lemma \ref{lemmafiniti}, we have that $\cos z$ and $\sin z$ are not complex Nash functions on $\C$. In particular, as the restriction of a complex Nash function is still a complex Nash function, then $\cos z$ and $\sin z$ are not complex Nash functions on $D$. By Proposition \ref{indipendente}, we deduce that $F$ is not a stem-Nash function on $D$, so $f$ is not a slice-Nash function on $\Omega$. 

(vi) \textit{Analogously, for each pair of imaginary units $I,J\in\sph_\O$ such that $IJ=-JI$ and each open circular subset $\Omega\subset \O$ the slice regular function $f:\Omega\to \O$ defined as $f(x)=(\cos x)I+(\sin x) J$ is not a slice-Nash function on $\Omega$.} \hfill$\sqbullet$
\end{example}

\subsection{Characterisation of slice-Nash functions}\label{sectionchar}

In this section, we provide characterisations of slice-Nash functions in terms of their components given by the splitting lemma and in terms of their real components. These characterisations will allow us to show that slice-Nash functions share many of the properties of the real and complex Nash functions. 

Let $A$ be either the algebra of quaternions $\HH$ or the algebra of octonions $\O$. Let $d_A$ and $u_A$ be the integers introduced in \eqref{dim}. Let $\Omega\subset A$ be an open circular set and $f:\Omega\to A$ a slice regular function. For each $I\in\sph_A$, denote by $f_I$ the restriction $f|_{\Omega_I}$ of $f$ to $\Omega_I:=\Omega\cap \C_I$. Let $\Bb_I:=\{I_0:=1,I, I_1,II_1,\ldots,I_{u_A},II_{u_A}\}$ be a splitting base of $A$ associated to $I$. By the splitting lemma, there exist unique holomorphic functions  $f_1^{I,I_k},f^{I,I_k}_2:\Omega_I\to \C_I$ for $k=0,\ldots,u_A$ such that 
$$
f_I(z_I)=\sum_{k=0}^{u_A}(f_1^{I,I_k}(z_I)+f^{I,I_k}_2(z_I)I)I_k
$$
for each $z_I\in\Omega_I$. We start with the following characterisation for slice-Nash functions that relates a slice-Nash function $f$ to the components $f_1^{I,I_k},f^{I,I_k}_2$ of $f_I$ given by the splitting lemma.

\begin{thm}[Characterisation for slice-Nash functions I]\label{char}
The following are equivalent:
\begin{itemize}
\item[{\rm(i)}] $f\in \mathcal{SN}_A(\Omega)$.
\item[{\rm(ii})] There exist $I\in\sph_A$ and a splitting base $\Bb_I:=\{I_0:=1,I, I_1,II_1,\ldots,I_{u_A},II_{u_A}\}$ of $A$ associated to $I$ such that the functions  $f_1^{I,I_k},f^{I,I_k}_2:\Omega_I\to \C_I$ are $\C_I$-Nash functions on $\Omega_I$ for each $k=0,\ldots,u_A$. 
\item[{\rm(iii)}] For each $I\in\sph_A$ and each splitting base $\Bb_I:=\{I_0:=1,I, I_1,II_1,\ldots,I_{u_A},II_{u_A}\}$ of $A$ associated to $I$, the functions $f_1^{I,I_k},f^{I,I_k}_2:\Omega_I\to \C_I$ are $\C_I$-Nash functions on $\Omega_I$ for each $k=0,\ldots,u_A$. 
\end{itemize}
\end{thm}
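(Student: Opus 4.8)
The implication (iii)$\Rightarrow$(ii) is trivial, so it suffices to prove (ii)$\Rightarrow$(i) and (i)$\Rightarrow$(iii); both will run through the same linear dictionary relating the components $f_1^{I,I_k},f_2^{I,I_k}$ of $f_I$ given by the splitting lemma to the complex components $F_{I,I_k}^{\ell}$ of the stem function $F$ with $f=\mathcal{I}(F)$. First I would set up this dictionary, for a fixed $I\in\sph_A$ and splitting base $\Bb_I$. On the one hand, the defining commutative square of a slice function gives $f_I\circ\phi_I=\widetilde\phi_I\circ F$ on $D$; expanding $F$ in the $\C$-basis $\Bb_I\otimes 1$ of $A\otimes_\R\C$ and applying the $\R$-linear map $\widetilde\phi_I$ (using Artin's theorem to multiply inside the subalgebra generated by $I$ and $I_k$ when $A=\O$) expresses the real components of $f_I$ with respect to $\Bb_I$, read on $D$ through $\phi_I$, as $\R$-linear combinations with constant coefficients — coming from the multiplication table of $\Bb_I$ — of the real components of $F$ with respect to $\Bb_I$. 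On the other hand, under Assumption \ref{domainD} the stem function is recovered from $f$ by $F_1(z)=\tfrac12\bigl(f(\phi_I(z))+f(\phi_I(\overline z))\bigr)$ and $F_2(z)=-\tfrac I2\bigl(f(\phi_I(z))-f(\phi_I(\overline z))\bigr)$, so the real components of $F$ with respect to $\Bb_I$ are $\R$-linear combinations of the real components of $z\mapsto f(\phi_I(z))$ and of the same functions precomposed with the map $z\mapsto\overline z$ of $\C\cong\R^2$, which is polynomial hence Nash. Since real Nash functions form a ring (\cite[Cor.8.1.6]{bcr}) closed under composition with polynomial maps, this dictionary is Nash-preserving in both directions: the real components of $f_I$ with respect to $\Bb_I$ are real Nash on $\Omega_I\cong D_\R$ if and only if the real components of $F$ with respect to $\Bb_I$ are real Nash on $D_\R$. (As $\phi_I$ is an $\R$-algebra isomorphism and a homeomorphism, it carries $\C_I$-Nash functions on $\Omega_I$ to $\C$-Nash functions on $D$, so identifying $\Omega_I$ with $D$ is harmless.)

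With the dictionary available, for (ii)$\Rightarrow$(i) I would argue: if $f_1^{I,I_k},f_2^{I,I_k}$ are $\C_I$-Nash for a single $I$ and $\Bb_I$, then, complex Nash functions being holomorphic, the splitting lemma gives $f\in\mathcal{SR}_A(\Omega)$ and $F$ holomorphic; moreover, being $\C_I$-Nash forces the real and imaginary parts (with respect to $I$) of $f_1^{I,I_k},f_2^{I,I_k}$ — that is, the real components of $f_I$ with respect to $\Bb_I$ — to be real Nash by \cite[Prop.2.4]{ca}. The dictionary then makes the real components of $F$ with respect to $\Bb_I$ real Nash on $D_\R$, whence Proposition \ref{equiv} (here is where holomorphy of $F$ is used) gives that every $F_{I,I_k}^{\ell}$ is $\C$-Nash on $D$, i.e. $F$ is a stem-Nash function and $f\in\mathcal{SN}_A(\Omega)$. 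Conversely, for (i)$\Rightarrow$(iii): if $f\in\mathcal{SN}_A(\Omega)$ then $f$ is slice regular and $F$ is stem-Nash, so by Proposition \ref{indipendente} the $F_{I,I_k}^{\ell}$ are $\C$-Nash for every $I$ and every $\Bb_I$; Proposition \ref{equiv} makes the real components of $F$ with respect to $\Bb_I$ real Nash on $D_\R$, the dictionary makes the real components of $f_I$ with respect to $\Bb_I$ real Nash on $\Omega_I$, and — $f_I$ being holomorphic because $f$ is slice regular — \cite[Prop.2.4]{ca} applied on the slice $\C_I$ shows $f_1^{I,I_k},f_2^{I,I_k}$ are $\C_I$-Nash on $\Omega_I$, for every $I$ and $\Bb_I$. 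This gives (iii).

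I expect the only delicate point to be the bookkeeping of the first step: writing the linear dictionary in a form uniform for $\HH$ and $\O$, and — more importantly — observing that reconstructing $F$ from $f$ unavoidably brings in the values $f(\phi_I(\overline z))$, hence a precomposition with complex conjugation on $D\cong D_\R$; this is harmless precisely because real Nash functions are stable under composition with polynomial maps. The whole equivalence genuinely relies on transporting the holomorphy of $F$ (equivalently the slice regularity of $f$) through every step — Example \ref{hol2} shows it cannot be dropped — and, apart from that, everything reduces to the coefficient bookkeeping together with Propositions \ref{indipendente}, \ref{indipendente2} and \ref{equiv}, set up in \S\ref{Maindef} for exactly this purpose.
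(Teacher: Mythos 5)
Your proof is correct, and it takes a genuinely different route from the paper's. The paper proves (i)$\Leftrightarrow$(ii)$\Leftrightarrow$(iii) entirely inside the complex-Nash category: it writes out the explicit linear relations between $f_1^{I,J},f_2^{I,J}$ and the stem components $F^{\ell}_{I,J}$ (the analogues of \eqref{split}--\eqref{split2} and \eqref{eq11}), observes that $\phi_I$ is a $*$-isomorphism carrying $\mathcal{N}_{\C}(D)$ onto $\mathcal{N}_{\C_I}(\Omega_I)$, and — in the direction (ii)$\Rightarrow$(i) — introduces the conjugate functions $(f_j^{I,J})^c(z_I):=\overline{f_j^{I,J}(\overline{z_I})}$, which are $\C_I$-Nash whenever $f_j^{I,J}$ are, so that suitable $\R$-linear combinations of $f_j^{I,J}$ and $(f_j^{I,J})^c$ recover each $F^{\ell}_{I,J}$ directly as a complex Nash function. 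You instead convert both sides into real Nash data on $D_\R\cong\Omega_I$ via \cite[Prop.2.4]{ca}, transport that data through the linear dictionary (with the precomposition $(\alpha,\beta)\mapsto(\alpha,-\beta)$ absorbing the need for $f(\phi_I(\bar z))$), and then close the loop with Proposition \ref{equiv}. The conjugate trick of the paper and your $\bar z$-trick are the same observation in different clothing — both exploit the odd/even parity of $F_1,F_2$ to invert an otherwise under-determined linear system — so the underlying linear algebra coincides; the difference is whether Nashness is carried across by the complex $*$-isomorphism $\phi_I$ (paper) or by the real-coordinate bridges \cite[Prop.2.4]{ca} and Proposition \ref{equiv} (you). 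Your route makes more explicit where holomorphy of $F$ is used (through \ref{equiv}), in line with Example \ref{hol2}, at the cost of one extra back-and-forth between $\C$ and $\R^2$; the paper's route is marginally shorter because it never leaves the complex side.
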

\begin{proof}
We show the statement for $A=\HH$, as the case $A=\O$ is analogous. Let $D\subset \C$ be an open subset such that $\Omega=\Omega_D$ and $F:=F_1+\iota F_2:D\to \HH\otimes_{\R}\C$ the stem function such that $f=\mathcal{I}(F)$. Let $I,J\in\sph_\HH$ be imaginary units such that $J\neq\pm I$. As $\{1,I,J,IJ\}$ is a base of $\HH$ as a $\R$-vector space, then there exist (unique) real valued functions $F_{s,\ell}^{I,J}:D\to  \R$ for $s=1,2$ and $\ell=0,1,2,3$ such that 
$$
F_s=F_{s,0}^{I,J}+F_{s,1}^{I,J}I+F_{s,2}^{I,J}J+F_{s,3}^{I,J}IJ
$$
for $s=1,2$. For $\ell=0,1,2,3$ define
$$
F_{I,J}^{\ell}:=F_{1,\ell}^{I,J}+\iota F_{2,\ell}^{I,J}
$$
Let $\phi_I:\C\to \C_I$ be the $*$-isomorphism introduced in \eqref{*iso}. We have
\begin{align*}
f_I(z_I)&=F_1(z)+I F_2(z)\\
&=F_{1,0}^{I,J}(z)+F_{1,1}^{I,J}(z)I+F_{1,2}^{I,J}(z)J+F_{1,3}^{I,J}(z)IJ\\
&\hspace{9mm}+I(F_{2,0}^{I,J}(z)+F_{2,1}^{I,J}(z)I+F_{2,2}^{I,J}(z)J+F_{2,3}^{I,J}(z)IJ)\\
&=((F_{1,0}^{I,J}(z)+IF_{2,0}^{I,J}(z))+(F_{1,1}^{I,J}(z)+IF_{2,1}^{I,J}(z))I)\\
&\hspace{9mm}+((F_{1,2}^{I,J}(z)+IF_{2,2}^{I,J}(z))+(F_{1,3}^{I,J}(z)+IF_{2,3}^{I,J}(z))I)J
\end{align*}
for each $z_I\in\Omega_I$, where $z=\phi_I^{-1}(z_I)\in D$. We deduce that
\begin{align}\label{split}
f_1^{I,J}(z_I)&=(F_{1,0}^{I,J}(z)+IF_{2,0}^{I,J}(z))+(F_{1,1}^{I,J}(z)+IF_{2,1}^{I,J}(z))I\\\label{split2}
f_2^{I,J}(z_I)&=(F_{1,2}^{I,J}(z)+IF_{2,2}^{I,J}(z))+(F_{1,3}^{I,J}(z)+IF_{2,3}^{I,J}(z))I
\end{align}
for each $z_I\in\Omega_I$, where $z=\phi_I^{-1}(z_I)\in D$. By \eqref{split}, we deduce 
\begin{align*}
f_1^{I,J}(z_I)&=(F_{1,0}^{I,J}(z)+IF_{2,0}^{I,J}(z))+(F_{1,1}^{I,J}(z)+IF_{2,1}^{I,J}(z))I\\
&=\phi_I(F_{I,J}^0(z))+\phi_I( F_{I,J}^1(z))I=\phi_I(F_{I,J}^0(\phi_I^{-1}(\phi_I(z))))+\phi_I( F_{I,J}^1(\phi_I^{-1}(\phi_I(z))))I\\
&=\phi_I(F_{I,J}^0(\phi_I^{-1}(z_I)))+\phi_I( F_{I,J}^1(\phi_I^{-1}(z_I)))I
\end{align*}
for each $z_I\in \Omega_I$. Analogously, by \eqref{split2}, we deduce that 
$$
f_2^{I,J}(z_I)=\phi_I(F_{I,J}^2(\phi_I^{-1}(z_I)))+\phi_I( F_{I,J}^3(\phi_I^{-1}(z_I)))I
$$
for each $z_I\in\Omega_I$. 

As $\phi_I$ is an isomorphism that preserves the complex multiplication and the complex structures, we deduce that $G\in \mathcal{N}_{\C}(D)$ if and only if $\phi_I\circ G\circ \phi_I^{-1}\in\mathcal{N}_{\C_I}(\Omega_I)$. By Proposition \ref{indipendente}, if $f\in \mathcal{SN}_\HH(\Omega)$, then $F_{I,J}^{\ell}\in \mathcal{N}_{\C}(D)$ for each $I,J\in\sph_\HH$ such that $J\neq \pm I$ and each $\ell=0,1,2,3$. As $\mathcal{N}_{\C_I}(\Omega_I)$ is a ring with respect to the pointwise addition and multiplication of functions \cite[Cor.1.11]{t} and by the fact that $\phi_I\mathcal{N}_{\C}(D)\phi_I^{-1}=\mathcal{N}_{\C_I}(\Omega_I)$ for each $I\in\sph_\HH$, it follows that if $F_{I,J}^{\ell}\in \mathcal{N}_{\C}(D)$ for each $\ell=0,1,2,3$, then $f_1^{I,J},f^{I,J}_2\in\mathcal{N}_{\C_I}(\Omega_I)$. In particular, as $\{1,I,J,IJ\}$ is a splitting base of $\HH$ associated to $I$, then we have that (i) implies (iii). As (iii) trivially implies (ii), we only need to show that (ii) implies (i) to conclude the proof.

Let $I,J\in\sph_\HH$ be imaginary units such that $J\neq\pm I$. Assume that $f_1^{I,J},f^{I,J}_2:\Omega_I\to \C_I$ are $\C_I$-Nash functions on $\Omega_I$. Recall that $F_1(\overline{z})=F_1(z)$ and $F_2(\overline{z})=-F_2(z)$ for each $z\in D$. We may write \eqref{split} and \eqref{split2} as
\begin{align}\label{eq11}
\begin{split}
f_1^{I,J}(z_I)&=(F_{1,0}^{I,J}(z)-F_{2,1}^{I,J}(z))+(F_{1,1}^{I,J}(z)+F_{2,0}^{I,J}(z))I\\
f_2^{I,J}(z_I)&=(F_{1,2}^{I,J}(z)-F_{2,3}^{I,J}(z))+(F_{1,3}^{I,J}(z)+F_{2,2}^{I,J}(z))I
\end{split}
\end{align}
for each $z_I\in\C_I$, where $z=\phi_I^{-1}(z_I)\in D$. As $\Omega_I$ is symmetric with respect to the real axis, we deduce
\begin{align}\label{eq22}
\begin{split}
(f_1^{I,J})^c(z_I)&:=\overline{f_1^{I,J}(\overline{z_I})}=(F_{1,0}^{I,J}(z)+F_{2,1}^{I,J}(z))+(-F_{1,1}^{I,J}(z)+F_{2,0}^{I,J}(z))I\\
(f_2^{I,J})^c(z_I)&:=\overline{f_2^{I,J}(\overline{z_I})}=(F_{1,2}^{I,J}(z)+F_{2,3}^{I,J}(z))+(-F_{1,3}^{I,J}(z)+F_{2,2}^{I,J}(z))I
\end{split}
\end{align}
for each $z_I\in\Omega_I$, where $z=\phi_I^{-1}(z_I)\in D$. A straightforward calculation shows that the functions $(f_1^{I,J})^c$ and $(f_2^{I,J})^c$ are $\C_I$-Nash functions on $\Omega_I$ (see also \cite[Rmk.2.3]{ca}). Using again the $*$-isomorphism $\phi_I$ and the fact that $\phi_I^{-1}\mathcal{N}_{\C_I}(\Omega_I)\phi_I=\mathcal{N}_{\C}(D)$, by \eqref{eq11} and \eqref{eq22}, we have that the following functions
\begin{align}\label{sumdiff}
\begin{split}
(F_{1,0}^{I,J}-F_{2,1}^{I,J})+\iota (F_{1,1}^{I,J}+F_{2,0}^{I,J}),& \quad (F_{1,2}^{I,J}-F_{2,3}^{I,J})+\iota (F_{1,3}^{I,J}+F_{2,2}^{I,J})\\
(F_{1,0}^{I,J}+F_{2,1}^{I,J})+\iota (-F_{1,1}^{I,J}+F_{2,0}^{I,J}),& \quad (F_{1,2}^{I,J}+F_{2,3}^{I,J})+\iota (-F_{1,3}^{I,J}+F_{2,2}^{I,J})
\end{split}
\end{align}
are complex Nash functions on $D$. As $\mathcal{N}_\C(D)$ is a ring \cite[Cor.1.11]{t}, taking suitable linear combinations of the functions in \eqref{sumdiff}, we deduce that the functions $F_{I,J}^{\ell}=F_{1,\ell}^{I,J}+\iota F_{2,\ell}^{I,J}$ are complex Nash functions on $D$ for each $\ell=0,1,2,3$. We conclude that $f$ is a slice-Nash function on $\Omega$, as required.
\end{proof}

Let $\Bb:=\{\e_0,\ldots,\e_{d_A}\}$ be any base of $A$ as a $\R$-vector space (here we are not requiring that $1\in\Bb$). Define
\begin{equation}\label{omegaB}
\Omega_{\Bb}:=\{(x_0,\ldots,x_{d_A})\in \R^{d_A+1} : x_0\e_0+\ldots+x_{d_A}\e_{d_A}\in \Omega\}\subset \R^{d_A+1}.
\end{equation}
Let $f:\Omega\to A$ be a slice function. Then, there exist unique real valued functions $f_\ell^{\Bb}:\Omega\to \R$ for $\ell=0,\ldots,d_A$ such that 
\begin{equation}\label{effereale}
f(x)=f_0^{\Bb}(x)\e_0+\ldots+f_{d_A}^{\Bb}(x)\e_{d_A}
\end{equation}
for each $x\in \Omega$. After the identification $A\equiv\R^{d_A+1}$ induced by the base $\Bb$, the functions $f_\ell^{\Bb}$ can be regarded as functions of real variables
$$
f^{\Bb,\R}_{\ell}:\Omega_{\Bb}\to \R, \quad (x_0,\ldots,x_{d_A})\mapsto f_\ell^{\Bb}(x_0\e_0+\ldots+x_{d_A}\e_{d_A}).
$$
In order to lighten the exposition, up to a light abuse of notation, we will write $f_\ell^{\Bb}$ instead of $f_\ell^{\Bb,\R}$. This will create no confusion, as the situation will be always clear by the context. The next characterisation relates a slice-Nash function $f$ to its real components $f_{\ell}^{\Bb}$.

\begin{thm}[Characterisation of slice-Nash functions II]\label{char2}
The followings are equivalent:
\begin{itemize}
\item[{\rm(i)}] $f\in\mathcal{SN}_A(\Omega)$.
\item[{\rm(ii})] $f$ is slice regular and there exists a base $\Bb$ of $A$ as a $\R$-vector space such that the functions $f_\ell^{\Bb}:\Omega_{\Bb}\to \R$ are real Nash functions for each $\ell=0,\ldots,d_A$.
\item[{\rm(iii)}] $f$ is slice regular and, for each base $\Bb$ of $A$ as a $\R$-vector space, the functions $f_\ell^{\Bb}:\Omega_{\Bb}\to \R$ are real Nash functions for each $\ell=0,\ldots,d_A$.
\end{itemize}
\end{thm}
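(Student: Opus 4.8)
The plan is to deduce the theorem from the already-established analysis of the associated stem function, chiefly Propositions \ref{equiv}, \ref{indipendente} and \ref{indipendente2}, by building an explicit dictionary between the real components of the \emph{slice} function $f$ on $\Omega_\Bb\subset\R^{d_A+1}$ and the real components of the stem function $F=F_1+\iota F_2$ on $D_\R\subset\R^2$, where $\Omega=\Omega_D$ and $f=\mathcal{I}(F)$. The implication (iii)$\Rightarrow$(ii) is trivial, and (ii)$\Rightarrow$(iii) is proved exactly as Proposition \ref{indipendente2}: passing from one real basis of $A$ to another replaces $f^\Bb_\ell$ by $\R$-linear combinations of the old components precomposed with a fixed linear automorphism of $\R^{d_A+1}$, and composition with linear maps together with the ring structure of real Nash functions \cite[Cor.8.1.6]{bcr} preserves the Nash property. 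So the heart of the matter is (i)$\Leftrightarrow$(ii); since (ii) already assumes $f\in\mathcal{SR}_A(\Omega)$, equivalently that $F$ is holomorphic, Proposition \ref{equiv} (with Proposition \ref{indipendente}) reduces us to proving, for a single hence any real basis $\Bb$ (legitimate since both sides are basis-independent, by Proposition \ref{indipendente2} and the equivalence (ii)$\Leftrightarrow$(iii) just established), that the real components $F^\Bb_{s,r}:D_\R\to\R$ of $F$ are real Nash on $D_\R$ if and only if the real components $f^\Bb_\ell:\Omega_\Bb\to\R$ of $f$ are real Nash on $\Omega_\Bb$.

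The bridge is the pointwise identity obtained by writing $x=\alpha+\beta J\in\Omega$ with $\beta\geq0$ and $J\in\sph_A$, so that $\Re(x)=\alpha$, $\|\Im(x)\|^2=\beta^2$, $\Im(x)=\beta J$ and $f(x)=F_1(\alpha+\iota\beta)+JF_2(\alpha+\iota\beta)$. Using that $F^\Bb_{1,r}$ is even and $F^\Bb_{2,r}$ is odd in $\beta$ (the odd-even pair condition), one rewrites this, valid for \emph{all} $x\in\Omega$, as
$$
f(x)=\sum_{r=0}^{d_A}\varphi_r\big(\Re(x),\|\Im(x)\|^2\big)\,\e_r+\sum_{r=0}^{d_A}\psi_r\big(\Re(x),\|\Im(x)\|^2\big)\,\big(\Im(x)\,\e_r\big),
$$
where $\varphi_r$ and $\psi_r$ are the well-defined functions of two real variables determined by $\varphi_r(\alpha,\beta^2)=F^\Bb_{1,r}(\alpha,\beta)$ and $\psi_r(\alpha,\beta^2)=\beta^{-1}F^\Bb_{2,r}(\alpha,\beta)$ (well-definedness coming precisely from the even/odd dependence on $\beta$). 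Since $x\mapsto\Re(x)$, $x\mapsto\|\Im(x)\|^2$ and $x\mapsto\Im(x)\,\e_r$ all have polynomial coordinates in the basis $\Bb$, this formula gives (i)$\Rightarrow$(ii): once the $F^\Bb_{s,r}$ are real Nash, so are $\varphi_r$ and $\psi_r$ (the step discussed below), and then each $f^\Bb_\ell$ is a polynomial combination of compositions of real Nash functions with polynomial maps, hence real Nash. For the converse (ii)$\Rightarrow$(i): fixing $I\in\sph_A$, the restriction $f_I=f|_{\Omega_I}$ is obtained by restricting the $f^\Bb_\ell$ along the linear map $(\alpha,\beta)\mapsto\alpha\cdot1+\beta\cdot I$, so its real components with respect to $\Bb$ are real Nash functions of $(\alpha,\beta)$ on $D_\R$; feeding these into $F_1(\alpha+\iota\beta)=\tfrac12\big(f_I(\alpha+\beta I)+f_I(\alpha-\beta I)\big)$ and $F_2(\alpha+\iota\beta)=-\tfrac I2\big(f_I(\alpha+\beta I)-f_I(\alpha-\beta I)\big)$, which only take $\R$-linear combinations of those components (and use $\overline{D}=D$), shows that the $F^\Bb_{s,r}$ are real Nash on $D_\R$, whence $f\in\mathcal{SN}_A(\Omega)$ by Proposition \ref{equiv}.

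The main obstacle will be the ``descent'' step invoked in (i)$\Rightarrow$(ii): showing that an even real Nash function of $(\alpha,\beta)$ is a real Nash function of $(\alpha,\beta^2)$, and similarly after dividing an odd real Nash function by $\beta$. On the locus $\beta\neq0$ this is merely composition with the Nash function $s\mapsto\sqrt s$ (together with, for $\psi_r$, exact division of the odd analytic function $F^\Bb_{2,r}$ by the Nash-prime $\beta$, the quotient being analytic and algebraic over $\R(\alpha,\beta)$, hence Nash); the delicate point is the behaviour across $\beta=0$, which occurs exactly on the symmetric-slice-domain components of $\Omega$, where $\beta=\|\Im(x)\|$ is not even differentiable as a function of $x$. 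There one argues that $\varphi_r$ is analytic at $\{s=0\}$ via the even-power Taylor expansion of $F^\Bb_{1,r}$ in $\beta$, and is algebraic over $\R[\alpha,s]$ by taking a Nash equation $P(\alpha,\beta,F^\Bb_{1,r})\equiv0$, forming $P(\alpha,\beta,T)\,P(\alpha,-\beta,T)$, which is even in $\beta$ hence a nonzero polynomial $\widetilde P(\alpha,\beta^2,T)$, and noting $\widetilde P\big(\alpha,s,\varphi_r(\alpha,s)\big)\equiv0$; being both analytic and algebraic over the polynomials (and extending as such past $s=0$), $\varphi_r$ is real Nash, and the same argument applied to $\beta^{-1}F^\Bb_{2,r}$ handles $\psi_r$. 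Everything else — Nash-ness being a pointwise condition, composing Nash functions with polynomial and linear maps, and transporting the earlier results between $\C$ and $\C_I$ through the $*$-isomorphism $\phi_I$ — is routine.
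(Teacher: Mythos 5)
Your proposal is correct, and it takes a genuinely different route from the paper's proof in the harder direction. The paper proves (i)$\Rightarrow$(iii) by writing $q=\Re(q)+\sqrt{-\Im^2(q)}\cdot\tfrac{\Im(q)}{\sqrt{-\Im^2(q)}}$ and composing $F_1,F_2$ with $q\mapsto\big(\Re(q),\sqrt{-\Im^2(q)}\big)$; since the square root is only Nash and smooth away from $\R$, this gives $f^\Bb_\ell$ Nash only on $\Omega_\Bb\setminus\R$, and the paper then patches across the real axis by observing that $f$ is real analytic and that a Nash equation valid on the (connected, dense) set $\Omega'\setminus\R$ persists on $\Omega'$. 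You instead replace $\sqrt{-\Im^2(q)}$ by the genuinely polynomial quantity $\|\Im(q)\|^2$, at the cost of ``descending'' the even/odd components $F^\Bb_{1,r}$ and $\beta^{-1}F^\Bb_{2,r}$ to functions $\varphi_r,\psi_r$ of $(\alpha,\beta^2)$; your $\widetilde P(\alpha,\beta^2,T):=P(\alpha,\beta,T)P(\alpha,-\beta,T)$ device plus the even-power Taylor expansion correctly show these descents are Nash across $s=0$, so the composition argument runs uniformly on all of $\Omega_\Bb$ with no separate treatment of the real axis. For the converse, the paper restricts to a slice and goes through the complex components $f_1^{I,J},f_2^{I,J}$ and \cite[Prop.2.4]{ca}, then invokes Theorem~\ref{char}, whereas you recover $F_1,F_2$ directly from $f_I$ via the representation formula and land on Proposition~\ref{equiv}; these are essentially equivalent, with yours being slightly shorter. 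Both approaches ultimately lean on the analyticity of $F$ to control what happens on the real axis — the paper to extend the algebraic relation past $\R$, you to extend $\varphi_r,\psi_r$ past $\{s=0\}$ — so neither is more elementary, but yours avoids the irrational function $\sqrt{-\Im^2(q)}$ entirely, which is a conceptually pleasant alternative. The one point that deserves more care in a written version is that $\varphi_r,\psi_r$ are a priori defined only on the non-open set $\{s\ge 0\}$, so you should state explicitly that they extend (analytically, hence also algebraically) to an open neighbourhood of each boundary point $(\alpha_0,0)$, which is what makes the composition argument legitimate at $x\in\Omega\cap\R$; you do gesture at this (``extending as such past $s=0$''), and the claim is true, so this is a presentational rather than a logical gap.
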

\begin{proof}
Observe that (iii) trivially implies (ii). Moreover, we omit the proof of the implication (ii)$\rightarrow$(iii), as the argument is essentially the same as the one of the proof of Proposition \ref{indipendente2}. In particular, we only need to show that (i) is equivalent to (iii). We show the statement for $A=\HH$, as the case $A=\O$ is analogous. Let $D\subset \C$ be an open subset such that $\Omega=\Omega_D$ and $F:=F_1+\iota F_2:D\to \HH\otimes_{\R}\C$ a stem function such that $f=\mathcal{I}(F)$. 

\noindent{\sc (i) implies (iii).} Each $q\in \HH\setminus\R$ can be written uniquely as $q=\alpha+\beta I$, where $\alpha,\beta\in \R$ with $\beta>0$ and $I\in\sph_\HH$. In particular, $\alpha=\Re(q):=\tfrac{q+q^c}{2}$ and $\beta I=\Im(q):=\tfrac{q-q^c}{2}$. A straightforward computation shows that
$$
\beta=\sqrt{-\Im^2(q)} \quad \text{and} \quad I=\frac{\Im(q)}{\sqrt{-\Im^2(q)}}.
$$
Observe that, as $q\not\in \R$, then $\Im^2(q)\in\R$ and $\Im^2(q)<0$. Thus, each $q\in \HH\setminus\R$ can be written (uniquely) as
\begin{equation}\label{J}
q=\Re(q)+\sqrt{-\Im^2(q)}\frac{\Im(q)}{\sqrt{-\Im^2(q)}}.
\end{equation}

Let $D_\R:=\{(\alpha,\beta)\in\R^2 : \alpha+\iota \beta\in D\}$. We regard the stem function $F$ as the map of real variables $F=(F_1,F_2):D_{\R}\to \HH\oplus \HH$. By \eqref{J}, we deduce that 
\begin{equation}\label{eqreal}
f(q)=F_{1}\Big(\Re(q),\sqrt{-\Im^2(q)}\Big)+\frac{\Im(q)}{\sqrt{-\Im^2(q)}}F_{2}\Big(\Re(q),\sqrt{-\Im^2(q)}\Big)
\end{equation}
for each $q\in \Omega\setminus\R$. Let $\Bb:=\{\e_0,\e_1,\e_2,\e_3\}$ be any base of $\HH$ as a $\R$-vector space. Consider the identification $\HH\equiv \R^4$ induced by the (real) base $\Bb$. With this identification, the product of $\HH$ is a (real) bilinear map and the conjugation $q\mapsto q^c$ a (real) linear map. In particular, the real components $(\alpha_0,\alpha_1,\alpha_2,\alpha_3)$ of the map
$$
(\Omega\setminus\R)_{\Bb}\to \R^4, \quad (q_0,q_1,q_2,q_3)\mapsto \frac{\Im(q_0\e_0+q_1\e_1+q_2\e_2+q_3\e_3)}{\sqrt{-\Im^2(q_0\e_0+q_1\e_1+q_2\e_2+q_3\e_3)}}
$$
with respect to the base $\Bb$ are real Nash functions on $(\Omega\setminus\R)_{\Bb}$. 

Let $F_{1,\ell}^{\Bb},F_{2,\ell}^{\Bb}:D_\R\to \R$ for $\ell=0,1,2,3$ be the real components of $F_1$ and $F_2$ with respect to the base $\Bb$. We may regard the functions $F_{s,\ell}^{\Bb}$ for $s=1,2$ and $\ell=0,1,2,3$, as functions
$$
\widetilde{F}_{s,\ell}^{\Bb}:\Omega_{\Bb}\to \R, \quad (q_0,q_1,q_2,q_3)\mapsto \widetilde{F}_{s,\ell}^{\Bb}(q_0,q_1,q_2,q_3):=F_{s,\ell}^{\Bb}\Big(\Re(q),\sqrt{-\Im^2(q)}\Big),
$$
where $q:=q_0\e_0+q_1\e_1+q_2\e_2+q_3\e_3\in\Omega_{\Bb}$. Clearly, if $F_{s,\ell}^{\Bb}$ is a real Nash function of $D_{\R}\setminus\R$, then $\widetilde{F}_{s,\ell}^{\Bb}$ is a real Nash function on $\Omega_\Bb\setminus\R$, because both $\Re(q)$ and $\sqrt{-\Im^2(q)}$ are Nash functions in the variables $q_0,q_1,q_2$ and $q_3$.

As the product of $\HH$ is a (real) bilinear map, by \eqref{eqreal}, we deduce that for each $\ell=0,1,2,3$ there exist unique $\beta^\ell_{r,k}\in\R$ for $r,k\in\{0,1,2,3\}$ such that
\begin{equation}\label{realchareq}
f_\ell^{\Bb}(q_0,q_1,q_2,q_3)=\widetilde{F}_{1,\ell}^{\Bb}(q_0,q_1,q_2,q_3)+\sum_{r,k=0}^3\beta^\ell_{r,k}\alpha_r(q_0,q_1,q_2,q_3) \widetilde{F}_{2,\ell}^{\Bb}(q_0,q_1,q_2,q_3)
\end{equation}
for each $(q_0,q_1,q_2,q_3)\in(\Omega\setminus\R)_\Bb$. The numbers $\beta^\ell_{r,k}\in\R$ are the only real numbers that satisfy the equality
$
\e_r\cdot\e_k=\beta^0_{r,k}\e_0+\beta^1_{r,k}\e_1+\beta^2_{r,k}\e_2+\beta^3_{r,k}\e_3
$
for $r,k\in\{0,1,2,3\}$. By Proposition \ref{indipendente2}, $F_{1,\ell}^{\Bb}$ and $F_{2,\ell}^{\Bb}$ are real Nash functions on $D_{\R}$ for each $\ell=0,1,2,3$, so $\widetilde{F}_{1,\ell}^{\Bb}$ and $\widetilde{F}_{2,\ell}^{\Bb}$ are real Nash functions on $(\Omega\setminus\R)_{\Bb}$ for each $\ell=0,1,2,3$. By \eqref{realchareq} and \cite[Cor.8.1.6]{bcr}, we deduce that $f_\ell^{\Bb}$ is a real Nash function on $(\Omega\setminus\R)_{\Bb}$ for each $\ell=0,1,2,3$. 

Observe that, as $F_1$ and $F_2$ are real analytic maps on $D_{\R}$, then, by \cite[Prop.7(iii)]{gp}, $f$ is a real analytic map on $\Omega_{\Bb}$. Let $\Omega'$ be a connected component of $\Omega_{\Bb}$ such that $\Omega'\cap \R\neq \varnothing$ and $\ell\in\{0,1,2,3\}$. As $f_\ell^{\Bb}$ is a real Nash function on $\Omega'\setminus\R$, then there exists a non-zero polynomial $P_\ell\in\R[\x,\t]:=\R[\x_1,\ldots,\x_d,\t]$ such that
$
P_\ell(q,f_\ell^{\Bb}(q))=0
$
for each $q\in\Omega'\setminus\R$. We deduce that $P_\ell(q,f_\ell^{\Bb}(q))=0$ for each $q\in\Omega'$. In particular, $f_\ell^{\Bb}$ is a real Nash function on $\Omega'$. As being a Nash function is a local property on the connected components of $\Omega_{\Bb}$, we conclude that $f_\ell^{\Bb}$ is a real Nash function on $\Omega_{\Bb}$, as required. 

\noindent{\sc (iii) implies (i).} Let $I,J\in\sph_\HH$ be imaginary units such that $J\neq \pm I$ and consider the splitting base $\Bb:=\{1,I,J,IJ\}$. Let $f_{\ell}^{\Bb}:\Omega\to \R$ for $\ell=0,1,2,3$ be the unique real valued functions such that
$$
f(q)=f_{0}^{\Bb}(q)+f_{1}^{\Bb}(q)I+f_{2}^{\Bb}(q)J+f_{3}^{\Bb}(q)IJ
$$
for each $q\in\Omega$. In particular,
$$
f_I(z_I)=f_{0}^{\Bb}(z_I)+f_{1}^{\Bb}(z_I)I+f_{2}^{\Bb}(z_I)J+f_{3}^{\Bb}(z_I)IJ
$$
for each $z_I\in\Omega_I:=\Omega\cap \C_I$, where $f_I:=f|_{\Omega_I}$. As $f$ is slice regular, by the splitting lemma, the functions 
$$
f_1^{I,J}:=f_{0}^{\Bb}|_{\Omega_I}+f_{1}^{\Bb}|_{\Omega_I}I:\Omega_I\to \C_I, \quad 
f_2^{I,J}:=f_{2}^{\Bb}|_{\Omega_I}+f_{3}^{\Bb}|_{\Omega_I}I:\Omega_I\to \C_I
$$
are holomorphic. As $f_{\ell}^{\Bb}$, seen as a function $f_{\ell}^{\Bb}:\Omega_{\Bb}\to \R$, is a real Nash function on $\Omega_{\Bb}$ for each $\ell=0,1,2,3$, then its restriction $f_{\ell}^{\Bb}|_{(\Omega_I)_{\Bb}}$ is a real Nash function on $(\Omega_{I})_\Bb=\Omega_{\Bb}\cap \C_I$ for each $\ell=0,1,2,3$. Moreover, as
\begin{align*}
&(\Omega_I)_{\Bb}=\{(q_0,q_1,q_2,q_3)\in \R^4: q_2=q_3=0 \text{ and }q_0+q_1I\in \Omega_I\}\\
&(\Omega_I)_{\R}=\{(\alpha,\beta)\in\R^2 : \alpha+\beta I\in \Omega_I\},
\end{align*}
then the function $f_{\ell}^{\Bb}|_{\Omega_I}$, regarded as a function of real variables $(\Omega_I)_\R\to \R$, is a real Nash function on $(\Omega_I)_\R$ for each $\ell=0,1,2,3$. In particular, as the functions $f_1^{I,J}$ and $f_2^{I,J}$ are holomorphic on $\Omega_I$, by \cite[Prop.2.4]{ca}, we deduce that the functions $f_1^{I,J}$ and $f_2^{I,J}$ are $\C_I$-Nash functions on $\Omega_I$. By Theorem \ref{char}, we conclude that $f\in\mathcal{SN}_\HH(\Omega)$, as desired.
\end{proof} 

In points (ii) and (iii) of the previous theorem, the assumption that $f$ is slice regular cannot be dropped (compare this with Example \ref{hol2}). For instance, the function 
$$
f:A\to A, \quad x\mapsto x^c
$$ 
is a real Nash function (when expressed in real components with respect to any base $\Bb$ of $A$ as a $\R$-vector space) but clearly $f$ is not slice regular, so it is not a slice-Nash function.

\subsection{Slice derivatives}

In this section, we show that if a slice function $f$ is slice-Nash, then its slice derivatives are also slice-Nash. 

\begin{prop}\label{slicederProp}
Let $A$ be either the algebra of quaternions $\HH$ or the algebra of octonions $\O$. Let $\Omega\subset A$ be an open circular set and $f\in\mathcal{SN}_A(\Omega)$. Then, for each integer $n\geq 1$, the $n$\textsuperscript{th} slice derivative $\tfrac{\partial^n f}{\partial \x^n}:\Omega\to A$ of $f$ is a slice-Nash function on $\Omega$. 
\end{prop}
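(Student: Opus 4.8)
The plan is to reduce the statement, by induction on $n$ and via the splitting-lemma characterisation of slice-Nash functions (Theorem~\ref{char}), to the single elementary fact that the complex derivative of a complex Nash function of one variable is again a complex Nash function.

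First I would treat the case $n=1$. Set $g:=\tfrac{\partial f}{\partial\x}$, fix $I\in\sph_A$ and a splitting base $\Bb_I=\{I_0:=1,I,I_1,II_1,\ldots,I_{u_A},II_{u_A}\}$ of $A$ associated to $I$, and write, by the splitting lemma, $f_I=\sum_{k=0}^{u_A}(f_1^{I,I_k}+f_2^{I,I_k}I)I_k$ with holomorphic $f_1^{I,I_k},f_2^{I,I_k}:\Omega_I\to\C_I$. The key bookkeeping step is to check that, on the slice $\C_I$, the slice derivative $g$ coincides with the ordinary complex derivative of the $A$-valued holomorphic function $z_I\mapsto f_I(z_I)$ of the single complex variable $z_I\in\Omega_I$: indeed, writing $f=\mathcal I(F)$ with $F=F_1+\iota F_2$ and using the Cauchy--Riemann equations for $F$, one gets $(\tfrac{\partial F}{\partial\z})_1=\tfrac{\partial F_1}{\partial\bm\alpha}$ and $(\tfrac{\partial F}{\partial\z})_2=\tfrac{\partial F_2}{\partial\bm\alpha}$, so $g(z_I)=\tfrac{\partial F_1}{\partial\bm\alpha}(z)+I\tfrac{\partial F_2}{\partial\bm\alpha}(z)=(f_I)'(z_I)$. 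Consequently the splitting components of $g$ with respect to $\Bb_I$ are precisely $g_1^{I,I_k}=(f_1^{I,I_k})'$ and $g_2^{I,I_k}=(f_2^{I,I_k})'$.

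Next I would invoke that differentiation preserves $\C_I$-Nash functions. If $h\in\mathcal N_{\C_I}(\Omega_I)$, pick a non-zero $P\in\C_I[\z_1,\z_2]$, which we may take irreducible with $\deg_{\z_2}P\geq 1$, such that $P(z,h(z))=0$ on $\Omega_I$; differentiating yields $h'(z)=-\tfrac{(\partial P/\partial\z_1)(z,h(z))}{(\partial P/\partial\z_2)(z,h(z))}$ off the discrete zero set of $z\mapsto(\partial P/\partial\z_2)(z,h(z))$ (which is not identically zero, since otherwise $h$ would satisfy a relation of lower degree in $\z_2$). Thus $h'$ lies in the finite field extension $\C_I(z)(h(z))$ of $\C_I(z)$, hence is algebraic over $\C_I[\z]$; since $h'$ is in addition holomorphic on all of $\Omega_I$, the identity principle applied on each connected component of $\Omega_I$ shows $h'$ satisfies a non-zero polynomial relation throughout $\Omega_I$, so $h'\in\mathcal N_{\C_I}(\Omega_I)$ (equivalently, $\mathcal N_{\C_I}(\Omega_I)$ is a differential ring, cf.\ \cite[\S8.1]{bcr}, \cite[Cor.1.11]{t}). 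Since $f\in\mathcal{SN}_A(\Omega)$ forces $f_1^{I,I_k},f_2^{I,I_k}\in\mathcal N_{\C_I}(\Omega_I)$ by Theorem~\ref{char}, we conclude $g_1^{I,I_k},g_2^{I,I_k}\in\mathcal N_{\C_I}(\Omega_I)$, and a second application of Theorem~\ref{char} gives $g=\tfrac{\partial f}{\partial\x}\in\mathcal{SN}_A(\Omega)$.

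The general case then follows by induction, writing $\tfrac{\partial^n f}{\partial\x^n}=\tfrac{\partial}{\partial\x}\bigl(\tfrac{\partial^{n-1}f}{\partial\x^{n-1}}\bigr)$: by the inductive hypothesis $\tfrac{\partial^{n-1}f}{\partial\x^{n-1}}$ is slice-Nash (in particular slice regular, since slice derivatives of slice regular functions are slice regular), so the case $n=1$ applies. The only genuinely technical point is the verification that complex Nash functions are stable under $\tfrac{d}{dz}$ together with the identification of the splitting components of $\tfrac{\partial f}{\partial\x}$ with the derivatives of those of $f$; beyond that the argument is a direct appeal to the characterisation already established in Theorem~\ref{char}. (One could equivalently run the proof through the real-component characterisation of Theorem~\ref{char2}, but the splitting-lemma version keeps the computation cleaner.)
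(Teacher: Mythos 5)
Your proof is correct and follows essentially the same route as the paper: reduce to $n=1$, observe that the slice derivative differentiates the complex components of $f$, and invoke closure of complex Nash functions of one variable under $\tfrac{d}{dz}$. The only differences are presentational: the paper works directly with the stem-function components $F_{I,J}^{\ell}$ via Proposition~\ref{indipendente} and cites \cite[Cor.1.12]{t} for the closure under differentiation, whereas you pass through the splitting-lemma components via Theorem~\ref{char} and reprove that closure fact from the irreducible-polynomial relation.
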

\begin{proof}
We show the statement for $A=\HH$, as the case $A=\O$ is similar. As $\tfrac{\partial^n f}{\partial \x^n}=\tfrac{\partial }{\partial \x}\big(\tfrac{\partial^{n-1} f}{\partial \x^{n-1}}\big)$ for each integer $n\geq 2$, it is enough to show the statement for $n=1$. Let $D\subset \C$ be an open subset such that $\Omega=\Omega_D$ and $F:D\to \HH\otimes_{\R}\C$ the stem function such that $f=\mathcal{I}(F)$. Let $I,J\in\sph_\HH$ be imaginary units such that $J\neq \pm I$. Let $F_{I,J}^{\ell}:D\to \C$ for $\ell=0,1,2,3$ be the (unique) complex functions such that 
$
F=F_{I,J}^{0}+F_{I,J}^{1}I+F_{I,J}^{2}J+F_{I,J}^{3}IJ.
$
By Proposition \ref{indipendente}, $F_{I,J}^{\ell}\in\mathcal{N}_\C(D)$ for each $\ell=0,1,2,3$. By \cite[Cor.1.12]{t}, we deduce that $\tfrac{\partial F_{I,J}^{\ell}}{\partial \z}\in\mathcal{N}_\C(D)$ for each $\ell=0,1,2,3$. 
Thus, $\tfrac{\partial F}{\partial \z}$ is a stem-Nash function on $D$. We conclude that 
$$
\frac{\partial f}{\partial \x}=\mathcal{I}\Big(\frac{\partial F}{\partial \z}\Big)\in \mathcal{SN}_\HH(\Omega),
$$
as required.
\end{proof}

\subsection{Algebraic structure on $\mathcal{SN}_A(\Omega)$}

Let $A$ be either the algebra of quaternions $\HH$ or the algebra of octonions $\O$. Let $\Omega\subset A$ be an open circular set. In this section, we show that $\mathcal{SN}_A(\Omega)$ is a $*$-subalgebra of the $*$-algebra $\mathcal{SR}_A(\Omega)$ of slice-regular functions on $\Omega$ endowed with the sum, the slice product and the conjugation $f\mapsto f^c$.

Let $D\subset \C$ be an open subset such that $\Omega=\Omega_D$. We start by showing that the complex $*$-algebra structure of $A\otimes_{\R}\C$ induces naturally a complex $*$-algebra structure on the set of all stem-Nash functions on $D$.

\begin{prop}\label{Calgebra}
The set of all stem-Nash functions on $D$ is a complex $*$-algebra (with respect to the operations induced by the complex $*$-algebra structure of $A\otimes_{\R}\C$).
\end{prop}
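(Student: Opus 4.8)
The plan is to mirror the proof of Proposition~\ref{stemalg*} and reduce the whole statement to two facts already at our disposal: that $\mathcal{N}_\C(D)$ is a commutative $\C$-algebra (a subring of the holomorphic functions on $D$ containing all constants, by \cite[Cor.1.11]{t}), and that stem-Nashness does not depend on the chosen splitting base (Proposition~\ref{indipendente}). First I would fix, once and for all, an imaginary unit $I\in\sph_A$ and a splitting base $\Bb_I:=\{I_0:=1,I,I_1,II_1,\ldots,I_{u_A},II_{u_A}\}$ of $A$ associated to $I$. Since $\Bb_I$ is an $\R$-basis of $A$, it is simultaneously a $\C$-basis of $A\otimes_\R\C$, and the identity $F=\sum_{k=0}^{u_A}(F_{I,I_k}^1+F_{I,I_k}^2 I)I_k$ exhibits the complex functions $F_{I,I_k}^\ell\colon D\to\C$ (for $\ell=1,2$ and $k=0,\ldots,u_A$) as exactly the coordinates of $F$ relative to this $\C$-basis. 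By Definition~\ref{defstemnash} and Proposition~\ref{indipendente}, $F$ is stem-Nash precisely when all of these coordinates belong to $\mathcal{N}_\C(D)$; so it suffices to check that this coordinate condition is preserved by the operations induced on $A\otimes_\R\C$-valued functions on $D$ by the complex $*$-algebra structure of $A\otimes_\R\C$, and, by the base-independence, to do so using the single base $\Bb_I$.

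Then I would run through the operations. For $F+G$ the coordinates are the pointwise sums $F_{I,I_k}^\ell+G_{I,I_k}^\ell$, and multiplication by a complex scalar scales each coordinate; both stay in $\mathcal{N}_\C(D)$ since it is a $\C$-vector space. For the product, write $b_m b_n=\sum_p \gamma_{mn}^p b_p$ for the structure constants of $A\otimes_\R\C$ with respect to $\Bb_I$; these lie in $\C$ because $\Bb_I\subset A$ and $A$ is closed under multiplication. Since $\C$ is the centre of $A\otimes_\R\C$ the complex factors pull out of products, so the $p$-th coordinate of $F\cdot G$ is the finite sum $\sum_{m,n}\gamma_{mn}^p\,F^m G^n$ of products of elements of $\mathcal{N}_\C(D)$, hence lies in $\mathcal{N}_\C(D)$. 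For the involution, write $b_m^c=\sum_p \delta_m^p b_p$ with $\delta_m^p\in\R\subset\C$ (as $b_m^c\in A$); because $\cdot^c$ is a $\C$-linear anti-automorphism of $A\otimes_\R\C$ fixing the central $\C$, the $p$-th coordinate of $F^c$ is $\sum_m\delta_m^p F^m\in\mathcal{N}_\C(D)$. Now $F+G$, $F\cdot G$ and $F^c$ are again (holomorphic) stem functions by Proposition~\ref{stemalg*}, hence stem-Nash, so the set of stem-Nash functions is a $*$-subalgebra of the alternative $*$-algebra of stem functions; being a subset closed under all the relevant operations, it inherits the complex $*$-algebra structure carried by $A\otimes_\R\C$, and it is associative when $A=\HH$.

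I do not expect a genuine obstacle here: the only non-formal input is Proposition~\ref{indipendente}, which frees us to compute with one convenient splitting base, and everything else is the bookkeeping above, strictly parallel to the proof of Proposition~\ref{stemalg*}. The one point worth underlining in the write-up is that the complex nature of the structure is precisely what the coordinate picture displays — the coordinate ring $\mathcal{N}_\C(D)$ is a $\C$-algebra, and, for instance, for every $J\in\sph_A$ the constant function with value in $\C_J$ is stem-Nash — so that no argument beyond closure under the operations and Proposition~\ref{stemalg*} is required.
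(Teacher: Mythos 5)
Your proposal is correct and follows essentially the same route as the paper's proof: fix one splitting base, note via Proposition~\ref{indipendente} that stem-Nashness is the condition that all the complex coordinates lie in $\mathcal{N}_\C(D)$, and then check closure under $+$, $\cdot$, scalar multiplication and $\cdot^c$ by expressing each operation coordinatewise through the (real, hence central) structure constants of the base and invoking \cite[Cor.1.11]{t}.
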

\begin{proof}
We show the statement for $A=\O$, as the case $A=\HH$ is similar but easier. Let $D\subset \C$ be an open subset and $F,G:D\to \O\otimes_\R\C$ stem-Nash functions. Let $I\in\sph_\O$ be an imaginary unit and $\Bb_I:=\{1,I,I_1,II_1,I_2,II_2,I_3,II_3\}$ a splitting base of $\O$ associated to $I$. In order to lighten the exposition, we denote the elements of the splitting base $\Bb_I$ as $J_0:=1,J_1,\ldots,J_7$. Let $F^{\ell},G^{\ell}:D\to \C$ be the unique complex valued functions such that
$$
F=F^0J_0+\ldots+F^7J_7\quad \text{and} \quad G=G^0J_0+\ldots+G^7J_7.
$$
By Proposition \ref{indipendente}, we may assume that $F^{\ell},G^{\ell}\in \mathcal{N}_{\C}(D)$ for each $\ell=0,\ldots,7$.

We may write the stem function $F+G$ as
$$
F+G=(F^0+G^0)J_0+\ldots+(F^7+G^7)J_7.
$$
As $\{J_0,\ldots,J_7\}$ is a base of $\O$ as a $\R$-vector space and the product of $\O$ a real bilinear map, then for each $r,s,k\in\{0,\ldots,7\}$ there exist unique $\alpha^k_{r,s}\in\R$ such that
$
J_rJ_s=\alpha_{r,s}^0J_0+\ldots+\alpha_{r,s}^7J_7.
$
In particular, we may write $F\cdot G$ as
$$
F\cdot G=\Big(\sum_{r=0}^7 F^r J_r\Big)\Big(\sum_{s=0}^7 G^s J_s\Big)=\sum_{r,s,k=0}^7\alpha_{r,s}^k F^rG^s J_k.
$$
By \cite[Cor.1.11]{t}, we deduce that all the components of $F+G$ and $G\cdot F$ with respect to the base $\{J_0,\ldots,J_7\}$ are complex Nash functions on $D$. Thus, $F+G$ and $F\cdot G$ are stem-Nash functions on $D$. As $\alpha F=\alpha\cdot F$ for each $\alpha\in \C$, we deduce that $\alpha F$ is a stem-Nash function for each $\alpha\in\C$. In particular, the set of all stem-Nash functions on $D$ is a (complex) algebra. Moreover, as
$$
F^c=F_0-F_1J_1-\ldots-F_7J_7,
$$
then, if $F$ is a stem-Nash function on $D$, $F^c$ is also a stem-Nash function on $D$. In particular, the algebra of all stem-Nash functions on $D$ endowed with the $*$-involution $F\mapsto F^c$ is a $*$-algebra, as required.
\end{proof}

By Proposition \ref{Calgebra}, we deduce straightforwardly the following:

\begin{prop}\label{ringNash}
If $f,g\in\mathcal{SN}_A(\Omega)$, then $f+g,f\cdot g\in\mathcal{SN}_A(\Omega)$.
\end{prop}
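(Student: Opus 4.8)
The plan is to reduce the statement to Proposition~\ref{Calgebra} by transporting everything through the isomorphism $\mathcal{I}$. First I would fix the open set $D\subset\C$ that is maximal with $\Omega=\Omega_D$; by the discussion following the representation formula (under Assumption~\ref{domainD}) such a $D$ exists and is unique, so each slice function on $\Omega$ is induced by a \emph{unique} stem function on $D$. Write $f=\mathcal{I}(F)$ and $g=\mathcal{I}(G)$ with $F,G\colon D\to A\otimes_\R\C$. Since $f,g\in\mathcal{SN}_A(\Omega)$, Definition~\ref{defslicenash} tells us that $F$ and $G$ are stem-Nash functions on $D$ (and a fortiori holomorphic stem functions).

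Next I would invoke Proposition~\ref{Calgebra}: the set of stem-Nash functions on $D$ is a complex $*$-subalgebra of the $*$-algebra of all stem functions on $D$. In particular $F+G$ and $F\cdot G$ are again stem-Nash functions on $D$. Because $\mathcal{I}$ is a $*$-algebra isomorphism from the $*$-algebra of stem functions on $D$ onto $\mathcal{S}_A(\Omega)$ (Proposition~\ref{algslice1}), it is additive and multiplicative, so $\mathcal{I}(F+G)=f+g$ and $\mathcal{I}(F\cdot G)=f\cdot g$.

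Finally I would verify that $f+g$ and $f\cdot g$ meet the two requirements of Definition~\ref{defslicenash}. Requirement (ii) — that the inducing stem function be stem-Nash — is precisely what the previous paragraph gives. Requirement (i) — slice regularity — is then automatic, since, as noted right after Definition~\ref{defslicenash}, condition (ii) implies condition (i): a stem-Nash function is in particular holomorphic, hence induces a slice regular function. Therefore $f+g,\,f\cdot g\in\mathcal{SN}_A(\Omega)$, as claimed.

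I do not expect any genuine obstacle here: the entire argument is a bookkeeping exercise in pushing the $*$-algebra structure of Proposition~\ref{Calgebra} through $\mathcal{I}$. The only point that deserves a moment's care is that $F$ and $G$ must live on the \emph{same} domain $D$ for Proposition~\ref{Calgebra} to apply, which is ensured because $f$ and $g$ share the domain $\Omega$ and hence the maximal $D$; the rest is immediate.
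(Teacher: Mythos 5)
Your proposal is correct and follows essentially the same route as the paper's proof: fix the (maximal) $D$ with $\Omega=\Omega_D$, pass to the inducing stem functions via $\mathcal{I}$, apply Proposition~\ref{Calgebra} to conclude that $F+G$ and $F\cdot G$ are stem-Nash, and transport back. The extra care you take about the two stem functions living on the same $D$ and about slice regularity following automatically from the stem-Nash condition is sound and only makes explicit what the paper leaves implicit.
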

\begin{proof}
Let $D\subset \C$ be an open subset such that $\Omega=\Omega_D$ and $F,G:D\to A\otimes_\R\C$ stem functions such that $f=\mathcal{I}(F)$ and $g=\mathcal{I}(G)$. As $f+g=\mathcal{I}(F+G)$ and $f\cdot g=\mathcal{I}(F\cdot G)$, by Proposition \ref{Calgebra}, we deduce that $f+g,f\cdot g\in\mathcal{SN}_A(\Omega)$, as required.
\end{proof}

In the following remark, we show that the conjugate, the normal function, and the reciprocal of a slice-Nash function are also slice-Nash functions.

\begin{remark}\label{siminv}
\textit{Assume that $f\in\mathcal{SN}_A(\Omega)$. Then
\begin{itemize}
\item[{\rm(i)}] $f^c\in \mathcal{SN}_A(\Omega)$.
\item[{\rm(ii)}] $N(f)\in\mathcal{SN}_A(\Omega)$.
\item[{\rm(iii)}] If $\Omega\setminus V(N(f))\neq\varnothing$, then $f^{-\bullet}\in \mathcal{SN}_A(\Omega\setminus V(N(f)))$.
\end{itemize}}
\end{remark}
Let $D\subset \C$ be an open subset such that $\Omega=\Omega_D$ and $F:=F_1+\iota F_2:D\to A\otimes_\R \C$ the stem function such that $f=\mathcal{I}(F)$. 

(i) It follows straightforwardly from the fact that $f^c:=\mathcal{I}(F^c)$. 

(ii) By point (i), $f^c\in \mathcal{SN}_A(\Omega)$. By Proposition \ref{ringNash}, we conclude that $N(f)=f\cdot f^c\in \mathcal{SN}_A(\Omega)$. 

(iii) Recall that $f^{-\bullet}(x):=(N(f)(x))^{-1}f^c(x)$ for each $x\in\Omega\setminus V(N(f))$. By point (i) and Proposition \ref{ringNash}, we are reduced to show $(N(f))^{-1}\in \mathcal{SN}_{A}(\Omega\setminus V(N(f)))$. By definition, $N(f)=\mathcal{I}(F\cdot F^c)$. As $F\cdot F^c$ is real valued, then the function $(F\cdot F^c)^{-1}:=\tfrac{1}{F\cdot F^c}$ is a well defined  stem-Nash function on $D\setminus V(F\cdot F^c)$. By the fact that $(F\cdot F^c)\cdot (F\cdot F^c)^{-1}=1$ on $D\setminus V(F\cdot F^c)$ and as $\mathcal{I}$ is an isomorphism of $*$-algebras, we deduce that $(N(f))^{-1}=\mathcal{I}((F\cdot F^c)^{-1})$. Observe that the circularised of $D\setminus V(F\cdot F^c)$ is $\Omega\setminus V(N(f))$. We conclude that $(N(f))^{-1}$ is a slice-Nash function on $\Omega\setminus V(N(f))$, as required. \hfill$\sqbullet$

We deduce the following:

\begin{example}\label{remarkprodotto}
(i) Let $g,h\in A[\x]$ be slice polynomials such that $h\neq 0$. Then the `slice rational' function $h^{-\bullet}\cdot g:A\setminus V(N(h))\to A$ is a slice-Nash function on $A\setminus V(N(h))$.

(ii) More generally, if $g,h:\Omega\to A$ are slice-Nash functions defined on an open circular set $\Omega\subset A$ such that $\Omega\setminus V(N(h))\neq\varnothing$, then the function $h^{-\bullet}\cdot g:\Omega\setminus V(N(h))\to A$ is a slice-Nash function on $\Omega\setminus V(N(h))$. \hfill$\sqbullet$
\end{example} 

By Proposition \ref{ringNash} and Remark \ref{siminv}, we deduce the following result. Compare this result with Theorem \ref{sliceregularstructure}.

\begin{thm}\label{*Nash}
The set $\mathcal{SN}_A(\Omega)$ of all slice-Nash functions on $\Omega$ is an alternative $*$-subalgebra of the alternative $*$-algebra $\mathcal{SR}_A(\Omega)$ of slice regular functions on $\Omega$ endowed with $+$, $\cdot$, $\cdot^c$. If $A=\HH$, then $\mathcal{SN}_\HH(\Omega)$ is associative. Moreover,
\begin{itemize}
\item[\rm{(i)}] if $\Omega$ is a symmetric slice domain, then $\mathcal{SN}_A(\Omega)$ is a division algebra,
\item[\rm{(ii)}] if $\Omega$ is a product domain, then $\mathcal{SN}_A(\Omega)$ includes some element $f\not\equiv 0$ with $N(f)\equiv 0$. However, every element $f$ with $N(f)\not\equiv 0$ admits a multiplicative inverse in the algebra $\mathcal{SN}_A(\Omega\setminus V(N(f)))$.
\end{itemize}
\end{thm}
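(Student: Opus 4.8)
The plan is to read this off formally from the closure properties already in hand, in exact parallel with the proof of Theorem~\ref{sliceregularstructure}. First I would establish that $\mathcal{SN}_A(\Omega)$ is a unital $*$-subalgebra of $\mathcal{SR}_A(\Omega)$. Closure under $+$ and under the slice product $\cdot$ is precisely Proposition~\ref{ringNash}; closure under $\cdot^c$ is Remark~\ref{siminv}(i); the constant function $1$ lies in $\mathcal{SN}_A(\Omega)$ by Example~\ref{esempislice}(i); and since every real constant is slice preserving, slice multiplication by $\lambda\in\R$ coincides with pointwise multiplication, so Proposition~\ref{ringNash} also gives closure under $\R$-scalars. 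Thus $\mathcal{SN}_A(\Omega)$ is a unital $\R$-subalgebra of $\mathcal{SR}_A(\Omega)$ stable under $\cdot^c$. Since alternativity and, when $A=\HH$, associativity are inherited by subalgebras, Theorem~\ref{sliceregularstructure} yields that $\mathcal{SN}_A(\Omega)$ is an alternative $*$-algebra, associative when $A=\HH$.

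For part~(i), if $\Omega$ is a symmetric slice domain then $\mathcal{SR}_A(\Omega)$ is a division algebra by Theorem~\ref{sliceregularstructure}, and the absence of zero divisors passes to any subalgebra, so $\mathcal{SN}_A(\Omega)$ is a division algebra as well; concretely, for $f\not\equiv 0$ Proposition~\ref{nonzeroint} gives $N(f)\not\equiv 0$, and this together with the multiplicativity of $N$ and the identity principle on slices rules out zero divisors directly. For part~(ii), if $\Omega=\Omega_D$ is a product domain then $D$ has two connected components interchanged by conjugation, and the stem function $F$ equal to $1+\iota J$ on one of them and $1-\iota J$ on the other (cf.\ Example~\ref{fettazero}) is locally constant, hence holomorphic with (constant, therefore) complex Nash components in a suitable splitting base; so $f:=\mathcal{I}(F)\in\mathcal{SN}_A(\Omega)$ satisfies $f\not\equiv 0$ and $N(f)=\mathcal{I}(F\cdot F^c)\equiv 0$. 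Finally, if $f\in\mathcal{SN}_A(\Omega)$ has $N(f)\not\equiv 0$, then $V(N(f))$ is a proper closed subset of $\Omega$, so $\Omega\setminus V(N(f))\neq\varnothing$, and Remark~\ref{siminv}(iii) provides $f^{-\bullet}\in\mathcal{SN}_A(\Omega\setminus V(N(f)))$ with $f\cdot f^{-\bullet}=f^{-\bullet}\cdot f=1$ there.

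There is no genuine obstacle here: the substantive work has already been carried out in Proposition~\ref{ringNash} and Remark~\ref{siminv}, and this statement merely packages it in parallel with Theorem~\ref{sliceregularstructure}. The only points requiring a little care are verifying that the product-domain witness is actually a stem-Nash function (and not merely a holomorphic stem function), and reading ``division algebra'' in the same sense as in Theorem~\ref{sliceregularstructure}, namely the absence of zero divisors on symmetric slice domains, rather than as invertibility of every nonzero element inside the same algebra, which already fails for $\mathcal{SR}_A(\Omega)$ (e.g.\ the identity function on $A$ has reciprocal defined only off the origin).
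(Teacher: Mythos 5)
Your argument mirrors the paper's proof step for step: $*$-subalgebra structure from Proposition~\ref{ringNash} and Remark~\ref{siminv}(i), part~(i) and the inverse statement in~(ii) from Theorem~\ref{sliceregularstructure} together with Remark~\ref{siminv}(iii), and the product-domain witness with $N(f)\equiv 0$ from Example~\ref{fettazero} after observing that the locally constant stem function there is a stem-Nash function. Your extra care in spelling out the verification that this witness is indeed slice-Nash, and your remark on the sense in which ``division algebra'' must be read, are both accurate and match the paper's (more terse) reasoning.
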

\begin{proof}
The fact that $\mathcal{SN}_A(\Omega)$ is a $*$-subalgebra of $\mathcal{SR}_A(\Omega)$ follows straightforwardly by Proposition \ref{ringNash}. Point (i) and the second part of point (ii) follow by Theorem \ref{sliceregularstructure} and Remark \ref{siminv}(iii), while the fist part of point (ii) follows by Example \ref{fettazero} after noticing that the involved function $f$ is slice-Nash.
\end{proof}

\section{Finiteness properties of slice-Nash functions}\label{SliceNashProp}

In this section, we show several finiteness properties of slice-Nash functions, finding many analogies with the properties of the classical real and complex Nash functions. 

\subsection{Global slice-Nash functions}

It is known that there are `very few' entire complex Nash functions $F\in\mathcal{N}_{\C}(\C^n)$. Namely, if $F:\C^n\to\C$ is a complex Nash function, then $F$ is a polynomial function \cite[Thm.1.3]{t}. Let $A$ be either the algebra of quaternions $\HH$ or the algebra of octonions $\O$. We make use of Theorem \ref{char} to derive from this result its slice counterpart, namely, to show that $\mathcal{SN}_A(A)=A[x]$, where $A[x]$ denotes the set of all slice polynomial functions on $A$. 

\begin{thm}[Global slice-Nash functions]\label{global}
If $f\in\mathcal{SN}_A(A)$, then $f\in A[x]$.
\end{thm}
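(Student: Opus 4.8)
The plan is to reduce the statement to the analogous fact for complex Nash functions, namely $\mathcal{N}_\C(\C)=\C[\z]$ (see \cite[Thm.1.3]{t}), by passing to a single slice and applying the splitting lemma together with the characterisation of slice-Nash functions in Theorem \ref{char}. I will treat $A=\HH$ first; the case $A=\O$ is identical, with $u_\O=3$ components instead of $u_\HH=1$.

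First, fix an imaginary unit $I\in\sph_A$ and a splitting base $\Bb_I=\{1,I,J,IJ\}$ of $\HH$ associated to $I$ (respectively a splitting base of $\O$ as in Example~\ref{splittingex}(ii)). Since $f\in\mathcal{SN}_\HH(\HH)$, Theorem~\ref{char} gives that the components $f_1^{I,J},f_2^{I,J}:\C_I\to\C_I$ of $f_I=f|_{\C_I}$ furnished by the splitting lemma are $\C_I$-Nash functions on the whole slice $\C_I\cong\C$. Transporting through the $*$-isomorphism $\phi_I:\C\to\C_I$ of \eqref{*iso} (which preserves the complex structure and therefore identifies $\mathcal{N}_{\C}(\C)$ with $\mathcal{N}_{\C_I}(\C_I)$), these become entire complex Nash functions on $\C$, hence polynomials by \cite[Thm.1.3]{t}. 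Thus there are polynomials $p_1,p_2\in\C_I[\z]$ with $f_1^{I,J}(z_I)=p_1(z_I)$ and $f_2^{I,J}(z_I)=p_2(z_I)$ for all $z_I\in\C_I$, so $f_I(z_I)=p_1(z_I)+p_2(z_I)J$ is a polynomial expression on the slice $\C_I$.

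Next I would upgrade this ``polynomial on one slice'' conclusion to ``$f$ is a slice polynomial function on all of $\HH$''. There are two clean ways to do this. One option: by the splitting lemma, each monomial $z_I^k$ and the constants are restrictions to $\C_I$ of slice regular functions, and one checks directly that $p_1(z_I)+p_2(z_I)J$ is the restriction to $\C_I$ of a slice polynomial $P(\x)=\sum_k \x^k c_k$ with suitable coefficients $c_k\in\HH$ (recall that on a slice the slice product with a constant agrees with the algebra product by Artin's theorem, see Remark~\ref{sliceprodoct}); the representation formula \eqref{repform} then forces $f=P$ on all of $\HH$, since two slice regular functions agreeing on a slice coincide. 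A second, even shorter option: run the same argument for \emph{every} slice and assemble the stem function. Concretely, the stem function $F:\C\to\HH\otimes_\R\C$ inducing $f$ is stem-Nash, hence each complex component $F_{I,J}^\ell:\C\to\C$ ($\ell=0,1,2,3$) lies in $\mathcal{N}_\C(\C)$, so by \cite[Thm.1.3]{t} each $F_{I,J}^\ell$ is a polynomial in $\z$; therefore $F(z)=\sum_{\ell}F_{I,J}^\ell(z)I_\ell$ is a polynomial with coefficients in $\HH\otimes_\R\C$, which is automatically complex intrinsic (being a stem function), forcing its coefficients to lie in $\HH$; applying $\mathcal I$ yields that $f=\mathcal I(F)$ is a slice polynomial function. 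This second route avoids the representation formula entirely and is the one I would actually write down.

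The octonionic case is verbatim the same, now with eight real components / four complex components relative to a splitting base $\{1,I,I_1,II_1,\ldots,I_3,II_3\}$; Artin's theorem guarantees that all the algebra manipulations on a fixed slice are unambiguous, and the slice polynomial function $A\to A$ associated to a slice polynomial is well defined (as recalled in \S\ref{prelipol}). I do not anticipate a serious obstacle here: the only point requiring a little care is checking that the coefficients of the polynomial stem function $F$ indeed land in $A$ (not merely in $A\otimes_\R\C$), which is immediate from the complex-intrinsic condition \eqref{compint} applied to each coefficient. The essential content of the theorem is entirely carried by the one-variable input \cite[Thm.1.3]{t} together with Theorem~\ref{char}.
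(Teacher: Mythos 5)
Your proposal is correct, and your Option 1 is essentially the paper's proof: reduce to the one-variable fact $\mathcal{N}_\C(\C)=\C[\z]$ via Theorem \ref{char} and the splitting lemma on a single slice, rewrite the result as a slice polynomial (Artin's theorem permitting the rearrangement $(z_I^r\alpha_{k,r})I_k=z_I^r(\alpha_{k,r}I_k)$), and conclude by the representation formula. The paper does exactly this for $A=\O$ and remarks that $\HH$ is analogous.

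Your Option 2 — the one you say you would actually write down — is a mild but genuine variant, and it is also correct: instead of translating back and forth across $\phi_I$ between $\C$ and $\C_I$, you stay at the level of the stem function, observe that the complex components $F_{I,I_k}^\ell:\C\to\C$ are entire Nash by Definition \ref{defstemnash} and Proposition \ref{indipendente}, hence polynomials by \cite[Thm.1.3]{t}, so $F(z)=\sum_r z^r c_r$ with $c_r\in A\otimes_\R\C$. The complex-intrinsic identity $\overline{F(z)}=F(\overline z)$ then forces $\overline{c_r}=c_r$, i.e.\ $c_r\in A$, and applying $\mathcal{I}$ gives $f(x)=\sum_r x^r c_r\in A[\x]$ immediately (compare with Example \ref{esempislice}(i)). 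This route bypasses both the Artin rearrangement and the representation formula, trading them for the single observation that a polynomial stem function with coefficients fixed by complex conjugation has coefficients in $A$. One small imprecision in your phrasing: you don't need to ``run the argument for every slice'' in Option 2 — once a single splitting base is chosen, the components $F_{I,I_k}^\ell$ are globally defined on $\C$ and one application of \cite[Thm.1.3]{t} suffices; the representation-formula step is absorbed into $\mathcal{I}$ being well defined. This does not affect the correctness of either route.
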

\begin{proof}
We show the statement for $A=\O$, as the case $A=\HH$ is similar but easier. Let $I\in\sph_{\O}$ be an imaginary unit and $\{I_0:=1,I,I_1,II_1,I_2,II_2,I_3,II_3\}$ a splitting base of $\O$ associated to $I$. By the splitting lemma, there exist unique holomorphic functions $f_k:\C_I\to \C_I$ for $k=0,1,2,3$ such that
$
f_I(z_I)=f_0(z_I)I_0+f_1(z_I)I_1+f_2(z_I)I_2+f_3(z_I)I_3
$
for each $z_I\in\C_I$. By Theorem \ref{char}, as $f$ is a slice-Nash function on $\O$, then the functions $f_{k}$ are complex Nash functions on $\C_I$ for each $k=0,1,2,3$. By \cite[Thm.1.3]{t}, we have that $f_k$ is a polynomial function on $\C_I$ for each $k=0,1,2,3$. That is, for each $k=0,1,2,3$, there exist an integer $n_k\geq 0$ and $\alpha_{k,0},\ldots,\alpha_{k,n_k}\in \C_I$ such that
$
f_k(z_I)=z_I^{n_k}\alpha_{k,n_k}+\ldots+z_I\alpha_{k,1}+\alpha_{k,0}
$
for each $z_I\in \C_I$. Define $N:=\max\{n_0,\ldots,n_3\}$. By Artin's theorem, we have
$$
(z_I^r\alpha_{k,r})I_k=z_I^r(\alpha_{k,r}I_k)
$$
for each $k=0,1,2,3$, each $r=0,\ldots,n_k$ and each $z_I\in \C_I$. For each $k=0,1,2,3$ and for each $n_k<r\leq N$, set $\alpha_{k,r}:=0$. We deduce
$$
f_I(z_I)=\sum_{k=0}^3 f_k(z_I)I_k=\sum_{k=0}^3 \Big(\sum_{r=0}^{n_k}z_I^r\alpha_{k,r}\Big)I_k=\sum_{k=0}^3 \sum_{r=0}^{n_k}z_I^r(\alpha_{k,r}I_k)=\sum_{r=0}^{N}z_I^r\Big(\sum_{k=0}^3 \alpha_{k,r}I_k\Big)
$$
for each $z_I\in \C_I$. Consider the slice polynomial
$$
P(\x):=\sum_{r=0}^{N}\x^r\Big(\sum_{k=0}^3 \alpha_{k,r}I_k\Big)\in \O[\x]
$$
Let $P:\O\to \O$ be the polynomial function associated to $P(\x)$. As the $f=P$ on the slice $\C_I$, using the representation formula, we deduce that $f(x)=P(x)$ for each $x\in\O$. We conclude that $f\in\O[x]$ is a slice polynomial function, as required.
\end{proof}

In the following example, we show that the previous theorem is no longer true if we consider slice-Nash functions defined on proper open circular subsets $\Omega\subsetneq A$. The example is for $A=\O$, but it can easily be adapted to the case $A=\HH$ as $\HH$ is a $*$-subalgebra of $\O$.

\begin{example}
Let $I,J\in\sph_\O$ be imaginary units such that $J\neq\pm I$. Consider the symmetric slice domain $\Omega:=\O\setminus\{x\in \R : x\leq 0\}$ and the slice regular function 
$$
f:\Omega\to \O, \quad x\mapsto \sqrt{x}I+x^2+J,
$$
where $\sqrt{x}$ is the regular branch of $\sqrt{\x}$ such that $\sqrt{1}=1$. Let $D:= \C \setminus \{z\in \R : z\leq 0\}$. Clearly $\Omega=\Omega_D$. A straightforward computation shows that the stem function 
$$
F:D\to \O\otimes_{\R}\C, \quad z\mapsto \sqrt{z}I+z^2+J:=I\otimes \sqrt{z}+1\otimes z^2+J\otimes 1,
$$
satisfies $f=\mathcal{I}(F)$, where $\sqrt{z}$ is the holomorphic branch of $\sqrt{\z}$ such that $\sqrt{1}=1$. Let $L\in \sph_\O$ be an imaginary unit such that $\Bb_I:=\{1,I,J,IJ,L,IL,JL,I(JL)\}$ is a splitting base of $\O$ associated to $I$. The complex coordinates $(F^0_{I,J,L},\ldots,F^7_{I,J,L})$ of $F$ with respect to the base $\Bb_I$ are the functions defined as $F^0_{I,J,L}(z)=z^2,  F^1_{I,J,L}(z)=\sqrt{z}, \quad F^2_{I,J,L}(z)=1$ and $F^3_{I,J,L}(z)=\ldots=F^7_{I,J,L}(z)=0$ for $z\in D$. We have that $f\in\mathcal{SN}_\O(\Omega)$, but $f$ is not a polynomial function. \hfill$\sqbullet$
\end{example}

\subsection{Zero sets of slice-Nash functions}

In this section, we show that if a slice function $f$ is slice-Nash on a circular domain $\Omega$ and $N(f)$ is not identically zero, then its zero set $V(f)$ is a \textit{finite} union of isolated zeros and isolated spherical zeros. Let $A$ be either the algebra of quaternions $\HH$ or the algebra of octonions $\O$.

\begin{prop}[Finiteness of zeros for slice-Nash functions]\label{finiti}
Let $\Omega\subset A$ be a circular domain and $f:\Omega\to A$ a slice-Nash function such that $N(f)$ is not identically zero. Then its zero set $V(f)$ is a finite union of isolated zeros and isolated spherical zeros.
\end{prop}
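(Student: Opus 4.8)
The plan is to combine the structural description of $V(f)$ from Theorem \ref{zeri2}(ii) with the finiteness of zeros of complex Nash functions on domains (Lemma \ref{lemmafiniti}), using the normal function $N(f)$ as the bridge. Since $f$ is slice regular and $N(f)\not\equiv 0$, Theorem \ref{zeri2}(ii) already tells us that $V(f)$ is a union of isolated points and isolated spheres $\sph_x$ with $x\in A\setminus\R$; the task is to upgrade this to a \emph{finite} union. By Theorem \ref{zeri1} we have $V(N(f))=\bigcup_{x\in V(f)}\sph_x$, so it is enough to control $V(N(f))$. The two facts I would exploit are that $N(f)$ is again slice-Nash (Remark \ref{siminv}(ii)) and that $N(f)$ is slice preserving, so its restriction $N(f)_I:=N(f)|_{\Omega_I}$ to a slice $\C_I$ is $\C_I$-valued and its zero set there is exactly $V(N(f))\cap\C_I$.

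First I would fix $I\in\sph_A$ and prove that $V(N(f))\cap\C_I$ is finite. Because $N(f)$ is slice preserving, in a splitting base associated to $I$ only the component along $I_0=1$ survives, and by Theorem \ref{char} (applied to the slice-Nash function $N(f)$) this component, which is exactly $N(f)_I$, is a $\C_I$-Nash function on $\Omega_I$; transporting it through the $*$-isomorphism $\phi_I\colon\C\to\C_I$ it becomes a complex Nash function on $D$, where $\Omega=\Omega_D$. Next I would observe that $N(f)_I$ vanishes identically on no connected component of $\Omega_I$: writing the stem function of $N(f)$ as $G_1+\iota G_2$ with $G_1,G_2$ real-valued (slice preserving), identical vanishing on a component would force $G_1\equiv G_2\equiv 0$ on the corresponding component of $D$, hence $N(f)\equiv 0$ on all of $\Omega$, because for a circular domain the circularisation of any connected component of $D$ is again $\Omega$ (this is the mechanism behind Proposition \ref{nonzeroint}); this contradicts the hypothesis. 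Since a circular domain has $\Omega_I$ either connected (symmetric slice domain) or with exactly two connected components (product domain), applying Lemma \ref{lemmafiniti} on each component shows that $V(N(f))\cap\C_I$ is finite.

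To conclude I would count, using $\C_I^+:=\{\alpha+\beta I : \beta>0\}$, so that every sphere $\sph_x$ with $x\notin\R$ meets $\C_I^+$ in exactly one point and every non-real element of $V(N(f))$ lies on such a sphere. Each isolated real zero $y$ of $f$ gives a point $y\in V(N(f))\cap\R$, and distinct ones give distinct points; each isolated non-real zero of $f$ and each isolated spherical zero $\sph_x\subset V(f)$ gives the unique point of $\sph_x\cap\C_I^+\subset V(N(f))\cap\C_I^+$, and by Theorem \ref{zeri1} distinct such zeros of $f$ meet $\sph_x$, hence $\C_I^+$, in distinct points. Since $V(N(f))\cap\C_I$ is finite, $f$ has only finitely many isolated zeros and finitely many isolated spherical zeros, so $V(f)$ is a finite union of them. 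The bookkeeping in this last step and the transport through $\phi_I$ are routine; the only step requiring genuine care is the non-vanishing of $N(f)_I$ on each connected component of $\Omega_I$, and in particular the product-domain case, where $\Omega_I$ disconnects and one must use that the two components of $D$ are exchanged by complex conjugation, so that vanishing on one of them already propagates to all of $\Omega$. Everything else is a direct consequence of Lemma \ref{lemmafiniti} together with Theorems \ref{zeri1} and \ref{zeri2}.
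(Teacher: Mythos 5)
Your proof is correct and follows essentially the same route as the paper's: reduce to $N(f)$, restrict to a slice $\C_I$, observe that on each of the at most two connected components of $\Omega_I$ the relevant complex Nash data cannot vanish identically (else $N(f)\equiv 0$ on $\Omega$), and invoke Lemma~\ref{lemmafiniti}. The only cosmetic difference is that you exploit the slice-preserving property of $N(f)$ to obtain a single $\C_I$-Nash function $N(f)_I$ to which Lemma~\ref{lemmafiniti} applies directly, whereas the paper applies the splitting lemma and argues that on each component at least one of the two splitting components is non-vanishing; both arguments are equivalent in substance.
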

\begin{proof}
We show the statement for $A=\HH$, as the case $A=\O$ is similar. As $N(f)$ is not identically zero, then by Theorem \ref{zeri2}, the zero set $V(f)$ of $f$ consists only of isolated zeros and isolated spherical zeros. Moreover, as $N(N(f))=N(f)^2$ is not identically zero, using again Theorem \ref{zeri2}, it follows that the zero set $V(N(f))$ of $N(f)$ also only consists of isolated zeros and isolated spherical zeros. By Theorem \ref{zeri1}, the zero set $V(N(f))$ of $N(f)$ is the circularised of the zero set $V(f)$ of $f$, so, in particular, $V(f)\subset V(N(f))$. Moreover, $V(N(f))$ consists only of isolated zeros on the real axes and isolated spherical zeros. In particular, for each $I\in \sph_\HH$ we have that $V(N(f))$ is the circularised of the zero set of the restriction of $N(f)$ to $\C_I$ and the restriction of $N(f)$ to $\C_I$ has only isolated zeros. Let $I\in\sph_\HH$ be an imaginary unit, $\Omega_I:=\Omega\cap \C_I$ and $f_I:=f|_{\Omega_I}$. As $V(f)\subset V(N(f))$ and for each spherical zero $\sph_x$ of $N(f)$ either $\sph_x$ is a spherical zero of $f$ or $\sph_x$ contains at most one isolated zero of $f$, then, up to replace $f$ with $N(f)$ if necessary, we may assume that 
\begin{itemize}
\item $V(f)$ is the circularised of $V(f_I)$,
\item $V(f_I)$ is a discrete subset of $\Omega_I$,
\end{itemize}
and we may reduce to show: \textit{$f_I$ has only finitely many isolated zeros on $\Omega_I$.}

Let $J\in\sph_\HH$ be an imaginary unit such that $J\neq \pm I$. By the splitting lemma there exist holomorphic functions $f_1^{I,J},f_2^{I,J}:\Omega_I\to \C_I$ such that
$
f_I(z_I)=f_1^{I,J}(z_I)+f_2^{I,J}(z_I)J
$
for each $z_I\in\Omega_I$. As $\{1,J\}$ is a base of $\HH$ as a $\C_I$ vector space, then
$
V(f_I)=V(f_1^{I,J})\cap V(f_2^{I,J}).
$
In particular, it is enough to show that at least one between the zero sets $V(f_1^{I,J})$ and $ V(f_2^{I,J})$ is finite. As $\Omega$ is connected, then $\Omega_I$ has one or two connected components. In both cases, as $V(f_I)$ is discrete, then on each of the connected components of $\Omega_I$, at least one between $f_1^{I,J}$ and $f_2^{I,J}$ is not identically zero. By Lemma \ref{lemmafiniti}, we conclude that on each connected component of $\Omega_I$ at least one between $f_1^{I,J}$ or $f_2^{I,J}$ have finitely many zeros, so at least one between $V(f_1^{I,J})$ and $V(f_2^{I,J})$ is finite, as desired. 
\end{proof}

By the previous result and Proposition \ref{nonzeroint}, we deduce straightforwardly the following:

\begin{cor}\label{slicedomainzero}
Let $\Omega\subset A$ be a symmetric slice domain and $f:\Omega\to A$ a non-zero slice-Nash function. Then, its zero set $V(f)$ is a finite union of isolated zeros and isolated spherical zeros.
\end{cor}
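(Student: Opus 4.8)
The plan is to combine Proposition~\ref{finiti} with Proposition~\ref{nonzeroint}, so the only real work is checking that their hypotheses are met. First I would record the trivial structural observations: a symmetric slice domain is in particular a circular domain, and a slice-Nash function is in particular slice regular by Definition~\ref{defslicenash}(i). Hence, once we know that $N(f)$ is not identically zero on $\Omega$, Proposition~\ref{finiti} applies verbatim to $f$ and yields exactly the desired conclusion, namely that $V(f)$ is a finite union of isolated zeros and isolated spherical zeros.

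The one point that needs an argument is therefore ruling out $N(f)\equiv 0$. Here I would use that $\Omega$, being a symmetric slice domain, is in particular a union of symmetric slice domains (it is one such domain), so the hypothesis of Proposition~\ref{nonzeroint} is satisfied. That proposition then tells us that $N(f)\equiv 0$ would force $f\equiv 0$; since $f$ is assumed non-zero, we conclude $N(f)\not\equiv 0$. Feeding this back into Proposition~\ref{finiti} finishes the proof.

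I do not anticipate any genuine obstacle: there is no computation involved and no delicate case analysis, only the bookkeeping of verifying that a symmetric slice domain and a non-zero slice-Nash function satisfy the respective hypotheses of the two cited results. The statement is precisely the specialisation of Proposition~\ref{finiti} to the setting where the failure of $N(f)\equiv 0$ is automatic, which is why it is listed as an immediate corollary.
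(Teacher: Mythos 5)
Your argument is correct and is precisely the one the paper has in mind: Proposition~\ref{nonzeroint} rules out $N(f)\equiv 0$ because a symmetric slice domain is trivially a union of symmetric slice domains, and then Proposition~\ref{finiti} applies. Nothing further to add.
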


\subsection{Semiregular slice-Nash functions}\label{semiregularSec}

Let $A$ be either the algebra of quaternions $\HH$ or the algebra of octonions $\O$. Let $d_A=\dim_{\R}(A)-1$ be the integer introduced in \eqref{dim}. Let $\Omega\subset A$ be an open circular set and $\Omega_0\subset \Omega$ any subset. In order to lighten the exposition we introduce the following definition. We say that $\Omega_0$ is a \textit{discrete set of singularities} if it is closed in $\Omega$ and it is a union of isolated points on the real axis and of isolated $(d_A-1)$-dimensional spheres with centre on the real axes. Clearly a discrete set of singularities $\Omega_0$ is also a circular set whose intersection with the slice $\C_I$, for each $I\in\sph_A$, is a closed and discrete subset of $\Omega_I:=\Omega\cap \C_I$ consisting of isolated points on the real axes and isolated pairs of complex conjugated points. Recall that $\mathcal{SEM}_A(\Omega)$ denotes the set of all semiregular slice functions on $\Omega$. 

We introduce the following definition: 

\begin{defn}[Semiregular slice-Nash function]
We say that a function $f$ is a \textit{semiregular slice-Nash function on} $\Omega$ if $f\in \mathcal{SEM}_A(\Omega)$ and there exists a discrete set of singularities $\Omega_0\subset \Omega$ such that $f\in\mathcal{SN}_A(\Omega\setminus \Omega_0)$. We denote by $\mathcal{SEN}_A(\Omega)$ the set of all semiregular slice-Nash functions on $\Omega$.
\end{defn}

We make use of the results of \S\ref{MeroCNash} to investigate finiteness properties of semiregular slice-Nash functions. We start by showing that slice-Nash functions do not have essential singularities. 

\begin{lem}[Singularities of slice-Nash functions]\label{Sliceonlypoles}
Let $\Omega_0$ be a subset of $\Omega$ consisting either of a single point on the real axis or a $(d_A-1)$-dimensional sphere with centre on the real axis. If $f\in\mathcal{SN}_A(\Omega\setminus\Omega_0)$, then $f\in \mathcal{SEM}_A(\Omega)$.
\end{lem}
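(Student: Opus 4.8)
The plan is to localise the problem and then combine the classification of isolated and spherical singularities of slice regular functions with the fact that complex Nash functions have no essential singularities. I would argue as follows.

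If $\Omega_0$ is a single point $y$ on the real axis, then $y\in\Omega\cap\R$ and, since $\mathcal{SN}_A(\Omega\setminus\{y\})\subset\mathcal{SR}_A(\Omega\setminus\{y\})$, Theorem \ref{sing1} tells us that $y$ is a removable singularity, a pole, or an essential singularity of $f$. If $\Omega_0=\sph_y$ is a $(d_A-1)$-dimensional sphere with centre $\alpha\in\R$, write $y=\alpha+\beta I_0$ with $\beta>0$, $I_0\in\sph_A$; then $y\in\Omega\setminus\R$, $f\in\mathcal{SR}_A(\Omega\setminus\sph_y)$, and Theorem \ref{sing2} gives the analogous trichotomy. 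In either case it suffices to exclude the essential singularity, after which Definition \ref{semiregularint} immediately yields $f\in\mathcal{SEM}_A(\Omega)$, since $\Omega_0$ is a closed circular subset of $\Omega$ consisting of a single isolated point on the real axis, or of a single isolated sphere of the form $\sph_y$ with $y\in\Omega\setminus\R$, and $f$ is slice regular on its complement.

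To exclude the essential case I would use Remark \ref{singchar}. Fix an arbitrary $I\in\sph_A$ and an arbitrary splitting base $\Bb_I$ of $A$ associated to $I$, and let $f^{\Bb_I}_0,\ldots,f^{\Bb_I}_{u_A}:\Omega_I\setminus(\sph_y\cap\C_I)\to\C_I$ be the holomorphic components of $f|_{\Omega_I\setminus(\sph_y\cap\C_I)}$ produced by the splitting lemma (when $y\in\R$ one has $\sph_y\cap\C_I=\{y\}$; otherwise $\sph_y\cap\C_I=\{y_I,y_I^c\}$ with $y_I\neq y_I^c$). Since $f\in\mathcal{SN}_A(\Omega\setminus\Omega_0)$, Theorem \ref{char} shows that each $f^{\Bb_I}_k$ is a $\C_I$-Nash function on $\Omega_I\setminus(\sph_y\cap\C_I)$. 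Transporting via the $*$-isomorphism $\phi_I$ of \eqref{*iso} (which, as in the proof of Theorem \ref{char}, identifies $\C_I$-Nash functions with complex Nash functions) and localising to a small disc around $y_I$ disjoint from $y_I^c$, and likewise around $y_I^c$, Lemma \ref{onlypoles} shows that each $f^{\Bb_I}_k$ extends to a meromorphic complex Nash function near $y_I$ and near $y_I^c$; in particular none of the points of $\sph_y\cap\C_I$ is an essential singularity of any $f^{\Bb_I}_k$. As $I$ and $\Bb_I$ were arbitrary, Remark \ref{singchar}(iii) shows that $\Omega_0$ is not an essential singularity of $f$. Hence $\Omega_0$ is a removable singularity or a pole of $f$, and the conclusion follows.

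The argument is essentially bookkeeping on top of results already available; the only mildly delicate point is the spherical case, where $\sph_y$ meets each slice $\C_I$ in the conjugate pair $\{y_I,y_I^c\}$, so that Lemma \ref{onlypoles}, which concerns a single puncture, must be invoked separately at $y_I$ and at $y_I^c$ after passing to small discs around each.
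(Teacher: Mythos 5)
Your proof is correct and follows essentially the same route as the paper: pass to a slice via the splitting lemma, invoke Theorem \ref{char} to see the splitting components are $\C_I$-Nash, apply Lemma \ref{onlypoles} to exclude essential singularities of the components, and conclude via Remark \ref{singchar}. The only differences are cosmetic: you spell out the trichotomy from Theorems \ref{sing1}/\ref{sing2} and the final appeal to Definition \ref{semiregularint}, which the paper leaves implicit.
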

\begin{proof}
We show the statement for $A=\HH$, as the case $A=\O$ is analogous. Let $q\in \Omega_0$. Then, there exist $I\in\sph_\HH$ and $\alpha,\beta\in \R$ such that $q=\alpha+\beta I$ (possibly with $\beta=0$ if $q$ is real). Let $J\in \sph_\HH$ be an imaginary unit such that $J\neq \pm I$. Let $\Omega_I:=\Omega\cap \C_I$ and 
$$
(\Omega\setminus \Omega_0)_I:=(\Omega\setminus \Omega_0)\cap \C_I=\Omega_I\setminus\{q,\overline{q}\}.
$$
Clearly, $\{q,\overline{q}\}=\{q\}$ if $q\in\R$. Denote by $f_I$ the restriction of $f$ to $\Omega_I\setminus\{q,\overline{q}\}$. By the splitting lemma, there exist holomorphic functions $f_1^{I,J},f_2^{I,J}:\Omega_I\setminus \{q,\overline{q}\}\to \C_I$ such that
$$
f_I(z_I)=f_1^{I,J}(z_I)+f_2^{I,J}(z_I)J
$$
for each $z_I\in\Omega_I\setminus\{q,\overline{q}\}$. By Theorem \ref{char}, $f_1^{I,J},f_2^{I,J}\in\mathcal{N}_{\C_I}(\Omega_I\setminus\{q,\overline{q}\})$. By Lemma \ref{onlypoles}, $q$ and $\overline{q}$ are not essential singularities for $f_1^{I,J}$ nor for $f_2^{I,J}$, so by Remark \ref{singchar}, $q$ is not an essential singularity for $f$, as required.
\end{proof}

We show next, that semiregular slice-Nash functions defined on a circular domain have only finitely many isolated poles and finitely many isolated spherical poles.

\begin{lem}[Finiteness of poles of semiregular slice-Nash functions]\label{Slicefinitepoles}
Assume that $\Omega$ is a circular domain. Let $f\in \mathcal{SEN}_A(\Omega)$ and let $\Omega_0\subset \Omega$ be a discrete set of singularities such that $f\in\mathcal{SN}_A(\Omega\setminus \Omega_0)$. Then there exist 
\begin{itemize}
\item finitely many isolated points $q_1,\ldots,q_k \in \Omega_0\cap \R$,
\item finitely many isolated $(d_A-1)$-dimensional spheres $\sph_{x_1},\ldots,\sph_{x_r}\subset \Omega_0$ with centre on the real axes,
\item a slice-Nash function $\Phi: \Omega\setminus (\{q_1,\ldots,q_k\}\cup\sph_{x_1}\cup\ldots\cup\sph_{x_r})\to A$ 
\end{itemize}
such that $\Phi\in\mathcal{SEM}_A(\Omega)$ and $f=\Phi|_{\Omega\setminus(\{q_1,\ldots,q_k\}\cup\sph_{x_1}\cup\ldots\cup\sph_{x_r})}$.
\end{lem}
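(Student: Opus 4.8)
The plan is to reduce everything to the one-variable statements of \S\ref{MeroCNash} by slicing with a fixed imaginary unit and a splitting base, exactly as Theorem \ref{char} allows. Fix $I\in\sph_A$ and a splitting base $\Bb_I$ of $A$ associated to $I$, write $\Omega_I:=\Omega\cap\C_I$ and $(\Omega_0)_I:=\Omega_0\cap\C_I$, and let $f_k^{I,I_k}\colon\Omega_I\setminus(\Omega_0)_I\to\C_I$, $k=0,\dots,u_A$, be the holomorphic functions in the splitting decomposition $f_I=\sum_{k=0}^{u_A}f_k^{I,I_k}\,I_k$. Since $\Omega\setminus\Omega_0$ is an open circular set and $f\in\mathcal{SN}_A(\Omega\setminus\Omega_0)$, Theorem \ref{char} gives $f_k^{I,I_k}\in\mathcal{N}_{\C_I}(\Omega_I\setminus(\Omega_0)_I)$ for every $k$; transporting along the $*$-isomorphism $\phi_I$ of \eqref{*iso}, each $f_k^{I,I_k}$ becomes a complex Nash function on the corresponding open subset of $\C$. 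Moreover $\Omega\setminus\Omega_0$ is again a circular domain (removing a discrete union of points and $(d_A-1)$-spheres, all of codimension $\ge 2$ in $A$, does not disconnect $\Omega$), so $\Omega_I\setminus(\Omega_0)_I$ is the complement of a closed discrete set in $\Omega_I$ and has at most two connected components.

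\textbf{Step 1: the set of actual poles is finite.} By Lemma \ref{Sliceonlypoles}, applied around each isolated point or sphere of $\Omega_0$, every such point or sphere is either a removable singularity or a pole for $f$; let $\Sigma\subseteq\Omega_0$ be the union of those that are poles. On the complex side, Corollary \ref{mero} — one application per connected component of $\Omega_I$ — shows each $f_k^{I,I_k}$ has only finitely many poles in $(\Omega_0)_I$, so the set $E\subseteq(\Omega_0)_I$ of points that are poles of some $f_k^{I,I_k}$ is finite. By the characterisation of singularities in Remark \ref{singchar}, a real point $q$ of $\Omega_0$ lies in $\Sigma$ only if $q\in E$, and a sphere $\sph_y\subseteq\Omega_0$ with $y\notin\R$ satisfies $\sph_y\subseteq\Sigma$ only if $y_I\in E\cup\overline E$ (where $\{y_I,y_I^c\}=\sph_y\cap\C_I$); hence $\Sigma\cap\C_I\subseteq E\cup\overline E$ is finite. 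Since $\Sigma$ is a union of isolated real points and isolated spheres of $\Omega_0$, and is therefore circular, finiteness of $\Sigma\cap\C_I$ forces $\Sigma$ to be a finite union of isolated points $q_1,\dots,q_k\in\Omega_0\cap\R$ and isolated spheres $\sph_{x_1},\dots,\sph_{x_r}\subseteq\Omega_0$.

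\textbf{Step 2: the extension.} Every point or sphere of $\Omega_0\setminus\Sigma$ is, by definition of $\Sigma$, a removable singularity of $f$, and it is isolated in $\Omega_0$; so by Theorems \ref{sing1}(i) and \ref{sing2}(i), $f$ extends slice-regularly across each of them individually, and since all these local extensions coincide with $f$ on $\Omega\setminus\Omega_0$ they patch to a slice regular function $\Phi$ on $\Omega\setminus\Sigma$ with $\Phi|_{\Omega\setminus\Omega_0}=f$. \textbf{Step 3: $\Phi$ is slice-Nash and semiregular.} Applying Theorem \ref{char} once more: the splitting components of $\Phi$ on $(\Omega\setminus\Sigma)\cap\C_I$ are holomorphic and agree with the $\C_I$-Nash functions $f_k^{I,I_k}$ on the open dense subset $\Omega_I\setminus(\Omega_0)_I$, hence on each connected component they satisfy the same nonzero polynomial relation as $f_k^{I,I_k}$ (identity principle), so they are $\C_I$-Nash; thus $\Phi\in\mathcal{SN}_A(\Omega\setminus\Sigma)$. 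Finally, because $\Omega_0$ is discrete, each $q_j$ (resp.\ $\sph_{x_j}$) is isolated in $\Omega_0$, so $\Phi=f$ on a punctured neighbourhood of it; therefore $q_j$ is a pole and $\sph_{x_j}$ a spherical pole of $\Phi$, whence $\Phi\in\mathcal{SEM}_A(\Omega)$ with singular set $\Sigma$. Together with Step 1 this gives all the assertions, with $f$ read — as is implicit in $f\in\mathcal{SEM}_A(\Omega)$ — as the function on $\Omega\setminus\Sigma$ obtained by filling in its removable singularities, so that $f=\Phi|_{\Omega\setminus\Sigma}$.

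The only genuinely substantive step is the finiteness in Step 1: a priori $\Omega_0$ may carry infinitely many singularities, and one must show all but finitely many are removable. This is precisely where the complex finiteness results (Lemma \ref{onlypoles}, Corollary \ref{mero}, Lemma \ref{finitepoles}) do the work, and the delicate point is the bookkeeping: transferring finiteness through the splitting base and through the conjugate-pair structure $\{y_I,y_I^c\}$ via Remark \ref{singchar}, while keeping track of the (at most two) connected components of $\Omega_I$ — notably in the product-domain case, where $\Omega_0$ contains no real points but may still contain spheres, so the statement is not vacuous. The remaining verifications — patching the local extensions, and recognising a holomorphic extension of a complex Nash function as again Nash via the identity principle — are routine.
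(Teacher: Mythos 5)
Your proof is correct, and it takes a route that is organizationally different from the paper's, though it draws on the same one-variable finiteness results. The paper's argument operates entirely on the complex side first: it applies Lemma \ref{finitepoles} directly to the splitting components $f_1^{I,J},f_2^{I,J}$ (per connected component of $\Omega_I$) to obtain meromorphic $\C_I$-Nash extensions $\Phi_1,\Phi_2$ defined off a finite set, circularises that finite set together with its complex conjugates to obtain $\Omega_1$, and only then reconstructs the hypercomplex extension $\Phi$ on $\Omega\setminus\Omega_1$ via the representation formula; the Nash-ness of $\Phi$ is immediate from Theorem \ref{char} because the $\Phi_i$ are already Nash by construction. You, instead, work on the hypercomplex side throughout: you first identify the genuine pole set $\Sigma$ (using Lemma \ref{Sliceonlypoles} locally, Corollary \ref{mero} for the complex finiteness, and Remark \ref{singchar} to transfer that finiteness), then extend $f$ across the removable singularities directly via Theorems \ref{sing1}(i) and \ref{sing2}(i), and finally recover Nash-ness of the extension a posteriori by applying the identity principle to the algebraic relation satisfied by each splitting component on the (still connected) larger domain. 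The two approaches are logically equivalent here; yours is conceptually cleaner in that the exceptional set you produce is exactly the pole set, whereas the paper's $\Omega_1$ may contain some removable singularities of the extension, while the paper's version avoids the identity-principle step at the cost of a more explicit reconstruction via the representation formula. One small point you handled more carefully than the paper: the equality $f=\Phi|_{\Omega\setminus\Sigma}$ in the statement involves the mild abuse of reading $f$ as already extended across its removable singularities, which you flag explicitly.
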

\begin{proof}
We show the statement for $A=\HH$, as the case $A=\O$ is analogous. Let $I,J\in\sph_\HH$ be imaginary units such that $J\neq \pm I$. Observe that $(\Omega_0)_I:=\Omega_0 \cap \C_I$ is a closed and discrete subset of $\C_I$ consisting of isolated points on the real axes and isolated pairs of complex conjugated points. In particular, $(\Omega_0)_I$ is symmetric with respect to the real axes. Denote by $f_I$ the restriction of $f$ to $(\Omega\setminus \Omega_0)_I:=(\Omega\setminus\Omega_0)\cap \C_I$. By the splitting lemma, there exist holomorphic functions $f_1^{I,J},f_2^{I,J}:(\Omega\setminus \Omega_0)_I\to \C_I$ such that
$
f_I(z_I)=f_1^{I,J}(z_I)+f_2^{I,J}(z_I)J
$
for each $z_I\in (\Omega\setminus \Omega_0)_I$. By Theorem \ref{char}, $f_1^{I,J},f_2^{I,J}\in\mathcal{N}_{\C_I}((\Omega\setminus \Omega_0)_I)$. As $\Omega$ is connected, then $\Omega_I$ consists of one or two connected components. Thus, by Lemma \ref{finitepoles}, there exist finitely many points $z_I^1,\ldots,z_I^k\in(\Omega_0)_I$ and $\C_I$-Nash functions 
$
\Phi_1^{I,J},\Phi_2^{I,J}:\Omega_I\setminus\{z_I^1,\ldots,z_I^k\}\to \C_I
$
such that $\Phi_1^{I,J}$ and $\Phi_2^{I,J}$ are meromorphic on $\Omega_I$ and
$$
f_1^{I,J}(z_I)=\Phi_1^{I,J}(z_I), \quad f_2^{I,J}(z_I)=\Phi_2^{I,J}(z_I)
$$
for each $z_I\in\Omega_I\setminus\{z_I^1,\ldots,z_I^k\}$. Observe that, as $(\Omega_0)_I$ is symmetric with respect to the real axes, then $\overline{z_I}^1,\ldots,\overline{z_I}^k\in (\Omega_0)_I$. Let $\Omega_1\subset \HH$ be the circularised of $\{z_I^1,\ldots,z_I^k,\overline{z_I}^1,\ldots,\overline{z_I}^k\}$. As $\Omega_0$ is a circular set and the set $\{z_I^1,\ldots,z_I^k,\overline{z_I}^1,\ldots,\overline{z_I}^k\}$ is symmetric with respect to the real axes and contained in $(\Omega_0)_I$, then $\Omega_1\subset \Omega_0$. Moreover, $\Omega_1$ is a finite union of isolated points on the real axes (corresponding to the $z_k$ such that $z_k=\overline{z}_k$) and isolated 2-dimensional spheres with centre on the real axes (corresponding to the $z_k$ such that $z_k\neq\overline{z}_k$). 

By the representation formula, there exists a unique slice regular function $\Phi:\Omega\setminus\Omega_1\to \HH$ such that
$
\Phi(z_I)=\Phi_1^{I,J}(z_I)+\Phi_2^{I,J}(z_I)J
$
for each $z_I\in \Omega_I\setminus\{z_I^1,\ldots,z_I^k,\overline{z_I}^1,\ldots,\overline{z_I}^k\}$. Clearly $f=\Phi|_{\Omega\setminus\Omega_1}$, because $f_1^{I,J}=\Phi_1^{I,J}$ and $f_2^{I,J}=\Phi_2^{I,J}$ on $\Omega_I\setminus\{z_I^1,\ldots,z_I^k\}$. By Theorem \ref{char}, $\Phi$ is a slice-Nash function on $\Omega\setminus \Omega_1$. By Lemma \ref{Sliceonlypoles}, we conclude that $\Phi$ is semiregular, as required. 
\end{proof}

By the previous results, we deduce straightforwardly that slice-Nash functions on $\Omega\setminus \Omega_0$, where $\Omega_0\subset \Omega$ is a discrete set of singularities, are semiregular on $\Omega$ with finitely many isolated poles and finitely many isolated spherical poles on each connected component of $\Omega$.

\begin{cor}
Let $\Omega_0\subset \Omega$ be a discrete set of singularities and $f\in\mathcal{SN}_A(\Omega\setminus\Omega_0)$. Then $f$ is a semiregular slice function on $\Omega$. Moreover, if $\Omega$ is connected, then the set of poles of $f$ consists of finitely many isolated points on the real axes and finitely many isolated $(d_A-1)$-dimensional spheres with centre on the real axes.
\end{cor}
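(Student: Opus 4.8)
The plan is to deduce the statement directly from Lemmas \ref{Sliceonlypoles} and \ref{Slicefinitepoles}, which contain all the analytic content; what remains is essentially bookkeeping. First I would show that $f$ extends to a semiregular slice function on the whole of $\Omega$. Since $\Omega_0$ is a discrete set of singularities it is closed in $\Omega$ and locally finite, so for every isolated point $q\in\Omega_0\cap\R$ (resp.\ every isolated sphere $\sph_y\subset\Omega_0$) I can choose an open circular neighbourhood $U\subset\Omega$ meeting $\Omega_0$ only in $\{q\}$ (resp.\ in $\sph_y$). The restriction of $f$ to $U\setminus\{q\}$ (resp.\ $U\setminus\sph_y$) is again slice-Nash — by Theorem \ref{char}, since the splitting components of $f$ restrict to restrictions of complex Nash functions, which are still complex Nash — so Lemma \ref{Sliceonlypoles} applies and shows that $q$ (resp.\ $\sph_y$) is an isolated pole or an isolated removable singularity for $f$, in particular not essential. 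Combining this local information over all points and spheres of $\Omega_0$ with the hypothesis $f\in\mathcal{SR}_A(\Omega\setminus\Omega_0)$, Definition \ref{semiregularint} is satisfied with the closed circular subset taken to be $\Omega_0$ itself; hence $f\in\mathcal{SEM}_A(\Omega)$, and therefore $f\in\mathcal{SEN}_A(\Omega)$.

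For the ``moreover'' part I would assume $\Omega$ connected and invoke Lemma \ref{Slicefinitepoles} applied to $f\in\mathcal{SEN}_A(\Omega)$ with the discrete set of singularities $\Omega_0$. This produces finitely many isolated real points $q_1,\dots,q_k\in\Omega_0\cap\R$, finitely many isolated $(d_A-1)$-dimensional spheres $\sph_{x_1},\dots,\sph_{x_r}\subset\Omega_0$ with centre on the real axes, and a slice-Nash function $\Phi$ on $\Omega\setminus(\{q_1,\dots,q_k\}\cup\sph_{x_1}\cup\dots\cup\sph_{x_r})$, semiregular on $\Omega$, with $f=\Phi|_{\Omega\setminus\Omega_0}$. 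Since $\Phi$ is slice regular off $\{q_1,\dots,q_k\}\cup\bigcup_j\sph_{x_j}$ and agrees with $f$ on a non-empty open subset, $f$ and $\Phi$ have the same poles in $\Omega$; consequently the poles of $f$ form a subset of $\{q_1,\dots,q_k\}\cup\{\sph_{x_1},\dots,\sph_{x_r}\}$, i.e.\ a finite family of isolated points on the real axes and isolated $(d_A-1)$-dimensional spheres with centre on the real axes, as claimed.

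I do not expect a genuine obstacle here: the two substantive inputs — absence of essential singularities for slice-Nash germs (Lemma \ref{Sliceonlypoles}, built on the complex statement Lemma \ref{onlypoles}) and finiteness of poles over a circular domain (Lemma \ref{Slicefinitepoles}, built on Lemma \ref{finitepoles}) — are already available. The only point requiring a little care is the patching in the first paragraph: one must check that the local neighbourhoods $U$ can be chosen circular (so that the notions of isolated pole and isolated spherical pole from \S\ref{preliminaries} apply verbatim) and that semiregularity verified locally at every point of the closed, locally finite set $\Omega_0$ assembles into global membership in $\mathcal{SEM}_A(\Omega)$; both are immediate once one observes that $\Omega_0$ itself is the closed circular subset demanded by Definition \ref{semiregularint}.
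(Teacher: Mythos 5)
Your argument is correct and follows the same route the paper sketches (the paper just says ``by the previous results, we deduce straightforwardly''): apply Lemma~\ref{Sliceonlypoles} on small circular neighbourhoods of each isolated real point and each isolated sphere of $\Omega_0$ to see that every singularity is a pole or removable, so Definition~\ref{semiregularint} holds with the closed circular set $\Omega_0$; then, when $\Omega$ is a circular domain, invoke Lemma~\ref{Slicefinitepoles} to produce the finite lists $\{q_1,\dots,q_k\}$, $\{\sph_{x_1},\dots,\sph_{x_r}\}$ and the extension $\Phi$, noting that $f$ can have no pole outside these since $\Phi$ is slice regular there. Nothing to add.
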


Let $\Omega_0\subset \Omega$ be a discrete set of singularities. As $\Omega\setminus \Omega_0$ is axially symmetric with respect to the real axes, it follows by Remark \ref{siminv}(i) that, if $f\in \mathcal{SN}_A(\Omega\setminus \Omega_0)$, then $f^c\in \mathcal{SN}_A(\Omega\setminus \Omega_0)$. In particular, we deduce straightforwardly by Theorems \ref{*semiregular} and \ref{*Nash} the following result, which is the analogous to the $*$-algebra $\mathcal{SEN}_A(\Omega)$ of Theorem \ref{*semiregular} for the $*$-algebra $\mathcal{SEM}_A(\Omega)$. 

\begin{thm}\label{semiregularAlgThm}
The set $\mathcal{SEN}_A(\Omega)$ of all semiregular slice-Nash functions on $\Omega$ is an alternative $*$-subalgebra of the alternative $*$-algebra $\mathcal{SEM}_A(\Omega)$ of semiregular slice functions on $\Omega$ endowed with $+$, $\cdot$, $\cdot^c$. If $A=\HH$, then $\mathcal{SEN}_\HH(\Omega)$ is associative. Moreover,
\begin{itemize}
\item if $\Omega$ is a symmetric slice domain, then $\mathcal{SEN}_A(\Omega)$ is a division algebra,
\item if $\Omega$ is a product domain, then $\mathcal{SEN}_A(\Omega)$ includes some element $f\not\equiv 0$ with $N(f)\equiv 0$. However, every element $f$ with $N(f)\not\equiv 0$ admits a multiplicative inverse within the algebra.
\end{itemize}
\end{thm}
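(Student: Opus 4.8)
The plan is to mimic, with the semiregular versions of the ingredients, the short proof of Theorem~\ref{*Nash}, using the finiteness results of \S\ref{semiregularSec} to keep track of the singular sets.

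\emph{Closure under the operations, alternativity, associativity.} First I would check that $\mathcal{SEN}_A(\Omega)$ is stable under $+$, $\cdot$ and $\cdot^c$. Given $f,g\in\mathcal{SEN}_A(\Omega)$, choose discrete sets of singularities $\Omega_0^f,\Omega_0^g\subset\Omega$ with $f\in\mathcal{SN}_A(\Omega\setminus\Omega_0^f)$ and $g\in\mathcal{SN}_A(\Omega\setminus\Omega_0^g)$. As already observed in the paragraph preceding Theorem~\ref{*semiregular}, the union $\Omega_0:=\Omega_0^f\cup\Omega_0^g$ is again a discrete set of singularities, and $f,g$ restrict to elements of $\mathcal{SN}_A(\Omega\setminus\Omega_0)$ (a restriction of a slice-Nash function to a circular open subset is slice-Nash, immediately from Theorem~\ref{char}). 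Proposition~\ref{ringNash} then gives $f+g,f\cdot g\in\mathcal{SN}_A(\Omega\setminus\Omega_0)$ and Remark~\ref{siminv}(i) gives $f^c\in\mathcal{SN}_A(\Omega\setminus\Omega_0)$; since $\mathcal{SEM}_A(\Omega)$ is already closed under these operations (Theorem~\ref{*semiregular}), all three functions lie in $\mathcal{SEM}_A(\Omega)$, hence in $\mathcal{SEN}_A(\Omega)$. Alternativity, and associativity when $A=\HH$, are then inherited from the ambient algebra $\mathcal{SEM}_A(\Omega)$.

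\emph{The division statements.} Suppose $\Omega$ is a symmetric slice domain and $f\in\mathcal{SEN}_A(\Omega)$ with $f\not\equiv0$, say $f\in\mathcal{SN}_A(\Omega\setminus\Omega_0)$. Removing the isolated real points and the $(d_A-1)$-dimensional spheres of $\Omega_0$ (all of codimension $\geq 2$ in $A$) neither disconnects $\Omega$ nor detaches it from $\R$, so $\Omega\setminus\Omega_0$ is again a union of symmetric slice domains; hence $N(f)\not\equiv0$ on it by Proposition~\ref{nonzeroint}, and $V(N(f))$ is a finite union of isolated real zeros and isolated spherical zeros by Proposition~\ref{finiti}. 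Thus $\Omega_0\cup V(N(f))$ is again a discrete set of singularities, and applying Remark~\ref{siminv}(iii) with ambient domain $\Omega\setminus\Omega_0$ yields $f^{-\bullet}\in\mathcal{SN}_A\big(\Omega\setminus(\Omega_0\cup V(N(f)))\big)$; by Lemma~\ref{Sliceonlypoles} (applied at each point, resp.\ sphere, of the singular set) $f^{-\bullet}$ extends to an element of $\mathcal{SEM}_A(\Omega)$, so $f^{-\bullet}\in\mathcal{SEN}_A(\Omega)$ and $\mathcal{SEN}_A(\Omega)$ is a division algebra. When $\Omega$ is a product domain, I would take the restriction to $\Omega$ of the function of Example~\ref{fettazero} (the identical construction works for $A=\HH$, since $J^2=-1$ still forces $(1+\iota J)(1-\iota J)=0$): it is slice regular with a locally constant stem function, whose components in a splitting base are constants and hence complex Nash, so it lies in $\mathcal{SN}_A(\Omega)\subset\mathcal{SEN}_A(\Omega)$, is not identically zero, and has $N(f)\equiv0$. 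Finally, if $f\in\mathcal{SEN}_A(\Omega)$ has $N(f)\not\equiv0$, the same chain of arguments as in the symmetric case --- $N(f)\not\equiv0$ on the dense subset $\Omega\setminus\Omega_0$, $V(N(f))$ a finite union of isolated zeros and spheres, Remark~\ref{siminv}(iii), Lemma~\ref{Sliceonlypoles} --- produces $f^{-\bullet}\in\mathcal{SEN}_A(\Omega)$ satisfying $f\cdot f^{-\bullet}=f^{-\bullet}\cdot f=1$ in the algebra.

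\emph{Main obstacle.} The purely formal part is the bookkeeping: verifying that the finite unions of discrete sets of singularities, zero sets and pole sets arising along the way remain discrete sets of singularities, and that restrictions of slice-Nash functions stay slice-Nash. The only substantive point is that the inverse $f^{-\bullet}$, \emph{a priori} defined and slice-Nash only on $\Omega$ minus a discrete singular set, genuinely has poles (not essential singularities) there, so that it is a \emph{semiregular} slice-Nash function on all of $\Omega$; this is precisely the content of Lemma~\ref{Sliceonlypoles}, so once it is invoked the argument reduces to assembling the pieces.
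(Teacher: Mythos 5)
Your proposal is correct and follows essentially the same route as the paper. The paper itself presents only a two-sentence derivation just before the theorem's statement: observe via Remark~\ref{siminv}(i) that $f\mapsto f^c$ preserves $\mathcal{SN}_A(\Omega\setminus\Omega_0)$, and then "deduce straightforwardly" from Theorems~\ref{*semiregular} and~\ref{*Nash}. Your write-up simply unfolds that deduction, filling in the bookkeeping the paper elides: that unions of discrete sets of singularities remain discrete sets of singularities, that restrictions of slice-Nash functions stay slice-Nash (via Theorem~\ref{char}), that the operations come down to Proposition~\ref{ringNash} and Remark~\ref{siminv} applied on a common complement $\Omega\setminus\Omega_0$, and that the inverse $f^{-\bullet}$ belongs to $\mathcal{SEN}_A(\Omega)$ by combining Proposition~\ref{nonzeroint}, Proposition~\ref{finiti}, Remark~\ref{siminv}(iii), and Lemma~\ref{Sliceonlypoles}. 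One very small streamlining: rather than invoking Lemma~\ref{Sliceonlypoles} sphere by sphere, you could cite the corollary following Lemma~\ref{Slicefinitepoles}, which directly asserts that a slice-Nash function on $\Omega$ minus a discrete set of singularities is semiregular on all of $\Omega$. Otherwise your argument is exactly the intended one.
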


We end this section by showing that global semiregular slice-Nash functions are actually `slice rational' functions (compare this result with Theorem \ref{global}).

\begin{thm}[Global semiregular slice-Nash functions]\label{globalsemiregular}
Let $\Omega_0\subset A$ be a discrete set of singularities and $f\in\mathcal{SN}_A(A\setminus \Omega_0)$. Then there exist two slice polynomials $P,Q\in A[\x]$ such that 
$
f(x)=(Q^{-\bullet}\cdot P)(x)
$
for each $x\in A\setminus \Omega_0$.
\end{thm}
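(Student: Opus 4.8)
The plan is to transcribe the proof of Theorem~\ref{globalmer} to the slice setting, using the finiteness of poles from Lemma~\ref{Slicefinitepoles}, the two characterisations of slice-Nash functions (Theorems~\ref{char} and~\ref{char2}), and the description of global slice-Nash functions from Theorem~\ref{global}. First I would note that $A$ is a circular domain and that, by Lemma~\ref{Sliceonlypoles}, every isolated point of $\Omega_0\cap\R$ and every isolated sphere contained in $\Omega_0$ is a pole or a removable singularity of $f$; hence $f\in\mathcal{SEN}_A(A)$ with singular set $\Omega_0$. Applying Lemma~\ref{Slicefinitepoles} with $\Omega=A$ then yields finitely many isolated real points $q_1,\dots,q_k$, finitely many isolated spheres $\sph_{x_1},\dots,\sph_{x_r}$, and a slice-Nash function $\Phi$ on $A\setminus(\{q_1,\dots,q_k\}\cup\sph_{x_1}\cup\dots\cup\sph_{x_r})$ with $\Phi\in\mathcal{SEM}_A(A)$ and $f=\Phi$ on $A\setminus\Omega_0$; it then suffices to write $\Phi=Q^{-\bullet}\cdot P$.

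Next I would clear denominators with a slice-preserving polynomial. For each $j$ let $a_j$ (resp.\ $b_j$) be the order of the (spherical) pole of $\Phi$ at $q_j$ (resp.\ at $\sph_{x_j}$), a finite positive integer by Theorems~\ref{sing1} and~\ref{sing2}, and set
$$
Q(\x):=\prod_{j=1}^{k}(\x-q_j)^{a_j}\cdot\prod_{j=1}^{r}\Delta_{x_j}(\x)^{b_j}\in A[\x].
$$
Since the $q_j$ are real and the $\Delta_{x_j}$ have real coefficients, $Q$ is slice preserving, so $Q^c=Q$, $N(Q)=Q^2$ and $V(N(Q))=V(Q)=\{q_1,\dots,q_k\}\cup\sph_{x_1}\cup\dots\cup\sph_{x_r}\subseteq\Omega_0$. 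I would then show that $G:=Q\cdot\Phi$, a priori slice regular on $A\setminus V(Q)$, extends slice-regularly to all of $A$: on a small circular neighbourhood of $q_j$ meeting no other singularity one writes $Q\cdot\Phi=\big((\x-q_j)^{-a_j}\cdot Q\big)\cdot\big((\x-q_j)^{a_j}\cdot\Phi\big)$, a slice product of two slice regular functions, the first being a genuine slice polynomial because $\x-q_j$ is slice preserving, the second slice regular by the pole characterisation; and one argues similarly with $\Delta_{x_j}$ near $\sph_{x_j}$.

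Finally I would upgrade $G$ to a slice polynomial. By Example~\ref{esempislice} the slice polynomial $Q$ is slice-Nash on every open circular set, so Proposition~\ref{ringNash} gives $G|_{A\setminus V(Q)}\in\mathcal{SN}_A(A\setminus V(Q))$; fixing any real base $\Bb$ of $A$, Theorem~\ref{char2} shows the real components $G_\ell^{\Bb}$ are real Nash on the connected open dense set $(A\setminus V(Q))_\Bb\subseteq A_\Bb\cong\R^{d_A+1}$. On the other hand $G$ is slice regular on all of $A$, hence real analytic, so each $G_\ell^{\Bb}$ is real analytic on the connected set $A_\Bb$; a nonzero polynomial relation $P_\ell(x,G_\ell^{\Bb}(x))=0$ valid on $(A\setminus V(Q))_\Bb$ therefore holds on all of $A_\Bb$ by the identity principle for real-analytic functions, so $G_\ell^{\Bb}$ is real Nash on $A_\Bb$ and $G\in\mathcal{SN}_A(A)$ by Theorem~\ref{char2}. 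Theorem~\ref{global} then produces a slice polynomial $P\in A[\x]$ with $G=P$ on $A$. To conclude, $Q^{-\bullet}$ is the two-sided inverse of $Q$ in the alternative $*$-algebra $\mathcal{S}_A(A\setminus V(N(Q)))$; since $Q$ (hence $Q^{-\bullet}$) is slice preserving and $Q^{-\bullet}(x)=Q(x)^{-1}$, Artin's theorem yields, for every $x\in A\setminus\Omega_0\subseteq A\setminus V(N(Q))$,
$$
(Q^{-\bullet}\cdot P)(x)=(Q^{-\bullet}\cdot G)(x)=Q(x)^{-1}\big(Q(x)\,\Phi(x)\big)=\Phi(x)=f(x),
$$
as required. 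The step I expect to be the main obstacle is checking cleanly that $G=Q\cdot\Phi$ genuinely extends slice-regularly across the poles with the correct finite exponents, and that this extension is still slice-Nash on all of $A$ (i.e.\ that slice-Nashness propagates across the removable singularities); the rest is a fairly routine translation of the one-variable complex arguments of \S\ref{compNashSec} and \S\ref{MeroCNash}.
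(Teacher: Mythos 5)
Your proposal is correct, but it follows a genuinely different route from the paper. The paper's proof is component-wise: it fixes a splitting base, uses Theorem~\ref{char} to see that the splitting components $f_k$ are complex Nash on $\C_I\setminus(\Omega_0)_I$, applies Theorem~\ref{globalmer} to write each $f_k=P_k/Q_k$, takes a common denominator $Q\in\C_I[\z_I]$, reassembles $P$ as a slice polynomial using slice products with the splitting base elements, and then uses the representation formula to propagate the identity $f=Q^{-\bullet}\cdot P$ from the slice $\C_I$ to all of $A\setminus\Omega_0$. You instead mirror the \emph{proof} of Theorem~\ref{globalmer} directly at the level of slice functions: you build $Q$ from the poles themselves as a real-coefficient (hence slice-preserving) polynomial $\prod(\x-q_j)^{a_j}\prod\Delta_{x_j}^{b_j}$, show that $G:=Q\cdot\Phi$ extends slice-regularly across the poles using the slice pole characterisations (Theorems~\ref{sing1} and~\ref{sing2}) and nucleus/Artin manipulations, extend the Nash relation across the removable singularities by the identity principle via Theorem~\ref{char2}, and then invoke the slice version of Tworzewski's theorem (Theorem~\ref{global}) to get $G=P\in A[\x]$. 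Both are correct; your version never needs to re-descend to the slice after Lemma~\ref{Slicefinitepoles} and yields a slightly cleaner output in which $Q$ is slice preserving (with real coefficients and $Q^c=Q$, $N(Q)=Q^2$), whereas the paper's $Q$ lives in $\C_I[\z_I]$. The price you pay is that you must verify the slice-regular extension of $Q\cdot\Phi$ and the propagation of Nashness yourself, whereas the paper offloads all of that to Theorem~\ref{globalmer}; you flagged exactly this as the delicate step, and your treatment of it (via the pole characterisation, the slice-preserving factorisation of $Q$, and the identity principle applied to the real components) is sound. One small wording correction: the first factor $(\x-q_j)^{-a_j}\cdot Q$ is a genuine slice polynomial because $(\x-q_j)^{a_j}$ divides $Q$ in $A[\x]$, not because $\x-q_j$ is slice preserving; slice-preservingness is what lets you rearrange the slice product, not what makes the quotient a polynomial.
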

\begin{proof}
We show the statement for $A=\O$, as the case $A=\HH$ is similar but easier. Let $\Omega_0\subset \O$ be a discrete set of singularities such that $f\in\mathcal{SN}_\O(\O\setminus \Omega_0)$. Let $I,J,L\in\sph_\O$ be imaginary units such that $\{1,I,J,IJ,L,IL,JL,I(JL)\}$ is a splitting base of $\O$ associated to $I$. The set $(\Omega_0)_I:= \Omega_0\cap \C_I$ is a closed and discrete subset of $\C_I$ consisting of isolated points on the real axes and isolated pairs of complex conjugated points. By the splitting lemma, there exist unique holomorphic functions $f_0,f_1,f_2,f_3:\C_I\setminus(\Omega_0)_I\to \C_I$ such that
\begin{equation}\label{sr1}
f_I(z_I)=f_0(z_I)+f_1(z_I)J+f_2(z_I)L+f_3(z_I)(JL)
\end{equation}
for each $z_I\in \C_I\setminus(\Omega_0)_I$. By Theorem \ref{char}, $f_0,f_1,f_2,f_3\in\mathcal{N}_{\C_I}(\C_I\setminus(\Omega_0)_I)$. By Theorem \ref{globalmer}, for each $k=0,1,2,3$ there exist polynomials $P_k,Q_k\in\C_I[\z_I]$ such that 
$$
f_k(z_I)=\frac{P_k(z_I)}{Q_k(z_I)}
$$
for each $z_I\in \C_I\setminus(\Omega_0)_I$. Up to multiply the numerators and the denominators for suitable non-zero polynomials and substitute the $P_k$'s with the corresponding products, we may assume that there exists a polynomial $Q\in \C_I[\z_I]$ such that
\begin{equation}\label{sr2}
f_k(z_I)=\frac{P_k(z_I)}{Q(z_I)}
\end{equation}
for each $k=0,1,2,3$ and each $z_I\in \C_I\setminus(\Omega_0)_I$. By \eqref{sr1} and  \eqref{sr2}, we deduce that
$$
f_I(z_I)=\frac{1}{Q(z_I)}(P_0(z_I)+P_1(z_I)J+P_2(z_I)L+P_3(z_I)(JL))
$$
for each $z_I\in\C_I\setminus(\Omega_0)_I$. Consider next the slice polynomial function $P\in\O[x]$ defined as
$$
P(x):=P_0(x)+(P_1\cdot J)(x)+(P_2\cdot L)(x)+(P_3\cdot(JL))(x)
$$
for each $x\in \O$, where $\cdot$ denotes, as usual, the slice product (see also Remark \ref{sliceprodoct}). We have
$$
(Q^{-\bullet}\cdot P)(z_I)=\frac{1}{Q(z_I)}(P_0(z_I)+(P_1J)(z_I)+(P_2L)(z_I)+(P_3(JL))(z_I))
$$
for each $z_I\in\C_I\setminus(\Omega_0)_I$. In particular, the slice regular functions $f$ and $Q^{-\bullet}\cdot P$ coincide on $\C_I\setminus(\Omega_0)_I$. As $\O\setminus\Omega_0$ is the circularised of $\C_I\setminus(\Omega_0)_I$, then by the representation formula, we conclude that
$
f(x)=(Q^{-\bullet}\cdot P)(x)
$
for each $x\in \O\setminus\Omega_0$, as required.
\end{proof}

In the following example, we show that the previous result is no longer true if we consider slice-Nash functions defined on proper open circular subsets $\Omega\subsetneq A$. The example is for $A=\HH$, but it can easily be adapted to the case $A=\O$.

\begin{example}
Let $\Omega:=\HH\setminus \{q\in \R: q\leq0\}$ and let $J\in\sph_\HH$ be an imaginary unit. Let $g:\Omega\to \HH$ be the slice regular branch of $\sqrt{\q}J$ such that $g(1)=J$. Consider the slice regular function $f:\Omega\setminus\{1\}\to \HH$ defined as $f(q):=(q-1)^{-\bullet}\cdot g(q)$ for each $q\in \Omega\setminus\{1\}$. It is clear that $f$ is semiregular on $\Omega$. By Example \ref{esempislice}(iii), $g$ is a slice-Nash function on $\Omega$. Thus, by Remark \ref{remarkprodotto}(ii), $f$ is a slice-Nash function on $\Omega\setminus\{1\}$. In particular, $f$ is a semiregular slice-Nash function on $\Omega$, but $f$ is not of the form $Q^{-\bullet}\cdot P$, where $P,Q\in\HH[\q]$. \hfill$\sqbullet$
\end{example}

\subsection{Polynomial bounds at infinity for slice-Nash functions}

In this section we make use of Proposition \ref{polboundC} to show that the norm of a slice-Nash function is polynomially bounded at infinity. Let $A$ be either the algebra of quaternions $\HH$ or the algebra of octonions $\O$. Recall that the norm of $x\in A$ is defined as $\|x\|:=\sqrt{n(x)}=\sqrt{xx^c}$ and coincide with the Euclidean norm of $\HH\simeq\R^4$ and $\O\simeq \R^8$ respectively. 

\begin{prop}[Polynomial bounds for slice-Nash functions]\label{slicepolbound}
Let $\Omega\subset A$ be an unbounded circular domain and $f\in \mathcal{SN}_A(\Omega)$. Then, there exist an integer $m\geq 0$ and constants $C,R\geq 1$ such that
$$
\|f(x)\|\leq C(1+\|x\|^m)
$$
for each $x\in \Omega$ such that $\|x\|\geq R$. 
\end{prop}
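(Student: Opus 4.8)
The plan is to reduce the estimate to a single slice $\C_I$ by means of the splitting lemma and Theorem \ref{char}, to apply there the complex bound of Proposition \ref{polboundC}, and then to propagate the resulting bound to all of $\Omega$ through the representation formula \eqref{repform}.

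Concretely, I would fix $I\in\sph_A$ and a splitting base $\Bb_I=\{I_0:=1,I,I_1,II_1,\ldots,I_{u_A},II_{u_A}\}$ of $A$ associated to $I$. Since $f\in\mathcal{SN}_A(\Omega)$, Theorem \ref{char} gives that the functions $f_1^{I,I_k},f_2^{I,I_k}:\Omega_I\to\C_I$ appearing in the splitting $f_I(z_I)=\sum_{k=0}^{u_A}(f_1^{I,I_k}(z_I)+f_2^{I,I_k}(z_I)I)I_k$ are $\C_I$-Nash functions on $\Omega_I:=\Omega\cap\C_I$. Writing $\Omega=\Omega_D$, the set $\Omega_I=\phi_I(D)$ is unbounded because $\Omega$ is, and it has either one connected component (when $\Omega$ is a symmetric slice domain) or two components swapped by the conjugation of $\C_I$ (when $\Omega$ is a product domain); in both cases each connected component of $\Omega_I$ is an unbounded domain, since the two components of $D$ in the product case are mirror images of one another. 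Transporting Proposition \ref{polboundC} to $\C_I$ via the isometric $*$-isomorphism $\phi_I$ (which preserves Nash-ness, as in the proof of Theorem \ref{char}), and applying it to each of the finitely many functions $f_s^{I,I_k}$ on each connected component of $\Omega_I$, then taking the maximum of the exponents and of the constants so obtained, yields an integer $m\geq 0$ and constants $C_0,R_0\geq 1$ with $|f_s^{I,I_k}(x)|\leq C_0(1+\|x\|^m)$ for all $x\in\Omega_I$ with $\|x\|\geq R_0$, every $s\in\{1,2\}$ and every $k\in\{0,\ldots,u_A\}$ (here the modulus on $\C_I$ agrees with the norm of $A$).

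Next I would recombine these estimates. Each $I_k$ has norm $1$, and since the norm is multiplicative on $A$ one has $\|\gamma I_k\|=|\gamma|$ for $\gamma\in\C_I$; hence for $x\in\Omega_I$ with $\|x\|\geq R_0$ the triangle inequality gives $\|f(x)\|\leq\sum_{k=0}^{u_A}(|f_1^{I,I_k}(x)|+|f_2^{I,I_k}(x)|)\leq 2(u_A+1)C_0(1+\|x\|^m)$. To reach an arbitrary $x=\alpha+\beta J\in\Omega_J$ with $J\in\sph_A$, note that $\Omega$ is circular, so $\alpha\pm\beta I\in\Omega_I$ and $\|\alpha\pm\beta I\|=\sqrt{\alpha^2+\beta^2}=\|x\|$. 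The representation formula \eqref{repform} writes $f(x)$ as $\tfrac12(f(\alpha+\beta I)+f(\alpha-\beta I))-\tfrac{J}{2}(I(f(\alpha+\beta I)-f(\alpha-\beta I)))$, and using again that multiplication by an element of norm $1$ is norm-preserving we obtain $\|f(x)\|\leq\|f(\alpha+\beta I)\|+\|f(\alpha-\beta I)\|$. Therefore, whenever $\|x\|\geq R_0$, it follows that $\|f(x)\|\leq 4(u_A+1)C_0(1+\|x\|^m)$; setting $C:=4(u_A+1)C_0\geq 1$ and $R:=R_0\geq 1$ finishes the argument.

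Most of this is bookkeeping: two applications of the multiplicativity of the norm on $A$ and the arithmetic of taking maxima of finitely many constants. The one place that needs a genuine (if short) remark is the verification that every connected component of $\Omega_I$ is an unbounded domain — which is exactly where the hypothesis that $\Omega$ is an unbounded circular domain enters — together with the explicit transfer of Proposition \ref{polboundC} from $\C$ to $\C_I$ through $\phi_I$.
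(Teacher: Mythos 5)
Your proof follows essentially the same route as the paper's: split on a slice via the splitting lemma and Theorem \ref{char}, apply Proposition \ref{polboundC} to each of the finitely many $\C_I$-Nash components on each of the (at most two) connected components of $\Omega_I$, take maxima of the resulting exponents and constants, reassemble using the multiplicativity of the norm, and finally propagate to all of $\Omega$ through the representation formula. The only point you spell out that the paper leaves implicit is that each connected component of $\Omega_I$ is an unbounded domain (needed to invoke Proposition \ref{polboundC}), which indeed follows from $\Omega$ being unbounded and circular; this is a welcome clarification rather than a deviation.
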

\begin{proof}
We show the statement for $A=\O$, as the case $A=\HH$ is similar but easier. Let $I\in\sph_\O$ be an imaginary unit. As $\Omega$ is a circular domain, then $\Omega_I:=\Omega\cap \C_I$ is an open subset of $\C_I$ with one or two connected components. We may assume that $\Omega_I$ has two connected components $\Omega_I^1$ and $\Omega_I^2$, otherwise the proof is similar but easier. Denote $f_I:=f|_{\Omega_I}$. Let $J,L\in\sph_\O$ be any imaginary units such that $\{1,I,J,IJ,L,IL,JL,I(JL)\}$ is a splitting base of $\O$ associated to $I$. By the splitting lemma, there exist unique holomorphic functions $f_0,f_1,f_2,f_3:\Omega_I\to \C_I$ such that 
\begin{equation}\label{splitstima}
f_I(z_I)=f_0(z_I)+f_1(z_I)J+f_2(z_I)L+f_3(z_I)(JL)
\end{equation}
for each $z_I\in\Omega_I$. By Theorem \ref{char}, $f_0,f_1,f_2,f_3\in \mathcal{N}_{\C_I}(\Omega_I)$. By Proposition \ref{polboundC}, for each $s=1,2$ and each $r=0,1,2,3$ there exist an integer $m_{s,r}\geq 0$ and constants $C_{s,r},R_{s,r}\geq 1$ such that
\begin{equation}\label{stimabound}
|f_r(z_I)|\leq C_{s,r}(1+|z_I|^{m_{s,r}}) 
\end{equation}
for each $z_I\in \Omega_I^s$ such that $|z_I|\geq R_{s,r}$. Define
$$
m:= \max_{r=0,1,2,3}\max_{s=1,2} \{m_{s,r}\}, \quad C:=8\max_{r=0,1,2,3}\max_{s=1,2} \{C_{s,r}\}\quad \text{\and} \quad R:=\max_{r=0,1,2,3}\max_{s=1,2}\{R_{r,s}\}\geq 1.
$$
We have
\begin{equation}\label{eqbound3}
 C_{s,r}(1+|z_I|^{m_{s,r}})\leq \frac{1}{8}C(1+|z_I|^m)
\end{equation}
for each $z_I\in \Omega^s_I$ such that $|z_I|\geq R\geq 1$ and each $s=1,2$ and $r=0,1,2,3$.

Observe that for each $x\in \O$ and each $K\in\sph_\O$, it holds $\|xK\|=\|x\|$. This follows by the fact that the norm $\|\cdot\|$ on $\O$ is multiplicative (due to the absence of zero divisors) and $\|K\|=1$. In particular, by \eqref{splitstima}, \eqref{stimabound} and \eqref{eqbound3}, we deduce that
\begin{align}\label{stimabound2}
\begin{split}
\|f_I(z_I)\|&\leq \|f_0(z_I)\|+\|f_1(z_I)J\|+\|f_2(z_I)L\|+\|f_3(z_I)(JL)\|\\
&=\|f_0(z_I)\|+\|f_1(z_I)\|+\|f_2(z_I)\|+\|f_3(z_I)\|\\
&\leq \frac{1}{8}C(1+|z_I|^m)+\frac{1}{8}C(1+|z_I|^m)+\frac{1}{8}C(1+|z_I|^m)+\frac{1}{8}C(1+|z_I|^m)\\
&=\frac{1}{2}C(1+|z_I|^m)
\end{split}
\end{align}
for each $z_I\in\Omega_I$ such that $|z_I|\geq R$, where $|z_I|:=(z_I\overline{z_I})^{\tfrac{1}{2}}$ denotes the usual complex module of $z_I\in\C_I$. 

For each $J\in\sph_\O$ and for each $z_J=\alpha+\beta J\in\Omega_J:=\Omega\cap \C_J$, denote $z_I:=\alpha+\beta I\in\Omega_I$. That is, $z_I=\phi_I(\phi_J^{-1}(z_J))$, where for each $I\in\sph_\O$, $\phi_I:\C\to \C_I$ is the $*$-isomorphism introduced in \ref{*iso}. By the representation formula, for each $J\in \sph_\O$  we have 
$$
f_J(z_J)=\frac{1}{2}(f_I(z_I)+f_I(\overline{z_I}))-\frac{J}{2}(I(f_I(z_I)-f_I(\overline{z_I})))
$$
for each $z_J\in \Omega_J:=\Omega\cap \C_J$. Recall that $\Omega_I$ is symmetric with respect to the real axes, so by \eqref{stimabound2}, we have
$
|f_I(\overline{z_I})|\leq \tfrac{1}{2}C(1+|\overline{z_I}|^m)
$
for each $z_I\in\Omega_I$. By the representation formula and \eqref{stimabound2}, we deduce 
\begin{align*}
\|f(z_J)\|&=\|f_J(z_J)\|\leq \frac{1}{2}(\|f_I(z_I)\|+\|f_I(\overline{z_I}))\|+\|J(I f_I(z_I))\|+\|J(If_I(\overline{z_I}))\|)\\
&=\frac{1}{2}(2\|f_I(z_I)\|+2\|f_I(\overline{z_I}))\|)= \|f_I(z_I)\|+\|f_I(\overline{z_I}))\|\\
&\leq \frac{1}{2}C(1+|z_I|^m)+ \frac{1}{2}C(1+|\overline{z_I}|^m)=C(1+|z_I|^m).
\end{align*}
for each $z_I\in\Omega_I$ such that $|z_I|\geq  R$. As for each $x\in \Omega$, there exists $J\in \sph_\O$ such that $x\in\Omega_J$, then 
$$
\|f(x)\|\leq C(1+\|x\|^m)
$$
for each $x\in \Omega$ such that $\|x\|\geq R$, as required. 
\end{proof}

The following example shows that the converse of the previous result is false. The example is for $\O$, but it can be easily adapted to $\HH$.

\begin{example}
Let $D:=\C\setminus\{z\in \R, \, z\leq 0\}$  and let $\gamma\in \R\setminus \Q$ be an irrational number such that $\gamma>0$ (here $\Q$ denotes, as usual, the field of rational numbers). Let $z^{\gamma}$ be the holomorphic branch of $\z^{\gamma}:=e^{\gamma\log\z}$ such that $1^{\gamma}=1$. Consider the holomorphic stem function 
$$
F:D\to \O\otimes_\R\C, \quad z\mapsto z^{\gamma}:=1\otimes z^{\gamma}.
$$
Let $\Omega:=\Omega_D\subset \O$ be the circularised of $D$ and $f:=\mathcal{I}(F):\Omega\to \O$ the slice regular function induced by $F$. By Example \ref{irrationalpower}, $F$ is not a stem-Nash function on $D$, so $f$ is not a slice-Nash function on $\Omega$. Let $n\geq 1$ be an integer such that $\gamma<n$ and $I\in \sph_\O$ an imaginary unit. Then
$$
\|f(z_I)\|=\|\Re(z^{\gamma})+I\Im(z^{\gamma})\|=|z^{\gamma}|\leq1+|z|^n=1+\|z_I\|^n.
$$
for each $z_I:=\alpha+\beta I\in \C_I$, where $\alpha,\beta\in \R$ and $z:=\alpha+\iota \beta\in \C$. We deduce that
$
\|f(x)\|\leq 1+\|x\|^n
$
for each $x\in \Omega$. \hfill$\sqbullet$
\end{example}

Let $\Omega\subset A$ be an unbounded circular domain and $f\in \mathcal{SEN}_A(\Omega)$. Let $\Omega_0\subset \Omega$ be a discrete set of singularities such that $f\in\mathcal{SN}_A(\Omega\setminus \Omega_0)$. By Lemma \ref{Slicefinitepoles}, there exist 
\begin{itemize}
\item finitely many isolated points $q_1,\ldots,q_k \in \Omega_0\cap \R$,
\item finitely many isolated $(d_A-1)$-dimensional spheres $\sph_{x_1},\ldots,\sph_{x_r}\subset \Omega_0$ with centre on the real axes,
\item a slice-Nash function $\Phi: \Omega\setminus (\{q_1,\ldots,q_k\}\cup\sph_{x_1}\cup\ldots\cup\sph_{x_r})\to A$ 
\end{itemize}
such that $\Phi\in\mathcal{SEM}_A(\Omega)$ and $f=\Phi|_{\Omega\setminus(\{q_1,\ldots,q_k\}\cup\sph_{x_1}\cup\ldots\cup\sph_{x_r})}$. In particular, there exists $R'>1$ such that $\Phi$ is slice-Nash on $D\setminus B(0,R')$. Thus, by Proposition \ref{slicepolbound}, there exist an integer $m\geq 0$ and constants $C,R''\geq1$ such that 
$$
\|\Phi(x)\|\leq C(1+\|x\|^m)
$$
for each $x\in \Omega$ such that $\|x\|\geq \max\{R',R''\}$. In particular, we deduce the following:

\begin{cor}
Let $\Omega\subset A$ be an unbounded circular domain and $f\in \mathcal{SEN}_A(\Omega)$. Let $\Omega_0\subset \Omega$ be a discrete set of singularities such that $f\in\mathcal{SN}_A(\Omega\setminus \Omega_0)$. Then, there exist an integer $m\geq 0$ and constants $C,R\geq1$ such that 
$$
\|f(x)\|\leq C(1+\|x\|^m)
$$
for each $x\in \Omega\setminus\Omega_0$ such that $\|x\|\geq R$.
\end{cor}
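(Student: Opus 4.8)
The plan is to deduce this corollary from Proposition \ref{slicepolbound} together with the structure result Lemma \ref{Slicefinitepoles}, just as in the paragraph immediately preceding the statement. Since $f\in\mathcal{SEN}_A(\Omega)$ and $\Omega$ is a circular domain, Lemma \ref{Slicefinitepoles} supplies finitely many isolated points $q_1,\dots,q_k\in\Omega_0\cap\R$, finitely many isolated $(d_A-1)$-dimensional spheres $\sph_{x_1},\dots,\sph_{x_r}\subset\Omega_0$ with centres on the real axis, and a function $\Phi$ which is slice-Nash on $\Omega\setminus(\{q_1,\dots,q_k\}\cup\sph_{x_1}\cup\dots\cup\sph_{x_r})$, semiregular on $\Omega$, and which agrees with $f$ off that finite exceptional set. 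In particular $f=\Phi$ on $\Omega\setminus\Omega_0$, so it suffices to produce a polynomial bound for $\|\Phi\|$ at points of large norm.

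First I would choose $R'>1$ so large that all the points $q_1,\dots,q_k$ and all the spheres $\sph_{x_1},\dots,\sph_{x_r}$ are contained in $B(0,R')$; this is possible because the exceptional set is a finite union of points and (bounded) spheres. Then $\Omega':=\Omega\setminus\overline{B}(0,R')$ is an open circular subset of $A$ (the complement of a ball is circular, and the intersection of circular sets is circular) which is unbounded and contained in the domain of $\Phi$. Since being slice-Nash is inherited by restriction to open circular subsets — the inducing stem function restricts, and its complex components stay complex Nash by restriction — we get $\Phi|_{\Omega'}\in\mathcal{SN}_A(\Omega')$. Applying Proposition \ref{slicepolbound} then yields an integer $m\geq0$ and constants $C,R''\geq1$ with $\|\Phi(x)\|\leq C(1+\|x\|^m)$ for all $x\in\Omega'$ with $\|x\|\geq R''$. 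Setting $R:=\max\{R',R''\}$, any $x\in\Omega\setminus\Omega_0$ with $\|x\|\geq R$ lies in $\Omega'$ (enlarging $R'$ slightly if one wants to avoid the boundary sphere), and $x\notin\Omega_0$ gives $f(x)=\Phi(x)$, whence $\|f(x)\|\leq C(1+\|x\|^m)$, as required.

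There is no real obstacle here; the argument is a bookkeeping combination of Lemma \ref{Slicefinitepoles} and Proposition \ref{slicepolbound}. The only mildly delicate point is that $\Omega'=\Omega\setminus\overline{B}(0,R')$ need not be connected, whereas Proposition \ref{slicepolbound} is stated for circular \emph{domains}; this is harmless because one may run Proposition \ref{slicepolbound} on each connected component and take the maximum of the resulting exponents and constants, and in fact, tracing the proof of Proposition \ref{polboundC}, the exponent and constants obtained depend only on a polynomial governing the splitting components of $\Phi$ on a single slice, so a uniform polynomial bound persists regardless of the component structure of $\Omega'$.
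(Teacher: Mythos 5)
Your proposal is correct and follows the same route as the paper: invoke Lemma \ref{Slicefinitepoles} to replace $f$ by a slice-Nash extension $\Phi$ off a finite set of real points and spheres, then apply Proposition \ref{slicepolbound}. The connectedness worry you flag can be dispensed with more cleanly by noting that $\Omega'':=\Omega\setminus\bigl(\{q_1,\dots,q_k\}\cup\sph_{x_1}\cup\dots\cup\sph_{x_r}\bigr)$ is itself already an unbounded circular \emph{domain} (the removed set has real codimension at least $2$ in $A\cong\R^{d_A+1}$, so deleting it leaves $\Omega$ connected), and one can apply Proposition \ref{slicepolbound} to $\Phi\in\mathcal{SN}_A(\Omega'')$ directly, without ever passing to $\Omega\setminus\overline{B}(0,R')$.
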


\bibliographystyle{amsalpha}

\begin{thebibliography}{FFQU}

\bibitem{ap} G. Alon, E. Paran: A quaternionic Nullstellensatz. \textit{J. Pure Appl. Algebra} \textbf{225} (2021), no. 4, Paper No. 106572, 9 pp.

\bibitem{ap2} G. Alon, E. Paran: A central quaternionic Nullstellensatz. \textit{J. Algebra} \textbf{574} (2021), 252--261.

\bibitem{alb} A. Altavilla, C. Bisi: Log-biharmonicity and a Jensen formula in the space of quaternions. \textit{Ann. Acad. Sci. Fenn. Math.} \textbf{44} (2019), no. 2, 805–839.

\bibitem{ab} D. Angella, C. Bisi: Slice-quaternionic Hopf surfaces. \textit{J. Geom. Anal.} \textbf{29} (2019), no. 3, 1837–1858.

\bibitem{ar} M. Artin: On the solutions of analytic equations. \textit{Invent. Math.} \textbf{5} (1968), 277--291.

\bibitem{ba} J. C. Baez: The octonions. \textit{Bull. Amer. Math. Soc. (N.S.)} \textbf{39} (2002), no. 2, 145--205.

\bibitem{bd2} C. Bisi, A. De Martino: On the quadratic cone of $\R_3$. \textit{Preprint} (2021) {\tt arXiv:2109.14582}

\bibitem{bd} C. Bisi, A. De Martino: On Brolin's theorem over the quaternions. \textit{Indiana Univ. Math. J.} \textbf{71} (2022), no. 4, 1675--1705.

\bibitem{bg} C. Bisi, G. Gentili: On quaternionic tori and their moduli space. \textit{J. Noncommut. Geom.} \textbf{12} (2018), no. 2, 473--510.

\bibitem{bs} C. Bisi, C. Stoppato: The Schwarz–Pick lemma for slice regular function. \textit{Indiana Univ. Math. J.} \textbf{61} (2012), no. 1, 297--317.

\bibitem{bs2} C. Bisi, C. Stoppato: Landau’s theorem for slice regular functions on the quaternionic unit ball. \textit{Internat. J. Math.} \textbf{28}(3) (2017).

\bibitem{bw} C. Bisi, J. Winkelmann: On a quaternionic Picard theorem. \textit{Proc. Amer. Math. Soc.} Ser. B \textbf{7} (2020), no. 3, 1750017, 21 pp.

\bibitem{bw2} C. Bisi, J. Winkelmann: On Runge pairs and topology of axially symmetric domains.\textit{J. Noncommut. Geom.} \textbf{15} (2021), no. 2, 713--734.

\bibitem{bw3} C. Bisi, J. Winkelmann: The harmonicity of slice regular functions. \textit{J. Geom. Anal.} \textbf{31} (2021), no. 8, 7773–7811.

\bibitem{bw6} C. Bisi, J. Winkelmann: Invariants and automorphisms for slice regular functions. \textit{J. Noncommut. Geom.} (2025). Published online first. {\tt DOI 10.4171/JNCG/615}

\bibitem{bcr} J. Bochnak, M. Coste, M.-F. Roy: {Real algebraic geometry.} {\em Ergeb. Math. Grenzgeb.} (3), {\bf36} Springer-Verlag, Berlin, (1998).


\bibitem{ca} A. Carbone: Holomorphic functions with Nash real part. \textit{Preprint} (2025) {\tt arXiv:2507.17387}

\bibitem{csast} F. Colombo, I. Sabadini, D.C. Struppa: Slice monogenic functions. 	\textit{Israel J. Math.} \textbf{171} (2009), 385--403.

\bibitem{cosm} J.H. Conway, D.A. Smith: \textit{On quaternions and octonions: their geometry, arithmetic, and symmetry. A K Peters, Ltd.,} Natick, MA (2003). 

\bibitem{crs1} M. Coste, J. M. Ruiz, M. Shiota: Approximation in compact Nash manifolds. \textit{Amer. J. Math.} \textbf{117} (1995), no. 4, 905--927.


\bibitem{Cox} D.A. Cox, J. Little, D. O’Shea - \textit{Ideals, varieties, and algorithms.} An introduction to computational algebraic geometry and commutative algebra. \textit{Undergrad. Texts Math.}, Springer, Cham, (2015).

\bibitem{cu} C.G. Cullen: An integral theorem for analytic intrinsic functions on quaternions. \textit{Duke Math. J.} \textbf{32} (1965), 139--148.


\bibitem{fu} R. Fueter: Die Funktionentheorie der Differentialgleichungen $\Delta u=0$ und $\Delta\Delta u=0$ mit vier reellen Variablen. \textit{Comment. Math. Helv.} \textbf{7} (1934), no. 1, 307--330.


\bibitem{gss} G. Gentili, A. Gori, G. Sarfatti: A direct approach to quaternionic manifolds. \textit{Math. Nachr.} \textbf{290} (2017), no. 2-3, 321--331.

\bibitem{gss2} G. Gentili, A. Gori, G. Sarfatti: Quaternionic toric manifolds. \textit{J. Symplectic Geom.} \textbf{17} (2019), no. 1, 267--300.


\bibitem{gssv} G. Gentili, C. Stoppato, D.C. Struppa, F. Vlacci: Recent developments for regular functions of a hypercomplex variable.  \textit{Hypercomplex Analysis}, Trends Math., Birkhäuser, Basel (2009), 165--185.

\bibitem{gst} G. Gentili, C. Stoppato: Zeros of regular functions and polynomials of a quaternionic variable. \textit{Michigan Math. J.} \textbf{56} (2008), no. 3, 655--667.

\bibitem{gst3} G. Gentili, C. Stoppato: The open mapping theorem for regular quaternionic functions. \textit{Ann. Sc. Norm. Super. Pisa Cl. Sci.} (5)\textbf{8} (2009), no. 4, 805--815.

\bibitem{gst2}G. Gentili, C. Stoppato: The zero sets of slice regular functions and the open mapping theorem. In \textit{Hypercomplex Analysis and Applications.} Trends Math. Birkh\"user/Springer Basel AG, Basel (2011), 95--107.

\bibitem{gsst} G. Gentili, C. Stoppato, D. C. Struppa: \textit{Regular functions of a quaternionic variable. Springer Monogr. Math.} Springer, Cham (2022).

\bibitem{gs} G. Gentili, D.C. Struppa: A new approach to Cullen-regular functions of a quaternionic variable. \textit{C. R. Math. Acad. Sci. Paris} \textbf{342} (2006), no. 10, 741--744.

\bibitem{gs2} G. Gentili, D.C. Struppa: A new theory of regular functions of a quaternionic variable. \textit{Adv. Math.} \textbf{216} (2007), no. 1, 279--301.

\bibitem{gs3} G. Gentili, D.C. Struppa: Regular functions on the space of Cayley numbers. \textit{Rocky Mountain J.Math.} \textbf{40} (2010), no. 1, 225--241.

\bibitem{gv} G. Gentili, I. Vignozzi: The Weierstrass factorization theorem for slice regular functions over the quaternions. \textit{Ann. Glob. Anal. Geom.} \textbf{40} (2011)., no. 4, 435--466.

\bibitem{gh} R. Ghiloni: Algebraic obstructions and a complete solution of a rational retraction problem. \textit{Proc. Amer. Math. Soc.} \textbf{130} (2002), no. 12, 3525--3535.

\bibitem{gh1} R. Ghiloni: Rigidity and moduli space in real algebraic geometry.\textit{Math. Ann.} \textbf{335} (2006), no. 4, 751--766.

\bibitem{gh2} R. Ghiloni: On the space of morphisms into generic real algebraic varieties. \textit{Ann. Scuola Norm. Sup. Pisa Cl. Sci.} \textbf{5}(5) (2006), no. 3, 419--438.

\bibitem{gp0} R. Ghiloni, A. Perotti: A New Approach to Slice Regularity on Real Algebras. In \textit{Hypercomplex Analysis and Applications.} Trends Math. Birkh\"user/Springer Basel AG, Basel (2011), 109--123.

\bibitem{gp} R. Ghiloni, A. Perotti: Slice regular functions on real alternative algebras. \textit{Adv. Math.} {\bf226} (2011), no. 2, 1662--1691.

\bibitem{gp3} R. Ghiloni, A. Perotti: Zeros of regular functions of quaternionic and octonionic variable: a division lemma and the camshaft effect. \textit{Ann. Mat. Pura Appl.} (4)\textbf{190} (2011), no. 3, 539--551.

\bibitem{gp2} R. Ghiloni, A. Perotti: Power and spherical series over real alternative $*$-algebras. \textit{Indiana Univ. Math. J.} {\bf63} (2014), no. 2, 495--532.

\bibitem{gp5} R. Ghiloni, A. Perotti: Global differential equations for slice regular functions. \textit{Math. Nachr.} \textbf{287} (2014), no. 5--6, 561--573.

\bibitem{gp6} R. Ghiloni, A. Perotti: Slice regular functions in several variables. \textit{Math. Z.} \textbf{302} (2022), no. 1, 295--351. 

\bibitem{gps1} R. Ghiloni, A. Perotti, C. Stoppato: Singularities of slice regular functions over real alternative $*$-algebras. \textit{Adv. Math.} \textbf{305} (2017), 1085--1130.

\bibitem{gps3} R. Ghiloni, A. Perotti, C. Stoppato: The algebra of slice functions. \textit{Trans. Amer. Math. Soc.} \textbf{369} (2017), no. 7, 4725--4762.

\bibitem{gps}R. Ghiloni, A. Perotti, C. Stoppato: Division algebras of slice functions. \textit{Proc. Roy. Soc. Edinburgh Sect. A} \textbf{150} (2020), no. 4, 2055--2082.

\bibitem{gps2}R. Ghiloni, A. Perotti, C. Stoppato: Slice regular functions and orthogonal complex structures over $\R^8$. \textit{J. Noncommut. Geom.} \textbf{16} (2022), no. 2, 637–676.

\bibitem{gsv} A. Gori, G. Sarfatti, F. Vlacci: Zero sets and Nullstellensatz type theorems for slice regular quaternionic polynomials. \textit{Linear Algebra Appl.} \textbf{685} (2024), 162--181.

\bibitem{gr} R. Gunning, H. Rossi: \textit{Analytic functions of several complex varieties.} Prentice-Hall, Englewood Cliffs, NJ, (1965).

\bibitem{lam} T.Y. Lam: \textit{A First Course in Noncommutative Rings. Grad. Texts in Math.} \textbf{131}, Springer-Verlag, New York (1991).

\bibitem{le} L. Lempert: Algebraic approximations in analytic geometry. \textit{Invent. Math.} \textbf{121} (1995), no. 2, 335--353.


\bibitem{Na} R. Narasimhan, Y. Nievergelt: \textit{Complex analysis in one variable.} \textit{Birkh\"auser Boston Inc}, Boston, MA (2001).

\bibitem{nas} J. Nash: Real algebraic manifolds. \textit{Ann. of Math.} (2) \textbf{56} (1952), 405--421.

\bibitem{ku} A.G. Kurosh: \textit{Lectures on general algebra}. Translated by K. A. Hirsch. Chelsea Publishing Co., New York(1965).


\bibitem{sh} M. Shiota: \textit{Nash manifolds.} {\em Lecture Notes in Math.}, {\bf1269} Springer-Verlag, Berlin, (1987).

\bibitem{sa} E. Savi: On the first-order theories of quaternions and octonions. \textit{Preprint} (2024) {\tt arXiv:2404.04976}

\bibitem{s} C. Stoppato: Poles of regular quaternionic functions. \textit{Complex Var. Elliptic Equ.} \textbf{54} (2009), no. 11, 1001--1018.

\bibitem{st} E.L. Stout: Algebraic domains on Stein manifolds. \textit{Proceedings of the conference on Banach algebras and several complex variables (New Haven, Conn., 1983)} 259--266. Contemp. Math., \textbf{32} American Mathematical Society, Providence, RI (1984). 

\bibitem{su} A. Sudbery: Quaternionic analysis. \textit{Math. Proc. Cambridge Philos. Soc.} \textbf{85} (1979), no. 2, 199--224.


\bibitem{to} A. Tognoli: Su una congettura di Nash. \textit{Ann. Scuola Norm. Sup. Pisa Cl. Sci.} (3)\textbf{27} (1973), 167--185.

\bibitem{t} P. Tworzewski: Intersections of analytic sets with linear subspaces. \textit{Ann. Scuola Norm. Sup. Pisa Cl. Sci.} (4){\bf17} (1990), no. 2, 227--271.

\bibitem{w} J.P. Ward: \textit{Quaternions and Cayley Numbers: Algebra and Applications}. \textit{ Math. Appl.} \textbf{403}, Kluwer Academic Publishers Group, Dordrecht (1997).

\end{thebibliography}

\end{document}